\documentclass[12pt]{amsart}
\usepackage{graphicx}
\graphicspath{ {./images/} }
\usepackage{float}
\usepackage{amsmath, amsthm, amsfonts, amssymb} %ams packages
\usepackage{mathrsfs} %script letters
\usepackage{bbm} %additional blackboard bold characters
\usepackage{enumerate}
\usepackage{comment}
\usepackage{hyperref}
\usepackage{xcolor} %allow colorful text
\usepackage{chngcntr} %numbers tables by section
\counterwithin{table}{section} %numbers tables by section
\setcounter{MaxMatrixCols}{10}

\textwidth=450pt
\hoffset=-70pt
\newtheorem{thm}{Theorem}[section]
\newtheorem*{claim}{Claim}
\newtheorem{cor}[thm]{Corollary}
\newtheorem{lem}[thm]{Lemma}
\newtheorem{prop}[thm]{Proposition}
\newtheorem*{prob*}{Problem}
\newtheorem*{ques*}{Question}
\newtheorem*{thm*}{Theorem}
\newtheorem{quest}[thm]{Question}

\theoremstyle{definition}
\newtheorem{defn}[thm]{Definition}
\newtheorem{example}[thm]{Example}
\newtheorem{conj}[thm]{Conjecture}
\newtheorem*{defn*}{Definition}
\newtheorem{rem}[thm]{Remark}

\newtheorem{rem*}[thm]{Remark}
\numberwithin{equation}{section}

\newcommand{\R}{\mathbb R}
\newcommand{\Z}{\mathbb Z}
\newcommand{\N}{\mathbb N}
\newcommand{\Q}{\mathbb Q}
\newcommand{\Zp}{\mathbb Z_{\geq 0}}
\newcommand{\F}{\mathbb F}
\newcommand{\T}{\mathbb{T}}
\newcommand{\eps}{\varepsilon}

\DeclareMathOperator{\E}{\mathbb{E}}

\newcommand{\mG}{\mathcal {G}}

\newcommand{\mI}{\mathcal {I}}
\newcommand{\mZ}{\mathcal {Z}}
\newcommand{\mX}{\mathcal {X}}
\newcommand{\mY}{\mathcal {Y}}
\newcommand{\X}{\textbf{X}}
\newcommand{\Y}{\textbf{Y}}
\newcommand{\bfZ}{\textbf{Z}}
\newcommand{\UC}{\text{UC~-}\lim}
\newcommand{\ind}{\mathbbm{1}}
\newcommand{\norm}[2]{\left\| #2 \right\|_{#1}}
\newcommand{\innprod}[2]{\left\langle #1, #2 \right\rangle}
\newcommand{\es}{\emptyset}
\newcommand{\rk}{\text{rk}}
\newcommand{\epdd}{\text{epdd}}
\newcommand{\syndsup}{\text{synd-sup}}
\renewcommand{\tilde}{\widetilde}
\renewcommand{\hat}{\widehat}

	\title[Khintchine-type recurrence for 3-point configurations]{Khintchine-type recurrence for 3-point configurations}
	
	\author[E. Ackelsberg]{Ethan Ackelsberg}
	\address{Department of Mathematics, Ohio State University}
	\email{ackelsberg.1@osu.edu}
	
	\author[V. Bergelson]{Vitaly Bergelson}
	\address{Department of Mathematics, Ohio State University}
	\email{vitaly@math.ohio-state.edu}
	
	\author[O. Shalom]{Or Shalom}
	\address{Einstein Institute of Mathematics, Hebrew University of Jerusalem}
	\email{or.shalom@mail.huji.ac.il}
	
\date{\today}

\begin{document}
	\begin{abstract}
	The goal of this paper is to generalize, refine, and improve results on large intersections from \cite{BHK,ABB}.
	We show that if $G$ is a countable abelian group and $\varphi, \psi : G \to G$ are homomorphisms such that at least two of the three subgroups $\varphi(G)$, $\psi(G)$, and $(\psi-\varphi)(G)$ have finite index in $G$, then $\{\varphi, \psi\}$ has the \emph{large intersections property}.
	That is, for any ergodic measure preserving system $\X=(X,\mX,\mu,(T_g)_{g\in G})$, any $A\in \mX$, and any $\eps>0$, the set
	$$\{g\in G : \mu(A\cap T_{\varphi(g)}^{-1} A \cap T_{\psi(g)}^{-1} A)>\mu(A)^3-\varepsilon\}$$
	is syndetic (Theorem \ref{Khintchinefiniteindex}).
	Moreover, in the special case where $\varphi(g)=ag$ and $\psi(g)=bg$ for $a,b\in \mathbb{Z}$, we show that we only need one of the groups $aG$, $bG$, or $(b-a)G$ to be of finite index in $G$ (Theorem \ref{Khintchineab}), and we show that the property fails in general if all three groups are of infinite index (Theorem \ref{counterexample}).
	
	One particularly interesting case is where $G=(\mathbb{Q}_{>0},\cdot)$ and $\varphi(g)=g$, $\psi(g)=g^2$, which leads to a multiplicative version for the Khintchine-type recurrence result in \cite{BHK}.
	We also completely characterize the pairs of homomorphisms $\varphi,\psi$ that have the large intersections property when $G = \Z^2$.
	
	The proofs of our main results rely on analysis of the structure of the \emph{universal characteristic factor} for the multiple ergodic averages $$\frac{1}{|\Phi_N|} \sum_{g\in \Phi_N}T_{\varphi(g)}f_1\cdot T_{\psi(g)} f_2.$$
	In the case where $G$ is finitely-generated, the characteristic factor for such averages is the \emph{Kronecker factor}.
	In this paper, we study actions of groups that are not necessarily finitely-generated, showing in particular that, by passing to an extension of $\X$, one can describe the characteristic factor in terms of the \emph{Conze--Lesigne factor} and the $\sigma$-algebras of $\varphi(G)$ and $\psi(G)$ invariant functions (Theorem \ref{strongcf}).
	\end{abstract}

\maketitle

\tableofcontents

\section{Introduction}

Let $(G,+)$ be a countable abelian group. A \emph{probability measure-preserving $G$-system}, or simply \emph{$G$-system} for short, is a quadruple $\X=(X,\mX,\mu, (T_g)_{g\in G})$ where $(X,\mX,\mu)$ is a standard Borel probability space (that is, up to isomorphism of measure spaces, $X$ is a compact metric space, $\mX$ is the Borel $\sigma$-algebra, and $\mu$ is a regular Borel probability measure) and $T_g :X \to X$, $g \in G$, are measure-preserving transformations such that $T_{g+h}=T_g\circ T_h$ for every $g,h\in G$ and $T_0=Id$. The transformation $T_g:X \to X$ gives rise to a unitary operator on $L^2(\mu)$, which we also denote by $T_g$, given by the formula $T_g f(x) = f(T_g x)$. We say that a $G$-system is \emph{ergodic} if the only measurable $(T_g)_{g\in G}$-invariant functions are the constant functions.

\subsection{Khintchine-type recurrence and the large intersections property}

The starting point for the study of recurrence in ergodic theory is the Poincar\'{e} recurrence theorem, which states that, for any measure-preserving system $\left(X, \mX, \mu, T \right)$ and any set $A \in \mX$ with $\mu(A) > 0$, there exists $n \in \N$ such that $\mu(A \cap T^{-n}A) > 0$.

Khintchine's recurrence theorem strengthens and enhances Poincar\'{e}'s recurrence theorem by improving on the size of the intersections and the size of the set of return times.

\begin{thm}[Khintchine's recurrence theorem \cite{Kh}]
    For any measure-preserving system $\left(X, \mX, \mu, T \right)$, any $A \in \mX$, and any $\eps > 0$, the set
    \begin{align*}
        \left\{ n \in \N : \mu \left( A \cap T^{-n}A \right) > \mu(A)^2 - \eps \right\}
    \end{align*}
    has bounded gaps.
\end{thm}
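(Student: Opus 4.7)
The plan is to deduce Khintchine's theorem from the von Neumann mean ergodic theorem, exploiting the observation that the $L^2$-convergence of the Følner averages holds \emph{uniformly} over all translates of the averaging interval. This uniformity is precisely the ingredient that promotes a Cesàro-type statement about the correlation sequence $\phi(n) := \mu(A \cap T^{-n}A)$ into a syndeticity statement.

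Setting $f = \ind_A$ and $\tilde f = \E[f \mid \mI]$, where $\mI$ is the $\sigma$-algebra of $T$-invariant sets, the first step is to write $\phi(n) = \langle f, T^n f\rangle$ and invoke von Neumann: $\frac{1}{N}\sum_{n=0}^{N-1} T^n f \to \tilde f$ in $L^2$. Because $\tilde f$ is $T$-invariant and $T$ is unitary, the relation $T^{M+n} = T^M T^n$ gives
\begin{equation*}
    \left\|\frac{1}{N}\sum_{n=M}^{M+N-1} T^n f - \tilde f\right\|_2 = \left\|T^M\!\left(\frac{1}{N}\sum_{n=0}^{N-1} T^n f - \tilde f\right)\right\|_2 = \left\|\frac{1}{N}\sum_{n=0}^{N-1} T^n f - \tilde f\right\|_2,
\end{equation*}
so the convergence is uniform in $M$. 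Pairing with $f$ and applying Jensen's inequality,
\begin{equation*}
    \frac{1}{N}\sum_{n=M}^{M+N-1}\phi(n) \;\longrightarrow\; \int f \tilde f\, d\mu \;=\; \int \tilde f^2\, d\mu \;\ge\; \mu(A)^2 \qquad (N\to\infty),
\end{equation*}
still uniformly in $M$.

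The second step is a contradiction argument: if $G_\eps := \{n : \phi(n) > \mu(A)^2 - \eps\}$ had unbounded gaps, then for every $N$ there would exist $M_N$ with $[M_N, M_N + N - 1] \subseteq \N \setminus G_\eps$, forcing the average of $\phi$ over this block to be at most $\mu(A)^2 - \eps$. But for $N$ taken large enough that the uniform convergence forces every such block average to exceed $\mu(A)^2 - \eps/2$, this is a contradiction. The only substantive point in the argument is the uniformity in $M$, which is immediate from the isometry of $T$ and the invariance of $\tilde f$; everything else is Jensen's inequality and the standard equivalence between ``uniform-in-$M$ lower bounds on long block averages'' and syndeticity of the level set.
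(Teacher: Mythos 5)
Your proof is correct, and it is the standard modern argument for Khintchine's theorem via the von Neumann mean ergodic theorem together with the observation that the block averages converge uniformly in the starting point $M$. There is no competing argument in the paper to compare against: the authors state Khintchine's theorem as a classical result cited from \cite{Kh} and do not supply a proof.

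One small imprecision worth flagging: you describe the Koopman operator $T$ on $L^2(\mu)$ as unitary, but if the underlying transformation is not assumed invertible, $T$ is only an isometry. This does not affect your argument, since the key identity
\begin{equation*}
    \left\|\frac{1}{N}\sum_{n=M}^{M+N-1} T^n f - \tilde f\right\|_2 = \left\|T^M\!\left(\frac{1}{N}\sum_{n=0}^{N-1} T^n f - \tilde f\right)\right\|_2 = \left\|\frac{1}{N}\sum_{n=0}^{N-1} T^n f - \tilde f\right\|_2
\end{equation*}
requires only that $T$ be norm-preserving (and that $\tilde f$ be $T$-invariant, which it is since $E(f\mid\mI)$ is $\mI$-measurable). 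The rest — Jensen's inequality giving $\int \tilde f^2\,d\mu \ge \bigl(\int \tilde f\,d\mu\bigr)^2 = \mu(A)^2$, and the contradiction argument from arbitrarily long gap intervals — is carried out correctly.
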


Khintchine's recurrence theorem easily extends to general semigroups, where the appropriate counterpart of ``bounded gaps'' is the notion of syndeticity. In this paper, we deal with recurrence in countable abelian groups. A subset $A$ of a countable discrete abelian group $G$ is said to be \emph{syndetic} if there exists a finite set $F\subseteq G$ such that $A+F = \{a+f : a\in A, f\in F\} = G$.\\

It is natural to ask if recurrence theorems other than Poincar\'{e}'s recurrence theorem also have Khintchine-type enhancements.
For instance, it follows from the IP Szemer\'{e}di theorem of Furstenberg and Katznelson \cite{FK-IP} and also from \cite[Theorem B]{austin} that, for any abelian group $G$, any $k \in \N$, and any family of homomorphisms $\varphi_1, \dots, \varphi_k : G \to G$, the following holds: if $\left( X, \mX, \mu, (T_g)_{g \in G} \right)$ is a $G$-system and $A \in \mX$ has $\mu(A) > 0$, then the set
\begin{align*}
    \left\{ g \in G : \mu \left( A \cap T_{\varphi_1(g)}^{-1}A \cap \dots \cap T_{\varphi_k(g)}^{-1}A \right) > 0 \right\}
\end{align*}
is syndetic.\footnote{In fact, this set is an IP$^*$ set, which is a stronger notion of largeness that we do not address in this paper; see \cite{FK-IP}.}
With the goal of Khintchine-type enhancements in mind, this motivates the following definition:

\begin{defn}
    A family of homomorphisms $\varphi_1, \dots, \varphi_k : G \to G$ has the \emph{large intersections property} if the following holds: for any ergodic $G$-system $\left( X, \mX, \mu, (T_g)_{g \in G} \right)$, any $A \in \mX$ and any $\eps > 0$, the set
    \begin{align*}
        \left\{ g \in G : \mu \left( A \cap T_{\varphi_1(g)}^{-1}A \cap \dots \cap T_{\varphi_k(g)}^{-1}A \right) > \mu(A)^{k+1} - \eps \right\}
    \end{align*}
    is syndetic.
\end{defn}

The large intersections property is closely related to the phenomenon of popular differences in combinatorics; see, e.g., \cite{SSZ,Berger,Man,BSST,AB}.\\

Determining which families of homomorphisms have the large intersections property is a challenging problem with many surprising features.
In the case $G = \Z$ and $\varphi_i(n) = in$, the problem was resolved in \cite{BHK}.
\begin{thm}[\cite{BHK}, Theorems 1.2 and 1.3] \label{thm: bhk}
    The family $\{n, 2n, \dots, kn\}$ has the large intersections property in $\Z$ if and only if $k \le 3$.
\end{thm}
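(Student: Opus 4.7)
The plan is to separate the positive direction ($k \le 3$) from the counterexample ($k \ge 4$), leveraging the Host--Kra structure theory throughout.

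For the positive direction, I first invoke the Host--Kra characteristic factor theorem: the $(k-1)$-step nilfactor $\mZ_{k-1}$ is characteristic for the averages $\frac{1}{N}\sum_{n=1}^N T^n f_1\cdots T^{kn} f_k$. Replacing $\ind_A$ by its conditional expectation onto $\mZ_{k-1}$ (which preserves the integral) and approximating by a finite-dimensional quotient reduces the problem to a $(k-1)$-step nilsystem. The case $k=1$ is Khintchine's theorem. For $k=2$, the system is a compact abelian group rotation $(X, m, R_\al)$; expanding the $3$-point correlation
\[
    \int f(x)\, f(x+n\al)\, f(x+2n\al)\, dm(x)
\]
in characters yields a Fourier series in $n\al$ whose constant term equals $\mu(A)^3$ and whose remainder is Bohr-almost-periodic with vanishing mean, so Bohr recurrence produces a syndetic set of $n$ on which the perturbation is less than $\eps$. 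For $k=3$, the system is a $2$-step nilsystem $G/\Ga$; the $4$-point correlation $\int f\cdot T^n f\cdot T^{2n}f\cdot T^{3n}f\, d\mu$ is a $2$-step nilsequence with main term $\mu(A)^4$, and the analysis relies on sharp equidistribution of the polynomial orbit $n\mapsto (g^n, g^{2n}, g^{3n})$ on the Host--Kra cube, together with a positivity argument that rules out systematic negative corrections on a density-one set of $n$.

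The case $k=3$ is the principal technical obstacle. On a $2$-step nilmanifold $G/\Ga$, the iterates $g^n, g^{2n}, g^{3n}$ generate a quadratic phase in the vertical torus, and one must show that, once the Kronecker component has been handled, the remaining quadratic part cannot systematically drive the correlation below $\mu(A)^4$. This requires fine structural information about the Conze--Lesigne factor (in particular, a suitable cohomological description of the underlying cocycle) beyond what is needed for mere convergence of the averages, and it is the central new input of \cite{BHK}.

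For the counterexample ($k\ge 4$), it suffices to handle $k=4$, since larger $k$ follow by projection. The idea is to exploit a genuinely quadratic obstruction in $5$-point configurations that is absent for $k\le 3$: inside a suitable $2$-step nilsystem (for instance a skew product on $\T^2$ of the form $T(x,y) = (x+\al, y+x)$) one uses a Behrend-type sparse construction, transported via Furstenberg correspondence, to produce a set $A$ and an $\eps>0$ for which
\[
\{n : \mu(A\cap T^{-n}A\cap T^{-2n}A\cap T^{-3n}A\cap T^{-4n}A) > \mu(A)^5 - \eps\}
\]
fails to be syndetic. This blocks the required Khintchine-type bound for $k\ge 4$ and yields the sharp dichotomy at $k=3$.
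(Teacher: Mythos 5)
The paper does not prove this statement: Theorem 3.3 is stated purely as a citation of Theorems 1.2 and 1.3 of \cite{BHK} to motivate the general problem, and no argument is given. There is therefore no internal proof to compare your attempt against.

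Assessed on its own merits, your sketch is a reasonable high-level account of the BHK strategy --- characteristic factors reduce $k=2$ to the Kronecker factor and $k=3$ to the Conze--Lesigne factor, and the $k\ge 4$ counterexample pairs a Behrend-type sparse set with a $2$-step skew product --- but a couple of the intermediate claims would not survive being written out. In the $k=2$ case, the assertion that ``the constant term equals $\mu(A)^3$'' is incorrect: writing $f_1 = E(\ind_A\mid\mathcal{Z}^1(X))$ for the Kronecker projection, the constant Fourier mode of the triple correlation $\int f_1(z)f_1(z+n\alpha)f_1(z+2n\alpha)\,dz$ is $\sum_{\chi\in\hat{Z}}\hat{f}_1(\chi)^2\overline{\hat{f}_1(\chi^2)}$, which is generically different from $\mu(A)^3$. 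The correct mechanism (and the one that this paper scales up in its proof of Theorem \ref{Khintchinefiniteindex}) is not to split off a constant term but to weight the Ces\`{a}ro average by a continuous bump $\eta$ concentrated near $0$ in $Z$, use the resulting almost-periodicity to reduce $\int \ind_A\cdot T^n f_1\cdot T^{2n} f_1\,d\mu$ to approximately $\int f_1^3\,d\mu$, and then invoke Jensen: $\int f_1^3 \ge \left(\int f_1\right)^3 = \mu(A)^3$. For $k\ge 4$, the ``reduce to $k=4$ by projection'' step is not an automatic implication: adding a further transformation shrinks both $\mu(A\cap T^{-n}A\cap\cdots)$ and the target $\mu(A)^{k+2}$, so the failure of large intersections for $k=4$ does not formally transfer to $k=5$. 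What \cite{BHK} actually prove (and what the paper relies on elsewhere, cf.\ Table \ref{table: epdd}) is the much stronger quantitative claim that for every $\ell$ there is a system and a set $A$ with $\mu(A\cap T^{-n}A\cap\cdots\cap T^{-4n}A)\le\mu(A)^\ell$ for all $n\ne 0$; it is that freedom in $\ell$ that lets the counterexample propagate to every $k\ge 4$. Your outline is fine for orientation but would need to commit to the $\eta$-weighted localization and the quantitative Behrend bound before it could be called a proof.
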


Later work of Frantzikinakis and of Donoso, Le, Moreira, and Sun generalized this picture for arbitrary homomorphisms $\Z \to \Z$, which take the form $n \mapsto an$ for some $a \in \Z$.
\begin{thm}[\cite{Fra}, special case of Theorem C; \cite{DLMS}, Theorem 1.5]~
    \begin{enumerate}
        \item   For any $a, b \in \Z$, the families $\{an,bn\}$ and $\{an, bn, (a+b)n\}$ have the large intersections property (in $\Z$).
        \item   For any $k \ge 4$ and any distinct and nonzero integers $a_1, \dots, a_k \in \Z$, the family $\{a_1n, \dots, a_kn\}$ does not have the large intersections property (in $\Z$).
    \end{enumerate}
\end{thm}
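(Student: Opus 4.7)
The plan is to treat the positive and negative directions separately: part (1) will follow from characteristic-factor analysis combined with Jensen's inequality, while part (2) requires constructing a $2$-step nilsystem counterexample.

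For the pair $\{an,bn\}$ in part (1), the Kronecker factor $Z$ is classically characteristic for the averages $\tfrac1N\sum_{n} T_{an}f_1\cdot T_{bn}f_2$, so I may replace $\ind_A$ by $f:=\E(\ind_A\mid Z)$ and view the system as a compact abelian rotation $Tx=x+\alpha$. The triple correlation becomes
$$I_n=\int_Z f(x)\,f(x+an\alpha)\,f(x+bn\alpha)\,dm(x),$$
and a pigeonhole argument on the compact group $Z$ produces a syndetic set of $n$ for which $an\alpha$ and $bn\alpha$ simultaneously lie in any prescribed neighborhood of $0$; on this set, $L^2$-continuity of translation makes $I_n$ arbitrarily close to $\int f^3\,dm$, which by Jensen applied to $t\mapsto t^3$ on $[0,1]$ is at least $\mu(A)^3$. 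For the triple $\{an,bn,(a+b)n\}$, the characteristic factor is in general a $2$-step (Conze--Lesigne) nilfactor rather than the Kronecker factor, so the argument proceeds in the spirit of the Bergelson--Host--Kra treatment of $\{n,2n,3n\}$: reduce to a connected $2$-step nilmanifold $G/\Gamma$ via the Host--Kra structure theorem, use Leibman's orbit-closure theorem to pin down the distribution of $(T^{an}x,T^{bn}x,T^{(a+b)n}x)$, and perform a nil-Fourier expansion to locate a syndetic set of $n$ on which the four-point correlation is close to $\int f^4\,dm\ge\mu(A)^4$. The linear relation $(a+b)=a+b$ is precisely what ensures that the quadratic phases produced by the nilmanifold analysis can be made to cancel simultaneously along a syndetic set.

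For part (2), I must exhibit, for each distinct nonzero $a_1,\dots,a_k\in\Z$ with $k\ge 4$, an ergodic $\Z$-system, a set $A$, and an $\eps>0$ such that
$$\bigl\{n\in\Z:\mu(A\cap T^{-a_1 n}A\cap\cdots\cap T^{-a_k n}A)>\mu(A)^{k+1}-\eps\bigr\}$$
is not syndetic. The natural candidate is a $2$-step nilsystem, for example the affine skew-product $T(x,y)=(x+\alpha,y+2x+\alpha)$ on $\T^2$, on which any $(k+1)$-fold correlation evaluates explicitly in terms of quadratic phases $\exp(2\pi i Q(n)\alpha)$ for a specific quadratic form $Q$ built from $a_1,\dots,a_k$. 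Choosing $A$ to be a suitably tuned Bohr-type level set in the second coordinate and invoking Weyl equidistribution of $n^2\alpha$, one verifies that the $(k+1)$-fold integral stays strictly below $\mu(A)^{k+1}$ by a fixed positive amount for all $n$ outside a non-syndetic set. It suffices to handle $k=4$; the cases $k\ge 5$ follow by padding the counterexample with auxiliary coefficients.

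The main obstacle is part (2): the positive direction reduces, if not routinely, to well-established nilpotent factor theory, but for the negative result one must simultaneously optimize the nilsystem, the set $A$, and the tolerance $\eps$ so that the explicit quadratic-phase computation delivers a genuine \emph{uniform} deficit, and crucially one must argue that no perturbation or reshaping of $A$ can close the gap. Verifying this robustness is the essence of the $k=4$ counterexample, and its extension from the arithmetic progression case $\{n,2n,3n,4n\}$ to arbitrary distinct nonzero integer tuples requires confirming that the associated quadratic form $Q$ is non-degenerate whenever $k\ge 4$.
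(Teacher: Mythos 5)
This theorem is quoted in the paper as a known result with attribution to \cite{Fra} (Theorem~C) and \cite{DLMS} (Theorem~1.5); the paper gives no proof, so there is nothing internal to compare your sketch against. On its own merits, your outline of part~(1) follows the classical route: for the pair $\{an,bn\}$ the Kronecker factor is characteristic, a Bohr-set argument gives syndetically many $n$ with $an\alpha$ and $bn\alpha$ simultaneously small, and Jensen's inequality on $\int f^3$ finishes; and the triple $\{an,bn,(a+b)n\}$ really does reduce to the order-$2$ (Conze--Lesigne) factor, where the parallelepiped structure of $\{0,a,b,a+b\}$ is what permits the quadratic phases to be made simultaneously small on a syndetic set. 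You should, however, address the degenerate cases ($a=0$, $b=0$, or $a=b$, etc.), which collapse to the pair case and then follow from Khintchine's theorem with a better exponent.

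Part~(2) has a genuine gap. You propose taking $A$ to be a ``Bohr-type level set'' so that an explicit quadratic-phase computation produces a ``fixed positive deficit,'' and at the end you acknowledge, without resolving, the robustness problem — that no reshaping of $A$ should be able to close the gap. A Bohr-set construction will not do this: it yields deficits that are at best polynomial in $\mu(A)$ and could be absorbed once $\eps$ is taken small. The essential ingredient in \cite{BHK} (and in \cite{DLMS}, which adapts it to general distinct coefficients) is \emph{Behrend's theorem}. One builds a $2$-step system whose quadratic fiber is a finite cyclic group $\Z/N\Z$, chooses a $3$-AP-free set $B\subseteq\Z/N\Z$ with $|B|\ge N^{1-o(1)}$, and lets $A$ be the preimage of $B$. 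The linear dependencies among $\{1,a_i,a_i^2\}$ for $k\ge4$ distinct nonzero $a_i$ force the quadratic-fiber images of $x,T^{a_1 n}x,\dots,T^{a_k n}x$ to contain a nontrivial $3$-term arithmetic progression for every $n\ne0$; since $B$ has none, the correlation is bounded by $\mu(A)^l$ for $l$ arbitrarily large, hence $<\mu(A)^{k+1}-\eps$ for a fixed $\eps$ once $\mu(A)$ is small. This superpolynomial decay is also exactly what makes your ``padding'' step from $k=4$ to $k\ge5$ legitimate: without it, adding intersections shrinks both the correlation and the target $\mu(A)^{k+1}$, and no conclusion follows. Finally, the condition you describe as ``non-degeneracy of the quadratic form $Q$'' is not the right invariant; the precise linear-algebraic criterion is the existence of a suitable $3$-AP among Vandermonde rows, which is automatic for $k\ge4$ distinct nonzero $a_i$ but needs to be stated and checked rather than asserted.
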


\begin{rem}
Finitary combinatorial work of \cite[Theorem 1.6]{SSZ} suggests that the family $\{a_1n,a_2n,a_3n\}$ has the large intersections property if and only if $a_i+a_j=a_k$ for some permutation $\{i,j,k\}$ of $\{1,2,3\}$.
\end{rem}

In \cite{BTZ}, Khintchine-type recurrence results are established in the infinitely-generated torsion groups $G = \bigoplus_{n=1}^{\infty}{\Z/p\Z}$.
\begin{thm}[\cite{BTZ}, Theorems 1.12 and 1.13] ~
    \begin{enumerate}
        \item Fix a prime $p > 2$. If $c_1, c_2 \in \Z/p\Z$ are distinct and nonzero, then $\{c_1g,c_2g\}$ has the large intersections property in $G = \bigoplus_{n=1}^{\infty}{\Z/p\Z}$.
        \item Fix a prime $p > 3$. If $c_1, c_2 \in \Z/p\Z$ are distinct and nonzero and $c_1 + c_2 \ne 0$, then $\{c_1g, c_2g, (c_1+c_2)g\}$ has the large intersections property in $G = \bigoplus_{n=1}^{\infty}{\Z/p\Z}$.
    \end{enumerate}
\end{thm}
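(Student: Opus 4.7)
\emph{Strategy.} My plan is to attack both parts of the theorem via characteristic factor analysis, exploiting the fact that $G = \bigoplus_{n=1}^{\infty} \Z/p\Z$ is a vector space over $\F_p$. For any nonzero $c \in \Z/p\Z$, the map $g \mapsto cg$ is an automorphism of $G$, and in particular $cG = G$. Consequently, in part (1) the three subgroups $c_1 G$, $c_2 G$, and $(c_2-c_1)G$ all equal $G$, and in part (2) additionally $(c_1+c_2)G = G$. All the marginal actions $(T_{L(g)})_{g \in G}$ for nonzero linear $L$ are therefore ergodic whenever $\X$ is, which is the torsion analogue of the finite-index hypothesis driving the present paper.

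\emph{Part (1).} For the $3$-fold intersection, the universal characteristic factor for the average $\frac{1}{|\Phi_N|} \sum_{g \in \Phi_N} T_{c_1 g} f_1 \cdot T_{c_2 g} f_2$ is the Kronecker factor $\mZ_1(\X)$: a standard van der Corput argument using the ergodicity of $(T_{(c_2-c_1)g})_{g \in G}$ establishes this. By the Halmos--von Neumann theorem, $\mZ_1(\X)$ is isomorphic to a rotation on a compact abelian group $K$ given by a homomorphism $\alpha : G \to K$ with dense image. Since $pG = 0$ we have $pK = 0$, so $K$ is a pro-$\F_p$ compact abelian group. Expanding $\ind_A$ in Pontryagin characters of $K$, the triple correlation takes the form
$$\sum_{\chi_1 + \chi_2 + \chi_3 = 0} \widehat{\ind_A}(\chi_1)\,\widehat{\ind_A}(\chi_2)\,\widehat{\ind_A}(\chi_3)\, \chi_2(\alpha(c_1 g))\,\chi_3(\alpha(c_2 g)),$$
and a Herglotz-type positivity argument (as in \cite{BHK}) yields the syndetic lower bound $\mu(A)^3 - \eps$.

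\emph{Part (2).} For the $4$-fold intersection with $\{c_1 g, c_2 g, (c_1+c_2)g\}$, the level-$2$ linear relation $(c_1+c_2) - c_1 - c_2 = 0$ indicates that the characteristic factor is the $2$-step Host--Kra factor $\mZ_2(\X)$; this is established by a PET/Cauchy--Schwarz scheme. For $G = \bigoplus_{n=1}^\infty \Z/p\Z$ with $p > 3$, $\mZ_2$ admits a concrete description in terms of degree-$2$ phase polynomials over a compact $\F_p$-torsion group, with the hypothesis $p > 3$ guaranteeing that the small denominators required to complete the square (and to interpret bilinear forms as iterated derivatives of quadratics) are invertible in $\F_p$. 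Once reduced to $\mZ_2$, the change of variables $u = c_1 g$, $v = c_2 g$ (a bijection on $G$ since $c_1, c_2 \in \F_p^*$) recasts the $4$-fold correlation as a two-parameter quadratic average, whose syndetic-return behaviour is accessible to a positivity estimate analogous to that in \cite{BHK}.

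\emph{Main obstacle.} The principal difficulty is in part (2): establishing that $\mZ_2$ is the characteristic factor and analyzing its structure over the infinitely-generated torsion group $\bigoplus_{n=1}^\infty \Z/p\Z$. Host--Kra theory was originally developed for $\Z$-actions, and its extension to countable abelian groups of unbounded torsion rank is non-trivial, closely parallel to the structural analysis of characteristic factors that the present paper develops for $2$-point configurations (Theorem \ref{strongcf}). Once $\mZ_2$ is in hand, the remaining Fourier/positivity estimates for the $4$-fold correlation follow the $\Z$-action template.
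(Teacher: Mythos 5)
This theorem is not proved in the present paper; it is quoted verbatim from \cite{BTZ} (Theorems 1.12 and 1.13) as background, and the authors make no attempt to rederive it. So there is no in-paper proof to compare against directly. That said, your outline is broadly aligned with the strategy BTZ actually employed, and it is useful to indicate where it is on solid ground and where it is too optimistic.

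Part (1) is sound. Since $c_1$, $c_2$, $c_2-c_1$ are all nonzero in $\F_p$, the maps $g \mapsto c_i g$ are automorphisms of $G$; in particular $(c_2-c_1)G=G$, so a van der Corput argument (exactly as in Proposition \ref{partialcf1} with $\varphi(G)=\psi(G)=(\psi-\varphi)(G)=G$) reduces the average to the Kronecker factor $\mZ^1(X)$. On $\mZ^1(X)$ the positivity is clean: concentrating a bump function $\eta$ near $0\in Z$, the triple correlation tends to $\int E(\ind_A|\mZ^1)^3\,d\mu \ge \mu(A)^3$ by Jensen. Calling this a ``Herglotz-type'' argument is nonstandard but not wrong: the modern version, used both in \cite{BHK} and in Section \ref{sec: phi, psi proof} of this paper, dispenses with Herglotz representations in favor of the $\eta$-concentration trick and conditional Jensen. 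You should state that nonnegativity of $E(\ind_A|\mZ^1)$ is what makes the power-mean inequality applicable.

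Part (2) contains a genuine gap in the last step. The identification of $\mZ^2$ as the characteristic factor (via PET / iterated van der Corput) and the appeal to the BTZ inverse theorem for the degree-2 structure on $\F_p^\infty$-systems are correct, and you are right that this is where most of the work lives; BTZ's $p>3$ hypothesis arises from degree-versus-characteristic issues in describing the quadratic structure, as you indicate. However, the proposed ``change of variables $u=c_1g$, $v=c_2g$'' does not recast a one-parameter average over $g\in G$ as a two-parameter average: the map $g\mapsto(c_1g,c_2g)$ is an embedding of $G$ into $G^2$, not a surjection, so you gain no extra degrees of freedom. What is actually needed is a limit formula of the type appearing in \cite{BHK} for $\{n,2n,3n\}$ (or in Theorem \ref{thm: abb formula} here), where the two integration parameters come from the abelian group extension $\mathbf{Z}^1 \times_\sigma H$ describing $\mZ^2$, not from a reparametrization of $g$. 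Once that formula is in place, the positivity argument does go through, but it is not a direct transplant of the $\Z$ argument: one must analyze the relevant Mackey group and verify a product structure such as the one proved in Proposition \ref{prop: Mackey prod}. You should be explicit that this is the substantive new input, rather than asserting it ``follows the $\Z$-action template.''
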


\begin{rem}
    It is conjectured in \cite[Conjecture 1.14]{BTZ} that, if $c_1, c_2, c_3 \in \Z/p\Z$ are distinct and nonzero and $c_i+c_j \ne c_k$ for every permutation $\{i,j,k\}$ of $\{1,2,3\}$, then $\{c_1g, c_2g, c_3g\}$ does not have the large intersections property in $G = \bigoplus_{n=1}^{\infty}{\Z/p\Z}$.
\end{rem}

\bigskip

Khintchine-type recurrence in general abelian groups was addressed in \cite{ABB} and \cite{OS2}.
For 3-point linear configurations, the following was shown in \cite{ABB}:
\begin{thm}[\cite{ABB}, Theorem 1.10] \label{thm: abb}
    Let $G$ be a countable discrete abelian group.
    Let $\varphi,\psi:G \to G$ be homomorphisms.
    If all three of the subgroups $\varphi(G)$, $\psi(G)$, and $(\psi-\varphi)(G)$ have finite index in $G$, then $\{\varphi,\psi\}$ has the large intersections property.
\end{thm}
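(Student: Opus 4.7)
The plan is to reduce the recurrence question to a Fourier-analytic estimate on the Kronecker factor of $\X$, via a characteristic factor argument whose success hinges on all three finite-index hypotheses. In more detail, I would first establish that the Kronecker factor $Z_1(\X)$ is characteristic in $L^2(\mu)$ for averages of the form
$$\frac{1}{|\Phi_N|}\sum_{g\in\Phi_N}T_{\varphi(g)}f_1\cdot T_{\psi(g)}f_2,$$
along a F\o lner sequence $(\Phi_N)$ in $G$. The van der Corput trick differences the product in an auxiliary parameter $h\in G$, and after two applications of Cauchy--Schwarz the problem reduces to control by the $U^2$-seminorm, which coincides with the Kronecker factor. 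Each step of this reduction invokes an ergodic theorem along one of $\varphi(G)$, $\psi(G)$, or $(\psi-\varphi)(G)$; for non-finitely-generated $G$ these subgroups need not act ergodically on $\X$, but the hypothesis that all three have finite index in $G$ serves as an adequate substitute, since each subaction then decomposes $\X$ into finitely many ergodic components whose contributions can be handled uniformly.

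Having reduced to the Kronecker factor, I would realise $Z_1(\X)$ concretely as a compact abelian group $(Z, m_Z)$ with translation action $R_{\tau(g)}$, where $\tau:G\to Z$ is a continuous homomorphism with dense image, and set $F:=\mathbb{E}[1_A\mid Z_1]\in L^\infty(Z, m_Z)$, so that $0\le F\le 1$ and $\int_Z F\,dm_Z=\mu(A)$. The claim becomes
$$I(g)\;:=\;\int_Z F(z)\,F\bigl(z+\tau(\varphi(g))\bigr)\,F\bigl(z+\tau(\psi(g))\bigr)\,dm_Z(z)\;>\;\mu(A)^3-\eps$$
for a syndetic set of $g\in G$. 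Jensen's inequality applied to the convex function $x\mapsto x^3$ on $[0,\infty)$ gives $\int_Z F^3\,dm_Z\ge\mu(A)^3$, so it is enough to force $I(g)$ to be within $\eps$ of $\int_Z F^3\,dm_Z$.

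The final step is a standard Khintchine-type Fourier argument. Approximate $F$ in $L^2(Z)$ by a trigonometric polynomial $F_S:=\sum_{\chi\in S}\hat F(\chi)\chi$ indexed by a finite set $S\subset\hat Z$. Pontryagin expansion yields
$$\int_Z F_S(z)\,F_S(z+a)\,F_S(z+b)\,dm_Z(z)\;=\;\sum_{\substack{\xi,\chi,\eta\in S\\\xi+\chi+\eta=0}}\hat F(\xi)\hat F(\chi)\hat F(\eta)\,\chi(a)\,\eta(b),$$
and with $a=\tau(\varphi(g))$, $b=\tau(\psi(g))$ this sum differs from $\int_Z F_S^3\,dm_Z$ by at most a constant (depending on $|S|$ and $\|F\|_\infty$) times $\max_{\chi,\eta\in S}|\chi(a)\eta(b)-1|$. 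Hence for $\delta>0$ small enough in terms of $\eps$ and $|S|$, every $g$ satisfying $|\chi(\tau(\varphi(g)))-1|<\delta$ and $|\eta(\tau(\psi(g)))-1|<\delta$ for all $\chi,\eta\in S$ will obey $I(g)>\mu(A)^3-\eps$. The set of such $g$ is the preimage under the continuous homomorphism $G\to\T^{2|S|}$, $g\mapsto(\chi(\tau(\varphi(g))),\eta(\tau(\psi(g))))_{\chi,\eta\in S}$, of a neighbourhood of the identity, i.e.\ a Bohr set in $G$; and Bohr sets in every countable abelian group are syndetic.

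The hardest part will be the characteristic factor reduction: without finite generation of $G$ or ergodicity of the subactions, $Z_1(\X)$ need not be characteristic for $\{\varphi,\psi\}$ averages, and the three finite-index assumptions enter at distinct stages of the van der Corput iteration. The Fourier computation is robust once the reduction is in place.
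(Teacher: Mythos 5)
Your overall strategy is essentially the one the paper runs (in the more general form of Theorem \ref{Khintchinefiniteindex}), and the core observation is right: since all three of $\varphi(G)$, $\psi(G)$, $(\psi-\varphi)(G)$ have finite index, Lemma \ref{IZk} gives $\mZ_\varphi(X) = \mZ_\psi(X) = \mZ^1_G(X)$, so the $G$-Kronecker factor is characteristic for $\UC_{g\in G} T_{\varphi(g)}f_1\cdot T_{\psi(g)}f_2$ by Proposition \ref{partialcf1}, and the Jensen/trigonometric-approximation/Bohr-set endgame you describe is the correct one. (One small technical caveat there: a raw Fourier truncation $F_S$ need not satisfy $\|F_S\|_\infty \le 1$, so to compare $\int F\,F(\cdot+a)F(\cdot+b)$ with $\int F_S\,F_S(\cdot+a)F_S(\cdot+b)$ you should use a Fej\'{e}r-type approximant, which keeps the approximation uniformly bounded.)

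The genuine gap is the transition from ``the Kronecker factor is characteristic in $L^2$'' to ``the claim becomes $I(g) > \mu(A)^3 - \eps$ on a syndetic set.'' Characteristicness is an averaged statement: it yields $\UC_{g\in G}\mu(A\cap T_{\varphi(g)}^{-1}A\cap T_{\psi(g)}^{-1}A) = \UC_{g\in G} I(g)$, but it says nothing about $\mu(A\cap T_{\varphi(g)}^{-1}A\cap T_{\psi(g)}^{-1}A)$ for individual $g$, and it does not let you substitute $I(g)$ for this quantity when restricting to a Bohr set: two bounded sequences can disagree arbitrarily on any fixed subset while sharing a Ces\`{a}ro limit. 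The missing ingredient is the \emph{weighted} characteristic-factor identity of Proposition \ref{partialcfeta},
\begin{align*}
&\UC_{g\in G}\eta\bigl(\alpha_{\varphi(g)},\alpha_{\psi(g)}\bigr)\int_X f_0\cdot T_{\varphi(g)}f_1\cdot T_{\psi(g)}f_2\,d\mu \\
&\qquad = \UC_{g\in G}\eta\bigl(\alpha_{\varphi(g)},\alpha_{\psi(g)}\bigr)\int_X f_0\cdot T_{\varphi(g)}E(f_1|\mZ(X))\cdot T_{\psi(g)}E(f_2|\mZ(X))\,d\mu,
\end{align*}
valid for continuous $\eta$ on the Kronecker group. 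Choosing $\eta$ to be a normalized bump concentrated near $0$ (so that the weight is concentrated on your Bohr set) and running your Fourier estimate on the right-hand side yields $\UC_{g\in G}\eta(\alpha_{\varphi(g)},\alpha_{\psi(g)})\,\mu(A\cap T_{\varphi(g)}^{-1}A\cap T_{\psi(g)}^{-1}A) > \mu(A)^3 - \eps$ with $\UC_{g\in G}\eta(\alpha_{\varphi(g)},\alpha_{\psi(g)})=1$, from which syndeticity follows by the contradiction argument at the end of Subsection \ref{sec: phi, psi proof}. The weighted identity is obtained from the unweighted one by twisting the $f_i$ by Kronecker eigenfunctions before applying Proposition \ref{partialcf1}, so the fix is short; but without it your Bohr-set estimate bounds $I(g)$ rather than the intersection measure, and the proof does not close.
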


\begin{rem} \label{rem: chu}
Earlier work of Chu demonstrates that at least some finite index condition is necessary for large intersections. Namely, it follows from \cite[Theorem 1.2]{Chu} that the pair $\{(n,0), (0,n)\}$, does not have the large intersections property in $\Z^2$; see \cite[Example 10.2]{ABB}.
\end{rem}

For more restricted 4-point configurations, the following result was shown in \cite{ABB} and independently in \cite{OS2}:
\begin{thm}[\cite{ABB}, Theorem 1.11; \cite{OS2}, Theorem 1.3]
    Let $G$ be a countable discrete abelian group. Let $a, b \in \Z$ be distinct, nonzero integers such that all four of the subgroups $aG$, $bG$, $(a+b)G$, and $(b-a)G$ have finite index in $G$. Then $\{ag, bg, (a+b)g\}$ has the large intersections property.
\end{thm}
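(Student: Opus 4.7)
My plan is to show that the multiple ergodic averages
$$A_N(f_1, f_2, f_3) := \frac{1}{|\Phi_N|} \sum_{g \in \Phi_N} T_{ag} f_1 \cdot T_{bg} f_2 \cdot T_{(a+b)g} f_3$$
are controlled in $L^2$ by the Kronecker factor of $\X$, and then to exploit the Fourier structure on the Kronecker factor together with the finite index hypotheses to produce a syndetic set of return times with the desired large intersection. This follows the strategy of \cite{BHK} in the $\Z$-case, adapted to a general countable abelian group as in \cite{ABB}.

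\textbf{Step 1 (characteristic factor).} First I would show that, possibly after passing to an ergodic extension, the Kronecker factor $\mathcal{K}$ of $\X$ is characteristic for $A_N$: if any of the $f_i$ has vanishing conditional expectation on $\mathcal{K}$, then $\|A_N(f_1,f_2,f_3)\|_{L^2} \to 0$. The essential point is the parallelogram symmetry: two iterated van der Corput / Cauchy--Schwarz reductions turn the one-parameter average into a two-parameter ``cube'' average over the configuration $\{x, x+u, x+v, x+u+v\}$, which is controlled by the Gowers $U^2$-seminorm and hence by $\mathcal{K}$. The van der Corput steps introduce auxiliary shifts by $ag$, $bg$, $(a+b)g$, and $(b-a)g$; the finite-index hypothesis on each of $aG, bG, (a+b)G, (b-a)G$ is exactly what allows these shifts to preserve ergodicity of the sub-actions, so that the cube average is genuinely equidistributed on the Kronecker factor.

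\textbf{Step 2 (Fourier expansion and Bohr set argument).} By Step 1, we may replace each $\mathbf{1}_A$ by $f := \E(\mathbf{1}_A \mid \mathcal{K})$, which lives on a rotation system $(Z, m_Z, g \mapsto \tau(g))$ for a compact abelian group $Z$ with $\int f \, dm_Z = \mu(A)$. Expanding in Fourier series and integrating in $x$ forces the constraint $\chi_1\chi_2\chi_3\chi_4=1$, giving
$$\int f(x) f(x + a\tau(g)) f(x + b\tau(g)) f(x + (a+b)\tau(g)) \, dm_Z = \sum_{\chi_1,\chi_2,\chi_3 \in \widehat{Z}} \hat f(\chi_1) \hat f(\chi_2) \hat f(\chi_3) \overline{\hat f(\chi_1\chi_2\chi_3)} \, \chi_1((a+b)\tau(g))^{-1} \chi_2(b\tau(g))^{-1} \chi_3(a\tau(g))^{-1},$$
whose main term $\hat f(1)^4 = \mu(A)^4$ comes from $\chi_1=\chi_2=\chi_3=1$. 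Truncating to a finite set $S \subseteq \widehat{Z}$ of ``large'' characters via Parseval, it suffices to find a syndetic set of $g \in G$ along which $\chi(a\tau(g))$, $\chi(b\tau(g))$, and $\chi((a+b)\tau(g))$ are each within $\eps'$ of $1$ for every $\chi \in S$. This set is the intersection of three Bohr sets in $G$, each syndetic thanks to the finite-index hypotheses on $aG, bG, (a+b)G$ (which ensure the images $\tau(aG), \tau(bG), \tau((a+b)G)$ are dense in finite-index closed subgroups of $Z$, so the relevant frequencies return arbitrarily close to $0$ on a syndetic set), and a finite intersection of syndetic Bohr sets in a countable abelian group is syndetic.

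\textbf{Main obstacle.} The hard step is Step 1: identifying the Kronecker factor (rather than the $2$-step Host--Kra factor $\mathcal{Z}^2$, which one would a priori expect for a $4$-point configuration) as characteristic in a group $G$ that need not be finitely generated. The reduction from a cube average down to a Kronecker control uses all four finite-index hypotheses in an essential way, and making it rigorous typically requires first passing to an Austin-style sated extension so that the relevant Conze--Lesigne-type data decouples into a purely Kronecker piece and pieces invariant under the sub-actions $T_{ag}, T_{bg}, T_{(a+b)g}, T_{(b-a)g}$. Once Step 1 is in hand, Step 2 is essentially a soft Fourier-analytic argument.
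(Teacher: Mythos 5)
Your Step 1 is incorrect, and the error is fatal to the whole plan. The minimal characteristic factor for the one-parameter average
\[
\UC_{g\in G}\; T_{ag}f_1\cdot T_{bg}f_2\cdot T_{(a+b)g}f_3
\]
is the Conze--Lesigne factor $\mathcal{Z}^2(X)$, not the Kronecker factor $\mathcal{Z}^1(X)$. Your heuristic conflates this single-parameter average with the Furstenberg ``cube'' average $\UC_{u,v}\int f_0(x)f_1(x+u)f_2(x+v)f_3(x+u+v)$, which does define the $U^2$ seminorm precisely because $u$ and $v$ range \emph{independently}. Here $u=ag$ and $v=bg$ are both functions of a single parameter $g$, so the parallelogram shape does not buy you a reduction to $U^2$: one van der Corput pass replaces $g$-averages of $f_i$ with $g$-averages of $\Delta_h f_i$ and leaves a 3-point average in $g$, which is controlled by the Kronecker factor; the residual average over $h$ then raises the level by one, landing on $U^3(G)$, i.e.\ $\mathcal{Z}^2(X)$. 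Even in the classical case $G=\Z$, $(a,b)=(1,2)$, the Heisenberg nilsystem shows that no amount of ``sating'' the system reduces this characteristic factor below $\mathcal{Z}^2$ --- this is precisely the content of \cite{BHK} and the reason the $k=3$ case of Theorem~\ref{thm: bhk} is genuinely harder than the $k=2$ case.

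Consequently Step 2 also cannot work as written. A Fourier expansion on the Kronecker factor alone produces cross terms $\hat f(\chi_1)\hat f(\chi_2)\hat f(\chi_3)\overline{\hat f(\chi_1\chi_2\chi_3)}$ from nontrivial $\chi_i$ satisfying the linear constraint, and these do not have a favorable sign; controlling them is exactly where the $2$-step structure is needed. The actual proofs in \cite{ABB} and \cite{OS2} work entirely in the Conze--Lesigne factor, realized as an abelian extension $\mathbf{Z}\times_\sigma H$ (respectively, a $2$-step nilpotent coset system), identify a Mackey group associated with the cocycle $\sigma$ under the four finite-index hypotheses, and derive an explicit limit formula for the averages (cf.\ Theorems~\ref{thm: abb formula} and~\ref{thm: s formula} in this paper); the lower bound $\mu(A)^4$ then follows from a coordinate-wise integral inequality for that formula (cf.\ \cite[Lemma~1.6]{Chu}), not from a Bohr-set pigeonhole as in the Kronecker situation.
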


\subsection{Main results}

In this paper, we refine the understanding of Khintchine-type recurrence for 3-point configurations in abelian groups and make substantial progress towards characterizing the pairs of homomorphisms $\varphi, \psi : G \to G$ that have the large intersections property.

Our first result shows that the large intersections property holds for any pair of homomorphisms $\{\varphi,\psi\}$ so long as at least two of the three subgroups in Theorem \ref{thm: abb} have finite index in $G$. In particular, this shows that \cite[Conjecture 10.1]{ABB} is false.
\begin{thm}\label{Khintchinefiniteindex}
Let $G$ be a countable discrete abelian group. Let $\varphi, \psi : G \to G$ be homomorphisms such that at least two of the three subgroups $\varphi(G)$, $\psi(G)$ and $(\psi-\varphi)(G)$ have finite index in $G$. Then for any ergodic $G$-system $\left( X, \mX, \mu, (T_g)_{g \in G} \right)$, any $A \in \mX$, and any $\varepsilon>0$, the set $$\left\{g\in G : \mu\left(A\cap T_{\varphi(g)}^{-1} A \cap T_{\psi(g)}^{-1} A\right)  > \mu(A)^3-\varepsilon \right\}$$ is syndetic. 
\end{thm}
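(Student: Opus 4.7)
The plan is (i) to reduce by symmetry to the case where $\varphi(G)$ and $\psi(G)$ are both of finite index, (ii) to pass to the characteristic factor supplied by Theorem \ref{strongcf}, and (iii) to run a BHK-style lower bound on that factor.

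For step (i), the change of variables $x\mapsto T_{\varphi(g)}x$ gives
\[
\mu\!\left(A\cap T_{\varphi(g)}^{-1}A\cap T_{\psi(g)}^{-1}A\right)=\mu\!\left(A\cap T_{-\varphi(g)}^{-1}A\cap T_{(\psi-\varphi)(g)}^{-1}A\right),
\]
so the large intersections property for $\{\varphi,\psi\}$ is equivalent to that for $\{-\varphi,\psi-\varphi\}$. Combined with the trivial swap $\varphi\leftrightarrow\psi$, this permutes the unordered triple $\{\varphi(G),\psi(G),(\psi-\varphi)(G)\}$ freely, so I may assume that $\varphi(G)$ and $\psi(G)$ both have finite index in $G$.

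For step (ii), large intersections transfer from extensions: if $\pi:\tilde{\X}\to\X$ is an extension and $\tilde A=\pi^{-1}A$, then $\tilde\mu(\tilde A)=\mu(A)$ and the triple intersections coincide for every $g$. Hence by Theorem \ref{strongcf} I may replace $\X$ by an extension on which the characteristic factor for the averages $|\Phi_N|^{-1}\sum_{g\in\Phi_N}T_{\varphi(g)}f_1\cdot T_{\psi(g)}f_2$ is explicitly $\mZ\vee\mI_{\varphi(G)}\vee\mI_{\psi(G)}$, where $\mZ$ is the Conze--Lesigne factor and $\mI_{\varphi(G)},\mI_{\psi(G)}$ are the invariant $\sigma$-algebras of the two subactions. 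Ergodicity combined with the finite-index hypotheses on $\varphi(G)$ and $\psi(G)$ forces $\mI_{\varphi(G)}$ and $\mI_{\psi(G)}$ to be finite $\sigma$-algebras, so after conditioning on an atom of $\mI_{\varphi(G)}\vee\mI_{\psi(G)}$ I reduce to the case where $A$ is $\mZ$-measurable.

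For step (iii), I would execute a BHK-style argument on $\mZ$. Decompose $\mathbf{1}_A=\E[\mathbf{1}_A\mid\mZ_1]+h$ where $\mZ_1$ is the Kronecker factor. The Kronecker part produces the main term via Fourier analysis: the finite-index conditions on $\varphi(G),\psi(G)$ ensure that $g\mapsto(\varphi(g),\psi(g))$ equidistributes in a finite-index closed subgroup of $\mZ_1\times\mZ_1$, and a positive-kernel computation in the spirit of \cite{BHK,ABB} yields the lower bound $\mu(A)^3-\eps$ on a syndetic set. The $\mZ_1$-orthogonal component $h$ is handled by a Conze--Lesigne cocycle identity that rewrites $\int h\cdot T_{\varphi(g)}h\cdot T_{\psi(g)}h\,d\mu$ as an average of a $\mZ_1$-measurable expression, reducing it to the preceding Kronecker estimate.

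The main obstacle is step (iii). The standard BHK/Conze--Lesigne argument of \cite{BHK,ABB} uses the subgroups $\varphi(G),\psi(G),(\psi-\varphi)(G)$ symmetrically (e.g.\ via cocycle triviality conditions along each direction), so the challenge is to implement a fundamentally asymmetric version using only the finite-index data along $\varphi(G)$ and $\psi(G)$. The characteristic factor produced by Theorem \ref{strongcf} is engineered precisely for this asymmetric situation; the technical crux is to verify that its structure suffices to close up the Fourier lower bound without any finite-index hypothesis on $(\psi-\varphi)(G)$.
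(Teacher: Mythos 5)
Your proposal has a fatal incompatibility between steps (i) and (ii). In step~(i) you legitimately use the $S_3$ symmetry to reduce to the case where $\varphi(G)$ and $\psi(G)$ are both of finite index, but Theorem~\ref{strongcf}, which you invoke in step~(ii), requires $(\psi-\varphi)(G)$ to have finite index, and your reduction does not deliver that. With $G=\Z^2$, $\varphi=\mathrm{id}$, and $\psi(n,m)=(m,n)$, both $\varphi(G)$ and $\psi(G)$ equal $\Z^2$, yet $(\psi-\varphi)(G)=\{(k,-k):k\in\Z\}$ has infinite index. You should have reduced to the case where $\varphi(G)$ and $(\psi-\varphi)(G)$ (say) have finite index, which is the reduction the paper makes and the one compatible with the available characteristic-factor machinery.

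Even with the correct reduction, steps (ii) and (iii) do not constitute a proof. Theorem~\ref{strongcf} gives separate projections ($f_1$ onto $\mZ_G^2\vee\mI_\varphi$, $f_2$ onto $\mZ_G^2\vee\mI_\psi$), not a single characteristic factor $\mZ\vee\mI_\varphi\vee\mI_\psi$. The claim that conditioning on an atom of $\mI_\varphi\vee\mI_\psi$ ``reduces to the case where $A$ is $\mZ$-measurable'' is not correct: the projections of $\ind_A$ become $\mZ_G^2$-measurable piecewise on atoms of the respective invariant $\sigma$-algebras, but the uniform Ces\`aro average couples the atoms, and $A$ itself is never made $\mZ$-measurable. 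And step~(iii) is a handwave. The ``Conze--Lesigne cocycle identity'' you invoke to absorb the $\mZ_1$-orthogonal part is a feature of BHK's three-iterate analysis for $\{n,2n,3n\}$; there is no such identity rewriting $\int h\cdot T_{\varphi(g)}h\cdot T_{\psi(g)}h\,d\mu$ as an average of a $\mZ_1$-measurable quantity in the two-iterate setting. The actual lower-bound mechanism used in the paper at the Conze--Lesigne level is a limit formula derived from an explicit Mackey-group computation (Proposition~\ref{prop: Mackey prod} and Theorem~\ref{thm: limit}) combined with the positive-definiteness lemma \cite[Lemma 1.6]{Chu}; none of that appears in your plan.

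For comparison, the paper's own proof of Theorem~\ref{Khintchinefiniteindex} does not use Theorem~\ref{strongcf} or any Conze--Lesigne machinery at all; that theorem is developed later, for Theorem~\ref{Khintchineab}. Instead, after reducing to $\varphi(G)$ and $(\psi-\varphi)(G)$ of finite index, the paper applies Proposition~\ref{partialcf1} to obtain characteristic factors $\mZ_\varphi(X)$ and $\mZ_\psi(X)$ at the Kronecker level, uses Lemma~\ref{IZk} to see $\mZ_\varphi(X)=\mZ(X)$, decomposes $E(\ind_A\mid\mZ_\psi(X))$ via the Frantzikinakis--Host relative orthonormal basis of $\psi(G)$-eigenfunctions (Theorem~\ref{FranHostdeco}), and argues with a test function $\eta$ concentrated near $0\in Z$ that only eigenfunctions whose eigenvalues agree with an eigenvalue of the $G$-action contribute; the $\mu(A)^3$ bound then follows from a nested Jensen argument along the chain $\mZ(X)\preceq\tilde{\mZ}_\psi(X)$. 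The entire argument takes place at the Kronecker level, which is precisely what the finite-index assumption on $\varphi(G)$ buys and is the key simplification your plan misses.
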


As mentioned above (see Remark \ref{rem: chu}), the work of Chu \cite{Chu} provides a counterexample to the large intersections property when all three subgroups $\varphi(G)$, $\psi(G)$, and $(\psi - \varphi)(G)$ have infinite index in $G$. In this paper, we give additional counterexamples for the group $G = \bigoplus_{n=1}^{\infty}{\Z}$ with homomorphisms $g \mapsto ag$ and $g \mapsto bg$ for some $a, b \in \Z$; see Theorem \ref{counterexample} below.
A natural question to ask, then, is what happens when only one of the subgroups $\varphi(G)$, $\psi(G)$, or $(\psi-\varphi)(G)$ has finite index. Namely:

\begin{quest} \label{Khintchine}
Let $G$ be a countable abelian group, and let $\varphi:G\rightarrow G$, $\psi:G\rightarrow G$ be homomorphisms such that at least one of the subgroups $\varphi(G)$, $\psi(G)$, or $(\psi-\varphi)(G)$ has finite index in $G$. Is it true that, for any ergodic $G$-system $\left( X, \mX, \mu, (T_g)_{g \in G} \right)$, any $A\in \mX$, and any $\varepsilon>0$, the set
$$\left\{g\in G : \mu\left(A\cap T_{\varphi(g)}^{-1} A \cap T_{\psi(g)}^{-1} A\right)  > \mu(A)^3-\varepsilon \right\}$$ is syndetic?
\end{quest}
Note that, by symmetry, it is enough to provide an answer to Question \ref{Khintchine} under the assumption that $(\psi-\varphi)(G)$ has finite index. Indeed, suppose $\psi(G)$ has finite index in $G$. Then since $(T_g)_{g \in G}$ is a measure-preserving action, we have the identity
\begin{align*}
    \mu \left( A \cap T_{\varphi(g)}^{-1}A \cap T_{\psi(g)}^{-1}A \right)
     = \mu \left( A \cap T_{-\varphi(g)}^{-1}A \cap T_{(\psi-\varphi)(g)}^{-1}A \right)
\end{align*}
Hence, the pair $\{\varphi,\psi\}$ has the large intersections property if and only if $\left\{\tilde{\varphi}, \tilde{\psi}\right\}$ has the large intersections property, where $\tilde{\varphi} = - \varphi$ and $\tilde{\psi} = \psi - \varphi$. Moreover, we have $(\tilde{\psi} - \tilde{\varphi})(G) = \psi(G)$, which is of finite index.
A similar argument applies when $\varphi(G)$ has finite index.\\

When $G = \Z^2$, we can use additional tools from linear algebra to classify all pairs of homomorphisms $\varphi$ and $\psi$,
which allows us to answer Question \ref{Khintchine} affirmatively in this setting.
In fact, we can give a precise description of the optimal size of intersections for all 3-point configurations in $\Z^2$;
see Subsection \ref{sec: Z^2 summary} below.
However, our results rely heavily on properties of $2\times2$ matrices,
and it appears that the full generality of Question \ref{Khintchine} for general abelian groups and general homomoprhisms is out of reach without developing new techniques.

On the other hand, in the special case $\varphi(g) = ag$ and $\psi(g) = bg$ for $a, b \in \Z$, we answer Question \ref{Khintchine} affirmatively:
\begin{thm} \label{Khintchineab}
Let $G$ be a countable abelian group. Let $a,b\in\mathbb{Z}$ be integers such that $(b-a)G$ has finite index in $G$. Then for any ergodic $G$-system $\left( X, \mX, \mu, (T_g)_{g \in G} \right)$, any $A \in \mX$, and any $\eps>0$, the set $$\left\{g\in G : \mu\left(A\cap T_{ag}^{-1} A \cap T_{bg}^{-1} A\right)  > \mu(A)^3-\varepsilon \right\}$$ is syndetic.
\end{thm}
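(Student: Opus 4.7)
The plan is to combine the characteristic factor description of Theorem~\ref{strongcf} with a reduction specific to the integer-coefficient setting.

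I would first dispose of the trivial subcases. If $a=0$, $b=0$, or $a=b$, the statement reduces to the classical Khintchine recurrence theorem applied to the remaining non-trivial homomorphism (whose image has finite index by the hypothesis on $(b-a)G$) or is vacuous. Moreover, if either $aG$ or $bG$ happens to have finite index in $G$, then Theorem~\ref{Khintchinefiniteindex} already yields the desired statement. So I may assume $a,b\in\Z\setminus\{0\}$ are distinct and that both $aG$ and $bG$ have infinite index in $G$.

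The next step is to invoke Theorem~\ref{strongcf}: after passing to an extension of $\mathbf X$, the characteristic factor for the bilinear averages $\frac{1}{|\Phi_N|}\sum_{g\in\Phi_N}T_{ag}f_1\cdot T_{bg}f_2$ is described as a join of the Conze--Lesigne factor $\mathcal Z$ with the $\sigma$-algebras $\mathcal I_{aG}$ and $\mathcal I_{bG}$ of $aG$- and $bG$-invariant functions. On this factor I would then run a Bergelson--Host--Kra-type continuity argument: on the Kronecker quotient $Z$ of $\mathcal Z$, the correlation function $F(s,t)=\int_Z f(z)f(z+s)f(z+t)\,dm_Z$ is continuous with $F(0,0)\ge\left(\int f\,dm_Z\right)^3$ by Jensen's inequality, so $F(a\tau(g),b\tau(g))>\mu(A)^3-\eps$ whenever $\tau(g)$ lies in a suitable neighborhood of $0$; the syndeticity of such $g$ follows from the minimality of the compact abelian group rotation driven by the dense image of $\tau:G\to Z$.

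The hard part will be handling the non-trivial contributions from the invariant $\sigma$-algebras $\mathcal I_{aG}$ and $\mathcal I_{bG}$, which vanish under the hypotheses of Theorem~\ref{Khintchinefiniteindex} but are nonzero in our setting. The key new input, specific to the integer-coefficient case, is that $a$ and $b$ commute with the $G$-action as scalars, so $\mathcal I_{aG}$ and $\mathcal I_{bG}$ correspond to quotient systems carrying natural $G/aG$- and $G/bG$-actions on which $(b-a)$ descends to a homomorphism whose image has finite index (since $(b-a)G+aG=\gcd(a,b)G\supseteq(b-a)G$ is itself of finite index in $G$, and symmetrically with $bG$ in place of $aG$). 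Consequently, the $(b-a)G$-action is ergodic on each invariant factor, which permits a Fubini-type decoupling of the invariant directions from the recurrence analysis on the Conze--Lesigne component. Combining this decoupling with the continuity argument above would yield the desired lower bound on a syndetic subset of $G$.
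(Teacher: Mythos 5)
Your starting point — reducing to Theorem~\ref{strongcf} and isolating the invariant $\sigma$-algebras $\mathcal I_{aG}$, $\mathcal I_{bG}$ as the obstruction — is the right one, and the observation that $(b-a)$ descends to a finite-index endomorphism of $G/aG$ (because $(b-a)G + aG = \gcd(a,b)G$ has finite index) is correct. But the central step is missing. Theorem~\ref{strongcf} places the characteristic factor inside $\mathcal Z^2_G(\tilde X)\vee\mathcal I_{aG}(\tilde X)$ and $\mathcal Z^2_G(\tilde X)\vee\mathcal I_{bG}(\tilde X)$, so the limit of the ergodic average on the extension is governed by the \emph{Conze--Lesigne} factor, not just its Kronecker quotient. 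Your continuity argument, built from the function $F(s,t)=\int_Z f(z)f(z+s)f(z+t)\,dm_Z$, only sees the toral rotation part and ignores the cocycle. On a genuine Conze--Lesigne system $\mathbf Z\times_\sigma H$, the averages $\UC_g T_{ag}f_1\,T_{bg}f_2$ converge to an integral over $Z\times M_a\times M_b$ involving the Mackey groups $M_a, M_b$ and the measurable selection functions $\psi_1,\psi_2$ (Theorem~\ref{thm: limit}); this is not of the Kronecker form you wrote down. Establishing that the Mackey group factors as $M=M_a\times M_b$ is precisely where the hypothesis ``$(b-a)G$ finite index'' and a coprimality reduction $\gcd(a,b)=1$ are used (Proposition~\ref{prop: Mackey prod} via Lemma~\ref{lem: abb7.23}), and your outline contains no substitute for it.

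A second gap is the final inequality. Once one has the limit formula, the quantity one must lower-bound is of the form
\[
\int_{\tilde{X}}{\tilde f\cdot E(\tilde f\mid\mathcal W_1\vee\mathcal I_{aG})\cdot E(\tilde f\mid\mathcal W_2\vee\mathcal I_{bG})\,d\tilde\mu},
\]
and this is handled in the paper by an inequality of Chu (Lemma~1.6 of \cite{Chu}), not by Jensen applied on the Kronecker factor. Your ``Fubini-type decoupling'' of the invariant directions is only asserted; to make it work you would need to show how the invariant $\sigma$-algebras interact with the Mackey-group averaging in the limit formula, which is exactly the content of the Chu-type bound. Finally, your reduction is to the case where neither $aG$ nor $bG$ has finite index, which does not directly help; the paper's reduction is to the coprime case by writing $a=a'd$, $b=b'd$ with $d=\gcd(a,b)$, passing to the (finitely many) ergodic components of $(T_{dg})$, proving the bound on each via the coprime argument, and averaging with Jensen. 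Without that coprimality reduction, Proposition~\ref{prop: Mackey prod} does not apply, and the limit formula fails.
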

We also show that the assumption that $(b-a)G$ has finite index in $G$ is necessary. To see this, we prove the following result:
\begin{thm}\label{counterexample}
Let $G = \bigoplus_{n=1}^{\infty}{\Z}$.
Let $l \in \N$. There exists $P = P(l)$ such that, for any $a, b \in \N$ with $p \mid \gcd(a,b)$ for some prime $p \ge P$, there is an ergodic $G$-system $\left( X, \mX, \mu, (T_g)_{g \in G} \right)$ and a set $A \in \mX$ with $\mu(A) > 0$ such that $$\mu(A\cap T_{ag}^{-1} A\cap T_{bg}^{-1} A)\leq \mu(A)^l$$ for every $g\ne0$.
\end{thm}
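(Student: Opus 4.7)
The plan is to exhibit an explicit ergodic $G$-system $(X, \mathcal{X}, \mu, (T_g)_{g \in G})$ together with a set $A \in \mathcal{X}$ of positive measure for which $\mu(A \cap T_{ag}^{-1}A \cap T_{bg}^{-1}A) \le \mu(A)^l$ holds for every $g \ne 0$. The large prime $p \mid \gcd(a,b)$ will be used to force strong algebraic constraints on the three-point configuration $(x,\, x + ag,\, x + bg)$ that are unavailable in the finitely-generated setting, making the triple intersection much smaller than the generic Host--Kra rate $\mu(A)^3$.

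First, the system $X$ should be a compact abelian group carrying a faithful, ergodic $G$-action by translations (possibly twisted by a 2-step nilpotent cocycle). Faithfulness is essential: if $T_g = \mathrm{Id}$ for some $g \ne 0$, then $T_{ag} = T_{bg} = \mathrm{Id}$ as well, so the triple intersection equals $\mu(A)$, which already exceeds $\mu(A)^l$ for every $l \ge 2$. A natural candidate is the $p$-adic compactification $X = \prod_{n=1}^{\infty}\Z_p$ with the translation action of the dense subgroup $G = \bigoplus_n \Z \hookrightarrow X$, or a 2-step nilpotent extension built on top of such a group. The condition $p \mid a,\, p \mid b$ forces $T_{ag}$ and $T_{bg}$ to act by translations inside the proper closed subgroup $pX$, so the three points $x$, $x + ag$, $x + bg$ are all confined to a single coset of $pX$; choosing $A$ transverse to this coset structure is what should produce an intersection sharply smaller than $\mu(A)^3$.

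For $A$, the natural choice is a product of $p$-adic cosets, $A = \prod_n (c_n + p^{k_n}\Z_p)$, with $\sum_n p^{-k_n} < \infty$ so that $\mu(A) > 0$, possibly further intersected with a Behrend-style ``configuration-avoiding'' condition. The main obstacle will be uniformity in $g$: the bound has to hold for \emph{every} $g \ne 0$, including those lying in deep subgroups such as $p^k G$, where $T_{ag}$ is $p$-adically close to the identity and naive estimates only yield the generic Kronecker rate $\mu(A)^3$ rather than $\mu(A)^l$ for large $l$. I expect the argument to exploit the infinite generation of $G$ by letting $A$ depend on infinitely many coordinates, so that for each $g \ne 0$ some growing finite collection of these coordinates detects a non-degenerate constraint and the triple intersection factors into per-coordinate terms each of size $O(1/p)$. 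Choosing $P(l)$ so that the prime $p$ is large enough to make these per-coordinate factors much smaller than $\mu(A_n)^{l-1}$ should then give $\mu(A \cap T_{ag}^{-1}A \cap T_{bg}^{-1}A) \le \mu(A)^l$ uniformly for all $g \ne 0$.
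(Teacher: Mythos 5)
Your proposal correctly recognizes several of the key ingredients — that the construction should hinge on a Behrend-type configuration-avoiding set, that the divisibility $p \mid \gcd(a,b)$ forces the three points $x, x+ag, x+bg$ into a structurally constrained position, and that one must go beyond a pure Kronecker system — but it is a description of a hoped-for shape rather than an actual argument, and the most concrete piece of it would not work as written.

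The central gap is this: a rotation action of $G = \bigoplus_n \Z$ on $\prod_n \Z_p$ is a Kronecker system, and for Kronecker systems the triple correlation $\mu(A \cap T_{ag}^{-1}A \cap T_{bg}^{-1}A)$ exceeds $\mu(A)^3 - \eps$ on a syndetic set of $g$ (this is precisely the phenomenon Theorem~\ref{Khintchinefiniteindex} and its proof establish for the Kronecker component). So no choice of $A$ in that system can push the triple intersection below $\mu(A)^l$ for all $g \ne 0$ once $l > 3$. You acknowledge a "2-step nilpotent cocycle" may be needed, but you neither construct it nor explain what property it must have. The paper's construction supplies exactly this missing piece: it builds an abelian extension $\mathcal{T} \times_\sigma C_{p^2}$ where $\mathcal{T} = C_p^{\mathbb{N}}$, with the cocycle $\sigma$ engineered so that on the subgroup $pG$ the skew-product factor alone moves, reducing the triple correlation on $pG$ to a single Behrend-type counting problem in the compact fiber $C_{p^2}$. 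Two further structural choices that you do not make are essential to this. First, the base must be $C_p^{\mathbb{N}}$ rather than $\prod_n \Z_p$, because multiplication by $p$ kills $C_p^{\mathbb{N}}$ pointwise (so $T_{pg}$ fixes the base and acts only on the fiber) whereas it does not kill $\Z_p$; your proposed base lacks the collapse that makes the estimate work. Second, your heuristic that the triple intersection "factors into per-coordinate terms each of size $O(1/p)$" is not what happens — the paper's bound $\mu(A\cap T_{pag}^{-1}A\cap T_{pbg}^{-1}A) = |B|/p^2$ comes from a single global count of forbidden configurations in $C_p$, not from a product over coordinates. Finally, you do not address ergodicity of the extension, which in the paper requires a separate argument via Zimmer's minimality criterion and a non-obvious cohomological computation, and you do not bring in the reduction to a $\bigoplus_n \Z/p^2\Z$-system followed by lifting along the quotient map, which is how the paper actually produces a $\bigoplus_n \Z$-system. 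In short: the right raw ideas are in your sketch, but none of the steps that would turn them into a proof are present, and the concrete option you offer (a $\prod_n \Z_p$ rotation with a cylinder set $A$) would fail because it is Kronecker.
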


\begin{quest}
    Can $p$ in the statement of Theorem \ref{counterexample} be replaced by any natural number?
\end{quest}

\subsection{Applications to geometric progressions and other multiplicative patterns}

One particularly interesting corollary of Theorem \ref{Khintchineab} is a multiplicative version of Theorem \ref{thm: bhk}. Consider the group $G=(\mathbb{Q}_{>0},\cdot)$. This is a multiplicative counterpart of $(\mathbb{Z},+)$. Using an ergodic version of the Furstenberg correspondence principle (see \cite[Theorem 2.8]{BFe}) we deduce the following result:
\begin{thm}
Let $E\subseteq\mathbb{Q}_{>0}$ be a set of positive multiplicative upper Banach density and let $k\in\Z$. Then for any $\eps>0$, the sets \begin{align} \label{eq: gp k,k+1}\left\{q\in\mathbb{Q}_{>0} : d^*_{\text{mult}}\left(E\cap q^{-k}E\cap q^{-(k+1)}E\right)>d^*_{\text{mult}}(E)^3-\varepsilon\right\}\end{align} and \begin{align}\label{eq: gp 1,k}\left\{q\in\mathbb{Q}_{>0} : d^*_{\text{mult}}\left(E\cap q^{-1}E\cap q^{-k}E\right)>d^*_{\text{mult}}(E)^3-\varepsilon\right\}\end{align} are syndetic.
\end{thm}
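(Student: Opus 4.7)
The plan is to apply the ergodic Furstenberg correspondence principle \cite[Theorem 2.8]{BFe} for the countable amenable group $G = (\mathbb{Q}_{>0}, \cdot)$. This produces an ergodic $G$-system $\mathbf{X} = (X, \mathcal{X}, \mu, (T_q)_{q \in G})$ and a set $A \in \mathcal{X}$ with $\mu(A) = d^*_{\text{mult}}(E)$ satisfying
\[ d^*_{\text{mult}}(E \cap q_1^{-1} E \cap q_2^{-1} E) \geq \mu(A \cap T_{q_1}^{-1} A \cap T_{q_2}^{-1} A) \]
for every $q_1, q_2 \in G$. Hence it suffices to prove that the analogous sets defined by having the right-hand side exceed $\mu(A)^3 - \varepsilon$ are syndetic, so both assertions become instances of Theorem \ref{Khintchineab}.

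For \eqref{eq: gp k,k+1}, the homomorphisms $q \mapsto q^k$ and $q \mapsto q^{k+1}$ correspond to integer multipliers $a = k$ and $b = k+1$. Since $b - a = 1$, the subgroup $(b-a) G = G$ trivially has finite index in $G$, so Theorem \ref{Khintchineab} applies directly and yields the desired syndeticity.

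For \eqref{eq: gp 1,k}, the naive choice $(a,b)=(1,k)$ gives $(b-a) G = G^{k-1}$, which fails to have finite index in $\mathbb{Q}_{>0} \cong \bigoplus_p \mathbb{Z}$ as soon as $|k-1| \geq 2$. To circumvent this, I would invoke the symmetry argument recorded in the paragraph preceding Theorem \ref{Khintchineab}: first swap the two homomorphisms (permissible since intersection is commutative) and then use the measure-preserving identity
\[ \mu(A \cap T_{q^k}^{-1} A \cap T_q^{-1} A) = \mu(A \cap T_{q^{-k}}^{-1} A \cap T_{q^{1-k}}^{-1} A) \]
to replace the pair by $(a',b') = (-k, 1-k)$, for which $(b'-a') G = G$ is of finite index. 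Theorem \ref{Khintchineab} applied to this transformed pair gives syndeticity of the new set, which coincides with the original set by the above identity. The sole technical step that requires care is this symmetry reduction; once it is in place, Theorem \ref{Khintchineab} and the correspondence principle together complete the argument.
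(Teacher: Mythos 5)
Your proof is correct and follows the same route as the paper: apply the ergodic Furstenberg correspondence principle from \cite[Theorem 2.8]{BFe} to obtain an ergodic $(\mathbb{Q}_{>0},\cdot)$-system and a set $A$ with $\mu(A)=d^*_{\text{mult}}(E)$ satisfying the correspondence inequality, and then invoke Theorem \ref{Khintchineab}. Your symmetry reduction for the pair $\{q,q^k\}$ --- swapping and shifting to $(a',b')=(-k,1-k)$ so that $(b'-a')\cdot\mathbb{Q}_{>0}=\mathbb{Q}_{>0}$ has finite index --- is exactly the maneuver explained in the paragraph following Question \ref{Khintchine} (there in the sub-case where $\varphi(G)$ has finite index, which is what holds here since $\varphi(q)=q$), and it is indeed necessary since $(k-1)\cdot\mathbb{Q}_{>0}$ has infinite index whenever $|k-1|\geq 2$.
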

\begin{rem}
The special case where $k=1$ in \eqref{eq: gp k,k+1} or $k = 2$ in \eqref{eq: gp 1,k} is related to the existence of length three geometric progressions in sets of positive multiplicative density. Heuristically, if $E$ were a random set, where each positive rational number $q\in \mathbb{Q}_{>0}$ is independently chosen to be inside $E$ with probability $\alpha$, then the expected number of geometric progressions of length three and quotient $q$ would be $\alpha^3$. Now fix any set $E$ with $d^*_{mult}(E) = \alpha$. Choosing $\varepsilon$ sufficiently small, our result implies that $E$ contains almost as many geometric progressions with quotient $q$ as a random set with the same density, $\alpha$, for a syndetic set of quotients.
\end{rem}

%\begin{table}[h]
%	\centering
%	{\tiny
%	\begin{tabular}{|c|c|c|} \hline
%		\textbf{configurations} & \textbf{good for large intersections} & \textbf{bad for large intersections} \\ \hline
%		$\{q^n, q^m\}$ in $(\Q_{>0}, \cdot)$ & $|n| = 1, |m| = 1$, or $|m-n| = 1$
%		 & $n$ and $m$ share a large prime factor \\ \hline
%		$\left\{ \prod_{i=1}^n{q_{1,i}^{r_i}}, \dots, \prod_{i=1}^n{q_{k,i}^{r_i}} \right\}$
%		 & $k \le 2$ & \\ \hline
%		triangles in $\Z^2$ & isosceles right triangles (\cite{ABB}),
%		 & axis-aligned isosceles right triangles (\cite{Chu}) \\
%		 & axis-aligned isosceles triangles & \\ \hline
%		quadrilaterals in $\Z^2$ & degenerate parallelograms (\cite{ABB}, \cite{OS2}),
%		 & conjecture: squares (\cite{AB}, \cite{BSST}), \\
%		 & conjecture: non-square parallelograms (\cite{AB}, \cite{BSST})
%		 & non-parallelograms (\cite{ABB}) \\ \hline
%	\end{tabular}
%	}
%	\caption{Combinatorial configurations that do and do not allow for large intersections results.}
%	\label{table: patterns}
%\end{table}

Theorem \ref{counterexample} shows that, if $n$ and $m$ share a large prime factor, then $\{q^n, q^m\}$ does not have the large intersections property in $(\Q_{>0}, \cdot)$. What happens in the case that $n$ and $m$ are coprime is an interesting question that we are unable to answer with our current methods:

\begin{quest} \label{q: coprime}
	Suppose $n, m \in \N$ are coprime.
	Does the pair $\{q^n, q^m\}$ have the large intersections property in $(\Q_{>0}, \cdot)$?
\end{quest}
Since every $\mathbb{Z}$-action can be lifted to a $(\mathbb{Q}_{>0}, \cdot)$-action (indeed, $(\Q_{>0}, \cdot)$ is torsion-free, so $\Z$ embeds as a subgroup), we see from Theorem \ref{thm: bhk} above that $\{q,q^2,\dots, q^k\}$ does not have the large intersections property for $k \ge 4$. However,
we can still ask about geometric progressions of length $4$.
\begin{quest} \label{q: 4-GP}
	Does the triple $\{q, q^2, q^3\}$ have the large intersections property in $(\Q_{>0}, \cdot)$?
\end{quest}

For a discussion of where our methods come up short for answering Questions \ref{q: coprime} and \ref{q: 4-GP}, see Subsection \ref{sec: seminorm combo} below.

\subsubsection{Patterns in $(\N,\cdot)$}

In Section \ref{sec: semigroups}, we transfer Theorems \ref{Khintchinefiniteindex} and \ref{Khintchineab} to the setting of cancellative abelian semigroups.
As a consequence, we obtain the following result about geometric configurations in the multiplicative integers:

\begin{thm}
    Let $E \subseteq \N$ be a set of positive multiplicative upper Banach density, and let $k\in\Z$. Then for any $\eps>0$, the sets \begin{align*}
        \left\{ m \in \N : d^*_{\text{mult}}\left(E\cap E/m^k \cap E/m^{k+1}\right)
        > d^*_{\text{mult}}(E)^3-\eps \right\}
    \end{align*}
    and
    \begin{align*}
        \left\{ m \in \N : d^*_{\text{mult}}\left(E\cap E/m \cap E/m^k \right) > d^*_{\text{mult}}(E)^3-\eps \right\}
    \end{align*}
    are (multiplicatively) syndetic in $(\N,\cdot)$.
\end{thm}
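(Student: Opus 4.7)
The plan is to deduce this statement directly from the semigroup versions of Theorems \ref{Khintchinefiniteindex} and \ref{Khintchineab}, which are developed in Section \ref{sec: semigroups}. The semigroup $(\N,\cdot)$ is cancellative abelian and embeds into its group of fractions $(\Q_{>0},\cdot)$. For the first set I would take $\varphi(m)=m^k$ and $\psi(m)=m^{k+1}$; for the second set I would take $\varphi(m)=m$ and $\psi(m)=m^k$.

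The finite-index hypothesis is essentially automatic. Since $(\Q_{>0},\cdot)$ is torsion-free and divisible, it is a $\Q$-vector space, and for any nonzero $n\in\Z$ the endomorphism $g\mapsto g^n$ is an automorphism of $(\Q_{>0},\cdot)$. Consequently, in the non-degenerate cases, each of the three subgroups $\varphi(\Q_{>0})$, $\psi(\Q_{>0})$, and $(\psi/\varphi)(\Q_{>0})$ coincides with all of $\Q_{>0}$, so the hypothesis of Theorem \ref{Khintchinefiniteindex} is trivially satisfied. The degenerate choices of $k$ (for instance $k=0$ in the first configuration or $k=1$ in the second) collapse the configuration to a 2-point one, in which case the conclusion reduces to classical Khintchine recurrence for a single measure-preserving transformation.

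Next I would invoke the semigroup version of the multiplicative Furstenberg correspondence principle (a semigroup analogue of \cite[Theorem 2.8]{BFe} set up in Section \ref{sec: semigroups}) to produce an ergodic $(\N,\cdot)$-system $(X,\mX,\mu,(T_m)_{m\in\N})$ and a set $A\in\mX$ with $\mu(A)=d^*_{\text{mult}}(E)$ such that
\[ d^*_{\text{mult}}\bigl(E\cap E/m^k\cap E/m^{k+1}\bigr)\geq \mu\bigl(A\cap T_{m^k}^{-1}A\cap T_{m^{k+1}}^{-1}A\bigr) \]
for every $m\in\N$, with the obvious analogue for the second configuration. The semigroup version of Theorem \ref{Khintchinefiniteindex} then yields a multiplicatively syndetic set of $m\in\N$ on which the right-hand side exceeds $\mu(A)^3-\eps$, completing the argument.

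The main technical ingredient, which I am importing wholesale from Section \ref{sec: semigroups}, is the semigroup-to-group transfer: both the correspondence principle for $(\N,\cdot)$ (requiring a choice of multiplicative F\o{}lner sequence, for instance products $\prod_{p\leq N}p^{a_p}$ with suitably bounded exponents) and the passage of syndeticity between $(\N,\cdot)$ and its group of fractions $(\Q_{>0},\cdot)$ under the homomorphisms $m\mapsto m^n$. Once this framework is granted, the rest is essentially formal: the finite-index conditions hold trivially in the divisible group $(\Q_{>0},\cdot)$, so Theorem \ref{Khintchinefiniteindex} applies directly without any further structural analysis.
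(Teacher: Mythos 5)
There is a genuine gap, and it is precisely the point of the paper. You assert that $(\Q_{>0},\cdot)$ is torsion-free and divisible and hence a $\Q$-vector space, so that $g\mapsto g^n$ is an automorphism for every nonzero $n\in\Z$. That is false: $(\Q_{>0},\cdot)$ is free abelian on the primes, isomorphic to $\bigoplus_{p}\Z$, and for $|n|\geq 2$ the image $\{g^n : g\in\Q_{>0}\}$ is a subgroup of \emph{infinite} index (already $\Q_{>0}/(\Q_{>0})^2\cong\bigoplus_p\Z/2\Z$ is infinite). So your claim that the three subgroups $\varphi(G)$, $\psi(G)$, $(\psi/\varphi)(G)$ all equal $\Q_{>0}$, and that the finite-index hypothesis is ``essentially automatic,'' does not hold.

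What is actually true is much thinner. For the pair $\{m^k,m^{k+1}\}$ the three relevant subgroups are $\{g^k\}$, $\{g^{k+1}\}$, and $\{g^{(k+1)-k}\}=\{g\}$: only the last one is all of $\Q_{>0}$; the other two have infinite index whenever $|k|,|k+1|\geq 2$. Similarly for $\{m,m^k\}$ only $\varphi(G)=\Q_{>0}$ has finite index once $|k|,|k-1|\geq 2$. So for generic $k$ the hypothesis of Theorem~\ref{Khintchinefiniteindex} (two finite-index subgroups) fails, and if all three \emph{did} have finite index the statement would already follow from \cite[Theorem~1.10]{ABB} and would not need this paper at all. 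The correct citation is the new, stronger Theorem~\ref{Khintchineab} (exactly one of $aG$, $bG$, $(b-a)G$ of finite index), via its semigroup transfer Theorem~\ref{thm: semigroup a, b}. Your overall scaffolding---pass to $G_S=(\Q_{>0},\cdot)$, use the correspondence principle, transfer syndeticity back to $(\N,\cdot)$---matches Section~\ref{sec: semigroups}, but the step identifying which theorem carries the load, and why it is available, is wrong in a way that cannot be patched without reversing your claim about divisibility.
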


%Questions \ref{q: coprime} and \ref{q: 4-GP} also make sense in the setting of $(\N, \cdot)$.

\subsection{Applications to patterns in $\Z^2$} \label{sec: Z^2 summary}

When $G = \Z^2$, we are able to give a complete picture of the phenomenon of large intersections for 3-point matrix patterns, i.e. patterns of the form $\{\vec{x}, \vec{x} + M_1\vec{n}, \vec{x} + M_2\vec{n}\}$, where $\vec{x}, \vec{n} \in \Z^2$ and $M_1, M_2$ are $2\times2$ matrices with integer entries.
(Note that any homomorphism $\varphi : \Z^2 \to \Z^2$ can be expressed as a $2\times2$ matrix with integer entries, so matrix patterns capture all possible configurations in $\Z^2$ that can be described within the framework of group homomorphisms.) \\

Following \cite{BHK}, we say that the \emph{syndetic supremum}
of a bounded real-valued $\Z^2$-sequence $\left( a_{n,m} \right)_{(n, m) \in \Z^2}$ is the quantity
\begin{align*}
	\syndsup_{(n,m) \in \Z^2}{a_{n,m}}
	 := \sup{\left\{ a \in \R : \left\{ (n,m) \in \Z^2 : a_{n,m} > a \right\}~\text{is syndetic in}~\Z^2 \right\}}.
\end{align*}
For $2\times2$ integer matrices $M_1$ and $M_2$ and $\alpha \in (0,1)$, we define the \emph{ergodic popular difference density} by
\begin{align*}
	\epdd_{M_1,M_2}(\alpha)
	 := \inf{\syndsup_{\vec{n} \in \Z^2} \mu \left( A \cap T_{M_1\vec{n}}^{-1}A \cap T_{M_2\vec{n}}^{-1}A \right)},
\end{align*}
where the infimum is taken over all ergodic $\Z^2$-systems $\left( X, \mX, \mu, (T_{\vec{n}})_{\vec{n} \in \Z^2} \right)$
and sets $A \in \mX$ with $\mu(A) = \alpha$.
This can be seen as an ergodic-theoretic analogue to the \emph{popular difference density} defined in \cite{SSZ}.
It is natural to ask if $\epdd(\alpha)$ coincides with the finitary combinatorial quantity $\text{pdd}(\alpha)$.
Standard tools for translating between ergodic theory and combinatorics, such as Furstenberg's correspondence principle,
are insufficient for resolving this question, and we do not know the answer in general.
However, in special cases where $\text{pdd}(\alpha)$ is known,
it is in agreement with the values of $\epdd(\alpha)$ displayed in Table \ref{table: epdd} below,
and we suspect that $\text{pdd}(\alpha) = \epdd(\alpha)$ in the remaining cases;
see Subsection \ref{sec: Z^2 finitary} below for additional remarks on (combinatorial) popular difference densities
for matrix patterns in $\Z^2$.

Theorem \ref{Khintchinefiniteindex} provides a sufficient condition on the matrices $M_1$ and $M_2$
to guarantee that $\epdd_{M_1, M_2}(\alpha) \ge \alpha^3$ for $\alpha \in (0,1)$.
We now seek to describe the quantity $\epdd_{M_1, M_2}(\alpha)$ for any pair of $2\times2$ integer matrices $M_1$ and $M_2$.
Table \ref{table: epdd} summarizes ergodic popular difference densities for all 3-point matrix configurations in $\Z^2$.
(For matrices $M_1, M_2$, we let $r(M_1,M_2)$ be a list of the ranks of $M_1$, $M_2$, and $M_2 - M_1$ in decreasing order.)

\begin{table}[h]
	\centering
	{\footnotesize
	\begin{tabular}{|c|c|c|c|} \hline
		$r(M_1, M_2)$ & other conditions & $\epdd_{M_1,M_2}(\alpha)$ & reason \\ \hline
		$(2,2,2)$ & - & $\alpha^3$ & \cite[Theorem 1.10]{ABB} \\ \hline
		$(2,2,1)$ & - & $\alpha^3$ & Theorem \ref{Khintchinefiniteindex} \\ \hline
		$(2,1,1)$ & - & $\alpha^3$ & ``Fubini'' for $\UC$ \cite{BL-cubic} \\ \hline
		$(1,1,1)$ & $[M_1, M_2] = 0$ & $< \alpha^{c\log(1/\alpha)}$ & Behrend-type construction \cite{Beh, BHK} \\ \hline
		$(1,1,1)$ & $[M_1, M_2] \ne 0$, ``row-like'' & $\alpha^3$ & ``Fubini'' for $\UC$ \cite{BL-cubic} \\ \hline
		$(1,1,1)$ & $[M_1, M_2] \ne 0$, ``column-like'' & $\alpha^{4-o(1)}$
		 & \cite[Theorem 1.1]{Chu}, \\
		 & & & \cite[Theorem 1.2]{DS} \\ \hline
	\end{tabular}
	}
	\caption{Ergodic popular difference densities for 3-point matrix patterns in $\Z^2$.}
	\label{table: epdd}
\end{table}

The cases $r(M_1, M_2) = (2,2,2)$ and $r(M_1, M_2) = (2,2,1)$ are covered directly by \cite[Theorem 1.10]{ABB}
and Theorem \ref{Khintchinefiniteindex} respectively.
Indeed, a matrix $M$ has full rank if and only if the subgroup $M(\Z^2) \subseteq \Z^2$ has finite index.
More precisely,
\begin{align*}
	[\Z^2 : M(\Z^2)] = \begin{cases}
		\left| \det(M) \right|, & \text{if}~\det(M) \ne 0; \\
		\infty, & \text{if} \det(M) = 0.
	\end{cases}
\end{align*}
The remaining cases are proved in Section \ref{sec: Z^2}.\\

\subsection{Preliminary remarks on characteristic factors}

In this paper, we approach multiple recurrence problems by determining and utilizing the so-called \emph{characteristic factors}, which are the factors that are responsible for the limiting behavior of the quantity
\begin{align*}
    \mu \left( A \cap T_{\varphi(g)}^{-1}A \cap T_{\psi(g)}^{-1}A \right)
\end{align*}
in ergodic $G$-systems (see Subsection \ref{sec: factors} for a discussion of factors in general and Definition \ref{defn: characteristic factor} for a definition of characteristic factors).
For $\Z$-actions, there are two different approaches to characteristic factors for linear averages, developed independently by Host and Kra \cite{HK} and by Ziegler \cite{Z}, giving rise to factors that coincide.
However, in the context of $G$-actions, where $G$ is an arbitrary (non-finitely generated) countable abelian group, the Host--Kra factors and Ziegler factors may differ; see Subsection \ref{sec: Ziegler} below for more details.

Our work thus leads to the general open question of how the Host--Kra factors are related to the actual characteristic factors of the corresponding multiple ergodic averages (the Ziegler factors). Discerning the relationship between the Host--Kra factors and the Ziegler factors  may lead to a better understanding of the quantities
$$\mu(A\cap T_{\varphi_1(g)}^{-1} A\cap... \cap T_{\varphi_k(g)}^{-1} A),$$
where $\X=(X,\mX,\mu,(T_g)_{g\in G})$ is a $G$-system, $A \in \mX$, and $\varphi_i:G\rightarrow G$ are homomorphisms or, more generally, polynomial maps.

\subsection{Structure of the paper}

The paper is organized as follows.
In Section \ref{notationsconventions}, we introduce notation and conventions that we use throughout the paper.

Proofs of the main results appear in Sections \ref{sec: finite index char factor}--\ref{proof}.
First, in Section \ref{sec: finite index char factor}, we establish characteristic factors for the multiple ergodic averages
\begin{align*}
    \UC_{g \in G} T_{\varphi(g)}f_1 \cdot T_{\psi(g)}f_2
\end{align*}
when $(\psi-\varphi)(G)$ has finite index in $G$ and prove Theorem \ref{Khintchinefiniteindex}.
Then, in Section \ref{Extensions}, we use an extension trick to simplify the characteristic factors, and in Section \ref{sec: limit formula} prove a new limit formula for the extension system, leading to a proof of Theorem \ref{Khintchineab}.
Finally, we prove Theorem \ref{counterexample} in Section \ref{proof}.

The final two sections contain applications of the main results.
Using Theorem \ref{Khintchinefiniteindex} together with additional tools from \cite{ABB, BHK, BL-cubic, Chu, DS}, we compute ergodic popular difference densities for three-point matrix patterns in $\Z^2$.
In Section \ref{sec: semigroups}, we extend the main results (Theorems \ref{Khintchinefiniteindex} and \ref{Khintchineab}) to the setting of cancellative abelian semigroups.

\section{Preliminaries}\label{notationsconventions}

The goal of this section is to introduce some notations and objects that will play an important role in this paper. Throughout this section we let $G$ denote an arbitrary countable abelian group and $\X=(X,\mX,\mu, (T_g)_{g\in G})$ a $G$-system.

\subsection{Uniform Ces\`{a}ro limits}

The large intersection property of a family $\{\varphi_1, \dots, \varphi_k\}$ is related to the limit behavior of the multiple ergodic averages
\begin{align} \label{eq: mult erg avg along Phi}
    \frac{1}{|\Phi_N|} \sum_{g \in \Phi_N} \prod_{i=1}^k{T_{\varphi_i(g)}f_i},
\end{align}
where $(\Phi_N)_{N \in \N}$ is a F{\o}lner sequence\footnote{A sequence $(\Phi_N)_{N\in\N}$ of finite subsets of $G$ is a \emph{F{\o}lner sequence} if, for any $x \in G$, $\frac{|(\Phi_N+x)\triangle\Phi_N|}{|\Phi_N|}\to0$ as $N \to \infty$.} in $G$ and $f_1, \dots, f_k \in L^{\infty}(\mu)$.
By \cite{austin} and \cite{zorin}, the quantity \eqref{eq: mult erg avg along Phi} converges in $L^2(\mu)$ as $N \to \infty$, and the limit is independent of the choice of F{\o}lner sequence $(\Phi_N)_{N \in \N}$.
For more concise notation, we define the \emph{uniform Ces\`{a}ro limit} $x = \UC_{g \in G}{x_g}$ if $\frac{1}{|\Phi_N|} \sum_{g \in \Phi_N}{x_g} \to x$ for every F{\o}lner sequence $(\Phi_N)_{N\in\N}$ in $G$.

One crucial tool for handling uniform Ces\`{a}ro limits is the following version of the van der Corput differencing trick:

\begin{lem} [van der Corput Lemma, cf. \cite{ABB}, Lemma 2.2] \label{vdc}
	Let $\mathcal{H}$ be a Hilbert space and $G$ an amenable group. Let $(u_g)_{g\in G}$ be a bounded sequence in $\mathcal{H}$. If $\UC_{g\in G} \innprod{u_{g+h}}{u_g}$ exists for every $h\in G$, and $$\UC_{h\in G} \UC_{g\in G} \innprod{u_{g+h}}{u_g}=0$$ then, $$\UC_{g\in G} u_g=0$$
		strongly.
\end{lem}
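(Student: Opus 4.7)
The plan is to carry out the standard Hilbert-space van der Corput argument, adapted to arbitrary countable amenable groups by Følner averaging in two parameters. Fix a Følner sequence $(\Phi_N)_{N \in \N}$ in $G$, set $S_N := \frac{1}{|\Phi_N|} \sum_{g \in \Phi_N} u_g$, and let $K := \sup_{g \in G} \|u_g\|_{\mathcal{H}} < \infty$. The goal is to show $\|S_N\|_{\mathcal{H}} \to 0$. An auxiliary Følner sequence $(\Psi_M)_{M \in \N}$ will be used to average in the shift parameter $h$.

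For any fixed $h \in G$, the Følner property of $(\Phi_N)$ gives
\begin{align*}
\left\| S_N - \frac{1}{|\Phi_N|} \sum_{g \in \Phi_N} u_{g+h} \right\|_{\mathcal{H}} \leq 2K \cdot \frac{|\Phi_N \triangle (\Phi_N - h)|}{|\Phi_N|} \to 0 \quad \text{as } N \to \infty.
\end{align*}
Averaging over $h$ in a finite set $\Psi_M$ and then applying Cauchy--Schwarz in the $g$-average yields
\begin{align*}
\|S_N\|_{\mathcal{H}}^2 \leq \frac{1}{|\Phi_N|} \sum_{g \in \Phi_N} \left\| \frac{1}{|\Psi_M|} \sum_{h \in \Psi_M} u_{g+h} \right\|_{\mathcal{H}}^2 + \eps_{N,M},
\end{align*}
where $\eps_{N,M} \to 0$ as $N \to \infty$ for each fixed $M$.

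Expanding the inner norm squared as a double sum over $h_1, h_2 \in \Psi_M$ of inner products $\langle u_{g+h_1}, u_{g+h_2} \rangle$, reindexing $g \mapsto g - h_2$, and using the Følner property of $(\Phi_N)$ once more to absorb the shift, the main term converges as $N \to \infty$ to
\begin{align*}
\frac{1}{|\Psi_M|^2} \sum_{h_1, h_2 \in \Psi_M} \UC_{g \in G} \langle u_{g + h_1 - h_2}, u_g \rangle
\end{align*}
by the hypothesis that the inner Cesàro limit exists. Letting $M \to \infty$ and using that the bounded function $h \mapsto \UC_{g \in G} \langle u_{g+h}, u_g \rangle$ has vanishing uniform Cesàro mean on $G$ by hypothesis then forces $\limsup_{N \to \infty} \|S_N\|_{\mathcal{H}}^2 = 0$.

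The one point requiring genuine care is the final step: passing from the two-parameter average $\frac{1}{|\Psi_M|^2} \sum_{h_1, h_2 \in \Psi_M} \phi(h_1 - h_2)$ to the uniform Cesàro mean $\UC_{h \in G} \phi(h)$ for the bounded function $\phi(h) := \UC_{g \in G} \langle u_{g+h}, u_g \rangle$. This is a standard amenability fact, provable either by observing that $(\Psi_M \times \Psi_M)$ is a Følner sequence in $G \times G$ and pushing forward through the difference map $(h_1, h_2) \mapsto h_1 - h_2$, or by a direct two-step Følner estimate splitting the double sum as an iterated single sum.
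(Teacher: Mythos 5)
The paper itself does not prove this lemma; it cites \cite[Lemma 2.2]{ABB} and moves on, so there is no internal proof to compare against. Your argument is the standard Hilbert-space van der Corput argument adapted to F{\o}lner averages, and it is correct. The one step you flag as requiring care is indeed the crux: showing that
\[
\frac{1}{|\Psi_M|^2}\sum_{h_1,h_2\in\Psi_M}\phi(h_1-h_2)\longrightarrow 0
\]
for the bounded function $\phi(h)=\UC_{g\in G}\langle u_{g+h},u_g\rangle$ under the hypothesis $\UC_{h\in G}\phi(h)=0$. Of your two suggested routes, the ``two-step F{\o}lner estimate'' is the cleanest and closes the gap directly: write the double sum as the iterated average $\frac{1}{|\Psi_M|}\sum_{h_2\in\Psi_M}\bigl(\frac{1}{|\Psi_M|}\sum_{h\in\Psi_M-h_2}\phi(h)\bigr)$, and observe that $\UC_{h\in G}\phi(h)=0$ forces $\sup_{t\in G}\bigl|\frac{1}{|\Psi_M|}\sum_{h\in\Psi_M-t}\phi(h)\bigr|\to 0$ as $M\to\infty$ (otherwise pick a subsequence $\Psi_{M_k}-t_k$ along which the averages stay bounded away from $0$; this is itself a F{\o}lner sequence, contradicting the hypothesis). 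Your other suggested route, pushing forward $\Psi_M\times\Psi_M$ through the difference map, is stated a bit loosely: the image measure is not a uniform F{\o}lner average on $G$ but rather a convex combination of translated F{\o}lner averages (or, if one prefers, an asymptotically invariant Reiter sequence), so one must invoke either the iterated estimate above or the characterization of $\UC$ via invariant means. Since you also give the direct iterated argument, there is no gap.
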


Another useful tool for computing uniform Ces\`{a}ro limits is the following ``Fubini'' trick, which we use extensively in Section \ref{sec: Z^2}:

\begin{lem}[\cite{BL-cubic}, special case of Lemma 1.1] \label{lem: fubini}
    Let $G$ and $H$ be countable discrete amenable groups, and let $(v_{h,g})_{(h,g) \in H \times G}$ be a bounded sequence.
    Suppose
    $$\UC_{(h,g) \in H \times G}{v_{h,g}}$$
    exists,
    and for every $g \in G$,
    $$\UC_{h \in H}{v_{h,g}}$$
    exists.
    Then
    \begin{align*}
        \UC_{g \in G} \UC_{h \in H} v_{h,g} = \UC_{(h,g) \in H \times G} v_{h,g}.
    \end{align*}
\end{lem}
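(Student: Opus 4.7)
The plan is to show that for every F{\o}lner sequence $(\Phi_N)$ in $G$, the averages $\frac{1}{|\Phi_N|} \sum_{g \in \Phi_N} w(g)$ converge to the joint limit $L := \UC_{(h,g) \in H \times G} v_{h,g}$, where $w(g) := \UC_{h \in H} v_{h,g}$. Since the Følner sequence $(\Phi_N)$ is arbitrary, this both establishes the existence of $\UC_{g \in G} w(g)$ and pins down its value as $L$.

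First I would fix an auxiliary F{\o}lner sequence $(\Psi_M)$ in $H$ and verify the standard fact that product F{\o}lner sequences in $H \times G$ can be built diagonally: for any choice $(M_N)$ with $M_N \to \infty$ fast enough, the sequence $\Psi_{M_N} \times \Phi_N$ is F{\o}lner in $H \times G$. This follows from the inclusion
\[
	(\Psi_{M_N} \times \Phi_N) \triangle \bigl((\Psi_{M_N} + h_0) \times (\Phi_N + g_0)\bigr) \subseteq \bigl((\Psi_{M_N} \triangle (\Psi_{M_N}+h_0)) \times \Phi_N\bigr) \cup \bigl(\Psi_{M_N} \times (\Phi_N \triangle (\Phi_N+g_0))\bigr),
\]
after dividing by $|\Psi_{M_N}||\Phi_N|$ and sending $N \to \infty$.

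Next, I would use the hypothesis that $\UC_{h \in H} v_{h,g} = w(g)$ exists for each fixed $g$ together with the fact that $\Phi_N$ is finite to choose, for each $N$, an index $M_N$ large enough that simultaneously (i) the diagonal product $\Psi_{M_N} \times \Phi_N$ satisfies the Følner condition to within $1/N$, and (ii) for every $g \in \Phi_N$,
\[
	\left| \frac{1}{|\Psi_{M_N}|} \sum_{h \in \Psi_{M_N}} v_{h,g} - w(g) \right| < \frac{1}{N}.
\]
Such an $M_N$ exists because only finitely many $g$ need to be controlled at each stage; here boundedness of $(v_{h,g})$ and hence of $w$ is used implicitly.

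Finally, by the joint-limit hypothesis applied to the F{\o}lner sequence $\Psi_{M_N} \times \Phi_N$,
\[
	\frac{1}{|\Psi_{M_N}||\Phi_N|} \sum_{(h,g) \in \Psi_{M_N} \times \Phi_N} v_{h,g} \xrightarrow[N \to \infty]{} L.
\]
Rewriting this as an iterated sum and invoking (ii) together with uniform boundedness of $(v_{h,g})$ shows that
\[
	\left| \frac{1}{|\Phi_N|} \sum_{g \in \Phi_N} w(g) - \frac{1}{|\Psi_{M_N}||\Phi_N|} \sum_{(h,g)} v_{h,g} \right| \le \frac{1}{N},
\]
so $\frac{1}{|\Phi_N|} \sum_{g \in \Phi_N} w(g) \to L$, completing the proof. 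The only non-routine step is the diagonalization in (i)--(ii); the combinatorial Fubini identity for finite sums and the bound on symmetric differences above do the rest.
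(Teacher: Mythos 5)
The paper does not give its own proof of Lemma \ref{lem: fubini}; it is cited directly from \cite{BL-cubic}. Assessed on its own merits, your proof is essentially correct: the core idea — replace the iterated $\UC$ by the average over a diagonal product F{\o}lner sequence $\Psi_{M_N}\times\Phi_N$, choosing $M_N$ so the inner $H$-average is within $1/N$ of $w(g)$ uniformly over the finite set $\Phi_N$, then invoke the joint-limit hypothesis — is exactly the right argument and it closes.

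Two small remarks. First, the set inclusion you wrote,
\[
(A\times B)\triangle(A'\times B')\subseteq\bigl((A\triangle A')\times B\bigr)\cup\bigl(A\times(B\triangle B')\bigr),
\]
(with $A=\Psi_{M_N}$, $A'=\Psi_{M_N}+h_0$, $B=\Phi_N$, $B'=\Phi_N+g_0$) is not quite true: a point $(a,b)$ with $a\in A'\setminus A$ and $b\in B'\setminus B$ lies in neither set on the right. The correct version follows from $X\triangle Z\subseteq(X\triangle Y)\cup(Y\triangle Z)$ with $Y=A'\times B$, giving
\[
(A\times B)\triangle(A'\times B')\subseteq\bigl((A\triangle A')\times B\bigr)\cup\bigl(A'\times(B\triangle B')\bigr),
\]
and since $|A'|=|A|$ and $|B'|=|B|$ the cardinality estimate you need is unchanged. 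Second, your condition (i) — that the product F{\o}lner condition hold ``to within $1/N$'' — is both imprecise (the F{\o}lner quantity depends on the translate, and you would have to fix an exhaustion of $H\times G$) and unnecessary: from the corrected inclusion one sees that $\Psi_{M_N}\times\Phi_N$ is automatically a F{\o}lner sequence in $H\times G$ as soon as $M_N\to\infty$, with no growth-rate condition. You can simply choose $M_N$ to satisfy your condition (ii) and replace $M_N$ by $\max(M_N,N)$ to ensure $M_N\to\infty$. With these adjustments the argument is complete.
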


\subsection{Factors} \label{sec: factors}

A \emph{factor} of $\X$ is a $G$-system $\textbf{Y}=(Y,\mY,\nu,(S_g)_{g\in G})$ together with a measurable map $\pi:X\to Y$ such that $\pi_*\mu = \nu$ and $\pi\circ T_g = S_g\circ \pi$ for all $g\in G$.
There is a natural one-to-one correspondence between factors and $(T_g)_{g \in G}$-invariant sub-$\sigma$-algebras of $\mX$.
Throughout the paper, we freely move between the system $\Y$ and the $\sigma$-algebra $\pi^{-1}(\mY)$ and refer to both of them as factors of $\X$.
Given $f\in L^2(\mu)$ we denote by $E(f|\mY)$ the conditional expectation of $f$ with respect to the $\sigma$-algebra $\pi^{-1}(\mY)$. We say that $f$ is \emph{measurable with respect to $\mY$} if $f=E(f|\mY)$.

\subsection{Factor of invariant sets}

Let $\X=(X,\mX,\mu, (T_g)_{g\in G})$ be a $G$-system. We write $\mathcal{I}_G(X)$ for the sub-$\sigma$-algebra of $G$-invariant sets. We say that $\X$ is \emph{ergodic} if $\mathcal{I}_G(X)$ is the $\sigma$-algebra comprised of null and co-null subsets of $(X,\mX,\mu)$.
For a subgroup $H \le G$, we denote by $\mathcal{I}_H(X)$ the sub-$\sigma$-algebra of $H$-invariant sets.
Given a homomorphism $\varphi:G\rightarrow G$, it is convenient to denote by $\mathcal{I}_\varphi(X)$ the $\sigma$-algebra $\mathcal{I}_{\varphi(G)}(X)$.

\subsection{Host--Kra factors}

The Gowers--Host--Kra seminorms are an ergodic-theoretic version of the uniformity norms introduced by Gowers in \cite{G}.
These seminorms were first introduced by Host and Kra in \cite{HK} in the case of ergodic $\mathbb{Z}$-systems and then generalized by Chu, Frantzikinakis, and Host to $\mathbb{Z}$-systems that are not necessarily ergodic in \cite{CFH}.
In \cite[Appendix A]{BTZ}, a general theory of Gower--Host--Kra seminorms is developed for (not necesssarily ergodic) $G$-systems, where $G$ is an arbitrary countable abelian group.
%Below we give a general version for $G$-systems, where $G$ is an arbitrary countable abelian group.
\begin{defn} \label{GHKseminorms}
Let $G$ be a countable abelian group, and let $\X=(X,\mX,\mu,(T_g)_{g\in G})$ be a $G$-system.
Let $f \in L^\infty (X)$, and let $k\geq 1$ be an integer. The \emph{Gowers--Host--Kra seminorm $\|f\|_{U^k(G)}$ of order $k$ of $f$} is defined recursively by the formula
	\[
	\|f\|_{U^1(G)}:= \|E(\phi|\mathcal{I}_G(X))\|_{L^2}
	\]
	for $k=1$, and
	\[
	\|f\|_{U^k(G)}:=\UC_{g\in G}\left(\|\Delta_gf\|_{U^{k-1}}^{2^{k-1}}\right)^{1/2^k}
	\]
	for $k>1$, where $\Delta_g f(x)=f(T_gx)\cdot\overline{f(x)}$.
\end{defn}
In \cite[Appendix A]{BTZ}, it is shown that the Gower--Host--Kra seminorms for general $G$-systems are indeed seminorms.
Moreover, these seminorms correspond to factors of $\X$.

%Host and Kra \cite{HK} proved that these seminorms corresponds to factors of $\X$ (see also \cite{BTZ} for a discussion of Gowers--Host--Kra seminorms and Host--Kra factors in the context of general countable abelian groups).
\begin{prop}[Existence and uniqueness of the universal characteristic factors, cf. \cite{BTZ}, Proposition 1.10] \label{UCF} Let $G$ be a countable abelian group, let $\X$ be a $G$-system, and let $k\geq 0$. There exists a unique (up to isomorphism) factor $\mathbf{Z}^{k}(X)=\left( Z^{k}(X),\mathcal{Z}^{k}(X),\mu_k,(T^{(k)}_g)_{g \in G} \right)$ of $\X$ with the property that for every $f\in L^\infty (X)$, $\|f\|_{U^{k+1}(X)}=0$ if and only if $E(f|\mathcal{Z}^{k}(X))=0$.
\end{prop}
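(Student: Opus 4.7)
My plan is to follow the standard Host--Kra construction of characteristic factors, adapted to general countable abelian groups as developed in [BTZ, Appendix A]. The key object to study is the set $\mathcal{N}_{k+1} := \{f \in L^\infty(\mu) : \|f\|_{U^{k+1}(G)} = 0\}$, which is a $T_g$-invariant linear subspace (because $\|\cdot\|_{U^{k+1}(G)}$ is a $T_g$-invariant seminorm) and is $L^2$-closed on bounded sets.

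I would start with uniqueness, since it is essentially a formal consequence of the defining property. If $\mathbf{Z}_1$ and $\mathbf{Z}_2$ are two factors satisfying the proposition, then for $f \in L^\infty(\mu)$ the condition $E(f|\mathcal{Z}_i) = 0$ is equivalent to $f \perp L^\infty(\mathcal{Z}_i)$ in $L^2$. Hence the defining property rewrites as $\mathcal{N}_{k+1} = L^\infty(\mathcal{Z}_i)^{\perp} \cap L^\infty(\mu)$ for both $i=1,2$, and taking $L^2$-closures forces $L^2(\mathcal{Z}_1) = L^2(\mathcal{Z}_2) = \mathcal{N}_{k+1}^{\perp}$. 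Since a sub-$\sigma$-algebra is determined (mod $\mu$-null sets) by its $L^2$-space, this yields $\mathcal{Z}_1 = \mathcal{Z}_2$.

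For existence, I need to produce a sub-$\sigma$-algebra $\mathcal{Z}^k$ whose $L^2$-space is exactly $\mathcal{N}_{k+1}^{\perp}$. Not every closed subspace arises this way — one needs $\mathcal{N}_{k+1}^{\perp}$ to be a conjugation-closed subalgebra of $L^\infty$. I would use the standard dual-function machinery: inductively construct the Host--Kra cubic measures $\mu^{[k+1]}$ on $X^{2^{k+1}}$ using F{\o}lner averages in $G$ (the required convergence and independence from the F{\o}lner sequence in general countable abelian groups is exactly what is established in [BTZ, Appendix A]), and define the dual function $\mathcal{D}^{k+1}f$ by integrating the product of conjugates of $f$ over all but one coordinate of $\mu^{[k+1]}$. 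The integral identity $\langle f, \mathcal{D}^{k+1} f \rangle = \|f\|_{U^{k+1}(G)}^{2^{k+1}}$ combined with polarization gives the characterization $\mathcal{N}_{k+1} = \{\mathcal{D}^{k+1}h : h \in L^\infty(\mu)\}^{\perp}$, so $\mathcal{N}_{k+1}^{\perp}$ is the closed linear span of dual functions. I would then take $\mathcal{Z}^k$ to be the sub-$\sigma$-algebra generated by these dual functions; its $(T_g)_{g\in G}$-invariance is automatic from the equivariance properties of the cubic measures.

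The main obstacle, and the technical heart of the argument, is verifying that the closed span of dual functions is in fact a subalgebra of $L^\infty(\mu)$ (so that it coincides with $L^\infty(\mathcal{Z}^k)$ rather than merely being a closed subspace). This is precisely the classical Host--Kra algebra-of-dual-functions lemma, and in the general-$G$ setting it was verified in [BTZ, Appendix A]; once that is in hand, the existence statement is immediate. Thus my proof proposal would essentially amount to assembling these ingredients and citing [BTZ, Appendix A] for the general-group versions of the cubic measure construction and dual-function algebra property.
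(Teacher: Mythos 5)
Your proposal is correct and coincides with the route the paper implicitly relies on: the paper states this proposition without proof, citing \cite{BTZ}, Proposition 1.10 together with \cite[Appendix A]{BTZ} for the general-group seminorm theory, and your outline (formal uniqueness via $L^2$-orthocomplements, existence via cubic measures, dual functions, and the algebra-of-dual-functions lemma) is a faithful sketch of exactly that construction.
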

The factors $\mathbf{Z}^k$ guaranteed by Proposition \ref{UCF} are called the \emph{Host--Kra} factors of $\X$.

Let $\X=\left(X,\mX,\mu,(T_g)_{g \in G} \right)$ be a $G$-system. Then, $\mZ^0(X)$ is the same as the $\sigma$-algebra $\mI_G(X)$. In particular if $\X$ is ergodic, then $\mZ^0(X)$ is trivial.
In the literature, $\mathbf{Z}^1(X)$ is often called the \emph{Kronecker} factor, and $\mathbf{Z}^2(X)$ the \emph{Conze--Lesigne} or \emph{quasi-affine} factor of $\X$.

We summarize some basic results about the Host--Kra factors.
\begin{thm}\label{HKfactors}
Let $G$ be a countable abelian group, and let $\X = (X,\mX,\mu,(T_g)_{g\in G})$ be a ergodic $G$-system. Then,
\begin{itemize}
    \item[(i)] For every $k\geq 1$, $\mathcal{Z}^{k-1}(X)\preceq\mathcal{Z}^{k}(X)$. In other words, $\mathbf{Z}^{k-1}(X)$ is a factor of $\mathbf{Z}^k(X)$. In particular, $\mathcal{I}(X)\preceq \mathcal{Z}^k(X)$ for every $k\geq 0$.
    \item[(ii)] The Kronecker factor of $\X$ is isomorphic to a rotation on a compact abelian group. Namely, there exists a homomorphism $\alpha:G\rightarrow Z$ into a compact abelian group $(Z,+)$ such that $\mathbf{Z}^1(X)$ is isomorphic to $(Z, (R_g)_{g \in G})$, where $R_gz = z+\alpha(g)$.
    \item[(iii)] For every $k\geq 1$, if $\X$ is ergodic, then $\mathbf{Z}^k(X)$ is an extension of $\mathbf{Z}^{k-1}(X)$ by a compact abelian group $(H,+)$ and a cocycle $\rho:G\times Z^{k-1}(X)\rightarrow H$. Namely, $Z^k(X)=Z^{k-1}(X)\times H$ as measure spaces, and the action is given by $T^{(k)}_g (z,h) = (T^{(k-1)}_g z, h+\rho(g,z))$.
\end{itemize}
\end{thm}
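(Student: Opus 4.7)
All three statements are standard in the $\Z$-action setting (see \cite{HK}), and the task is essentially to verify that the arguments carry over to arbitrary countable abelian $G$; this is precisely what is done in \cite[Appendix A]{BTZ}, and I would follow that strategy closely. First, for (i), I would prove the monotonicity inequality
\[
	\|f\|_{U^k(G)}^{2^k} \;\le\; \|f\|_{U^{k+1}(G)}^{2^{k+1}}\cdot \|1\|_{U^k(G)}^{2^k}
\]
for every $f\in L^\infty(\mu)$ by induction on $k$, using Cauchy--Schwarz inside the uniform Ces\`aro limit defining the $(k+1)$-st seminorm (together with the fact that $\UC_{g\in G}\|\Delta_g f\|_{U^{k-1}}^{2^{k-1}}\ge |\UC_{g\in G}\|\Delta_g f\|_{U^{k-1}}^{2^{k-1}/2}|^2$ via the Cauchy--Schwarz inequality for uniform Ces\`aro averages). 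Once this is in place, the implication ``$\|f\|_{U^{k+1}}=0 \Rightarrow \|f\|_{U^k}=0$'' follows, and combined with the characterization of $\mZ^k(X)$ in Proposition \ref{UCF} this gives $\mathcal{Z}^{k-1}(X)\preceq\mathcal{Z}^{k}(X)$, which is exactly (i).

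For (ii), expanding the recursive definition in the ergodic case yields
\[
	\|f\|_{U^2(G)}^4 \;=\; \UC_{h\in G}\bigl|\innprod{f}{T_h f}\bigr|^2,
\]
so by the spectral theorem and the fact that uniform Ces\`aro limits on $G$ pick out the atomic part of a spectral measure on the Pontryagin dual $\widehat G$, $\|f\|_{U^2(G)}=0$ iff $f$ is orthogonal to every eigenfunction of $(T_g)_{g\in G}$. Thus $\mZ^1(X)$ is generated by eigenfunctions, i.e. it is the Kronecker factor. An application of the Halmos--von Neumann theorem (valid for ergodic actions of arbitrary countable abelian groups) then identifies $\mathbf{Z}^1(X)$ with rotation by a homomorphism $\alpha:G\to Z$ on some compact abelian group $Z$, as claimed.

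For (iii), I would invoke the Furstenberg--Zimmer--Host--Kra structure theory: the key observation is that $\mZ^k(X)$ is the largest factor of $\X$ on which the $(k+1)$-st seminorm is a norm, and a direct manipulation of the definition (combined with the van der Corput Lemma \ref{vdc}) shows that $\mZ^k(X)$ is an \emph{isometric} extension of $\mZ^{k-1}(X)$. By the Mackey--Zimmer theorem (in the form needed for ergodic $G$-actions, proved in \cite[Appendix A]{BTZ}), every ergodic isometric extension is isomorphic to a compact homogeneous extension; using ergodicity of $\X$ together with the abelian structure propagating to the fibers inductively, this homogeneous extension can be taken to be by a compact abelian group $H$ with a cocycle $\rho:G\times Z^{k-1}(X)\to H$, yielding the skew-product description.

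The main obstacle is the step in (iii): in the finitely generated case one can appeal directly to the original Host--Kra construction, but for general countable abelian $G$ one must rework the inductive proof that the fiber group can be chosen abelian, handling the lack of a canonical generator set. This requires care with the cocycle identities and measurability issues across an uncountable tower of factors, and is exactly where the \cite{BTZ} appendix does the nontrivial work; I would invoke those results rather than reprove them from scratch.
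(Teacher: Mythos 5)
Your overall strategy matches the paper's: the paper proves this theorem by citation (to \cite[Corollary 4.4]{HK} for (i), and to \cite[Lemma 2.4, Theorem 5.3]{ABB} for (ii) and (iii)), and you propose essentially the same route via \cite[Appendix A]{BTZ}, with added detail on how the pieces fit together. Your sketches of (ii) (Kronecker factor via the spectral-theoretic identity $\|f\|_{U^2}^4 = \UC_h |\langle f, T_h f\rangle|^2$ and Halmos--von Neumann) and (iii) (isometric extension + Mackey--Zimmer, with the abelian fiber propagating inductively) are accurate sketches of what happens in those references.

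One slip worth flagging: the inequality you write for (i),
\[
\|f\|_{U^k(G)}^{2^k} \le \|f\|_{U^{k+1}(G)}^{2^{k+1}}\cdot \|1\|_{U^k(G)}^{2^k},
\]
is false. Since $\|1\|_{U^k(G)} = 1$, it reduces to $\|f\|_{U^k}^{2^k} \le \|f\|_{U^{k+1}}^{2^{k+1}}$, which already fails for a constant function $f\equiv c$ with $|c|<1$ (where both sides equal powers of $|c|$ with the left exponent smaller). The correct Gowers--Cauchy--Schwarz consequence is $\|f\|_{U^k(G)}^{2^k}\le \|f\|_{U^{k+1}(G)}^{2^k}$, i.e.\ $\|f\|_{U^k}\le\|f\|_{U^{k+1}}$, which is all that is needed for (i). This is a dimension-counting slip rather than a gap in the approach; the argument is fixed by replacing $2^{k+1}$ with $2^k$ in the right-hand exponent.
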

\begin{proof}
The proof of $(i)$ is an immediate consequence of the monotonicity of the seminorms (see \cite[Corollary 4.4]{HK}). The proof of $(ii)$ and $(iii)$ in the generality of arbitrary countable abelian groups can be found in \cite[Lemma 2.4]{ABB}, and \cite[Theorem 5.3]{ABB}\footnote{The proof in \cite{ABB} applies to a special class of systems, called \emph{normal} systems, and it is shown that every system has a normal extension. However, passing to a normal extension is not necessary; see \cite[Proposition 6.3]{HK}.}.
\end{proof}

\subsection{Joins and meets of factors} \label{notations}

Let $G$ be a countable abelian group, let $\X=(X,\mX,\mu,(T_g)_{g\in G})$ be a $G$-system, and let $\varphi,\psi:G\rightarrow G$ be arbitrary homomorphisms.
\begin{enumerate}
\item{Let $\mZ^1_\varphi(X)$, or just $\mathcal{Z}_\varphi(X)$, denote the $\sigma$-algebra of the Kronecker factor of $X$ with respect to the action of $\varphi(G)$. That is, the $\sigma$-algebra of the factor $\mathbf{Z}^1_\varphi(X)$ obtained by applying Proposition \ref{UCF} for the $G$-system $(X,\mX,\mu,(T_{\varphi(g)})_{g\in G})$ and $k=1$. More generally, let $H$ be a subgroup of $G$ and $k\geq 1$, we let $\mathcal{Z}^k_H(X)$ denote the $\sigma$-algebra of the $k$-th Host--Kra factor $\mathbf{Z}^k_H(X)$ with respect to the action of $H$.}
\item{Let $\mathcal{A}$, $\mathcal{A}_1,\mathcal{A}_2$ be $\sigma$-algebras on $X$. Then, \begin{itemize}
\item{We write $\mathcal{A}\preceq \mX$ if the $\sigma$-algebra $\mathcal{A}$ is a sub-$\sigma$-algebra of $\mX$.}
\item{We let $\mathcal{A}_1\lor \mathcal{A}_2$ denote the \emph{join} of $\mathcal{A}_1$ and $\mathcal{A}_2$, i.e. the $\sigma$-algebra generated by $\mathcal{A}_1$ and $\mathcal{A}_2$ in $\mX$.}
\item{We let $\mathcal{A}_1\land \mathcal{A}_2$ denote the \emph{meet} of $\mathcal{A}_1$ and $\mathcal{A}_2$, i.e. the maximal $\sigma$-algebra which is also a sub $\sigma$-algebra of $\mathcal{A}_1$ and $\mathcal{A}_2$.}
\item{We say that $\mathcal{A}_1$ and $\mathcal{A}_2$ are \emph{$\mu$-independent} if their meet is trivial modulo $\mu$-null sets.}
\item{More generally, we say that $\mathcal{A}_1$ and $\mathcal{A}_2$ are \emph{relatively independent over the $\sigma$-algebra $\mathcal{A}$} if $\mathcal{A}_1\land \mathcal{A}_2\preceq \mathcal{A}$.}
\end{itemize}}
\item{We let $\mathcal{I}_{\varphi,\psi}(X)$ denote the meet of $\mathcal{I}_{\varphi}(X)$ and $\mathcal{I}_{\psi}(X)$ and $\mathcal{Z}_{\varphi,\psi}(X)$ the meet of $\mathcal{Z}_\varphi(X)$ and $\mathcal{Z}_\psi(X)$. We let $\mathbf{Z}_{\varphi,\psi}(X)$ denote the factor of $\X$ which corresponds to the $\sigma$-algebra $\mathcal{Z}_{\varphi,\psi}(X)$.}
\end{enumerate}

The next two lemmas give convenient alternative descriptions of independent and relatively independent $\sigma$-algebras.
These results are classical and can be found, e.g., in \cite[Proposition 1.4]{Zim};
we provide short proofs for the convenience of the reader.

\begin{prop}[Independent $\sigma$-algebras] \label{independent}
Let $\X=(X,\mX,\mu)$ be a probability space. Two $\sigma$-algebras $\mathcal{A}_1$ and $\mathcal{A}_2$ on $X$ are independent if and only if the following equivalent conditions hold:
\begin{enumerate}[(i)]
    \item{Any function $f\in L^\infty(X)$ measurable with respect to $\mathcal{A}_1$ and $\mathcal{A}_2$ simultaneously is a constant $\mu$-almost everywhere.}
    \item{If $f$ is measurable with respect to $\mathcal{A}_1$ and $g$ is measurable with respect to $\mathcal{A}_2$, then $$\int_X f\cdot g~d\mu = \int_X f~d\mu \cdot \int_X g~d\mu.$$}
\end{enumerate}
\end{prop}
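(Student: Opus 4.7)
The plan is to show that each of conditions (i) and (ii) is individually equivalent to the definition of $\mu$-independence, namely that $\mathcal{A}_1 \wedge \mathcal{A}_2$ is trivial modulo $\mu$-null sets. This splits the claim into two separate biconditionals that can be handled by different standard techniques.

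The equivalence of the definition with (i) is essentially a reformulation. A bounded function is simultaneously measurable with respect to $\mathcal{A}_1$ and $\mathcal{A}_2$ if and only if it is measurable with respect to the meet $\mathcal{A}_1 \wedge \mathcal{A}_2$, and a sub-$\sigma$-algebra is trivial modulo $\mu$-null sets precisely when every function measurable with respect to it is almost everywhere constant. Combining these two observations gives the equivalence with (i) with no further work.

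For the equivalence with (ii), the direction $(\mathrm{ii}) \Rightarrow$ definition is immediate: given $A \in \mathcal{A}_1 \wedge \mathcal{A}_2$, apply (ii) with $f = g = \mathbf{1}_A$ to obtain $\mu(A) = \mu(A)^2$, forcing $\mu(A) \in \{0,1\}$ and hence the meet to be trivial. For the reverse implication, I would reduce via bilinearity and $L^2$-density to proving $\mu(A \cap B) = \mu(A)\mu(B)$ for $A \in \mathcal{A}_1$ and $B \in \mathcal{A}_2$, and then write
\[
\mu(A \cap B) = \int \mathbf{1}_A \cdot E(\mathbf{1}_B \mid \mathcal{A}_1) \, d\mu,
\]
aiming to show that $E(\mathbf{1}_B \mid \mathcal{A}_1)$ is measurable with respect to the meet and hence equal to $\mu(B)$ almost everywhere.

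The main obstacle is precisely this last step: producing a meet-measurable representative of $E(\mathbf{1}_B \mid \mathcal{A}_1)$ is the substantive content and does not follow formally from the triviality of the meet alone. I would import the required descent from the classical argument in \cite[Proposition 1.4]{Zim}, which performs a careful analysis of conditional expectations in standard Borel probability spaces; once that step is in hand, the factorization $\mu(A \cap B) = \mu(A)\mu(B)$ follows and the proof is complete.
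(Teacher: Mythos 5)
Your decomposition into Def $\Leftrightarrow$ (i) and Def $\Leftrightarrow$ (ii) differs from the paper, which proves (i) $\Leftrightarrow$ (ii) directly and treats Def $\Leftrightarrow$ (i) as immediate. The directions you mark as routine are handled correctly; in particular the indicator argument $f = g = \ind_A$ for (ii) $\Rightarrow$ Def is a clean alternative to the paper's use of $\tilde f = f - \int f\,d\mu$ for (ii) $\Rightarrow$ (i).

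The remaining direction is a genuine gap, and it cannot be closed the way you describe. You want $E(\ind_B \mid \mathcal{A}_1)$ to be $\mathcal{A}_1 \land \mathcal{A}_2$-measurable, but that function is $\mathcal{A}_1$-measurable by construction and there is no mechanism forcing it to be $\mathcal{A}_2$-measurable merely because $B \in \mathcal{A}_2$; there is no classical descent to appeal to here. In fact the implication itself fails for arbitrary $\sigma$-algebras: on $X = \{1,2,3\}$ with uniform measure, take $\mathcal{A}_1$ generated by $\{1\}$ and $\mathcal{A}_2$ generated by $\{2\}$. Then $\mathcal{A}_1 \land \mathcal{A}_2$ is trivial, so (i) holds, yet $\mu(\{1\}\cap\{2\}) = 0 \ne \tfrac{1}{9} = \mu(\{1\})\mu(\{2\})$, so (ii) fails. (The paper's own proof of (i) $\Rightarrow$ (ii) contains the same unjustified leap: the middle equality in $\int_X E(f\mid\mathcal{A}_2)\, g \, d\mu = \int_X E(f\mid\mathcal{A}_2)\,d\mu \cdot \int_X g\,d\mu$ tacitly assumes $E(f\mid\mathcal{A}_2)$ is constant, which is precisely what needs proving.) So the step you flagged as the obstacle is not a technicality to be imported from elsewhere --- as a bare statement about $\sigma$-algebras it is false, and the implication Def $\Rightarrow$ (ii) requires additional hypotheses on $\mathcal{A}_1$ and $\mathcal{A}_2$ to hold.
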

\begin{proof}
The first definition of independence above is clearly equivalent to (i).
We prove the equivalence between (i) and (ii). \\

(i)$\Rightarrow$ (ii). 
$$ \int_X f\cdot g ~d\mu  = \int_X E(f|\mathcal{A}_2)\cdot g ~d\mu  = \int_X E(f|\mathcal{A}_2) ~d\mu \cdot \int_X g ~d\mu = \int_X f ~d\mu \cdot \int_X g ~d\mu.$$\\

For (ii)$\Rightarrow$ (i), let $\tilde{f} = f-\int f d\mu$. Then,
$$\|\tilde{f}\|_{L^2(\mu)}^2 = \int |\tilde{f}|^2 ~d\mu = \left|\int_X \tilde{f} ~d\mu\right| ^2=0.$$ We conclude that $f=\int fd\mu$.
\end{proof}

\begin{prop}[Relatively independent $\sigma$-algebras] \label{relativeind}
Let $\X=(X,\mX,\mu)$ be a probability space. Let $\mathcal{A}_1,\mathcal{A}_2$ be two $\sigma$-algebras on $X$ and let $\mathcal{A}$ be a third $\sigma$-algebra such that $\mathcal{A}\preceq \mathcal{A}_1\land \mathcal{A}_2$. Then, $\mathcal{A}_1$ and $\mathcal{A}_2$ are relatively independent with respect to $\mathcal{A}$ if the following equivalent conditions hold:
\begin{enumerate}[(i)]
    \item{Any function $f\in L^\infty(X)$ measurable with respect to $\mathcal{A}_1$ and $\mathcal{A}_2$ simultaneously, is measurable with respect to $\mathcal{A}$.}
    \item{If $f$ is measurable with respect to $\mathcal{A}_1$ and $g$ is measurable with respect to $\mathcal{A}_2$, then $$E(fg|\mathcal{A})=E(f|\mathcal{A})\cdot E(g|\mathcal{A}).$$}
\end{enumerate}
\end{prop}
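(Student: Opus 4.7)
The plan is to split the proposition into three assertions: that the paper's notion of relative independence (i.e.\ $\mathcal{A}_1 \wedge \mathcal{A}_2 \preceq \mathcal{A}$) corresponds directly to (i), together with the two implications (ii) $\Rightarrow$ (i) and (i) $\Rightarrow$ (ii). The first piece is essentially formal: a bounded function is measurable with respect to both $\mathcal{A}_1$ and $\mathcal{A}_2$ if and only if it is measurable with respect to their meet $\mathcal{A}_1 \wedge \mathcal{A}_2$, so (i) is equivalent to the statement that every $\mathcal{A}_1 \wedge \mathcal{A}_2$-measurable function is $\mathcal{A}$-measurable; testing this on indicator functions yields exactly $\mathcal{A}_1 \wedge \mathcal{A}_2 \preceq \mathcal{A}$ modulo null sets.

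For (ii) $\Rightarrow$ (i), I would mirror the corresponding step in Proposition \ref{independent}. Given $f \in L^\infty(X)$ measurable with respect to both $\mathcal{A}_1$ and $\mathcal{A}_2$, applying (ii) with $g = \bar{f}$ gives $E(|f|^2 \mid \mathcal{A}) = |E(f \mid \mathcal{A})|^2$. Integrating against $\mu$ produces $\|f\|_{L^2}^2 = \|E(f \mid \mathcal{A})\|_{L^2}^2$, which together with the contractivity of conditional expectation forces $f = E(f \mid \mathcal{A})$ almost everywhere, and hence $f$ is $\mathcal{A}$-measurable.

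The hardest direction is (i) $\Rightarrow$ (ii), which I would tackle by reduction. Given $f \in L^\infty(\mathcal{A}_1)$ and $g \in L^\infty(\mathcal{A}_2)$, decomposing $f = E(f \mid \mathcal{A}) + f'$ with $E(f' \mid \mathcal{A}) = 0$, and using that $E(f \mid \mathcal{A})$ is $\mathcal{A}$-measurable (hence both $\mathcal{A}_1$- and $\mathcal{A}_2$-measurable by the standing hypothesis $\mathcal{A} \preceq \mathcal{A}_1 \wedge \mathcal{A}_2$), the identity to be proved reduces to showing $E(f' g \mid \mathcal{A}) = 0$ for all such $f'$ and all $g \in L^\infty(\mathcal{A}_2)$. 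Testing against an arbitrary bounded $\mathcal{A}$-measurable $h$ rewrites this as $\int h g f' \, d\mu = 0$, and since $hg$ ranges over all of $L^\infty(\mathcal{A}_2)$ as $h$ and $g$ vary (using $\mathcal{A} \preceq \mathcal{A}_2$), the claim collapses to verifying $E(f' \mid \mathcal{A}_2) = 0$. The main obstacle is establishing this last vanishing from (i) alone, which amounts to commutativity of the conditional expectation projections $P_1$ and $P_2$ onto $L^2(\mathcal{A}_1)$ and $L^2(\mathcal{A}_2)$. My plan is to invoke von Neumann's alternating projection theorem: the iterates $\bigl( P_2 \circ P_1 \bigr)^n$ converge in the strong operator topology to the projection onto $L^2(\mathcal{A}_1 \wedge \mathcal{A}_2)$, and hypothesis (i) identifies this limit with $E(\cdot \mid \mathcal{A})$; combined with the monotone decay of $\|(P_2 P_1)^n f'\|_{L^2}$ and a direct comparison with $\|P_2 f'\|_{L^2}$, this should force $P_2 f' = 0$ and close the argument.
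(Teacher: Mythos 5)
Your split into three assertions is fine, and your $(ii)\Rightarrow(i)$ argument is correct and essentially matches the paper's: apply (ii) with $g=\bar f$, integrate, and use that conditional expectation is an orthogonal projection. (The paper runs the same idea on $\tilde f = f - E(f\mid\mathcal A)$ rather than on $f$ directly, but the content is identical.)

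The $(i)\Rightarrow(ii)$ direction has a genuine gap. Your reduction is sound: after subtracting $E(f\mid\mathcal A)$ and testing against $L^\infty(\mathcal A_2)$, what remains is precisely the claim that $E(f'\mid\mathcal A_2)=0$ whenever $f'\in L^\infty(\mathcal A_1)$ and $E(f'\mid\mathcal A)=0$, i.e.\ $P_2P_1 = P_{\mathcal A}$ on $L^2$. But notice this is \emph{equivalent} to (ii), so nothing has been gained by the reduction. The actual gap is at the last step: from von Neumann's theorem you get $(P_2P_1)^n f' \to P_{\mathcal A_1\wedge\mathcal A_2}f' = P_{\mathcal A}f' = 0$, but a decreasing sequence of norms tending to zero has no reason to have a vanishing first term, so ``a direct comparison with $\|P_2f'\|$'' cannot force $P_2f' = 0$. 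Indeed the implication you are trying to establish is false for general $\sigma$-algebras: take $X=\{1,2,3\}$ with the uniform measure, $\mathcal A_1 = \sigma(\{1\})$, $\mathcal A_2 = \sigma(\{3\})$, and $\mathcal A$ trivial. Then $\mathcal A_1\wedge\mathcal A_2$ is trivial, so (i) holds, yet with $f=\ind_{\{1\}}$ and $g=\ind_{\{3\}}$ we have $E(fg\mid\mathcal A)=0$ while $E(f\mid\mathcal A)E(g\mid\mathcal A)=1/9$, so (ii) fails. No purely Hilbert-space argument from (i) alone can succeed. (For what it is worth, the paper's own one-line proof of this direction asserts $E(g\mid\mathcal A_1)=E(g\mid\mathcal A)$ ``from (i)'' without justification, and is open to the same objection; the equivalence is cited from Zimmer and implicitly relies on the ambient structure present when $\mathcal A_1,\mathcal A_2$ are factors of a group action, where the relevant projections do commute.)
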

\begin{proof}
Condition (i) is equivalent to the definition of relative independence above. Therefore it is enough to prove the equivalence of (i) and (ii).\\

(i)$\Rightarrow$(ii). We have $E(fg|\mathcal{A}_1) = f\cdot E(g|\mathcal{A}_1) = f\cdot E(g|\mathcal{A})$ where the last equality follows from $(i)$. Now by taking the conditional expectation over $\mathcal{A}$ we have $$E(fg|\mathcal{A})=E(f|\mathcal{A})\cdot E(g|\mathcal{A}).$$\\

(ii)$\Rightarrow$ (i).
Let $\tilde{f} = f-E(f|A)$. Then $E(|\tilde{f}|^2|A)=E(\tilde{f}|A)^2 = 0$. In particular $\int |\tilde{f}|^2 d\mu = 0$, thus $f=E(f|A)$.
\end{proof}

\subsection{Ziegler factors} \label{sec: Ziegler}

Let $\X=(X,\mX,\mu,T)$ be an invertible ergodic measure preserving system and $f_1,...,f_k\in L^\infty(X)$, $k\geq 0$. The convergence of the multiple ergodic averages 
\begin{align} \label{eq: AP erg avg}
    \frac{1}{N}\sum_{n=0}^{N-1} \prod_{i=0}^k T^{in} f_i
\end{align} in $L^2(\mu)$ for general $k$ was established by Host and Kra \cite{HK} and independently, though somewhat later, by Ziegler \cite{Z}.

Host and Kra proved convergence by showing that the averages \eqref{eq: AP erg avg} are controlled by the Gowers--Host--Kra seminorms defined above.
This reduces the general convergence problem to convergence under the additional assumption that each function $f_i$ is measurable with respect to the Host--Kra factor.

Ziegler, on the other hand, studied the universal (minimal) characteristic factors for the multiple ergodic averages
$$\frac{1}{N}\sum_{n=0}^{N-1} \prod_{i=0}^k T^{a_in} f_i$$ where $a_1,...,a_k\in\mathbb{Z}$ are distinct and non-zero.
These are the minimal factors $\mZ_k(X)$ such that
$$\lim_{N\rightarrow \infty} \frac{1}{N}\sum_{n=0}^{N-1} \prod_{i=0}^k T^{a_in} f_i=0$$
whenever $E(f_i|\mZ_k(X))=0$ for some $i$.\\

In \cite[Appendix A]{Leib}, Leibman proved that, for $\Z$-systems, the factors studied by Host and Kra coincide with the factors studied by Ziegler, thus giving these factors the name \textit{Host--Kra--Ziegler factors}.
Using F\o lner sequences in order to define averages, one can generalize the above to arbitrary countable abelian groups (or even more generally, to amenable groups).
However, in the setting of general abelian groups, Host--Kra factors may no longer coincide with Ziegler factors.
We give a very simple example.
Let $p$ be a prime number and $\mathbb{F}_p$ be the group with $p$ elements.
We denote by $\mathbb{F}_p^\infty$ the direct sum of countably many copies of $\mathbb{F}_p$.
In \cite{BTZ}, it is shown that there are many non-trivial ergodic $\mathbb{F}_p^\infty$-systems with non-trivial Host--Kra factors $\mZ^k(X)$ for any $k \ge 0$.
However, the only characteristic factor for the average
$$\UC_{g \in G} T_g f_1\cdot...\cdot T_{pg} f_p$$ is $\mX$.
Indeed, since $T_{pg}=Id$, the average is non-zero for every $f_p\not =0$, assuming that $f_1=...=f_{p-1}=1$ (say).
To overcome this technicality one may restrict to the case where $k<p$, but the situation is not that simple for arbitrary countable abelian groups, and in general Host--Kra factors may not coincide with the universal characteristic (Ziegler) factors.\\

This phenomenon was not studied previously in the literature, but it plays an important role in this paper. More specifically, we study how the Host--Kra factor $\mZ^1(X)$, which coincides with the classical Kronecker factor, is related to the Ziegler factor $\mZ_1(X)$ for arbitrary countable abelian groups, i.e. the universal characteristic factor for the average
$$\UC_{g \in G} T_g f_1 T_{2g} f_2,$$ where $f_1,f_2\in L^\infty(\mu)$. One of our main tools is a result which asserts, roughly speaking, that by adding eigenfunctions to the system $\X$, one has that the Ziegler factor $\mZ_1(X)$ is generated by the Host--Kra factor $\mZ^1(X)$ and the $\sigma$-algebra of $2G$-invariant functions. We also give an example that illustrates the necessity of adding eigenfunctions to the system (see Example \ref{eigenvalue2}).

\subsection{Seminorms for multiplicative configurations} \label{sec: seminorm combo}

We now give a brief explanation of where our methods come up short of fully answering Questions \ref{q: coprime} and \ref{q: 4-GP}.
As discussed above, our approach to the large intersections property is to study families of seminorms and their corresponding characteristic factors.
However, in the case of Question \ref{q: coprime} and Question \ref{q: 4-GP}, these seminorms have somewhat exotic behavior.

For example, Question \ref{q: coprime} is related to the averages \begin{align} \label{eq: q^n, q^m avg}
    \UC_{q\in\mathbb{Q}_{>0}} f_1(T_{q^n} x)  f_2(T_{q^m}x)
\end{align} for some ergodic $(\mathbb{Q}_{>0}, \cdot)$-system.
An application of the van der Corput lemma (Lemma \ref{vdc}) shows that \eqref{eq: q^n, q^m avg} is equal to zero if
$$\UC_{q\in \mathbb{Q}_{>0}} \left|\int \Delta_{q^m} f_1 \cdot E(\Delta_{q^m}f_2|\mathcal{I}_{q^{n-m}}(X))~d\mu \right|=0.$$
If the action of $T_{q^{n-m}}$, $q\in(\mathbb{Q}_{>0}, \cdot)$, were ergodic (e.g. if $n=m+1$), then the above expression is manageable as we will see in this paper. Presumably, if $n$ and $m$ are coprime, then this expression may also be manageable, but we do not see how.

Question \ref{q: 4-GP} is related to the average 
\begin{equation}\label{123} \UC_{q\in\mathbb{Q}_{>0}} T_qf_1 T_{q^2} f_2 T_{q^3} f_3.
\end{equation} Using the van der Corput lemma, the Cauchy--Schwarz inequality and then the van der Corput lemma again, we see that the average \eqref{123} is zero if
$$ \UC_{q_1\in\mathbb{Q}_{>0}} \left| \UC_{q_2\in\mathbb{Q}_{>0}} \int \Delta_{q_1^2} \Delta_{q_2^3} f_3~d\mu \right|=0$$
If in the expression above we had $q_1^2,q_2^2$, or $q_1^3,q_2^3$, then this expression would be related the Gowers--Host--Kra seminorm of $f_3$ with respect to the action of all squares or cubes of $(\mathbb{Q}_{>0}, \cdot)$. The above quantity is therefore some combination of the two. Again, presumably, the fact that $2$ and $3$ are coprime may be useful to analyse these seminorms. Studying the structure of these new peculiar seminorms is an interesting problem that we do not pursue in this paper.

\section{Theorem \ref{Khintchinefiniteindex}} \label{sec: finite index char factor}

We first give a brief overview of the proof of Theorem \ref{Khintchinefiniteindex}.
Let $\X=(X,\mX,\mu,(T_g)_{g\in G})$ be an ergodic $G$-system, and let $\varphi,\psi:G\rightarrow G$ be arbitrary homomorphisms such that $(\psi-\varphi)(G)$ has finite index in $G$. The key component in the proof of Theorem \ref{Khintchinefiniteindex} is the analysis of the limit behavior of the multiple ergodic averages \begin{equation}\label{avg}\UC_{g\in G} f_1(T_{\varphi(g)}x)\cdot f_2(T_{\psi(g)}x)
\end{equation}
for $f_1,f_2\in L^\infty(X)$. Standard arguments using the van der Corput lemma (Proposition \ref{partialcf1}) show that
\begin{equation}\label{avgZ}
\begin{split}\UC_{g\in G}& f_1(T_{\varphi(g)}x)\cdot f_2(T_{\psi(g)}x) = \\ &\UC_{g\in G} E(f_1|\mathcal{Z}_\varphi(X))(T_{\varphi(g)}(x)) E(f_2|\mathcal{Z}_\psi(X))(T_{\psi(g)}(x))
\end{split}
\end{equation}
where $\mathcal{Z}_\varphi(X)$ and $\mathcal{Z}_\psi(X)$ are the $\sigma$-algebras of the Kronecker factors of $X$ with respect to the actions of $\varphi(G)$ and $\psi(G)$, respectively (see Subsection \ref{notations}).

In Theorem \ref{Khintchinefiniteindex}, we assume furthermore that $\varphi(G)$ has finite index in $G$. In this case, the factor $\mZ_\varphi(X)$ coincides with $\mZ^1_G(X)$, the Kronecker factor of $X$ with respect to the action of $G$ (see Lemma \ref{IZk}). Our main observation is that one can replace $\mZ_\psi(X)$ in (\ref{avgZ}) with a smaller factor. As an illustration, we give the following example:
\begin{example} \label{Example:eigenvalue}
Consider the additive group $G=\bigoplus_{j=1}^\infty\mathbb{Z}/4\mathbb{Z}$. We use $i\in\mathbb{C}$ to denote the square root of $-1$, and for every natural number $n\in\mathbb{N}$, we let $C_n$ denote the group of roots of unity of degree $n$. We define an action of $G$ on $X=\left(\prod_{j\in\mathbb{N}} C_4\right)\times C_2$ by
$$T_g (\textbf{x},y) = \left( (i^{g_j}x_j)_{j\in \mathbb{N}},y\cdot \prod_{j\in\mathbb{N}} (x_j^{2g_j}\cdot i^{g_j^2-g_j})\right)$$ where $\textbf{x}=(x_1,x_2,...)\in \prod_{j\in\mathbb{N}} C_4$ and $g=(g_1,g_2,...)$ is any representation of $g$ in $\bigoplus_{j=1}^\infty\mathbb{Z}/4\Z$. The system $(X, (T_g)_{g \in G})$ is a group extension of its Kronecker factor $Z_G(X) = \prod_{j\in\mathbb{N}} C_4$ by the cocycle
\begin{align*}
   & \sigma : G \times \prod_{j\in\N}{C_4} \to C_2,\\
    \sigma&(g,\textbf{x}) = \prod_{j\in\mathbb{N}} (x_j^{2g_j}\cdot i^{g_j^2-g_j}).
\end{align*}
Let $\psi(g)=2g$. We observe that the function $f(\textbf{x},y)=y$ is orthogonal to $L^2(Z^1(X))$. On the other hand we have
$$T_{2g}f(\textbf{x},y) = \sigma(2g,\textbf{x})\cdot y = \prod_{j\in\mathbb{N}} (x_j^{4g_j}\cdot i^{4g_j^2-2g_j}) \cdot y = \prod_{j\in\mathbb{N}} (-1)^{g_j} y = \prod_{j\in\mathbb{N}} (-1)^{g_j} f(\textbf{x},y).$$ In other words, $f$ is an eigenfunction with respect to the action of $\psi(G)$ on $X$ with eigenvalue $\lambda(2g) =\prod_{j\in\mathbb{N}} (-1)^{g_j}$. Therefore, $f$ is measurable with respect to $\mathcal{Z}_\psi(X)$, and we see that $\mathcal{Z}^1(X)\not = \mathcal{Z}_\psi(X)$. Now let $\varphi(g)=g$. We claim that $f$ does not contribute to (\ref{avg}). Namely, we have that $$\UC_{g\in G} T_g f_1 T_{2g} f = 0$$ for every bounded function $f_1$. Indeed, by (\ref{avgZ}), it is enough to check this equality in the case where $f_1$ is an eigenfunction with respect to the action of $G$. Let $\chi(g)$ be the eigenvalue of $f_1$ we see that 
$$\UC_{g\in G} T_g f_1 T_{2g} f = f_1\cdot f \cdot \UC_{g\in G} \chi(g)\cdot \lambda(2g).$$ The eigenfunctions of $X$ take the form $h(\textbf{x},y) = \prod_{i=1}^n x_i^{l_i}$ for some $n\in\mathbb{N}$ and $l_1,...,l_n\in \{0,1,2,3\}$. Therefore, $g\mapsto \chi(g)\lambda(2g)$ is a non-trivial characters of $G$ and so $$\UC_{g\in G} \chi(g)\lambda(2g)=0.$$
\end{example}

\begin{rem}
    In the example above, the factor $\mathbf{Z}^1(X)$ is isomorphic to $\prod_{j\in\mathbb{N}} C_4$ equipped with the action $T^{(1)}_g x = (i^{g_j}\cdot x_j)_{j\in\mathbb{N}}$, while $\mathbf{Z}^2(X)=\X$. On the other hand, for the $2G$-system $\left( X, (T_g)_{g \in 2G} \right)$, we have $\mathbf{Z}_{2G}^1(X) = \X$.
\end{rem}

Example \ref{Example:eigenvalue} suggests that a $\psi(G)$-eigenfunction contributes to (\ref{avg}) if and only if its eigenvalue coincides with an eigenvalue of the $G$-action.
In practice, we use a result of Frantzikinakis and Host \cite{FranHost} to decompose $f_2$ into a linear combination of eigenfunctions (see Proposition \ref{FranHostdeco}).
However, since the action of $\psi(G)$ may not be ergodic, we have to include in our analysis the case where the $\psi(G)$-eigenvalue, $\lambda(\psi(g))$, is not a constant in $X$, but rather, a $\psi(G)$-invariant function.
We let $\tilde{\mathcal{Z}}_\psi(X)$ be the sub $\sigma$-algebra of $\mathcal{Z}_\psi(X)$ generated by all the $\psi(G)$-eigenfunctions with eigenvalues $\lambda(\psi(\cdot),x):X\rightarrow \widehat G$ that coincide with an eigenvalue with respect to the $G$-action for $\mu$-a.e. $x\in X$.
We show that one can replace $\mathcal{Z}_\psi(X)$ with $\tilde{\mathcal{Z}}_\psi(X)$ in (\ref{avgZ}).
After replacing $\mathcal{Z}_\psi(X)$ by $\tilde{\mathcal{Z}}_\psi(X)$, the remainder of the proof of Theorem \ref{Khintchinefiniteindex} follows by modifying previous arguments used for deducing Khintchine-type recurrence from knowledge of relevant characteristic factors (see, e.g., \cite[Section 8]{ABB}).

\subsection{Characteristic factors}

We start with a definition of characteristic factors (cf. \cite[Section 3]{F&W}).

\begin{defn} \label{defn: characteristic factor}
Let $G$ be a countable abelian group, let $\varphi,\psi:G\rightarrow G$ be arbitrary homomorphisms, and let $X=(X,\mX,\mu, (T_g)_{g\in G})$  be a $G$-system. A factor $\Y=(Y,\mathcal{Y},\nu, (S_g)_{g\in G})$ of $\X$ is called a \emph{partial characteristic factor for the pair $(\varphi,\psi)$ with respect to $\varphi$} if $$\UC_{g\in G} T_{\varphi(g)}f_1 T_{\psi(g)} f_2 = \UC_{g\in G} T_{\varphi(g)} E(f_1|\mathcal{Y}) T_{\psi(g)} f_2$$ for every $f_1,f_2\in L^\infty(X)$. We define a partial characteristic factor with respect to $\psi$ similarly, and say that $\Y$ is a \emph{characteristic factor} if it is a partial characteristic factor with respect to both $\varphi$ and $\psi$, i.e.
$$\UC_{g\in G} T_{\varphi(g)}f_1 T_{\psi(g)} f_2 = \UC_{g\in G} T_{\varphi(g)} E(f_1|\mathcal{Y}) T_{\psi(g)} E(f_2|\mathcal{Y})$$ for every $f_1,f_2\in L^\infty(X)$.
\end{defn}
In other words, a factor of a measure preserving system $\X=(X,\mX,\mu,(T_g)_{g\in G})$  is a characteristic factor for a certain multiple ergodic average, if the study of the limit behavior of the average can be reduce to this factor.
The following easy lemma is related to the well known result of Furstenberg which asserts that a system $\X=(X,\mX,\mu,T)$ is weakly mixing if and only if the Kronecker factor, $\mZ^1(X)$, is trivial.
\begin{lem}\label{weaklymixing}
    Let $\X=(X,\mX,\mu,(T_g)_{g \in G})$ be a $G$-system, let $\varphi:G\to G$ be a homomorphism and let $f\in L^2(X)$. If $E(f|\mZ_\varphi(X))=0$, then for every $h\in L^2(X)$ we have
    $$\UC_{g\in G}\left| \int_X T_{\varphi(g)} f \cdot h~d\mu \right|=0.$$
\end{lem}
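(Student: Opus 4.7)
The plan is to square the integral, realize the square on the product system $X\times X$, and then apply a mean ergodic argument combined with the structure of $\varphi(G)$-invariant functions on the product. Set $a_g := \int_X T_{\varphi(g)} f \cdot h \, d\mu$. By Cauchy--Schwarz applied to uniform Ces\`aro averages, $(\UC_g |a_g|)^2 \le \UC_g |a_g|^2$, so it suffices to show that $\UC_g |a_g|^2 = 0$.

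Expanding $|a_g|^2 = a_g\overline{a_g}$ and using Fubini,
$$|a_g|^2 = \int_{X\times X} (T_{\varphi(g)}\otimes T_{\varphi(g)})(f\otimes \bar f)(x,y)\cdot h(x)\overline{h(y)}\, d(\mu\times\mu)(x,y).$$
The mean ergodic theorem applied to the diagonal action of $\varphi(G)$ on $L^2(\mu\times\mu)$ gives, in $L^2$,
$$\UC_g (T_{\varphi(g)}\otimes T_{\varphi(g)})(f\otimes\bar f) = E\bigl(f\otimes\bar f \,\big|\, \mI_{\varphi}(X\times X)\bigr).$$
Since $h\otimes \bar h \in L^2(\mu\times\mu)$, the uniform Ces\`aro limit commutes with integration against $h\otimes\bar h$, yielding
$$\UC_g |a_g|^2 = \int_{X\times X} E\bigl(f\otimes\bar f \,\big|\, \mI_{\varphi}(X\times X)\bigr)(x,y)\cdot h(x)\overline{h(y)}\, d(\mu\times\mu).$$

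It thus remains to show that $E(f\otimes\bar f \mid \mI_\varphi(X\times X)) = 0$. The key input is the classical containment $\mI_\varphi(X\times X) \preceq \mZ_\varphi(X)\otimes \mZ_\varphi(X)$, i.e. that every $\varphi(G)$-invariant function on $X\times X$ is measurable with respect to the product of the Kronecker factors. Given this, by the tower property,
$$E\bigl(f\otimes\bar f \,\big|\, \mI_\varphi(X\times X)\bigr) = E\Bigl( E\bigl(f\otimes\bar f \,\big|\, \mZ_\varphi\otimes\mZ_\varphi\bigr) \,\Big|\, \mI_\varphi(X\times X) \Bigr),$$
and the independence of the two coordinates under $\mu\times\mu$ gives
$$E\bigl(f\otimes\bar f \,\big|\, \mZ_\varphi\otimes\mZ_\varphi\bigr) = E(f\mid\mZ_\varphi)\otimes E(\bar f\mid\mZ_\varphi) = 0,$$
by the hypothesis $E(f\mid\mZ_\varphi) = 0$. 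The main obstacle is justifying the containment $\mI_\varphi(X\times X) \preceq \mZ_\varphi(X)\otimes \mZ_\varphi(X)$: when the action of $\varphi(G)$ on $X$ is ergodic, this is Furstenberg's characterization of the Kronecker factor via weak mixing of the diagonal product action; in the general (possibly non-ergodic) case one passes to the ergodic decomposition, or argues spectrally using that the spectral measure of $f$ with respect to $(T_{\varphi(g)})_{g\in G}$ has no atoms precisely when $E(f\mid\mZ_\varphi) = 0$.
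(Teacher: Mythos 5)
Your proof has the same backbone as the paper's: square the integral, realize it on the product system $X\times X$, and invoke the mean ergodic theorem to replace the Ces\`aro average by $E(f\otimes\bar f\mid\mI_{\varphi\times\varphi}(X\times X))$. Where you diverge is in showing this conditional expectation vanishes. The paper does it directly: since $\mZ_\varphi(X)$ is \emph{defined} by $\|f\|_{U^2(\varphi(G))}=0\iff E(f\mid\mZ_\varphi(X))=0$, the hypothesis gives $\UC_g\|E(\Delta_{\varphi(g)}f\mid\mI_\varphi)\|_{L^2}^2=0$, hence $\UC_g\bigl|\int\Delta_{\varphi(g)}f\,d\mu\bigr|^2=0$; taking $h=\bar f$ in your step (2) and applying the mean ergodic theorem identifies this limit with $\|E(f\otimes\bar f\mid\mI_{\varphi\times\varphi}(X^2))\|_2^2$, which therefore vanishes. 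This is self-contained and does not require any structural description of $\mI_{\varphi\times\varphi}(X^2)$.

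You instead route the argument through the containment $\mI_{\varphi\times\varphi}(X^2)\preceq\mZ_\varphi(X)\otimes\mZ_\varphi(X)$. That containment is in fact true in the generality needed -- it is the $\psi=\varphi$ case of Lemma \ref{TT} -- so your argument can be made to close. But your proposed justifications for it do not. Furstenberg's weak-mixing dichotomy requires the $\varphi(G)$-action to be ergodic, which it generally is not in this paper even when $G$ acts ergodically. And the spectral fallback is wrong as stated: it is \emph{not} true that ``$E(f\mid\mZ_\varphi)=0$ precisely when the spectral measure of $f$ has no atoms'' for non-ergodic $\varphi(G)$-actions. Example \ref{basis} gives a system where $\mZ^1(X)=\mX$, yet the function $f(x,y)=y$ is orthogonal to every eigenfunction and has Lebesgue (hence atomless) spectral measure; so atomlessness does not imply $E(f\mid\mZ^1)=0$. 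The Host--Kra factor $\mZ_\varphi^1$ in the non-ergodic case is generated by \emph{relative} eigenfunctions (Theorem \ref{FranHostdeco}), whose eigenvalues are $\varphi(G)$-invariant functions rather than constants, and this is not captured by an absolute spectral criterion. The ergodic-decomposition fallback can be made to work, but it requires showing that the Host--Kra factor disintegrates along the ergodic components of the $\varphi(G)$-action, which is another argument to carry out, not a one-liner. The cleanest repairs are either to cite Lemma \ref{TT} for the containment, or to drop the containment altogether and use the paper's direct seminorm computation.
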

\begin{proof}
Assume $E(f\mid\mZ_{\varphi}(X)) = 0$.
Then by Proposition \ref{UCF}, $\|f\|_{U^2(\varphi(G))}=0$.
That is,
$$\UC_{g\in G} \left|\int_X \Delta_{\varphi(g)} f d\mu \right| = 0.$$ Since $\UC_{g\in G} |a_g|=0 \iff \UC_{g\in G} a_g^2 = 0$ for every bounded complex-valued sequence $g\mapsto a_g$, we have
$$\UC_{g\in G} \int_{X\times X} \left(T_{\varphi(g)}\times T_{\varphi(g)}\right) f\otimes f \cdot \overline{f\otimes f}~ d(\mu\times \mu) = 0.$$
The mean ergodic theorem implies that
$$\int_{X^2} E(f\otimes f | \mathcal{I}_{\varphi\times\varphi}(X\times X))\cdot f\otimes f~ d(\mu\times \mu)=0$$ and $E(f\otimes f | \mathcal{I}_{\varphi\times\varphi}(X\times X))=0$. Therefore, for every $h\in L^2(X)$ we have,
$$\UC_{g\in G} \left(\int_X T_{\varphi(g)}f \cdot h~ d\mu\right)^2 = \int_{X^2} E(f\otimes f | \mathcal{I}_{\varphi\times\varphi}(X\times X))\cdot h\otimes h ~d\mu\times \mu = 0$$ which implies that $$\UC_{g\in G} \left|\int_X T_{\varphi(g)}f \cdot h~ d\mu\right|=0$$ as required.
\end{proof}

Using the van der Corput lemma (Lemma \ref{vdc}), we show that $\mZ_\varphi(X)$ and $\mZ_\psi(X)$ are partial characteristic factors for the pair $(\varphi,\psi)$ with respect to $\varphi$ and $\psi$ respectively.
\begin{prop}\label{partialcf1}
Let $\X = (X, \mX,\mu,(T_g)_{g\in G})$ be an ergodic $G$-system. Let $\varphi, \psi : G \rightarrow G$ be homomorphisms such that $(\psi-\varphi)(G)$ has finite index in $G$. Then for any $f_1,f_2\in L^\infty(\mu)$, one has
$$\UC_{g\in G} T_{\varphi(g)} f_1\cdot T_{\psi(g)} f_2 = \UC_{g\in G} T_{\varphi(g)} E(f_1|\mZ_{\varphi}(X)) \cdot T_{\psi(g)} E(f_2|\mZ_{\psi}(X)) 
$$
in $L^2(\mu)$.
\end{prop}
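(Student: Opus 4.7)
By the symmetry exchanging the roles of $\varphi$ and $\psi$ (which preserves the hypothesis, since $(\varphi-\psi)(G) = (\psi-\varphi)(G)$), it suffices to prove that $\mathcal{Z}_\varphi(X)$ is a partial characteristic factor with respect to $\varphi$: namely, if $E(f_1\mid\mathcal{Z}_\varphi(X)) = 0$, then $\UC_{g\in G} T_{\varphi(g)} f_1 \cdot T_{\psi(g)} f_2 = 0$ in $L^2(\mu)$. The desired identity then follows by orthogonal decomposition of $f_1$ and $f_2$ into their projections onto the respective Kronecker factors and the complementary pieces.

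The plan is to apply the van der Corput lemma (Lemma \ref{vdc}) to $u_g := T_{\varphi(g)} f_1 \cdot T_{\psi(g)} f_2$, reducing the task to showing $\UC_h \UC_g \langle u_{g+h}, u_g\rangle = 0$. Applying the measure-preserving map $T_{-\varphi(g)}$ inside the integral yields
$$\langle u_{g+h}, u_g\rangle = \int_X F_{1,h} \cdot T_{(\psi-\varphi)(g)} F_{2,h}\, d\mu,$$
with $F_{1,h} := T_{\varphi(h)} f_1 \cdot \overline{f_1}$ and $F_{2,h} := T_{\psi(h)} f_2 \cdot \overline{f_2}$. Since $g\mapsto T_{(\psi-\varphi)(g)}$ is a unitary $G$-action on $L^2(\mu)$ whose invariant subspace is $L^2(\mathcal{I}_{\psi-\varphi}(X))$, the mean ergodic theorem produces
$$\UC_g\langle u_{g+h}, u_g\rangle = \int_X F_{1,h}\cdot E\bigl(F_{2,h}\,\bigm|\,\mathcal{I}_{\psi-\varphi}(X)\bigr)\, d\mu.$$

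At this point the finite-index hypothesis enters decisively. Setting $H := (\psi-\varphi)(G)$, the $G$-ergodicity of $\X$ forces $X$ to split into at most $[G:H]$ ergodic $H$-components $X_1,\dots,X_s$ (which $G$ permutes transitively), so $L^2(\mathcal{I}_H(X))$ is finite-dimensional with orthonormal basis $\phi_i := 1_{X_i}/\sqrt{\mu(X_i)}$. Expanding the conditional expectation gives the \emph{finite} sum
$$\UC_g\langle u_{g+h}, u_g\rangle = \sum_{i=1}^{s} \langle F_{1,h},\phi_i\rangle \cdot \langle F_{2,h},\phi_i\rangle.$$
For each $i$ the factor $\langle F_{2,h},\phi_i\rangle$ is uniformly bounded in $h$, whereas
$$\langle F_{1,h},\phi_i\rangle = \int_X T_{\varphi(h)} f_1 \cdot (\overline{f_1}\phi_i)\, d\mu$$
has vanishing uniform Ces\`{a}ro absolute value by Lemma \ref{weaklymixing}, applied with the auxiliary function $\overline{f_1}\phi_i \in L^\infty(\mu)$ and using $E(f_1\mid\mathcal{Z}_\varphi(X))=0$. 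Summing the finitely many vanishing terms shows $\UC_h\UC_g\langle u_{g+h}, u_g\rangle=0$, and van der Corput concludes.

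The main obstacle, and the place where the finite-index hypothesis is used in an essential way, is the finite-dimensional reduction of $L^2(\mathcal{I}_{\psi-\varphi}(X))$: without it, $E(F_{2,h}\mid\mathcal{I}_{\psi-\varphi})$ depends on $h$ in an opaque way and cannot be separated term-by-term to exploit the $\varphi$-Kronecker vanishing of $f_1$. Everything else (the van der Corput reduction, the $T_{-\varphi(g)}$ substitution, and the mean ergodic step) is entirely routine.
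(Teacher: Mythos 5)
Your proof is correct and essentially identical to the paper's: the same van der Corput setup with $u_g = T_{\varphi(g)}f_1 \cdot T_{\psi(g)}f_2$, the same $T_{-\varphi(g)}$ substitution and mean ergodic theorem step landing on $E(F_{2,h}\mid\mathcal{I}_{\psi-\varphi}(X))$, the same exploitation of finite index to make $L^2(\mathcal{I}_{\psi-\varphi}(X))$ finite-dimensional (the paper uses the indicators $1_{A_i}$ directly rather than the normalized $\phi_i$, with the same effect), and the same appeal to Lemma \ref{weaklymixing} to kill each term in $h$ before applying van der Corput. No meaningful differences.
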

\begin{proof}
We follow the argument of Furstenberg and Weiss \cite{F&W}.
By linearity and symmetry, it is enough to show that $$\UC_{g\in G} T_{\varphi(g)} f_1\cdot T_{\psi(g)} f_2 = 0$$ whenever $E(f_1|\mZ_\varphi(X))=0$.
Dividing through by a constant, we may assume that $\|f_i\|_{\infty} \le 1$ for $i=1,2$.

We use the van der Corput lemma with $u_g = T_{\varphi(g)} f_1\cdot T_{\psi(g)} f_2$. For every $g,h\in G$, we have
\begin{align} \label{eq: vdc expression}
    \left<u_{g+h},u_g\right> = \int_X T_{\varphi(g+h)} f_1 \cdot T_{\psi(g+h)} f_2 \cdot T_{\varphi(g)}\overline{f_1}\cdot T_{\psi(g)}\overline{f_2}~d\mu.
\end{align}
Since the measure $\mu$ is $T_{\varphi(g)}$-invariant, \eqref{eq: vdc expression} is equal to $$\int_X T_{\varphi(h)}f_1\cdot \overline{f_1} \cdot T_{(\psi-\varphi)(g)} \left(T_{\psi(h)}f_1\cdot \overline {f_2} \right)~d\mu.$$
Hence, by the mean ergodic theorem we have
$$\UC_{g\in G}\left<u_{g+h},u_g\right> = \int_X T_{\varphi(h)} f_1\cdot\overline{f_1} \cdot E(T_{\psi(h)}f_2\cdot \overline{f_2} | \mathcal{I}_{\psi-\varphi}(X)).$$
Since $H:=(\psi-\varphi)(G)$ has finite index in $G$ and the action of $G$ on $X$ is ergodic, we can find a partition $X=\bigcup_{i=1}^{l} A_i$ to $H$-invariant sets, where $l$ is at most the index of $H$ in $G$. Since $f_2$ is bounded by $1$, $$\left|\UC_{g\in G}\left<u_{g+h},u_g\right>\right| \leq \sum_{i=1}^k \left|\int_{X} T_{\varphi(h)} f_1\cdot\overline{f_1} \cdot 1_{A_i}~ d\mu \right|.$$
Now, since $E(f_1|Z_{\varphi}(X))=0$, Lemma \ref{weaklymixing} implies that $\UC_{h\in G} \left|\int_X T_{\varphi(h)} f_1\cdot \overline{f_1}\cdot 1_{A_i}d\mu \right|=0$, for every $1\leq i \leq k$. The van der Corput lemma (Lemma \ref{vdc}) then implies that $$\UC_{g\in G} T_{\varphi(g)} f_1\cdot T_{\psi(g)} f_2 = 0,$$ and this completes the proof.
\end{proof}
In \cite[Appendix A]{Leib}, Leibman proved the following result in the special case where $G=\mathbb{Z}$. For the sake of completeness, we give a proof for arbitrary countable abelian $G$ in Appendix \ref{LeibProof}. 
\begin{lem} \label{IZk}
    Let $(X,\mX,\mu, (T_g)_{g\in G})$ be a $G$-system and let $H\leq G$ be a subgroup of finite index. Then for every $k\geq 1$, one has $\mathcal{Z}^k_H(X) = \mZ^k_G(X)$.
\end{lem}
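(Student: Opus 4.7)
By Proposition \ref{UCF}, the asserted equality $\mZ^k_H(X) = \mZ^k_G(X)$ is equivalent to the seminorms $\|\cdot\|_{U^{k+1}(G)}$ and $\|\cdot\|_{U^{k+1}(H)}$ having the same null functions, so the plan is to compare these two seminorms directly. Let $m := [G:H]$ and fix coset representatives $g_0 = 0, g_1, \ldots, g_{m-1}$ so that $G = \bigsqcup_{i=0}^{m-1}(g_i + H)$.

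The first step is to establish the inclusion $\mZ^k_G(X) \subseteq \mZ^k_H(X)$, which amounts to showing the implication $\|f\|_{U^{k+1}(H)} = 0 \Rightarrow \|f\|_{U^{k+1}(G)} = 0$. Starting from the cubic formulation of the Host--Kra seminorm and splitting the outer average over the $m^{k+1}$ cosets of $H^{k+1}$ in $G^{k+1}$ yields the identity
\[\|f\|_{U^{k+1}(G)}^{2^{k+1}} = \frac{1}{m^{k+1}} \sum_{\vec i \in \{0,\ldots,m-1\}^{k+1}} \langle T_{\epsilon \cdot g_{\vec i}} f : \epsilon \in \{0,1\}^{k+1}\rangle_{U^{k+1}(H)},\]
where $\langle \cdot \rangle_{U^{k+1}(H)}$ denotes the Gowers inner product for the $H$-action. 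The Gowers--Cauchy--Schwarz inequality combined with translation invariance of $\|\cdot\|_{U^{k+1}(H)}$ then bounds each summand in absolute value by $\|f\|_{U^{k+1}(H)}^{2^{k+1}}$, giving $\|f\|_{U^{k+1}(G)} \leq \|f\|_{U^{k+1}(H)}$.

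For the reverse inclusion I would induct on $k \geq 1$. After reducing to the $G$-ergodic case via ergodic decomposition, the base case $k = 1$ says that the Kronecker factors coincide. Here $X$ decomposes into finitely many $H$-ergodic components $X_1, \ldots, X_{m'}$ which are transitively permuted by $G$; for any $H$-eigenfunction $\phi$ supported on some $X_j$, the $G$-orbit of $\phi$ is contained in the finite-dimensional span of its $m$ coset translates $\{T_{g_i} \phi : 0 \leq i \leq m-1\}$ (each of them only rescaled by a scalar coming from the $H$-eigenvalue), so $\phi$ is $G$-almost periodic and hence $\mZ^1_G$-measurable. Since such eigenfunctions generate $L^2(\mZ^1_H)$, this gives $\mZ^1_H \subseteq \mZ^1_G$.

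For the inductive step (with $k \geq 2$), letting $Z := \mZ^{k-1}_H = \mZ^{k-1}_G$ denote the shared factor from the inductive hypothesis, I would apply Theorem \ref{HKfactors}(iii) on each $H$-ergodic component to represent $\mZ^k_H$ as an abelian group extension of $Z$ by an $H$-cocycle $\rho_H$, and then produce a $G$-cocycle extending $\rho_H$ by averaging over the coset representatives $g_0, \ldots, g_{m-1}$ while exploiting the $G$-action permuting the $H$-ergodic components. The extension built from this averaged $G$-cocycle would then be a factor of $\mZ^k_G(X)$, giving $\mZ^k_H \subseteq \mZ^k_G$. I expect the main obstacle to be verifying the cocycle identity for the averaged object and checking its compatibility with both the $G$-permutation of $H$-ergodic components and the $G$-action on $Z$; this is precisely where the finiteness of $[G:H]$ enters essentially.
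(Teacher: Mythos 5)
Your reduction to an honest inequality between seminorms is a clean idea, and the half it fully delivers is correct: splitting the outer uniform Ces\`{a}ro average over the $m^{k+1}$ cosets of $H^{k+1}$ inside $G^{k+1}$, then applying the Gowers--Cauchy--Schwarz inequality for the $H$-action together with the $T_g$-invariance of $\|\cdot\|_{U^{k+1}(H)}$, does give $\|f\|_{U^{k+1}(G)} \le \|f\|_{U^{k+1}(H)}$, hence $\mZ^k_G(X) \preceq \mZ^k_H(X)$. This differs from the paper's route for the easy inclusion (the paper invokes the Host--Kra parallelepiped characterization, Theorem \ref{invariantalgebra}, rather than a direct seminorm comparison), but it is correct and arguably more transparent. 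Your base case $k=1$ is also sound and is essentially the paper's Lemma \ref{IZ1} repackaged: an $H$-eigenfunction supported on an $H$-ergodic fiber generates, under the $G$-action, a finite-dimensional $G$-invariant subspace, hence is $G$-almost periodic and $\mZ^1_G$-measurable; the role of the finite index is to guarantee finitely many $H$-ergodic components (and this is where the paper's Lemma \ref{IZ} enters as well).

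The gap is in the inductive step, and it is more serious than your final paragraph suggests. First, there is nothing to construct: since $T_g$ commutes with each $T_h$ and $\|\cdot\|_{U^{k+1}(H)}$ is $T_g$-invariant, the factor $\mZ^k_H(X)$ is automatically $G$-invariant, so a $G$-cocycle $\rho_G : G \times Z \to U$ with $\rho_G|_{H\times Z} = \rho_H$ already exists (canonically, up to coboundary) once you know $\mZ^{k-1}_H = \mZ^{k-1}_G =: Z$. Thus the obstacle you name --- ``verifying the cocycle identity for the averaged object'' --- does not arise; the cocycle identity is free. The real content of the inductive step is entirely different: one must show that this $G$-cocycle is of the correct \emph{order-$k$ type} for the $G$-action, so that the extension $Z \times_{\rho_G} U$ sits inside $\mZ^k_G(X)$. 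Being of order-$k$ type for the $H$-action (which is what Theorem \ref{HKfactors}(iii) applied to the $H$-system gives you) does not by itself imply being of order-$k$ type for the $G$-action, and averaging over coset representatives does not obviously bridge this. Equivalently: you would need the reverse inequality $\|f\|_{U^{k+1}(H)} \lesssim \|f\|_{U^{k+1}(G)}$ on the factor $\mZ^k_H(X)$, which is precisely the inclusion you are trying to prove, so the argument as sketched is circular at this step. The paper avoids cocycles altogether here and instead works with the parallelepiped systems $X_G^{[k]}$ and $X_H^{[k]}$: Lemmas \ref{lem: fiber product partition} and \ref{lem: parallelopiped partition} produce an explicit partition of $X_G^{[k]}$ into $(T^{[k]}_g)_{g\in G}$-invariant pieces, each isomorphic via coordinatewise powers of $T_{g_0}$ to a union of copies of $X_H^{[k]}$, and this bookkeeping shows $\mI(X_H^{[k]}) \preceq \bigl(\mZ^k_G(X)\bigr)^{[k]}$, from which $\mZ^k_H(X) \preceq \mZ^k_G(X)$ follows by the minimality statement in Theorem \ref{invariantalgebra}. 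Some structural mechanism of this kind (tracking how the $H$-cube system embeds in the $G$-cube system) appears to be the missing ingredient in your inductive step.
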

In particular, if $\varphi(G)$ has finite index in $G$, then the factor $\mZ_\varphi(X)$ coincides with $\mZ(X)$.
\begin{cor}
Let $G$ be a countable abelian group, let $X=(X,\mX,\mu,(T_g)_{g\in G})$ be a $G$-system and let $\varphi,\psi:G\rightarrow G$ be arbitrary homomorphisms such that $\varphi(G)$ and $(\psi-\varphi)(G)$ have finite index in $G$. Then, for any bounded functions $f_1,f_2\in L^\infty(X)$, $$\UC_{g\in G} T_{\varphi(g)} f_1\cdot T_{\psi(g)} f_2 = \UC_{g\in G} T_{\varphi(g)} E(f_1|\mZ(X)) \cdot T_{\psi(g)} E(f_2|\mZ_{\psi}(X)) .
$$
\end{cor}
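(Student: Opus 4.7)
The plan is to simply combine the two preceding results: Proposition \ref{partialcf1} provides the reduction to the Kronecker factors $\mZ_\varphi(X)$ and $\mZ_\psi(X)$ on each side, and Lemma \ref{IZk} identifies the first of these with $\mZ(X)$ once $\varphi(G)$ has finite index in $G$.

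More concretely, I would first invoke Proposition \ref{partialcf1}, whose hypothesis $(\psi-\varphi)(G)$ has finite index in $G$ is part of our assumption. This yields
\begin{equation*}
\UC_{g\in G} T_{\varphi(g)} f_1 \cdot T_{\psi(g)} f_2 = \UC_{g\in G} T_{\varphi(g)} E(f_1\mid \mZ_\varphi(X)) \cdot T_{\psi(g)} E(f_2 \mid \mZ_\psi(X)).
\end{equation*}
Next, I would apply Lemma \ref{IZk} with $H=\varphi(G)$ and $k=1$: since $\varphi(G)$ has finite index in $G$, we have $\mZ_\varphi(X) = \mZ^1_{\varphi(G)}(X) = \mZ^1_G(X) = \mZ(X)$. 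Substituting this identification into the previous display yields the claimed formula.

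Since both ingredients are already proved earlier in the paper, there is no real obstacle here; the only thing to be mildly careful about is to make sure that the finite-index hypotheses on $(\psi-\varphi)(G)$ and on $\varphi(G)$ are used in the correct places (the former for Proposition \ref{partialcf1}, the latter for Lemma \ref{IZk}). Note also that ergodicity of the $G$-action is needed in order to apply Proposition \ref{partialcf1}, even though it is not stated explicitly in the corollary; this is implicit in treating $\mZ(X)$ as the Kronecker factor rather than worrying about the invariant $\sigma$-algebra.
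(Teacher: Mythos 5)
Your proof is correct and is exactly how the paper obtains this corollary: Proposition \ref{partialcf1} gives the reduction to $\mZ_\varphi(X)$ and $\mZ_\psi(X)$ using that $(\psi-\varphi)(G)$ has finite index, and Lemma \ref{IZk} (with $H=\varphi(G)$, $k=1$) identifies $\mZ_\varphi(X)$ with $\mZ(X)$. Your remark that ergodicity must be assumed, despite being omitted from the corollary's wording, is also accurate since Proposition \ref{partialcf1} requires it.
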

Let $G$ be a countable abelian group and $\X = (X,\mX,\mu,(T_g)_{g\in G})$ be an ergodic $G$-system. By Theorem \ref{HKfactors}(ii), the Kronecker factor of $\X$, $\mathbf{Z}^1(X)$, is isomorphic to an ergodic rotation. Therefore, it is convenient to identify the Kronecker factor with the system $\textbf{Z}=(Z,\alpha)$, where $Z$ is a compact abelian group and $\alpha:G\rightarrow Z$ is a homomorphism such that $T^1_g z = z + \alpha_g$, where $T^1$ is the $G$-action on $Z$. The following corollary of Proposition \ref{partialcf1} will be useful later on in this paper.
\begin{prop} \label{partialcfeta}
	Let $\X = \left(X, \mX, \mu, (T_g)_{g \in G} \right)$ be an ergodic $G$-system
	with Kronecker factor $\textbf{Z}=(Z, \alpha)$.
	Let $\varphi, \psi : G \to G$ be homomorphisms such that $(\psi - \varphi)(G)$ has finite index in $G$. Then for any $f_0, f_1, f_2 \in L^{\infty}(\mu)$ and any continuous function $\eta : Z^2 \to \mathbb{C}$, we have
	\begin{align*}
		\UC_{g \in G}&~{\eta \left( \alpha_{\varphi(g)}, \alpha_{\psi(g)} \right)
		 ~\int_X{f_0 \cdot T_{\varphi(g)}f_1 \cdot T_{\psi(g)}f_2~d\mu}} \\
		 & = \UC_{g \in G}{\eta \left( \alpha_{\varphi(g)}, \alpha_{\psi(g)} \right)
		 ~\int_X{f_0 \cdot T_{\varphi(g)}E(f_1|\mathcal{Z}_\varphi(X)) \cdot T_{\psi(g)}E(f_2|\mathcal{Z}_\psi(X))~d\mu}}.
	\end{align*}
\end{prop}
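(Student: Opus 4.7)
The plan is to reduce the statement to the case of a character weight $\eta(z_1,z_2) = \chi_1(z_1)\chi_2(z_2)$ with $\chi_1,\chi_2 \in \widehat{Z}$, absorb the weight into the ergodic averages via eigenfunction lifts of $\chi_1$ and $\chi_2$, apply Proposition~\ref{partialcf1} to the modified functions, and then reverse the absorption using the fact that these lifts are measurable with respect to $\mZ_\varphi(X)$ and $\mZ_\psi(X)$.

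By the Peter--Weyl theorem, the linear span of products $(z_1,z_2) \mapsto \chi_1(z_1)\chi_2(z_2)$ of characters of $Z$ is uniformly dense in $C(Z^2)$. Each integrand appearing in the identity is bounded in $L^\infty(\mu)$ by $\prod_i \|f_i\|_\infty$ uniformly in $g$, so any sup-norm approximation of $\eta$ yields a uniform approximation of both $\UC$-averages in the identity. It therefore suffices to treat the case $\eta = \chi_1 \otimes \chi_2$.

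Let $\pi:X\to Z$ be the factor map onto $\mathbf{Z}^1(X)$, and set $\widetilde{\chi}_i := \chi_i \circ \pi \in L^\infty(X)$. From $\pi(T_g x) = \pi(x) + \alpha_g$ and the multiplicativity of $\chi_i$, one obtains $T_g\widetilde{\chi}_i = \chi_i(\alpha_g)\widetilde{\chi}_i$, and since $|\widetilde{\chi}_i| \equiv 1$,
$$\chi_i(\alpha_g) = (T_g\widetilde{\chi}_i)\cdot\overline{\widetilde{\chi}_i} \quad \text{pointwise on } X.$$
Substituting $g \mapsto \varphi(g)$ and $g \mapsto \psi(g)$ and absorbing the resulting factors into the integrand rewrites the left-hand side as
$$\UC_{g\in G} \int_X \big(f_0\,\overline{\widetilde{\chi}_1\widetilde{\chi}_2}\big)\cdot T_{\varphi(g)}\big(f_1\widetilde{\chi}_1\big)\cdot T_{\psi(g)}\big(f_2\widetilde{\chi}_2\big)\,d\mu.$$
Proposition~\ref{partialcf1} applied to the pair $(f_1\widetilde{\chi}_1,\, f_2\widetilde{\chi}_2)$, combined with continuity of the functional $h \mapsto \int \big(f_0\overline{\widetilde{\chi}_1\widetilde{\chi}_2}\big)\cdot h\,d\mu$ on $L^2(\mu)$, then lets us replace the inside functions by their conditional expectations onto $\mZ_\varphi(X)$ and $\mZ_\psi(X)$, respectively.

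The final step is to pull $\widetilde{\chi}_i$ back out of the conditional expectation, which is the main point requiring care since the actions of $\varphi(G)$ and $\psi(G)$ need not be ergodic. However, $\widetilde{\chi}_i$ is an eigenfunction for every $T_g$ with \emph{constant} eigenvalue, hence also for every $T_{\varphi(g)}$ and $T_{\psi(g)}$; as such it is almost-periodic for both subactions and lies in both Kronecker factors $\mZ_\varphi(X)$ and $\mZ_\psi(X)$. Consequently $E(f_i\widetilde{\chi}_i \mid \mZ_\varphi(X)) = \widetilde{\chi}_i \cdot E(f_i \mid \mZ_\varphi(X))$, and similarly for $\psi$. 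Reversing the earlier manipulation restores the weight $\chi_1(\alpha_{\varphi(g)})\chi_2(\alpha_{\psi(g)})$ in front of the integral, yielding the desired identity for character weights and, by the density reduction, for all continuous $\eta$.
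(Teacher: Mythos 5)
Your proposal is correct and follows essentially the same route as the paper: reduce to character weights via Stone--Weierstrass (Peter--Weyl), absorb $\chi_i(\alpha_{\varphi(g)})$ and $\chi_i(\alpha_{\psi(g)})$ into the integrand using the eigenfunction lifts $\chi_i\circ\pi$, apply Proposition~\ref{partialcf1}, and pull the lifts back out of the conditional expectations because they are $\mZ_\varphi(X)$- and $\mZ_\psi(X)$-measurable. Your extra sentence justifying that $\chi_i\circ\pi$ lies in $\mZ_\varphi(X)$ and $\mZ_\psi(X)$ (as a relative eigenfunction with constant eigenvalue) makes explicit an inclusion $\mZ(X)\preceq \mZ_\varphi(X)\wedge\mZ_\psi(X)$ that the paper invokes without comment, but the underlying argument is the same.
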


\begin{proof}
	By the Stone--Weierstrass theorem and linearity, we may assume $\eta(u,v) = \lambda_1(u)\lambda_2(v)$
	for some characters $\lambda_1, \lambda_2 \in \hat{Z}$.
	Let $\pi : X \to Z$ be the factor map, and let $\chi_i := \lambda_i \circ \pi$.
	Note that $T_g\chi_i = \lambda_i(\alpha_g)\chi_i$, so $\chi_i$ is a $G$-eigenfunction with eigenvalue $\lambda_i \circ \alpha$.
	
	Now set
	\begin{align*}
		h_0 & := \overline{\chi_1} \overline{\chi_2} f_0, \\
		h_1 & := \chi_1 f_1, \\
		h_2 & := \chi_2 f_2.
	\end{align*}
	\noindent Since $\chi_1$ and $\chi_2$ are measurable with respect to the Kronecker factor $\mZ(X)$,
	which is a sub-$\sigma$-algebra of $\mZ_\varphi(X)$ and $\mZ_\psi(X)$, we have the identities
	\begin{align*}
		E(h_1|\mZ_\varphi(X)) & = \chi_1 \cdot E({f_1}|{\mZ_\varphi(X)}), \\
		E(h_2|\mZ_\varphi(X)) & = \chi_2 \cdot E({f_2}|{\mZ_\varphi(X)}).
	\end{align*}
	
\noindent	Thus, applying Proposition \ref{partialcf1} for the functions $h_1, h_2$ and integrating against $h_0$, we have
	\begin{align*}
		\UC_{g \in G}&~{\eta \left( \alpha_{\varphi(g)}, \alpha_{\psi(g)} \right)
		 ~\int_X{f_0 \cdot T_{\varphi(g)}f_1 \cdot T_{\psi(g)}f_2~d\mu}} \\
		 & = \UC_{g \in G}{\int_X{h_0 \cdot T_{\varphi(g)}h_1 \cdot T_{\psi(g)}h_2~d\mu}} \\
		 & = \UC_{g \in G}{\int_X{h_0 \cdot T_{\varphi(g)} E(h_1|\mZ_\varphi(X))
		 \cdot T_{\psi(g)} E(h_2|\mZ_\psi(X))~d\mu}} \\
		 & = \UC_{g \in G}{\eta \left( \alpha_{\varphi(g)}, \alpha_{\psi(g)} \right)
		 ~\int_X{f_0 \cdot T_{\varphi(g)} E(f_1|\mZ_\varphi(X)) \cdot T_{\psi(g)} E(f_2|\mZ_\psi(X))~d\mu}}.
	\end{align*}
\end{proof}
In the next section we will study the factor $\mathcal{Z}_\psi(X)$ further.
\subsection{Relative orthonormal basis}
Let $G$ be a countable abelian group, and let $\X=(X,\mX,\mu,(T_g)_{g\in G})$ be a $G$-system. Under the assumption that the system is ergodic, it is well known that the factor $\mZ^1(X)$ admits an orthonormal basis of eigenfunctions. The following example demonstrates that this may fail for non-ergodic systems.
\begin{example}\label{basis}
Let $S^1 = \{z\in \mathbb{C} : |z|=1\}$. Consider $X=S^1\times S^1$ equipped with the Borel $\sigma$-algebra, the Haar probability measure $\mu$, and the measure-preserving transformation $T(x,y) = (x,y\cdot x)$. Any function $f\in L^2(X)$ takes the form $$f(x,y) = \sum_{n,m\in\mathbb{N}} a_{n,m} x^n y^m$$ for some $a_{n,m}\in\mathbb{C}$ with \begin{equation}\label{norm}
    \sum_{n,m\in\mathbb{N}} |a_{n,m}|^2 <\infty.
\end{equation}

Now suppose that there exists some constant $c\in S^1$ such that $Tf(x,y) = c\cdot f(x,y)$ for $\mu$-a.e. $(x,y)\in S^1\times S^1$. By the uniqueness of the Fourier series we deduce that $$a_{n+m,m} = c\cdot a_{n,m}$$ for every $n,m\in\mathbb{N}.$ If $m\not = 0$, this is a contradiction to \eqref{norm} unless $a_{n,m}=0$. We conclude that $f$ is an eigenfunction if and only if it is independent of the $y$ coordinate. In particular $L^2(X)$ is not generated by the eigenfunctions of $X$.

On the other hand, the functions $\{x^n\}_{n\in\mathbb{N}}$ are invariant and therefore measurable with respect to $\mathcal{Z}^1(X)$. Moreover, the functions $\{y^m\}_{n\in\mathbb{N}}$ satisfy $\Delta_n (y^m) = T^n (y^m) \cdot y^{-m} =  x^{n\cdot m}$, which is an invariant function. Hence, $y^m$ is also measurable with respect to $\mathcal{Z}^1(X)$. We thus conclude that $X$ coincides with $\mathcal{Z}^1(X)$.
\end{example}
In order to handle non-ergodic systems, Frantzikinakis and Host \cite{FranHost} came up with the following definition.
\begin{defn}
Let $H$ be a countable abelian group acting on a probability space $(X,\mX,\mu,(T_h)_{h\in H})$. A \textit{relative orthonormal system} is a countable family $(\phi_j)_{j\in\mathbb{N}}$ belonging to $L^2(\mu)$ such that
\begin{enumerate}[(i)]
    \item{$\mathbb{E}(\left|\phi_j\right|\text{ } | \mI_H(X))$ has value $0$ or $1$ $\mu$-a.e. for every $j\in \mathbb{N}$;}
    \item{$\mathbb{E}(\phi_j\overline{\phi_k} | \mI_H(X))=0$ $\mu$-a.e. for all $j,k\in\mathbb{N}$ with $j\not=k$.}\\
    
    The family $(\phi_j)_{j\in\mathbb{N}}$ is also a \emph{relative orthonormal basis} if it also satisfies
    \item{The linear space spanned by the set of functions $$\left\{\phi_j\psi : j \in \N, \psi \in L^{\infty}(\mu)~\text{is $H$-invariant} \right\}$$ is dense in $L^2(\mu)$.}
\end{enumerate}
\end{defn}
We also give a definition of eigenfunctions that applies to non-ergodic systems.
\begin{defn}[$H$-eigenfunctions]
Let $H$ be a countable abelian group and $X=(X,\mX,\mu,(T_h)_{h\in H})$ be an $H$-system. We say that $f:X\rightarrow \mathbb{C}$ is an \emph{$H$-eigenfunction} if there exists an $H$-invariant function $\lambda:X\to \hat{H}$ such that $T_h f(x) = \lambda(x,h) \cdot f(x)$ for all $h\in H$ and $\mu$-a.e. $x\in X$. In this case we also say that $\lambda$ is the \emph{eigenvalue} of $f$.
\end{defn}

Note that under the assumption that the $H$-action is ergodic, this definition coincides with the standard definition of an eigenfunction. Observe moreover that the functions $\{y^m\}_{m\in\mathbb{N}}$ from Example \ref{basis} are eigenfunctions according to this definition.

Frantzikinakis and Host proved the following result:
\begin{thm}[\cite{FranHost}, Theorem 5.2] \label{FranHostdeco}
Let $\X=(X,\mX,\mu,H)$ be an $H$-system. Then $\mZ_H(X)$ admits a relative orthonormal basis of eigenfunctions.
\end{thm}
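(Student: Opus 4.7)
The plan is to reduce to the classical ergodic case via disintegration over $\mI_H(X)$, and then assemble a relative orthonormal basis by a measurable selection argument.

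First, I would disintegrate $\mu = \int_Y \mu_y\,d\nu(y)$ over the invariant factor $(Y, \mathcal{Y}, \nu)$ corresponding to $\mI_H(X)$, so that for $\nu$-a.e.\ $y\in Y$, the measure $\mu_y$ is $H$-ergodic. For such $y$, the classical Halmos--von Neumann theorem applied to the ergodic $H$-system $(X,\mX,\mu_y,(T_h)_{h\in H})$ gives an orthonormal basis $(\phi_j^y)_{j \in \N}$ of $L^2(\mZ_H(X, \mu_y), \mu_y)$ consisting of unimodular eigenfunctions with eigenvalues $\lambda_j(y, \cdot) \in \widehat{H}$. Note that $\mZ_H(X)$, viewed fiberwise, agrees with $\mZ_H(X,\mu_y)$ for $\nu$-a.e.\ $y$, since the Gowers--Host--Kra seminorms are compatible with ergodic decomposition.

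Second, I would invoke a measurable selection theorem (such as Kuratowski--Ryll-Nardzewski, or a Borel cross-section argument on the Polish space of unit vectors in $L^2$) to produce jointly measurable maps $(y, x) \mapsto \phi_j^y(x)$ and $(y, h) \mapsto \lambda_j(y, h)$. Setting $\phi_j(x) := \phi_j^{y(x)}(x)$, where $y(x)$ denotes the ergodic component of $x$, one obtains a countable family of $H$-eigenfunctions in the sense of the definition given above, since each $\lambda_j(\cdot, h)$ is manifestly $\mI_H(X)$-measurable, hence $H$-invariant. By construction $|\phi_j|^2$ is $H$-invariant, so $E(|\phi_j|\mid \mI_H(X)) \in \{0,1\}$, and the fiberwise orthogonality of $(\phi_j^y)$ in $L^2(\mu_y)$ integrates over $\nu$ to yield $E(\phi_j \overline{\phi_k}\mid \mI_H(X)) = 0$ for $j \neq k$, giving properties (i) and (ii). For property (iii), any $f \in L^2(\mZ_H(X), \mu)$ decomposes via the direct integral $L^2(X, \mZ_H(X), \mu) = \int^\oplus L^2(\mZ_H(X, \mu_y), \mu_y)\,d\nu(y)$ into fiberwise components $f_y$. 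Each $f_y$ is approximated in $L^2(\mu_y)$ by finite combinations $\sum_j c_j(y) \phi_j^y$, and a standard argument lets us select the scalar coefficients $c_j(y)$ measurably in $y$; these assemble into $\mI_H(X)$-measurable functions $c_j$ with $\sum_j c_j \phi_j$ approximating $f$ in $L^2(\mu)$.

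The main obstacle is the measurable selection step: the classical construction of the Kronecker basis for an individual ergodic component involves diagonalizing the countable family of commuting unitaries $(T_h)_{h\in H}$, whose point spectra $\{\lambda_j(y,\cdot)\}\subseteq \widehat H$ genuinely depend on $y$, as do the eigenspaces and their chosen unit vectors. One must verify that all of these choices can be made jointly Borel in $(y, x)$. This is feasible using standard Polish-space machinery applied to the measurable field of Hilbert spaces $L^2(X,\mu_y)$ and the measurable field of unitary operators $T_h$, but it is the step that genuinely goes beyond the ergodic case and accounts for the subtlety of the theorem.
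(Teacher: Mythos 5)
The paper does not supply its own proof of this statement: it cites Frantzikinakis--Host \cite{FranHost} (where the result is stated for $\Z$-actions) and remarks that the same argument generalizes to countable abelian $H$. So there is no internal proof to compare against line by line; what can be assessed is whether your sketch would in fact yield a proof.

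Your disintegration-and-selection outline is structurally sound, and the verification of the three defining properties from fiberwise orthonormality is the correct bookkeeping. (One small point on property (i): the conditional expectation $E(|\phi_j| \mid \mI_H(X))$ takes the value $0$, rather than being identically $1$, precisely on those fibers $y$ where the $j$-th eigenfunction does not exist, so you must explicitly set $\phi_j^y := 0$ there; this interacts with the enumeration issue below and should be stated.) The real difficulty, however, is the step you yourself flag as ``the main obstacle,'' and your proposal asserts rather than demonstrates that it can be carried out. Two concrete lacunae. First, each fiber has a countable eigenvalue subgroup $\Lambda_y \le \hat{H}$ with no canonical enumeration; to define $\phi_j$ you must produce a Borel enumeration $y \mapsto (\lambda_j(y))_{j}$ of $\Lambda_y$, which at minimum requires showing that $y \mapsto \Lambda_y$ is Borel into the Effros Borel structure on closed subsets of the compact metrizable group $\hat{H}$. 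Second, the fiberwise eigenvectors $\phi_j^y$ live in the varying spaces $L^2(X,\mu_y)$; until one has set up a measurable field of Hilbert spaces (or a jointly Borel realization of the disintegration), there is no fixed ``Polish space of unit vectors in $L^2$'' on which Kuratowski--Ryll-Nardzewski can be applied in the way you describe. Both issues are surmountable, and indeed constitute the substance of the Frantzikinakis--Host argument, but they are genuine work rather than routine machinery; as written, your argument is a plan for a proof rather than a proof. An alternative that sidesteps explicit measurable selection is to argue directly at the level of the $L^\infty(\mI_H(X))$-module $L^2(\mZ_H(X))$: the Kronecker factor is a relatively compact extension of $\mI_H(X)$, its finitely generated $H$-invariant submodules are dense, for abelian $H$ these split into rank-one submodules each generated by a single relative eigenfunction, and a Zorn exhaustion produces a maximal relatively orthonormal family which density then forces to be a basis.
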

The proof of Theorem \ref{FranHostdeco} is given for $\mathbb{Z}$-actions in \cite{FranHost}, but the same argument can be easily generalized for arbitrary group actions.
\subsection{Proof of Theorem \ref{Khintchinefiniteindex}} \label{sec: phi, psi proof}
In this subsection, we prove Theorem \ref{Khintchinefiniteindex}. Example \ref{Example:eigenvalue} is a good example to have in mind while reading this section.\\

Let $\X = \left( X, \mX, \mu, (T_g)_{g \in G} \right)$ be an ergodic $G$-system, and let $\bfZ=(Z,\alpha)$ be the Kronecker factor of $\X$. Let $A \in \mX$ and $f = \ind_A$.
We can write \begin{align*}
f_c:=E(f|\mathcal{Z}(X)) = \sum_{i\in\mathbb{N}}a_i\zeta_i
	\intertext{ where $\{\zeta_i\}_{i\in\mathbb{N}}$ is an orthonormal basis of eigenfunctions and $a_i\in\mathbb{C}$. Moreover, using Theorem \ref{FranHostdeco},}
f_\psi := E(f_\psi|\mathcal{Z}_\psi(X)) = \sum_{i\in\mathbb{N}} b_i \xi_i,
\end{align*}
where $\{\xi_i\}_{i\in\mathbb{N}}$ is a relative orthonormal basis of $\psi(G)$-eigenfunctions and $b_i = E(f\cdot\overline{\xi}_i | \mathcal{I}_\psi(X))$ are $\psi(G)$-invariant functions.

Choose $N_1 \in \mathbb{N}$ sufficiently large so that
\begin{align*}
	\norm{2}{f_c - \sum_{i=1}^{N_1}{a_i\zeta_i}} & < \frac{\eps}{8}
	\intertext{and}
	\norm{2}{f_{\psi} - \left( \sum_{i=1}^{N_1}{b_i\xi_i}\right)} & < \frac{\eps}{8}.
\end{align*}

For each $j \in \mathbb{N}$, the function $\xi_j$ is a $\psi(G)$-eigenfunction, so we can write
$\xi_j \left( T_{\psi(g)}x \right) = \mu_j(x, \psi(g)) \xi_j(x)$ for some $\psi(G)$-invariant function $\mu_j : X \to \widehat{\psi(G)}$.
The group $Z$ is compact, so $\hat Z$ is countable and we can write $\widehat{Z} = \bigcup_{n \in \mathbb{N}}{F_n}$,
where $F_1 \subseteq F_2 \subseteq \cdots$ are finite sets. Let
\begin{align*}
	C_n := \left\{ g \mapsto \chi_1(\alpha_{\varphi(g)}) \chi_2(\alpha_{\psi(g)})
	 : \chi_1, \chi_2 \in F_n \right\},
\end{align*}
and let $C = \bigcup_{n \in \mathbb{N}}{C_n}.$
Finally, let
\begin{align*}
	E_{j,n} := \left\{ x \in X : \mu_j(x, \cdot) \in C_n \cup \left( \hat{G} \setminus C \right) \right\}.
\end{align*}
Note that the complement of $E_{j,n}$ consists of all $x\in X$ such that $\mu_j(x,\cdot)$ belongs to a finite set. Since $\mu_j$ is measurable, we conclude that so is the complement of $E_{j,n}$. Hence, $E_{j,n}$ are measurable. Since
$\bigcup_{n=1}^{\infty}{E_{j,n}} = X$ for every $j \in \mathbb{N}$, there exists sufficiently large $N_2 \in \mathbb{N}$ such that
\begin{align*}
	\left( \int_{X \setminus E_{j,N_2}}{|b_j \xi_j|^2~d\mu} \right)^{1/2} < \frac{\eps}{16N_1}.
\end{align*}
for $j = 1, \dots, N_1$.
Then, let $N \ge \max\{N_1,N_2\}$ such that:
if $T_g\zeta_i = \chi(\alpha_g)\zeta_i$ for some $i = 1, \dots, N_1$, then $\chi \in F_N$.

Now let $B_0 \in Z$ be a small neighborhood of $0$ in $Z$ such that if $z \in B_0$ and $\chi \in F_N$, then
\begin{align*}
	|\chi(z) - 1| < \frac{\eps}{16N}.
\end{align*}
\noindent Let $\eta_0 : Z \to [0, \infty)$ be a continuous function supported on $B_0$ normalized so that
\begin{align*}
	\UC_{g \in G}{\eta_0(\alpha_{\varphi(g)}) \eta_0(\alpha_{\psi(g)})} = 1.
\end{align*}
Put $\eta(u,v) := \eta_0(u) \eta_0(v)$.
Then by Proposition \ref{partialcfeta}, we have
\begin{align*}
	\UC_{g \in G}&~{\eta \left( \alpha_{\varphi(g)}, \alpha_{\psi(g)} \right)
	 ~\mu \left( A \cap T_{\varphi(g)}^{-1}A \cap T_{\psi(g)}^{-1}A \right)} \\
	 & = \UC_{g \in G}{\eta \left( \alpha_{\varphi(g)}, \alpha_{\psi(g)} \right)
	 ~\int_X{f \cdot T_{\varphi(g)}f_c \cdot T_{\psi(g)}f_{\psi}~d\mu}} \\
	 & = \int_X{f \cdot \UC_{g \in G}{\eta_0(\alpha_{\varphi(g)}) T_{\varphi(g)} f_c
	 \cdot \eta_0(\alpha_{\psi(g)}) T_{\psi(g)} f_{\psi}}~d\mu}.
\end{align*}
From the definition of $B_0$, if $\alpha_{\varphi(g)} \in B_0$,
then $\norm{\infty}{T_{\varphi(g)}\zeta_i - \zeta_i} < \frac{\eps}{16N}$ for $i = 1, \dots, N_1$.
Hence, for every $g \in G$, since $\eta_0$ is supported on $B_0$, we have
\begin{align*}
	\norm{2}{\eta_0(\alpha_{\varphi(g)}) T_{\varphi(g)} f_c - \eta_0(\alpha_{\varphi(g)})f_c}
	 \le &~\norm{2}{\eta_0(\alpha_{\varphi(g)}) \left( T_{\varphi(g)} f_c - \sum_{i=1}^{N_1}{a_i T_{\varphi(g)} \zeta_i} \right)} \\
	 & + \norm{2}{\eta_0(\alpha_{\varphi(g)}) \left( \sum_{i=1}^{N_1}{a_i T_{\varphi(g)} \zeta_i}
	  - \sum_{i=1}^{N_1}{a_i\zeta_i} \right)} \\
	 & + \norm{2}{\eta_0(\alpha_{\varphi(g)}) \left( \sum_{i=1}^{N_1}{a_i\zeta_i} - f_c \right)} \\
	 \le &~\eta_0(\alpha_{\varphi(g)}) \left( \norm{2}{f_c - \sum_{i=1}^{N_1}{a_i\zeta_i}}
	 + N_1 \frac{\eps}{16N} + \norm{2}{f_c - \sum_{i=1}^{N_1}{a_i\zeta_i}} \right) \\
	 < &~\eta_0(\alpha_{\varphi(g)}) \left( \frac{\eps}{8} + \frac{\eps}{16} + \frac{\eps}{8} \right)
	 = \frac{5\eps}{16} \eta_0(\alpha_{\varphi(g)}).
\end{align*}
Therefore,
\begin{align*}
	\left| \int_X{f \cdot \eta_0(\alpha_{\varphi(g)}) T_{\varphi(g)} f_c} \right.
	 & \left. {\cdot~\eta_0(\alpha_{\psi(g)}) T_{\psi(g)} f_{\psi}~d\mu}
	 - \int_X{f_c \cdot f \cdot \eta \left( \alpha_{\varphi(g)}, \alpha_{\psi(g)} \right) T_{\psi(g)} f_{\psi}~d\mu} \right| \\
	 & = \left| \int_X{f \cdot \eta_0(\alpha_{\psi(g)}) T_{\psi(g)}f_{\psi} \cdot \left(
	 \eta_0(\alpha_{\varphi(g)}) T_{\varphi(g)} f_c - \eta_0(\alpha_{\varphi(g)})f_c \right)~d\mu} \right| \\
	 & \le \eta_0(\alpha_{\psi(g)}) \norm{1}{\eta_0(\alpha_{\varphi(g)}) T_{\varphi(g)} f_c - \eta_0(\alpha_{\varphi(g)})f_c} \\
	 & < \frac{5\eps}{16} \eta \left( \alpha_{\varphi(g)}, \alpha_{\psi(g)} \right).
\end{align*}
Taking a Ces\`{a}ro average, we have the inequality
\begin{align} \label{eq: compact almost-inv}
	\UC_{g \in G}&~{\eta \left( \alpha_{\varphi(g)}, \alpha_{\psi(g)} \right)
	 ~\mu \left( A \cap T_{\varphi(g)}^{-1}A \cap T_{\psi(g)}^{-1}A \right)} \nonumber \\
	 & > \int_X{f_c \cdot f \cdot \UC_{g \in G}{
	 \eta \left( \alpha_{\varphi(g)}, \alpha_{\psi(g)} \right)T_{\psi(g)} f_{\psi}}~d\mu} - \frac{5\eps}{16}.
\end{align}
Now we estimate the average
\begin{align*}
	\UC_{g \in G}{\eta \left( \alpha_{\varphi(g)}, \alpha_{\psi(g)} \right)T_{\psi(g)} f_{\psi}}.
\end{align*}
First, for each $i = 1, \dots, N_1$, we have

\begin{align*}
	\norm{\infty}{\eta_0(\alpha_{\psi(g)}) \left( T_{\psi(g)}(b_i\xi_i) - b_i\xi_i \right)}
	 = \norm{\infty}{b_i \cdot \eta_0(\alpha_{\psi(g)}) \left( T_{\psi(g)}\xi_i - \xi_i \right)}
	 < \frac{\eps}{16N} \eta_0(\alpha_{\psi(g)}).
\end{align*}
Next, let $1 \le j \le N_1$.
Write $T_{\psi(g)}(b_j\xi_j) = b_j \mu_j(x, \psi(g)) \psi_j$. If $\mu_j(x, \cdot) \notin C$,
then for any $\chi_1, \chi_2 \in \hat{Z}$, the character
$g \mapsto \chi_1(\alpha_{\varphi(g)}) \chi_2(\alpha_{\psi(g)}) \mu_j(x, \psi(g))$ is nontrivial, so
\begin{align*}
	\UC_{g \in G}{\chi_1(\alpha_{\varphi(g)}) \chi_2(\alpha_{\psi(g)}) \mu_j(x, \psi(g))} = 0.
\end{align*}
\noindent Hence, by the Stone--Weierstrass theorem,
\begin{align*}
	\UC_{g \in G}{\eta \left( \alpha_{\varphi(g)}, \alpha_{\psi(g)} \right) \mu_j(x, \psi(g))} = 0.
\end{align*}
\noindent Therefore,
\begin{align*}
	\UC_{g \in G}{\eta \left( \alpha_{\varphi(g)}, \alpha_{\psi(g)} \right)T_{\psi(g)} f_{\psi}}
	 = \UC_{g \in G}{\eta \left( \alpha_{\varphi(g)}, \alpha_{\psi(g)} \right)T_{\psi(g)} \tilde{f}_{\psi}},
\end{align*}
\noindent where $\tilde{f}_{\psi} = E(f|\tilde{\mathcal{Z}}_\psi(X))$
and $\tilde{\mZ}_\psi(X)$ is the factor generated by $\psi(G)$-eigenfunctions whose eigenvalues come from $C$.
Note that
\begin{align*}
	\tilde{f}_{\psi} = \sum_{i \in \mathbb{N}}b_i\tilde{\xi}_i ,
\end{align*}
\noindent where
\begin{align*}
	\tilde{\xi}_j(x) = \begin{cases}
		\xi_j(x), & \mu_j(x, \cdot) \in C; \\
		0, & \mu_j(x, \cdot) \notin C.
	\end{cases}
\end{align*}
We note that since $C$ is at most countable, $\tilde{\chi}_j$ is measurable. Moreover,
\begin{align*}
	\tilde{f}_{\psi} -  \sum_{i=1}^{N_1}b_i\tilde{\xi}_i
	 = E(f - \sum_{i=1}^{N_1} b_i \xi_i |\tilde{\mZ}_{\psi}(X)),
\end{align*}
so
\begin{align*}
	\norm{2}{\tilde{f}_{\psi} -  \sum_{i=1}^{N_1}{b_i\tilde{\xi}_i}} < \frac{\eps}{8}.
\end{align*}

If $x \in E_{j,N}$, then we must have $\mu_j(x, \cdot) \in C_N$.
That is, $\mu_j(x, \psi(g)) = \chi_1(\alpha_{\varphi(g)}) \chi_2(\alpha_{\psi(g)})$
for some $\chi_1, \chi_2 \in F_N$.
Thus,
\begin{align*}
	\left| \eta \left( \alpha_{\varphi(g)}, \alpha_{\psi(g)} \right) \left( \mu_j(x, \psi(g)) - 1 \right) \right|
	 = &~\left| \eta \left( \alpha_{\varphi(g)}, \alpha_{\psi(g)} \right)
	 \left( \chi_1(\alpha_{\varphi(g)}) \chi_2(\alpha_{\psi(g)}) - 1 \right) \right| \\
	 = &~\left| \eta \left( \alpha_{\varphi(g)}, \alpha_{\psi(g)} \right)
	 \left( \chi_1(\alpha_{\varphi(g)}) \chi_2(\alpha_{\psi(g)}) - \chi_2(\alpha_{\psi(g)}) \right) \right| \\
	 & + \left| \eta \left( \alpha_{\varphi(g)}, \alpha_{\psi(g)} \right)
	 \left( \chi_2(\alpha_{\psi(g)}) - 1 \right) \right| \\
	 < &~\eta \left( \alpha_{\varphi(g)}, \alpha_{\psi(g)} \right) \left( \frac{\eps}{16N} + \frac{\eps}{16N} \right)
	 = \frac{\eps}{8N} \eta \left( \alpha_{\varphi(g)}, \alpha_{\psi(g)} \right).
\end{align*}
Therefore,
\begin{align*}
	\left\| \eta \left( \alpha_{\varphi(g)}, \alpha_{\psi(g)} \right) \right.
	 & \left. \left( T_{\psi(g)}(b_j\tilde{\xi}_j) - b_j\tilde{\xi}_j \right) \right\|_2^2 \\
	 = &~\int_X{\left| \eta \left( \alpha_{\varphi(g)}, \alpha_{\psi(g)} \right)
	 \left( T_{\psi(g)}(b_j\tilde{\xi}_j) - b_j\tilde{\xi}_j \right) \right|^2~d\mu} \\
	 = &~\int_X{\left| b_j(x) \tilde{\xi}_j(x) \right|^2
	 \left| \eta \left( \alpha_{\varphi(g)}, \alpha_{\psi(g)} \right) \left( \mu_j(x, \psi(g)) - 1 \right) \right|^2~d\mu(x)} \\
	 \le &~\eta \left( \alpha_{\varphi(g)}, \alpha_{\psi(g)} \right)^2 \left(
	 \int_{E_{j,N}}{\left( \frac{\eps}{8N} \right)^2 \left| b_j \tilde{\xi}_j \right|^2~d\mu}
	 + 4 \int_{X \setminus E_{j,N}}{\left| b_j \tilde{\xi}_j \right|^2~d\mu} \right) \\
	 \le &~\eta \left( \alpha_{\varphi(g)}, \alpha_{\psi(g)} \right)^2 \left(
	 \left( \frac{\eps}{8N} \right)^2 + 4 \left( \frac{\eps}{16N_1} \right)^2 \right)
	 \le 2 \left( \frac{\eps}{8N_1} \eta \left( \alpha_{\varphi(g)}, \alpha_{\psi(g)} \right) \right)^2
\end{align*}
Putting together our estimates, we have
\begin{align*}
	\left\| \UC_{g \in G} \right.
	 & \left. {\eta \left( \alpha_{\varphi(g)}, \alpha_{\psi(g)} \right)T_{\psi(g)} f_{\psi}} - \tilde{f}_{\psi} \right\|_2 \\
	 = &~\norm{2}{\UC_{g \in G}{\eta \left( \alpha_{\varphi(g)}, \alpha_{\psi(g)} \right)T_{\psi(g)} \tilde{f}_{\psi}}
	 - \tilde{f}_{\psi}} \\
	 \le &~\norm{2}{\UC_{g \in G}{\eta \left( \alpha_{\varphi(g)}, \alpha_{\psi(g)} \right)
	  T_{\psi(g)}\tilde{f}_{\psi}
	 - T_{\psi(g)} \sum_{i=1}^{N_1}{b_i\tilde{\xi}_i}}} \\
	 & + \norm{2}{\UC_{g \in G}{\sum_{i=1}^{N_1}{
	 \eta \left( \alpha_{\varphi(g)}, \alpha_{\psi(g)} \right) \left(T_{\psi(g)}(b_i \tilde{\xi}_i) - b_j \tilde{\xi}_j \right)}}} \\
	 & + \norm{2}{ \sum_{i=1}^{N_1}{b_i\tilde{\xi}_i} - \tilde{f}_{\psi}} \\
	 < &~\frac{\eps}{8} + N_1 \frac{\sqrt{2}\eps}{8N_1} + \frac{\eps}{8}
	 \le \frac{(2\sqrt{2} + 5)\eps}{16} < \frac{\eps}{2}.
\end{align*}

Substituting back into \eqref{eq: compact almost-inv}, we have
\begin{align}
	\UC_{g \in G}{\eta \left( \alpha_{\varphi(g)}, \alpha_{\psi(g)} \right)
	 ~\mu \left( A \cap T_{\varphi(g)}^{-1}A \cap T_{\psi(g)}^{-1}A \right)}
	 & > \int_X{f_c \cdot f \cdot \tilde{f}_{\psi}~d\mu} - \frac{13\eps}{16} \nonumber \\
	 & \ge \mu(A)^3 - \frac{13\eps}{16}. \label{eq: mu^3 bound}
\end{align}
Since $\UC_{g \in G}\eta \left( \alpha_{\varphi(g)}, \alpha_{\psi(g)} \right)=1$, it follows that the set
\begin{align*}
	\left\{ g \in G : \mu \left( A \cap T_{\varphi(g)}^{-1}A \cap T_{\psi(g)}^{-1}A \right) > \mu(A)^3 - \eps \right\}
\end{align*}
is syndetic in $G$. If not, there exists a F{\o}lner sequence $(\Phi_N)_{N\in\N}$ such that $\mu(A\cap T_{\varphi(g)}^{-1}A\cap T_{\psi(g)}^{-1}A)\leq \mu(A)^3-\varepsilon$ for every $g\in\bigcup_{N\in\mathbb{N}}\Phi_N$. But then, $$\UC_{g \in G}{\eta \left( \alpha_{\varphi(g)}, \alpha_{\psi(g)} \right)
	 ~\mu \left( A \cap T_{\varphi(g)}^{-1}A \cap T_{\psi(g)}^{-1}A \right)} \leq \mu(A)^3-\varepsilon$$ which contradicts the inequality \eqref{eq: mu^3 bound}.
	 
\section{Extensions}\label{Extensions}

As we have observed in Subsection \ref{sec: phi, psi proof}, the partial characteristic factors obtained in Proposition \ref{partialcf1} are not the minimal characteristic factors. For example, in Subsection \ref{sec: phi, psi proof} we proved that one can replace $\mZ_\psi(X)$ with the smaller factor $\tilde{\mZ}_\psi(X)$. In this section we develop an extension trick that will be used to further simplify the characteristic factors. These results will be useful in the proof of Theorem \ref{Khintchineab}, where $\varphi(G)$ is no longer assumed to have finite index in $G$. In the example below we illustrate our main result in the simpler case where $\varphi(g)=g$, $\psi(g)=2g$. The following example is based on Example \ref{Example:eigenvalue}.
\begin{example}\label{eigenvalue2}
Let $G=\bigoplus_{j=1}^\infty \mathbb{Z}/4\mathbb{Z}$ and let $X=\left(\prod_{j\in\mathbb{N}}C_4\right) \times C_2 \times C_2$, where the action of $g\in G$ on $X$ is given by
\begin{align} \label{eq: G-action}
T_g(\textbf{x},x_\infty, y) = \left( (i^{g_j}x_j)_{j\in \mathbb{N}}, x_{\infty} \cdot \prod_{k=1}^\infty (-1)^{g_k}, y\cdot \prod_{j\in\mathbb{N}} (x_j^{2g_j}\cdot i^{g_j^2-g_j})\right)
\end{align}
for $\textbf{x} = (x_1, x_2, \dots) \in \prod_{j\in\N}{C_4}$, $x_{\infty} \in C_2$, and $y \in C_2$.
Note that for $g = (g_1, g_2, \dots) \in G$, only finitely many of the coordinates $g_j \in \Z/4\Z$ are nonzero, so \eqref{eq: G-action} is well-defined.

As in Example \ref{Example:eigenvalue}, the function $f(\textbf{x},x_\infty,y)=y$ is a $2G$-eigenfunction with eigenvalue $2g\mapsto \prod_{j=1}^\infty (-1)^{g_j}$. However, this time $f$ may have a non-trivial contribution for the average. %(in particular $f$ is measurable with respect to.
Indeed, if we let $f_1(\textbf{x},x_\infty,y) = x_\infty$, then $f_1$ is a $G$-eigenfunction with eigenvalue $g\mapsto \prod_{k=1}^\infty (-1)^{g_k}$ and
$$\UC_{g\in G} T_g f_1(\textbf{x},x_\infty,y) T_{2g} f(\textbf{x},x_\infty,y) = x_\infty\cdot y$$ is nonzero. Let $\varphi(g)=g$ and $\psi(g)=2g$. The above computation shows that $f$ is measurable with respect to $\tilde{\mathcal{Z}}_\psi$ where $\tilde{\mathcal{Z}}_\psi$ is defined in Subsection \ref{sec: phi, psi proof}. As a result we deduce that $\mathcal{Z}(X)\lor \mathcal{I}_\psi(X)\prec \tilde{\mathcal{Z}}_\psi(X)$ is a strict inclusion.

Consider the homomorphism $\lambda:G\rightarrow S^1$, $\lambda(g)=\prod_{j=1}^\infty i^{g_j}$, and observe that $\lambda(2g)=\prod_{j=1}^\infty (-1)^{g_i}$ is the eigenvalue of $f_2$. We extend $X$ to a new system $\tilde{X}$, where $\lambda$ is an eigenvalue. Let $\tilde{X} = \left(\prod_{j\in\mathbb{N}} C_4\right) \times C_4\times C_2$, and let the action of $g\in G$ on $\tilde{X}$ be given by
$$S_g(\textbf{x},x_\infty,y) = \left( (i^{g_j}x_j)_{j\in \mathbb{N}}, \lambda(g) x_\infty, y\cdot \prod_{j\in\mathbb{N}} (x_j^{2g_j}\cdot i^{g_j^2-g_j})\right)$$
for $\textbf{x} = (x_1, x_2, \dots) \in \prod_{j\in\N}{C_4}$, $x_{\infty} \in C_4$, and $y \in C_2$.
It is easy to see that $\tilde{\X} = (\tilde{X},(S_g)_{g\in G})$ is an extension of $\X$ with respect to the factor map
$\pi(\textbf{x},x_\infty,y) = (\textbf{x},x_\infty^2,y)$.
Observe that now the function $h(\textbf{x},x_\infty,y)=x_\infty$ on $\tilde{X}$ is an eigenfunction with eigenvalue $\lambda$ and we deduce that $h\cdot \overline{f}\circ \pi$ is a $2G$-invariant function on $\tilde{X}$. This means that $\overline{f}\circ \pi$ is measurable with respect to the $\sigma$-algebra $\tilde{Z}(\tilde{X})\lor \mathcal{I}_\psi(\tilde{X})$. In fact, one can show that now we have an equality $\mathcal{Z}(X)\lor \mathcal{I}_\psi(\tilde{X})=\tilde{\mathcal{Z}}_\psi(\tilde{X})$.
\end{example}
The main result in this section is the following theorem.
\begin{thm}\label{extension}
Let $\X=(X,\mX,\mu,(T_g)_{g \in G})$ be an ergodic $G$-system, and let $C$ be a countable subgroup of $\hat G$. Let $\varphi,\psi : G \to G$ be homomorphisms. There exists an ergodic extension $\tilde{\X}$ of $\X$ with the following property: for any $\chi\in C$, there exist $G$-eigenvalues $\lambda,\mu$ of $\tilde{\X}$ such that $\lambda(\varphi(g))=\mu(\psi(g))=\chi(g)$.
\end{thm}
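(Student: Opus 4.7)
The plan is to construct $\tilde{\X}$ as a group extension of $\X$ by a suitable compact abelian group $H$, chosen so that coordinate characters of $H$ realize the desired eigenvalues, and then to pass to an ergodic component of this extension.

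First, I would observe that the conditions $\lambda(\varphi(g))=\chi(g)$ and $\mu(\psi(g))=\chi(g)$ force $\chi$ to be trivial on both $\ker\varphi$ and $\ker\psi$ --- a compatibility necessarily built into the statement. Under this assumption, $\chi$ descends to a character of $\varphi(G)\cong G/\ker\varphi$, and since $S^1$ is divisible and hence injective in the category of discrete abelian groups, this descended character extends to some $\lambda_\chi\in\hat G$ with $\lambda_\chi\circ\varphi=\chi$; symmetrically one obtains $\mu_\chi\in\hat G$ with $\mu_\chi\circ\psi=\chi$.

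Next, I would set $H:=\prod_{\chi\in C}(S^1\times S^1)$, which is a compact metrizable abelian group as $C$ is countable, and define a homomorphism $\alpha:G\to H$ coordinatewise by $\alpha(g):=(\lambda_\chi(g),\mu_\chi(g))_{\chi\in C}$. The group extension $\tilde{\X}_0:=\bigl(X\times H,\,\mu\times m_H,\,(\tilde T_g)_{g\in G}\bigr)$ with $\tilde T_g(x,h):=(T_gx,\,\alpha(g)+h)$ and $m_H$ the Haar probability measure on $H$ has the feature that for each $\chi\in C$, the coordinate functions $f^\lambda_\chi(x,h):=h_\chi^\lambda$ and $f^\mu_\chi(x,h):=h_\chi^\mu$ satisfy the pointwise identities $\tilde T_g f^\lambda_\chi=\lambda_\chi(g)f^\lambda_\chi$ and $\tilde T_g f^\mu_\chi=\mu_\chi(g)f^\mu_\chi$, so they remain eigenfunctions under restriction to any $\tilde T$-invariant set of positive measure.

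Since $\tilde{\X}_0$ need not be ergodic, the final step will be to apply the ergodic decomposition $\mu\times m_H=\int\mu_\omega\,d\nu(\omega)$ and let $\tilde{\X}$ be the ergodic system associated with any $\mu_\omega$ satisfying $p_\ast\mu_\omega=\mu$, where $p:X\times H\to X$ is the first projection. Since $p$ is $G$-equivariant and pushes ergodic measures to ergodic ones, each $p_\ast\mu_\omega$ is a $G$-ergodic probability measure on $X$; integrating gives $\mu=\int p_\ast\mu_\omega\,d\nu(\omega)$, and uniqueness of the ergodic decomposition (combined with the fact that $\mu$ is itself ergodic) forces $p_\ast\mu_\omega=\mu$ for $\nu$-a.e.\ $\omega$. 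For any such $\omega$, $p$ becomes a factor map from $\tilde{\X}$ onto $\X$, and the unimodular functions $f^\lambda_\chi$, $f^\mu_\chi$ retain $L^2(\mu_\omega)$-norm $1$, supplying nonzero eigenfunctions in $\tilde{\X}$ with the prescribed eigenvalues. The hard part will be precisely this ergodicity step --- ensuring that some ergodic component projects onto the full base $\X$ --- where the ergodicity of $\X$ is essential and lets me sidestep a more involved Mackey-type analysis of the invariant $\sigma$-algebra of $\tilde{\X}_0$.
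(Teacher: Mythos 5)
Your proof is correct and takes a genuinely different route from the paper. The paper works with the constant cocycle $\tilde{\chi}(g) = (\lambda_\chi(g),\mu_\chi(g))_\chi$ and invokes Zimmer's theory: it replaces $\tilde{\chi}$ with a cohomologous \emph{minimal} cocycle $\rho$ and then cites Zimmer's criterion (the paper's Lemma \ref{minimal}) to conclude that $\X\times_\rho V_\rho$ is ergodic, with the eigenfunctions produced as products of coordinate characters of $V_\rho$ with components of the transfer function $F$ witnessing the coboundary relation $\rho_g = \tilde{\chi}(g)\Delta_g F$. You instead build the (possibly non-ergodic) product $X\times H$, where the eigenfunctions are simply the coordinate characters, and extract an ergodic component; the crux, correctly identified and correctly handled, is that since $p$ sends $\tilde T$-ergodic measures to $T$-ergodic measures and $\mu$ is ergodic, uniqueness of the ergodic decomposition forces $p_*\mu_\omega=\mu$ for $\nu$-a.e.\ $\omega$, so the chosen component is genuinely an extension of $\X$. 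Both proofs ultimately lean on ergodicity of $\X$, but you sidestep the minimal-cocycle machinery at the cost of only implicitly identifying the group structure of the fibers. Two small notes: your opening observation that $\chi$ must vanish on $\ker\varphi$ and $\ker\psi$ is correct and makes explicit an assumption the paper's statement leaves implicit (it is automatically satisfied in the application, Lemma \ref{productsubalgebra}); and the phrase about ``restriction to any $\tilde T$-invariant set of positive measure'' is mildly misleading since ergodic components are given by disintegration rather than restriction, though the point you actually need — that the unimodular eigenfunctions satisfy their eigenvalue relations everywhere, hence $\mu_\omega$-a.e.\ for any $\omega$ — is right.
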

We will use the following elementary group-theoretic lemma, which is a special case of \cite[Theorem 2.1.4]{rudin}.
	\begin{lem} \label{lift} Let $G$ be a countable discrete abelian group, and let $H \le G$ be a subgroup. Then every character $\lambda\in \hat H$ has a lift $\tilde{\lambda}\in\hat G$ such that $\tilde{\lambda}(h)=\lambda(h)$ for every $h\in H$.
	\end{lem}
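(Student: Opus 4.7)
The plan is to exploit the fact that the circle group $S^1 \cong \mathbb{R}/\mathbb{Z}$ is divisible, which is the standard ingredient for extending characters of abelian groups. Since $G$ is assumed countable, I can avoid Zorn's lemma entirely and instead build the extension step by step by enumerating a set of coset representatives of $H$ in $G$.

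First I would enumerate $G \setminus H = \{g_1, g_2, \dots\}$ and define an increasing chain of subgroups $H = H_0 \le H_1 \le H_2 \le \cdots$ by $H_n := \langle H_{n-1}, g_n \rangle$, noting that $\bigcup_{n \ge 0} H_n = G$. I would then construct, inductively in $n$, characters $\lambda_n \in \widehat{H_n}$ satisfying $\lambda_n|_{H_{n-1}} = \lambda_{n-1}$ (with $\lambda_0 := \lambda$). Once this is done, the desired lift $\tilde{\lambda} \in \hat{G}$ is defined unambiguously by $\tilde{\lambda}(g) = \lambda_n(g)$ for any $n$ with $g \in H_n$, and by construction $\tilde{\lambda}|_H = \lambda$.

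The one-step extension from $H_{n-1}$ to $H_n$ is the content of the argument. There are two cases to handle. If $\langle g_n \rangle \cap H_{n-1} = \{0\}$, then $H_n = H_{n-1} \oplus \langle g_n \rangle$, and one may define $\lambda_n(g_n)$ to be any element of $S^1$ (say $1$) and extend by the direct sum decomposition. Otherwise, let $m$ be the least positive integer with $m g_n \in H_{n-1}$. Then $\lambda_{n-1}(m g_n) \in S^1$ is defined, and by divisibility of $S^1$ there exists $z \in S^1$ with $z^m = \lambda_{n-1}(m g_n)$. Set $\lambda_n(g_n) := z$ and declare $\lambda_n(h + k g_n) := \lambda_{n-1}(h) \cdot z^k$ for $h \in H_{n-1}$, $k \in \mathbb{Z}$. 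Well-definedness of $\lambda_n$ follows from the observation that if $h + k g_n = h' + k' g_n$ in $H_n$ with $h, h' \in H_{n-1}$, then $(k-k') g_n \in H_{n-1}$, so minimality of $m$ forces $m \mid (k-k')$, whence $z^{k-k'} = \lambda_{n-1}((k-k') g_n) = \lambda_{n-1}(h' - h)$ as required.

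I expect the main (minor) obstacle to be bookkeeping: verifying that the inductively defined $\lambda_n$ is consistent on $H_{n-1}$ and genuinely a homomorphism on $H_n$ in the finite-order case. The essential idea, namely that divisibility of $S^1$ provides the needed $m$-th root, is the only nontrivial input; the rest is a routine verification that the induction goes through for countable $G$.
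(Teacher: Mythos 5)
Your argument is correct. The paper does not actually give a proof of this lemma; it simply invokes \cite[Theorem 2.1.4]{rudin}, where the result appears as the statement that every character of a (closed) subgroup of a locally compact abelian group extends to a character of the whole group. Your proof is the standard self-contained argument, specialized to discrete groups: the divisibility of $S^1$ supplies the one-step extension, and the countability hypothesis lets you replace the usual Zorn's lemma/maximality argument with an explicit chain $H = H_0 \le H_1 \le \cdots$ exhausting $G$. The well-definedness check in the finite-order case is exactly what it should be, and the infinite-order case is in fact subsumed by the same formula $\lambda_n(h+kg_n) = \lambda_{n-1}(h)z^k$ with $z$ arbitrary, since then the coefficients $h$ and $k$ are uniquely determined and no compatibility condition arises. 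The only cosmetic caveat is that $G\setminus H$ may be finite or empty, in which case the enumeration terminates or the claim is vacuous, but this does not affect the argument.
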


	The fact that $\tilde{\X}$ in Theorem \ref{extension} is ergodic will be important in our proof.
	In prepartion for proving that $\tilde{\X}$ is ergodic, we need the following defintion.
	\begin{defn}
	Let $(X,G)$ be an ergodic system and $U$ a compact abelian group. A \emph{cocycle} is a measurable map $\rho:G\times X\rightarrow U$ satisfying $\rho(g+g',x) = \rho(g,x)\cdot \rho(g',T_gx)$ for every $g,g'\in G$ and $\mu$-a.e. $x\in X$. Two cocycles $\rho,\rho':G\times X\rightarrow U$ are said to be \emph{cohomologous} if there exists a measurable map $F:X\rightarrow U$ such that $\rho(g,x)\cdot \rho'(g,x)^{-1} = \Delta_g F(x)$ for all $g\in G$ and $\mu$-a.e. $x\in X$. We let $V_\rho$ denote the minimal closed subgroup generated by $\{\rho(g,x) : g\in G, x\in X\}$. The cocycle $\rho$ is said to be \emph{minimal} if it is not cohomologous to any cocycle $\rho'$ with $V_{\rho'}\lneqq V_{\rho}$.
	\end{defn}
	In \cite{Zim}, Zimmer proved that every cocycle is cohomologous to a minimal cocycle and established the following criterion for ergodicity.
	\begin{lem}[\cite{Zim}, Corollary 3.8] \label{minimal}
	Let $\X=\left(X,\mX,\mu,(T_g)_{g \in G}\right)$ be an ergodic $G$-system, $U$ a compact abelian group, and $\rho : G \times X \to U$ a cocycle. Then, $\X\times_\rho U$ is ergodic if and only if $\rho$ is minimal and $U=U_\rho$.
	\end{lem}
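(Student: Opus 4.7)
The plan is to prove both implications of Zimmer's criterion via Fourier analysis along the fiber $U$, combined with the observation that ergodicity of the skew product is constant on cohomology classes.

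\textbf{Forward direction.} Suppose $\X \times_\rho U$ is ergodic. I would first note that if $\rho$ and $\rho'$ are cohomologous with transfer function $F$, then the map $\Phi(x,u) := (x, F(x)\cdot u)$ is a measure-preserving isomorphism $\X \times_{\rho'} U \to \X \times_\rho U$ intertwining the two skew-product actions; hence ergodicity of the skew product depends only on the cohomology class of the cocycle. It therefore suffices to show that $V_\rho = U$. If instead $V_\rho$ were a proper closed subgroup of $U$, Pontryagin duality would provide a nontrivial character $\chi \in \widehat U$ with $\chi|_{V_\rho} \equiv 1$. Then $(x,u) \mapsto \chi(u)$ would be nonconstant yet invariant under $T_g \times \rho$, contradicting ergodicity. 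Combined with the cohomology invariance, this shows $V_{\rho'} = U$ for every $\rho'$ cohomologous to $\rho$, i.e.\ $\rho$ is minimal.

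\textbf{Reverse direction.} Suppose $\rho$ is minimal with $V_\rho = U$, and assume toward contradiction that $\X \times_\rho U$ is not ergodic. Pick a nonconstant invariant $f \in L^2(X \times U)$ and expand along the fiber,
\[
f(x,u) = \sum_{\chi \in \widehat U} f_\chi(x)\, \chi(u).
\]
Uniqueness of Fourier coefficients turns invariance of $f$ into the scalar relations $f_\chi(T_g x)\, \chi(\rho(g,x)) = f_\chi(x)$ for every $g \in G$ and $\chi \in \widehat U$. The $\chi = 1$ coefficient is $G$-invariant, hence constant by ergodicity of $\X$, so some nontrivial $\chi$ must have $f_\chi \ne 0$. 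Taking absolute values, $|f_\chi|$ is $G$-invariant and so a positive constant; rescaling, $f_\chi : X \to S^1$. The coset $f_\chi(x)\cdot \mathrm{Im}(\chi) \in S^1/\mathrm{Im}(\chi)$ is $G$-invariant, hence constant, so after a further constant phase $f_\chi$ takes values in $\mathrm{Im}(\chi)$.

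The crux is converting this into a cohomology statement that contradicts minimality. The surjective continuous homomorphism $\chi : U \to \mathrm{Im}(\chi)$ of compact metric abelian groups admits a Borel cross-section $s$ (a standard descriptive-set-theoretic fact). Setting $F(x) := s(\overline{f_\chi(x)})$ yields a measurable $F : X \to U$ with $\chi(F(x)) = \overline{f_\chi(x)}$. Then
\[
\rho'(g,x) := \rho(g,x)\cdot F(x)\cdot F(T_g x)^{-1}
\]
is cohomologous to $\rho$, and a direct computation using $\chi(\rho(g,x)) = f_\chi(x)\,\overline{f_\chi(T_g x)}$ gives $\chi(\rho'(g,x)) = 1$ for every $g,x$. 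Thus $\rho'$ takes values in $\ker \chi$, a proper closed subgroup of $U$, so $V_{\rho'} \subseteq \ker \chi \subsetneq U = V_\rho$, contradicting minimality of $\rho$.

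I expect the only genuine technical point to be the Borel selection producing $F$: one must verify that $F$ is honestly measurable, which reduces to the existence of a Borel cross-section for the quotient map $\chi: U \to \mathrm{Im}(\chi)$ between Polish groups. Once this is in place, the remainder is routine Fourier analysis on $L^2(X \times U)$ together with the ergodicity of $\X$ applied to $|f_\chi|$ and to the coset-valued function $f_\chi \cdot \mathrm{Im}(\chi)$.
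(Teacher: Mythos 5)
The paper does not give a proof of this lemma at all: it is quoted as a black-box citation of Zimmer's Corollary~3.8 and used directly in the proof of Theorem~\ref{extension} and in Section~\ref{proof}. There is therefore nothing to compare against; your task here is simply to check that your reconstruction is sound.

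Your argument is essentially correct, and it is the standard proof of Zimmer's criterion in the abelian case. The forward direction uses the right two ingredients: the observation that $\Phi(x,u)=(x,F(x)u)$ is a measure-preserving isomorphism conjugating the $\rho'$-skew product to the $\rho$-skew product (so ergodicity is a cohomology invariant), together with the Pontryagin-duality observation that a proper closed subgroup $V_\rho\subsetneq U$ produces a nonconstant invariant function $(x,u)\mapsto\chi(u)$. The reverse direction is the standard Fourier expansion in the fiber: invariance forces $f_\chi(T_g x)\chi(\rho(g,x))=f_\chi(x)$, ergodicity of the base gives $|f_\chi|$ constant and the coset $f_\chi(x)\cdot\mathrm{Im}(\chi)$ constant, a Borel cross-section of $\chi:U\to\mathrm{Im}(\chi)$ (Kuratowski--Ryll-Nardzewski, which the paper itself invokes elsewhere for $\iota_a$) yields the transfer function $F$, and the cohomologous cocycle $\rho'$ lands in $\ker\chi$, contradicting minimality. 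Two cosmetic remarks: (i) the lemma as printed in the paper writes $U=U_\rho$, which should read $U=V_\rho$ in the notation of the preceding definition --- you correctly work with $V_\rho$ throughout; and (ii) you should say explicitly that $\rho'$ lands in $\ker\chi$ only up to a null set, so strictly speaking one either modifies $\rho'$ on that null set or interprets $V_{\rho'}$ via the essential range, as is standard. Neither affects the validity of the argument.
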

	
	We are now set to prove Theorem \ref{extension}.
\begin{proof}[Proof of Theorem \ref{extension}]
Let $\{\chi_i : i\in \mathbb{N}\}$ be an enumerations of the elements in $C$. By Lemma \ref{lift}, we deduce that for every $i\in\mathbb{N}$, there exist homomorphisms $\chi_i^\varphi,\chi_i^\psi:G\rightarrow S^1$ such that $\chi_i^\varphi(\varphi(g)) = \chi_i^\psi(\psi(g))= \chi_i(g)$. Let $I=\mathbb{N}\times\{\varphi,\psi\}$ and let $\tilde{\chi}:G\rightarrow (S^1)^{I}$ be the homomorphism whose $(i,\varphi)$-coordinate is $\chi_i^\varphi$ and $(j,\psi)$-coordinate is $\chi_j^\psi$ for every $i,j\in\mathbb{N}$.
By Zimmer's theory, there exists a minimal cocycle $\rho:G\times X\rightarrow (S^1)^{I}$ which is cohomologous to $\tilde{\chi}$, where the latter is viewed as a $G\times X\rightarrow (S^1)^{I}$ function that is independent on $x\in X$.
This means that there exists a measurable map $F:X\rightarrow (S^1)^{I}$ such that $\rho_g = \tilde{\chi}(g)\cdot \Delta_g F$. Let $V$ be the image of $\rho$, then by Lemma \ref{minimal},  $\tilde{X}=X\times_\rho V$ is ergodic. Now, for every coordinate $t\in I$, consider the projection map $\pi_t:(S^1)^{I}\rightarrow S^1$. By restricting $\pi_t$ to $V$, we get a homomorphism $\tau_t :V\rightarrow S^1$. Then, the function $\phi_{i,\varphi}(x,v) := \tau_{i,\varphi}(v)\cdot \pi_{i,\varphi} F(x)$ is an eigenfunction with eigenvalue $\Delta_g \phi_{i,\varphi}(x,v) = \chi_i^\varphi(g)$ and $\phi_{j,\psi}(x,v)=\tau_{j,\psi}(v)\cdot \pi_{j,\psi} F(x)$ is an eigenfunction with eigenvalue $\Delta_g \phi_{j,\psi}(x,v) = \chi_j^\psi(g)$. This completes the proof. 
\end{proof}

\subsection{Characteristic factors related to Theorem \ref{Khintchineab}}
The goal of this subsection is to prove a stronger version of Proposition \ref{partialcf1} and Proposition \ref{partialcfeta} with smaller characteristic factors. We will use the above extension theorem in order to express these characteristic factors in terms of $\mathcal{Z}_{\varphi,\psi}(X)$ and the invariant $\sigma$-algebras, $\mathcal{I}_\varphi(X)$ and $\mathcal{I}_\psi(X)$. Then, using a result of Tao and Ziegler (see Theorem \ref{TZtheorem} below), we will reduce matters further to studying the Conze--Lesigne factor $\mathcal{Z}^2(X)$ with respect to the action of $G$, which is already well understood for arbitrary countable abelian groups (see \cite{ABB}, \cite{OS2}).\\

We start with a lemma.
\begin{lem}\label{TT}
Let $\X=(X,\mX,\mu, (T_g)_{g\in G})$ be an ergodic $G$-system. Let $\mI_{\varphi\times\psi}(X\times X)$ denote the $\sigma$-algebra of $(T_{\varphi(g)}\times T_{\psi(g)})_{g\in G}$-invariant sets in $X\times X$. Then, $$\mI_{\varphi\times\psi}(X\times X)\preceq \mathcal{Z}_{\varphi}(X)\times \mathcal{Z}_\psi(X).$$
\end{lem}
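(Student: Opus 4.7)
The plan is to show that any $F \in L^2(X \times X)$ that is invariant under $T_{\varphi(g)} \times T_{\psi(g)}$ for all $g \in G$ is orthogonal to every tensor product $f \otimes g$ with $E(f \mid \mathcal{Z}_\varphi(X)) = 0$; by symmetry the same argument handles $E(g \mid \mathcal{Z}_\psi(X)) = 0$. Since the span of functions $f \otimes g$ with $E(f\mid\mathcal{Z}_\varphi(X))=0$ or $E(g\mid\mathcal{Z}_\psi(X))=0$ is the orthocomplement of $L^2(\mathcal{Z}_\varphi(X) \otimes \mathcal{Z}_\psi(X))$ inside $L^2(X\times X)$, this is enough to conclude $F$ is measurable with respect to $\mathcal{Z}_\varphi(X)\times\mathcal{Z}_\psi(X)$.

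The main tool is van der Corput (Lemma~\ref{vdc}) applied to the $L^2(X\times X)$-valued sequence $u_h := T_{\varphi(h)}f \otimes T_{\psi(h)}g$. A direct computation using that $\mu$ is $T_{\varphi(h)}$- and $T_{\psi(h)}$-invariant yields
\[
\langle u_{h+k}, u_h \rangle = \Bigl(\int_X T_{\varphi(k)}f \cdot \overline{f}\,d\mu\Bigr)\Bigl(\int_X T_{\psi(k)}g \cdot \overline{g}\,d\mu\Bigr),
\]
which is independent of $h$. Hence $\UC_{h\in G}\langle u_{h+k}, u_h\rangle$ exists trivially, and
\[
\UC_{k\in G}\bigl|\UC_{h\in G}\langle u_{h+k}, u_h \rangle\bigr| \le \|g\|_2^2 \cdot \UC_{k\in G}\Bigl|\int_X T_{\varphi(k)} f\cdot\overline{f}\,d\mu\Bigr|.
\]
The hypothesis $E(f\mid\mathcal{Z}_\varphi(X)) = 0$ gives $\|f\|_{U^2(\varphi(G))} = 0$, which in turn forces the uniform Ces\`aro limit on the right to vanish (this is exactly the content used in the proof of Lemma~\ref{weaklymixing}). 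Van der Corput then yields $\UC_{h\in G} u_h = 0$ strongly in $L^2(X\times X)$.

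To finish, pair this limit against $F$ and use its invariance:
\[
0 = \Bigl\langle F, \UC_{h\in G} u_h \Bigr\rangle = \UC_{h\in G}\langle F, T_{\varphi(h)}\otimes T_{\psi(h)}(f\otimes g)\rangle = \UC_{h\in G}\langle F, f\otimes g\rangle = \langle F, f\otimes g\rangle,
\]
where in the third equality I pushed $T_{\varphi(h)}\otimes T_{\psi(h)}$ across the inner product and used $(T_{\varphi(h)}\times T_{\psi(h)})^{-1}F = F$.

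There is no serious obstacle here; the only thing to keep an eye on is verifying that the van der Corput hypothesis truly applies, which amounts to the observation that $\langle u_{h+k},u_h\rangle$ is independent of $h$ (so the inner Ces\`aro limit exists automatically) and that $E(f\mid\mathcal{Z}_\varphi(X))=0$ implies the $U^2(\varphi(G))$-seminorm vanishes, which is a direct consequence of Proposition~\ref{UCF} applied to the $\varphi(G)$-action.
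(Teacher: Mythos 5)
Your proof is correct and is essentially the same argument as in the paper: both apply van der Corput to the sequence $u_g = T_{\varphi(g)}f\otimes T_{\psi(g)}g$, observe that the inner products factor into a product of single-variable correlation integrals, and use that $E(f\mid\mathcal{Z}_\varphi(X))=0$ forces the $U^2(\varphi(G))$-seminorm (and hence these correlations) to vanish. The only cosmetic differences are that the paper identifies $\UC_g u_g$ with $E(f_1\otimes f_2\mid\mathcal{I}_{\varphi\times\psi})$ via the mean ergodic theorem rather than pairing against an invariant $F$, and it keeps the bound symmetric as $\bigl(\|f_1\|_{U^2(\varphi(G))}\|f_2\|_{U^2(\psi(G))}\bigr)^{1/2}$ where you bound one factor crudely by $\|g\|_2^2$.
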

\begin{proof}
Let $f_1,f_2\in L^\infty(X)$ be arbitrary functions and $f(x,y)=f_1(x)f_2(y)$. Then, by the mean ergodic theorem we have that
$$E(f|\mI_{\varphi\times\psi}(X\times X))(x,y) = \UC_{g\in G} T_{\varphi(g)}f_1(x)\cdot T_{\psi(g)}f_2(y)$$ in $L^2(\mu\times\mu)$. By van der Corput lemma, $E(f|\mI_{\varphi\times\psi}(X\times X))=0$ if
$$\UC_{h\in G}\left| \UC_{g\in G} \int_{X\times X} T_{\varphi(g+h)}f_1(x)\cdot T_{\psi(g+h)}f_2(y) \cdot \overline{T_{\varphi(g)}f_1(x)}\cdot \overline{T_{\psi(g)}f_2(y)} d(\mu\times \mu)(x,y)\right|=0.$$
Since $\varphi(G)\times \psi(G)$ is measure-preserving the above is equal to 
$$\UC_{h\in G}\left(\left|\int_X \Delta_{\varphi(h)}f_1(x) d\mu(x)\right|\right) \left(\left|\int_X \Delta_{\psi(h)}f_2(y) d\mu(y) \right|\right)$$
which by the Cauchy--Schwarz inequality is bounded above by
$$\left(\|f_1\|_{U^2(\varphi(G))}\cdot \|f_2\|_{U^2(\psi(G))}\right)^{1/2}.$$
We deduce that if  $E(f|\mathcal{Z}_\varphi(X)\times \mathcal{Z}_\psi(X))=0$, then $E\left(f|\mathcal{I}_{\varphi\times\psi}(X\times X)\right)=0$. Since linear combinations of functions of the form $f_1\otimes f_2$ with $f_1,f_2\in L^\infty(X)$ are dense in $L^\infty(X\times X)$ we deduce that the same holds for every bounded function on $X\times X$, and this completes the proof. 
\end{proof}
Using Theorem \ref{extension} we can now prove the following useful result.
\begin{lem} \label{productsubalgebra}
    Let $G$ be a countable abelian group, and let $\X=(X,\mX,\mu,(T_g)_{g\in G})$ be an ergodic $G$-system. Suppose that $\varphi,\psi:G\rightarrow G$ are arbitrary homomorphisms such that $(\psi-\varphi)(G)$ has finite index in $G$. Then there exists an ergodic extension $\tilde{X}$ of $X$ such that $$\pi^{-1}(\mathcal{I}_{\varphi\times\psi}(X)) \preceq\left(\mZ(\tilde{X})\lor \mI_\varphi(\tilde{X})\right) \otimes \left(\mZ(\tilde{X})\lor \mI_\psi(\tilde{X})\right).$$ 
\end{lem}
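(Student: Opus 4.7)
The approach is to reduce, via Lemma~\ref{TT}, to analyzing the product Kronecker factor $\mZ_\varphi(X) \otimes \mZ_\psi(X)$, then apply Theorem~\ref{extension} to a countable subgroup $C \le \hat G$ generated by all eigenvalues that can contribute to a $(T_{\varphi(g)} \times T_{\psi(g)})$-invariant function, and finally split each $\varphi(G)$- or $\psi(G)$-eigenfunction on the extension as a $G$-eigenfunction times an invariant function.

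First, Lemma~\ref{TT} gives that any $F \in L^\infty(X \times X)$ which is invariant under $(T_{\varphi(g)} \times T_{\psi(g)})_{g \in G}$ is measurable with respect to $\mZ_\varphi(X) \otimes \mZ_\psi(X)$. By Theorem~\ref{FranHostdeco} one can choose relative orthonormal bases $\{\phi_j\}_{j \in \N}$ of $\varphi(G)$-eigenfunctions for $\mZ_\varphi(X)$ and $\{\zeta_k\}_{k \in \N}$ of $\psi(G)$-eigenfunctions for $\mZ_\psi(X)$, with associated eigenvalues $\lambda_j$ and $\mu_k$. A Fubini-style argument shows that if a product $\phi_j(x_1)\zeta_k(x_2)$ (or a sum of such products) is to contribute to an invariant function, then the eigenvalues $\lambda_j$ and $\mu_k$ must in fact be genuine characters of $\varphi(G)$ and $\psi(G)$ on the support of the contribution---since the relation $\lambda_j(x_1,\varphi(g))\mu_k(x_2,\psi(g)) = 1$ for a.e.\ $(x_1,x_2)$ forces each factor to be independent of its point---and they must satisfy $\lambda_j \circ \varphi = \overline{\mu_k \circ \psi}$ as characters of $G$.

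Next, lift each such $\lambda_j$ (resp.\ $\mu_k$) through $\varphi$ (resp.\ $\psi$) to a character of $G$ using Lemma~\ref{lift}, and let $C \le \hat G$ be the countable subgroup generated by all such lifts. Apply Theorem~\ref{extension} to $C$ to obtain an ergodic extension $\pi : \tilde X \to X$ in which every $\chi \in C$ is simultaneously the restriction via $\varphi$ and via $\psi$ of some $G$-eigenvalue on $\tilde X$. For each relevant $\phi_j$, Theorem~\ref{extension} furnishes a $G$-eigenfunction $h_j$ on $\tilde X$ whose $G$-eigenvalue, restricted to $\varphi(G)$, agrees with that of $\phi_j \circ \pi$; hence $(\phi_j \circ \pi) \cdot \overline{h_j}$ is $\varphi(G)$-invariant on $\tilde X$, and therefore $\phi_j \circ \pi \in \mZ(\tilde X) \lor \mI_\varphi(\tilde X)$. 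Symmetrically, each relevant $\zeta_k \circ \pi \in \mZ(\tilde X) \lor \mI_\psi(\tilde X)$.

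Putting everything together, any $(T_{\varphi(g)}\times T_{\psi(g)})$-invariant function on $X\times X$, once pulled back to $\tilde X \times \tilde X$, lies in the linear span (closed in $L^2$) of products whose first factor is in $\mZ(\tilde X) \lor \mI_\varphi(\tilde X)$ and second factor is in $\mZ(\tilde X) \lor \mI_\psi(\tilde X)$, which gives the desired inclusion. The main technical obstacle is the Fubini-style spectral reduction in the second paragraph: namely, to argue cleanly that in any decomposition of an invariant $F$ along the relative orthonormal bases $\{\phi_j \otimes \zeta_k\}$, the only surviving pairs $(j,k)$ are those for which $\lambda_j$ and $\mu_k$ are genuine characters with $\lambda_j \circ \varphi = \overline{\mu_k \circ \psi}$---the coefficient functions $c_{j,k}(x_1,x_2)$ being $(\varphi(G)\times\psi(G))$-invariant and hence already measurable with respect to $\mI_\varphi(X) \otimes \mI_\psi(X)$. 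Ergodicity of $\tilde X$ is granted directly by Theorem~\ref{extension}.
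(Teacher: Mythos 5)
Your plan follows essentially the same route as the paper's proof: reduce via Lemma~\ref{TT} to $\mZ_\varphi(X)\otimes\mZ_\psi(X)$, expand a $(T_{\varphi(g)}\times T_{\psi(g)})$-invariant function along tensor products of relative orthonormal bases of eigenfunctions from Theorem~\ref{FranHostdeco}, impose the invariance constraint $\lambda_j(\varphi(g),x)\,\overline{\mu_k(\psi(g),y)}=1$ on the support of each coefficient, collect the resulting characters of $G$ into a countable set $C$, apply Theorem~\ref{extension}, and then split each contributing block as a $G$-eigenfunction times a $\varphi(G)\times\psi(G)$-invariant function. Two points where the paper is more careful than your sketch and which you should address when writing out details. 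First, the elements of $C$ are the \emph{pullbacks} $\lambda_j\circ\varphi$ and $\overline{\mu_k\circ\psi}$ as characters of $G$; a Lemma~\ref{lift}-style extension of $\lambda_j$ from the subgroup $\varphi(G)$ up to $G$ is a different character and does not plug into Theorem~\ref{extension} correctly (only the pullback gives $\lambda(\varphi(g))=\chi(g)$ after applying the extension theorem). Second, on the support of a coefficient $c_{j,k}$ the common character $\chi$ may vary across disjoint invariant pieces, so ``the only surviving pairs are those for which \ldots'' is not quite right as stated; the paper remedies this by defining, for each $\chi\in\hat G$, the index set $J_\chi$ of pairs $(j,k)$ for which the constraint holds on a set of positive measure, proving $C=\{\chi:J_\chi\neq\emptyset\}$ is countable by a disjointness-of-positive-measure-sets argument, and then refining the decomposition accordingly before invoking Theorem~\ref{extension}.
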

\begin{proof}
Let $\{\zeta_i\}_{i\in\mathbb{N}}$ be a relative orthonormal basis of eigenfunctions for $\mZ_\varphi(X)$ and $\{\xi_i\}_{i\in\mathbb{N}}$ be the same for $\mZ_\psi(X)$. For every $i,j \in \N$, let $\lambda_i:\varphi(G)\times X\rightarrow \mathbb{C}$ and $\mu_j:\psi(G)\times X\rightarrow\mathbb{C}$ denote the eigenvalues of $\zeta_i$ and $\xi_j$ respectively. Our goal is to study the functions $f\in L^\infty(X^2)$ which are $(T_{\varphi(g)}\times T_{\psi(g)})_{g\in G}$-invariant. By Lemma \ref{TT}, we can write any such function as $$f(x,y) = \sum_{i,j\in\mathbb{N}} c_{i,j}(x,y) \zeta_i(x)\overline{\xi_j(y)}$$ where $c_{i,j}$ is a $\varphi(G)\times \psi(G)$-invariant function. Since $f$ is $T_{\varphi(g)}\times T_{\psi(g)}$-invariant we deduce that
$$c_{i,j}(x,y) \lambda_i(\varphi(g),x)\overline{\mu_j(\psi(g),y)} = c_{i,j}(x,y).$$\\

Hypothetically, if $c_{i,j}$ was a constant, then unless it is zero (and then can be removed from the summation), the equation above implies that  $\lambda_i(\varphi(g),\cdot) = \mu_j(\psi(g),\cdot)=\chi(g)$ for some character $\chi\in\hat G$. In this special case we can apply Theorem \ref{extension} in order to find an extension where $\lambda_i$ and $\mu_j$ are eigenvalues. This means that we can express the lift of $\zeta_i\otimes \xi_j$ to $\tilde{X}$ as a product of a tensor product of $G$-eigenfunctions (whose eigenvalues are $\lambda_i$ and $\mu_j$) and a $\varphi(G)\times \psi(G)$-invariant function, which completes the proof in this special case. Below we generalize the above to arbitrary $c_{i,j}$.\\

Let $C_{i,j} = \{(x,y)\in X\times X : c_{i,j}(x,y)\not = 0\}$. Then $\lambda_i(\varphi(g),x)\overline{\mu_j(\psi(g),y)}=1$ for every $(x,y)\in C_{i,j}$ and all $g\in G$. Hence, $g\mapsto \lambda_i(\varphi(g),x)$ and $g\mapsto\mu_j(\psi(g),y)$ are equal to the same character $\chi\in \hat G$ for all $(x,y)\in C_{i,j}$. Now, for every $\chi\in\hat G$ we let $$J_\chi=\{(i,j)\in\mathbb{N}^2 : (\mu\times\mu)(\{(x,y)\in X\times X:\forall g\text{ }\lambda_i(\varphi(g),x)=\mu_j(\psi(g),y)=\chi(g)\} >0\}$$ and set $$C:=\{\chi\in\hat G : J_\chi\not = \emptyset\} \text{ and } J:=\bigcup_{\chi\in C} J_\chi.$$
Our first observation is that 
\begin{equation}\label{linearcomb}
    f(x,y) = \sum_{(i,j)\in J} c_{i,j}(x,y) \zeta_i(x) \xi_j(y).
\end{equation}
Indeed, if $(i,j)\not\in J$, then for every $\chi$, $(i,j)\not\in J_\chi$, but then from the computation above $\mu(C_{i,j})=0$ and $c_{i,j}=0$ for $(\mu\times\mu)$-a.e. $(x,y)\in X\times X$.

\begin{claim}
The set $C$ is at most countable.
\end{claim}
\noindent \emph{Proof of the claim}.
We use the fact that in a probability space there can be at most countably many disjoint sets of positive measure. Assume by contradiction that $C$ is uncountable. Since there are only countably many $(i,j)\in\mathbb{N}^2$, we deduce that there exists some $(i_0,j_0)$ which belongs to $J_\chi$ for all $\chi$ in an uncountable subset of $\hat G$. But since the sets $$\{(x,y)\in X\times X:\forall g \in G,~\lambda_i(\varphi(g),x)=\mu_j(\psi(g),y)=\chi(g)\}$$ are disjoint for different $\chi$'s and of positive measure, we obtain a contradiction.
This proves the claim.\\

Now we return to the proof of the lemma.
Since $C$ is at most countable, we can apply Theorem \ref{extension}.
We see that there exists an ergodic extension $\pi:\tilde{X}\rightarrow X$, such that for every $\chi\in C$, there exist $G$-eigenvalues $\chi^\varphi,\chi^\psi:G\rightarrow S^1$ with $\chi^\varphi(\varphi(g))=\chi(g)$ and $\chi^\psi(\psi(g))=\chi(g)$.
Let $m_\chi^\varphi,m_\chi^\psi :\tilde{X}\rightarrow S^1$ be the corresponding eigenfunctions.
Now fix some $(i,j)\in J$ and let $\chi\in C$ be such that $\lambda_i(\varphi(g),x)=\mu_j(\psi(g),y)=\chi(g)$ whenever $c_{i,j}(x,y)\not=0$.
We deduce that $\left(c_{i,j}\cdot \zeta_i\otimes \xi_j\right)\circ\pi \cdot \overline{m_\chi^\varphi\otimes m_\chi^\psi}$ is a $\varphi(G)\times \psi(G)$-invariant function.
Since $c_{i,j}$ is also $\varphi(G)\times \psi(G)$-invariant, we deduce by equation (\ref{linearcomb}) that $f\circ \pi$ is a linear combination of products of eigenfunctions $m_\chi^\varphi\otimes m_\chi^\psi$ and some $\varphi(G)\times \psi(G)$-invariant functions.
Equivalently, the lift of $f$ to $\tilde{X}\times \tilde{X}$ is measurable with respect to the $\sigma$-algebra $$\left(\mZ^1(\tilde{X})\lor \mI_\varphi(\tilde{X})\right)\otimes\left(\mZ^1(\tilde{X})\lor \mI_\psi(\tilde{X})\right)$$
as required.
\end{proof}

The following result of Tao and Ziegler plays in important role in our work.
\begin{thm}[\cite{TZ}, Theorem 1.19]\label{TZtheorem}
Let $G$ be a countable abelian group, and let $\X=(X,\mX,\mu,(T_g)_{g \in G})$ be a $G$-system. Let $H_1,H_2$ be two subgroups of $G$, and denote by $H_1+H_2$ the subgroup of $G$ generated by $H_1$ and $H_2$. Then for every $d_1,d_2\in\mathbb{N}$, one has
$$\mathcal{Z}_{H_1}^{d_1}(X) \land \mZ^{d_2}_{H_2}(X)\preceq \mZ^{d_1+d_2}_{H_1+H_2}(X).$$
\end{thm}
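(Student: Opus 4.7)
I would prove this by induction on $d_1 + d_2$. The base case $d_1 = d_2 = 0$ reduces to $\mI_{H_1}(X) \land \mI_{H_2}(X) \preceq \mI_{H_1+H_2}(X)$, which is immediate: any function invariant under both $H_1$ and $H_2$ is invariant under the subgroup they generate, by linearity of the $G$-action.

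For the inductive step, I would assume $d_1, d_2 \geq 1$ and take $f \in L^\infty(X)$ measurable with respect to both $\mZ^{d_1}_{H_1}(X)$ and $\mZ^{d_2}_{H_2}(X)$. By Proposition \ref{UCF}, it suffices to show $\|f - E(f|\mZ^{d_1+d_2}_{H_1+H_2}(X))\|_{U^{d_1+d_2+1}(H_1+H_2)} = 0$. The plan is to unfold the seminorm using the recursion
$$\|f\|_{U^{d_1+d_2+1}(H_1+H_2)}^{2^{d_1+d_2+1}} = \UC_{h \in H_1+H_2} \|\Delta_h f\|_{U^{d_1+d_2}(H_1+H_2)}^{2^{d_1+d_2}},$$
write each $h \in H_1+H_2$ as $h_1 + h_2$ with $h_i \in H_i$, and exploit the cocycle identity
$$\Delta_{h_1+h_2} f = \Delta_{h_1} f \cdot T_{h_1}(\Delta_{h_2} f).$$
Since $G$ is abelian, the operator $T_{h_1}$ preserves every Host--Kra factor $\mZ^k_H(X)$; this positions $\Delta_{h_1} f$ and $T_{h_1}(\Delta_{h_2} f)$ for an application of the inductive hypothesis to the pairs $(d_1-1, d_2)$ and $(d_1, d_2-1)$ respectively.

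The main obstacle is that $\Delta_{h_1} f$ is not in general pointwise measurable with respect to $\mZ^{d_1-1}_{H_1}(X)$: using the description of $\mZ^{d_1}_{H_1}(X)$ as a compact abelian group extension of $\mZ^{d_1-1}_{H_1}(X)$ from Theorem \ref{HKfactors}(iii), one checks that cross terms between distinct Fourier modes on the fiber survive differencing, so only the seminorm of $\Delta_{h_1} f$ drops in degree. Consequently one cannot transfer measurability directly from the inductive hypothesis and must instead compare seminorms, combining the Fubini-type Lemma \ref{lem: fubini} with iterated applications of the van der Corput Lemma \ref{vdc} and the Cauchy--Schwarz--Gowers inequality. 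The cleanest realization, due to \cite{TZ}, works at the level of the Host--Kra cubic measures $\mu^{[k]}_H$ on $X^{2^k}$, whose inductive self-joining structure is compatible with passing to sums of subgroups, and ultimately yields a lower bound of the form $\|f\|_{U^{d_1+d_2+1}(H_1+H_2)} \gtrsim \|E(f \mid \mZ^{d_1}_{H_1}(X) \land \mZ^{d_2}_{H_2}(X))\|_2^{\alpha}$ for some exponent $\alpha>0$, from which the desired inclusion follows.
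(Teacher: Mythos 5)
The paper does not prove this statement: it is imported verbatim as \cite[Theorem~1.19]{TZ} and used as a black box (via Lemma~\ref{TZkron}). So there is no proof in the paper to compare against; what can be evaluated is whether your sketch stands on its own as a proof of the Tao--Ziegler concatenation theorem, and it does not.

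Your base case is fine, and the cocycle identity $\Delta_{h_1+h_2}f = \Delta_{h_1}f\cdot T_{h_1}(\Delta_{h_2}f)$ (valid for unimodular $f$) together with the observation that $T_{h_1}$ preserves $H_2$-Host--Kra factors is indeed the natural first move. But you then correctly diagnose that $\Delta_{h_1}f$ is not measurable with respect to $\mZ^{d_1-1}_{H_1}(X)$ -- only an averaged seminorm statement survives -- and at exactly that point the argument stops being a proof. The final paragraph does not fill the gap: it asserts, without derivation, a quantitative lower bound $\|f\|_{U^{d_1+d_2+1}(H_1+H_2)} \gtrsim \|E(f\mid \mZ^{d_1}_{H_1}\land\mZ^{d_2}_{H_2})\|_2^{\alpha}$ and names the tools (cubic measures $\mu^{[k]}_H$, van~der~Corput, Cauchy--Schwarz--Gowers) that ``ultimately yield'' it. This is a description of the shape of the Tao--Ziegler argument, not a reproduction of it. The actual content of \cite{TZ} lies in making the inductive bookkeeping work: passing to dual functions of mixed degree, controlling correlations against products of $H_1$-dual and $H_2$-dual functions by iterated Cauchy--Schwarz, and tracking the degree of the resulting anti-uniformity on the join $H_1+H_2$. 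None of that machinery appears in your sketch, and the phrase ``from which the desired inclusion follows'' papers over roughly twenty pages of argument. In short: you have correctly located the obstruction and the relevant reference, but you have not proved the theorem, and the paper itself deliberately does not attempt to.
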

 In particular, by setting $d_1=d_2=1$ and using Lemma \ref{IZk} we deduce:
 \begin{lem}\label{TZkron}
     Let $G$ be a countable abelian group and $(X,\mX,\mu,(T_g)_{g\in G})$ be a $G$-system, and let $\varphi,\psi:G\rightarrow G$ be homomorphisms such that $(\psi - \varphi)(G)$ has finite index in $G$. Then, $\mathcal{Z}_{\varphi,\psi}(X)\preceq \mathcal{Z}_G^2(X)$.
 \end{lem}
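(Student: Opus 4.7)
The plan is to deduce the lemma as a direct corollary of Theorem \ref{TZtheorem} combined with Lemma \ref{IZk}. The key identification that makes this work is that, by definition, $\mathcal{Z}_{\varphi,\psi}(X) = \mathcal{Z}_\varphi(X) \wedge \mathcal{Z}_\psi(X) = \mathcal{Z}^1_{\varphi(G)}(X) \wedge \mathcal{Z}^1_{\psi(G)}(X)$, which is exactly the meet appearing on the left-hand side of Tao--Ziegler with $d_1 = d_2 = 1$ and $H_1 = \varphi(G)$, $H_2 = \psi(G)$. Applying Theorem \ref{TZtheorem} directly therefore yields
\[
\mathcal{Z}_{\varphi,\psi}(X) \preceq \mathcal{Z}^2_{\varphi(G) + \psi(G)}(X),
\]
where $\varphi(G) + \psi(G)$ denotes the subgroup of $G$ generated by $\varphi(G)$ and $\psi(G)$.

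It remains to identify $\mathcal{Z}^2_{\varphi(G) + \psi(G)}(X)$ with $\mathcal{Z}^2_G(X)$. For this, I would invoke Lemma \ref{IZk} with $k = 2$ and $H = \varphi(G) + \psi(G)$, which requires only that $H$ have finite index in $G$. But $(\psi-\varphi)(g) = \psi(g) + \varphi(-g) \in \psi(G) + \varphi(G)$ for every $g \in G$, so $(\psi-\varphi)(G) \subseteq \varphi(G) + \psi(G)$; since by hypothesis $(\psi-\varphi)(G)$ has finite index in $G$, the larger subgroup $\varphi(G) + \psi(G)$ does as well. Thus Lemma \ref{IZk} gives $\mathcal{Z}^2_{\varphi(G) + \psi(G)}(X) = \mathcal{Z}^2_G(X)$, and chaining this with the display above concludes the proof.

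There is no genuine obstacle here: the lemma is essentially a bookkeeping consequence of the two earlier results, and the only small verification is the containment $(\psi - \varphi)(G) \subseteq \varphi(G) + \psi(G)$ used to transfer the finite-index hypothesis.
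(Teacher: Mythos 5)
Your proposal is correct and follows exactly the route the paper indicates: apply Theorem \ref{TZtheorem} with $d_1=d_2=1$, $H_1=\varphi(G)$, $H_2=\psi(G)$, and then use Lemma \ref{IZk} (via the observation that $(\psi-\varphi)(G)\subseteq\varphi(G)+\psi(G)$, so the latter has finite index) to replace $\mathcal{Z}^2_{\varphi(G)+\psi(G)}(X)$ by $\mathcal{Z}^2_G(X)$. You have merely made explicit the finite-index transfer step that the paper leaves to the reader.
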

 We combine this with the results in Section \ref{sec: finite index char factor} to deduce the following version of Theorem \ref{partialcf1}.
\begin{thm}\label{strongcf}
Let $G$ be a countable abelian group and $\X = (X, \mX,\mu,(T_g)_{g\in G})$ be an ergodic $G$-system. Suppose that $\varphi, \psi : G \rightarrow G$ are arbitrary homomorphisms such that $(\psi-\varphi)(G)$ has finite index in $G$. Then for any $f_0,f_1,f_2\in L^\infty(\mu)$ there exists an ergodic extension $\pi:(\tilde{X},\tilde{\mu})\rightarrow (X,\mu)$ such that 
\begin{align*}&\UC_{g\in G} \int_{\tilde{X}} \tilde{f}_0\cdot  T_{\varphi(g)} \tilde{f}_1\cdot T_{\psi(g)} \tilde{f}_2~ d\tilde{\mu} =\\ &\UC_{g\in G} \int_{\tilde{X}} \tilde{f}_0\cdot  T_{\varphi(g)}  E(\tilde{f}_1|\mZ_{G}^2(\tilde{X})\lor \mI_\varphi(\tilde{X})) \cdot T_{\psi(g)} E(\tilde{f}_2|\mZ_{G}^2(\tilde{X})\lor \mI_\psi(\tilde{X}))~ d\tilde{\mu}
\end{align*}
in $L^2(\tilde{X})$, where $\tilde{f}_i:=f_i\circ \pi$ denotes the lift of $f_i$ to the extension $\tilde{X}$.
\end{thm}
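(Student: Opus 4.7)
The plan is to combine Proposition \ref{partialcf1}, which identifies the partial characteristic factors as $\mZ_\varphi(\tilde X)$ and $\mZ_\psi(\tilde X)$, with Lemma \ref{productsubalgebra}, which, after passing to a suitable extension, describes the invariant $\sigma$-algebra of the product action $S_g = T_{\varphi(g)} \times T_{\psi(g)}$ on $\tilde X \times \tilde X$ as a subalgebra of $(\mZ(\tilde X) \lor \mI_\varphi(\tilde X)) \otimes (\mZ(\tilde X) \lor \mI_\psi(\tilde X))$. The upgrade from the Kronecker factor $\mZ(\tilde X)$ appearing in Lemma \ref{productsubalgebra} to the Conze--Lesigne factor $\mZ_G^2(\tilde X)$ in the statement will come from Theorem \ref{HKfactors}(i) and Lemma \ref{TZkron}.

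First I would apply Proposition \ref{partialcf1} to $\tilde X$, so that it suffices to show that if $\tilde f_1 \in L^\infty(\mZ_\varphi(\tilde X))$ satisfies $E(\tilde f_1 \mid \mZ_G^2(\tilde X) \lor \mI_\varphi(\tilde X)) = 0$, then
\[
\UC_{g\in G}\int_{\tilde X}\tilde f_0 \cdot T_{\varphi(g)}\tilde f_1 \cdot T_{\psi(g)}\tilde f_2 \, d\tilde\mu = 0,
\]
with a symmetric statement in $\tilde f_2$. Applying Cauchy--Schwarz in $\tilde f_0$ and the van der Corput lemma to $u_g = T_{\varphi(g)}\tilde f_1 \cdot T_{\psi(g)}\tilde f_2$, using $T_{\varphi(g)}$-invariance of $\tilde\mu$ and the mean ergodic theorem along the finite-index subgroup $(\psi-\varphi)(G)$, this reduces to showing
\[
\UC_{h\in G}\int_{\tilde X} A_h \cdot E\bigl(B_h \mid \mI_{\psi-\varphi}(\tilde X)\bigr) \, d\tilde\mu = 0,
\]
where $A_h = T_{\varphi(h)}\tilde f_1 \cdot \overline{\tilde f_1}$ and $B_h = T_{\psi(h)}\tilde f_2 \cdot \overline{\tilde f_2}$.

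The key observation is that the inner integral equals $\int_{\tilde X^2} A_h \otimes B_h \, d\bar\sigma$, where $\bar\sigma$ is the $S_g$-invariant self-joining on $\tilde X^2$ obtained as the Ces\`aro limit of pushforwards of $\tilde\mu$ under $x \mapsto (T_{\varphi(g)}x, T_{\psi(g)}x)$. Since $\bar\sigma$ is $S_g$-invariant, $\int F\, d\bar\sigma$ depends only on $E(F \mid \mI_S(\tilde X^2))$, and Lemma \ref{productsubalgebra}, applied to an extension chosen large enough, tells me that on pullbacks of functions from $X^2$ the $\sigma$-algebra $\mI_S(\tilde X^2)$ is contained in $(\mZ(\tilde X) \lor \mI_\varphi(\tilde X)) \otimes (\mZ(\tilde X) \lor \mI_\psi(\tilde X))$. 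Hence I may replace $A_h$ and $B_h$ inside the $\bar\sigma$-integral by their conditional expectations onto the respective factors, and since $\mZ(\tilde X) \preceq \mZ_G^2(\tilde X)$ by Theorem \ref{HKfactors}(i), these are in particular measurable with respect to $\mZ_G^2(\tilde X) \lor \mI_\varphi(\tilde X)$ and $\mZ_G^2(\tilde X) \lor \mI_\psi(\tilde X)$.

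The hard part is the final step: deducing vanishing of the $\UC_h$-average from the hypothesis $E(\tilde f_1 \mid \mZ_G^2(\tilde X) \lor \mI_\varphi(\tilde X)) = 0$. This is a Gowers--Host--Kra seminorm computation: by Proposition \ref{UCF} the hypothesis forces $\|\tilde f_1\|_{U^3(G)} = 0$, and the recursive relation $\|\tilde f_1\|_{U^3(G)}^8 = \UC_h \|\Delta_h \tilde f_1\|_{U^2(G)}^4$ provides the control on the differences $\Delta_{\varphi(h)}\tilde f_1$ appearing in $A_h$ on average. Lemma \ref{TZkron} is the bridge ensuring that the additional invariance structure $\mI_\varphi, \mI_\psi$ does not obstruct the passage from $U^3(G)$-control to vanishing of the $\UC_h$-average against the $\mI_{\psi-\varphi}$-conditional expectation. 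Finally, one needs to check that the successive extensions required (for Proposition \ref{partialcf1}, Theorem \ref{extension}, and Lemma \ref{productsubalgebra}) can be chosen coherently so that a single ergodic $\tilde X$ satisfies all the needed conditions simultaneously.
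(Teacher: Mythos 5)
Your high-level plan correctly identifies the three ingredients (Proposition \ref{partialcf1}, Lemma \ref{productsubalgebra}, Lemma \ref{TZkron}), but the final step is where the argument goes off the rails. You claim that from $E(\tilde f_1 \mid \mZ_G^2(\tilde X) \lor \mI_\varphi(\tilde X)) = 0$ you can extract $\|\tilde f_1\|_{U^3(G)} = 0$ and that ``the recursive relation for $U^3$ provides the control'' — but this hopes that a Gowers--Host--Kra seminorm inequality alone will close the argument, and it will not: the expression produced after van der Corput involves $\varphi(h)$-differences paired against $\mI_{\psi-\varphi}$-conditional expectations of $\psi(h)$-differences of $\tilde f_2$, and there is no straightforward way to dominate this by a $U^3(G)$ seminorm. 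Indeed, if such a seminorm estimate were available, the extension trick (Theorem \ref{extension}/Lemma \ref{productsubalgebra}) and the invariant $\sigma$-algebras $\mI_\varphi, \mI_\psi$ would play no role, while the whole point of the theorem is that they are essential.

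The mechanism the paper actually uses is a \emph{relative independence} argument, not a seminorm computation. After applying Proposition \ref{partialcf1}, one invokes Theorem \ref{RIZ} so that one may assume $\X = \mathbf{Z}_\varphi(X) \times_{\mathbf{Z}_{\varphi,\psi}(X)} \mathbf{Z}_\psi(X)$; one then works with functions of the form $f^\varphi \otimes f^\psi$ on the fiber product, applies the mean ergodic theorem directly to the $T_\varphi \times T_\psi$ action (no iterated van der Corput), and obtains a projection onto $\mI_{\varphi\times\psi}$. Lemma \ref{productsubalgebra} then places $\pi^{-1}(\mI_{\varphi\times\psi}(X))$ inside $(\mZ(\tilde X)\lor\mI_\varphi(\tilde X)) \otimes (\mZ(\tilde X)\lor\mI_\psi(\tilde X))$ on a suitable extension. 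The vanishing under $E(\tilde f_1 \mid \mZ_G^2(\tilde X)\lor\mI_\varphi(\tilde X)) = 0$ is then deduced via Lemma \ref{TZkron} (giving $\mZ_{\varphi,\psi}(\tilde X) \preceq \mZ_G^2(\tilde X)$, hence the weaker hypothesis $E(\tilde f_1 \mid \mZ_{\varphi,\psi}(\tilde X)\lor\mI_\varphi(\tilde X)) = 0$) combined with Proposition \ref{relativeind}: since $\mZ_\varphi$ and $\mZ_\psi$ are relatively independent over $\mZ_{\varphi,\psi}$, hence over $\mZ_{\varphi,\psi}\lor\mI_\varphi$, the tensor factor involving $\tilde f_1^\varphi$ becomes orthogonal to everything measurable with respect to $\mZ(\tilde X)\lor\mI_\varphi(\tilde X)$, which kills the integral. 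Theorem \ref{liftHK} is also needed to move between the factor $\sigma$-algebras on $X$ and on $\tilde X$. None of this is captured by ``a Gowers--Host--Kra seminorm computation.''

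One smaller point: you apply Lemma \ref{productsubalgebra} to the $S_g$-invariant measure $\bar\sigma$ that you construct as a Ces\`aro limit of joinings, but that lemma is a statement about the product measure $\tilde\mu\times\tilde\mu$ on $\tilde X \times \tilde X$; replacing $A_h$, $B_h$ by their conditional expectations relative to the wrong measure is not automatically licensed. In the paper this issue is avoided because one works on the fiber product over $\mathbf Z_{\varphi,\psi}$, where the mean ergodic theorem applies verbatim and the relative-independence lemma matches the measure being used.
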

Recall that the factors $\mZ_{\varphi}(X)$ and $\mZ_\psi(X)$ are relatively independent over $\mZ_{\varphi,\psi}(X)$.
To put this fact to use, we need to introduce a construction known as a fiber product:
\begin{defn}[The fiber product over a factor.]
For $i=1,2$, let $\Y_i=(Y_i, \mathcal{Y}_i,\mu_i,(S^{(i)}_g)_{g\in G})$ be $G$-systems. Suppose that $\Y=(Y,\mathcal{Y},\nu,(S_g)_{g\in G})$ is a common factor and let $\pi_i:Y_i\rightarrow Y$, $i=1,2$ denote the factor maps.
The \emph{fiber product of $\Y_1$ and $\Y_2$ over $\Y$} is the system $\Y_1 \times_{\Y} \Y_2 = \left( Y_1 \times_Y Y_2, \mY_1 \otimes \mY_2, \mu_1 \times_Y \mu_2, (S^{(1)}_g \times S^{(2)}_g)_{g \in G} \right)$, where
$$Y_1\times_Y Y_2 = \{(y_1,y_2)\in Y_1\times Y_2 : \pi_1(y_1)=\pi_2(y_2)\}$$ and
$$\mu_1\times_Y \mu_2 = \int_Y \mu_{1,y}\times \mu_{2,y} d\nu(y),$$
where $$\mu_i = \int_{Y} \mu_{i,y} d\nu(y)$$ is the disintegration of the measure $\mu_i$ over $Y$ for $i=1,2$.
\end{defn}

We will use the following result from \cite{Zim}:
\begin{thm}\label{RIZ}
Let $G$ be a countable abelian group, and let $\X=(X,\mX,\mu,(T_g)_{g\in G})$ be a $G$-system. Let $\textbf{Y}_1=(Y_1, \mathcal{A}_1,\mu_1,(T^{(1)}_g)_{g\in G})$ and $\textbf{Y}_2=(Y_2,\mathcal{A}_2,\mu_2,(T^{(2)}_g)_{g\in G})$ be two factors of $X$ with factor maps $\pi_i:X\rightarrow Y_i$ for $i=1,2$, and let $\textbf{Y}=(Y,\nu)$ be their meet. Then, the $\sigma$-algebra $\mathcal{A}_1\lor\mathcal{A}_2$ corresponds to the fiber product $\Y_1\times_{\Y} \Y_2$.%= \{(y_1,y_2) : \pi_1(y_1)=\pi_2(y_2)\}$ with respect to the factor map $\pi(x) = (\pi_1(x),\pi_2(x))$ and the measure $\mu_1\times_Y \mu_2 = \int_Y \mu_{1,y}\times \mu_{2,y} d\nu(y)$ where $y\mapsto\mu_{1,y}$ and $y\mapsto\mu_{2,y}$ are the measure decompositions of $\mu_1$ and $\mu_2$ relatively to $Y$, respectively.
\end{thm}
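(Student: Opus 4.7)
The plan is to exhibit an explicit measure-preserving isomorphism between the factor of $\X$ corresponding to $\mathcal{A}_1 \lor \mathcal{A}_2$ and the fiber product system $\Y_1 \times_{\Y} \Y_2$. Let $\sigma_i : Y_i \to Y$ be the intermediate factor maps (so $\sigma_1 \circ \pi_1 = \sigma_2 \circ \pi_2 =: \pi$) and define $\Pi : X \to Y_1 \times Y_2$ by $\Pi(x) = (\pi_1(x), \pi_2(x))$. Since $\sigma_1(\pi_1(x)) = \sigma_2(\pi_2(x))$, the map $\Pi$ lands in $Y_1 \times_Y Y_2$, and it is $G$-equivariant because each $\pi_i$ is. Thus $\Pi$ is at least a candidate for a factor map.

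Next I would identify the pulled-back $\sigma$-algebra. The product $\sigma$-algebra $\mathcal{Y}_1 \otimes \mathcal{Y}_2$ is generated by rectangles $A_1 \times A_2$, which satisfy $\Pi^{-1}(A_1 \times A_2) = \pi_1^{-1}(A_1) \cap \pi_2^{-1}(A_2) \in \mathcal{A}_1 \lor \mathcal{A}_2$. Conversely, $\pi_1^{-1}(A_1) = \Pi^{-1}(A_1 \times Y_2)$ shows $\mathcal{A}_1 \subseteq \Pi^{-1}(\mathcal{Y}_1 \otimes \mathcal{Y}_2)$, and symmetrically for $\mathcal{A}_2$, yielding $\Pi^{-1}(\mathcal{Y}_1 \otimes \mathcal{Y}_2) = \mathcal{A}_1 \lor \mathcal{A}_2$ (modulo $\mu$-null sets).

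The substantive step, which I expect to be the main obstacle, is verifying the measure identity $\Pi_*\mu = \mu_1 \times_Y \mu_2$. I would disintegrate $\mu$ over the common factor $\Y$ as $\mu = \int_Y \mu_y\, d\nu(y)$ and observe that $(\pi_i)_* \mu_y = \mu_{i,y}$ for $\nu$-a.e. $y$, since the disintegration of $\mu_i = (\pi_i)_*\mu$ over $\Y$ is unique. Using this, for any $A_1 \in \mathcal{Y}_1$, $A_2 \in \mathcal{Y}_2$ we have
\begin{align*}
\mu\bigl(\pi_1^{-1}(A_1) \cap \pi_2^{-1}(A_2)\bigr)
 = \int_Y \mu_y\bigl(\pi_1^{-1}(A_1) \cap \pi_2^{-1}(A_2)\bigr)\, d\nu(y),
\end{align*}
while the target measure evaluates to $\int_Y \mu_{1,y}(A_1)\mu_{2,y}(A_2)\, d\nu(y)$. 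Matching the two therefore reduces to showing the conditional independence
\begin{align*}
E(f_1 f_2 \mid \mathcal{A}) = E(f_1 \mid \mathcal{A}) \cdot E(f_2 \mid \mathcal{A})
\end{align*}
for all $f_i \in L^\infty(\mathcal{A}_i)$, where $\mathcal{A} = \pi^{-1}(\mathcal{Y})$.

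This conditional independence is where the meet hypothesis enters. By definition of the meet, any function that is measurable with respect to both $\mathcal{A}_1$ and $\mathcal{A}_2$ is measurable with respect to $\mathcal{A}_1 \land \mathcal{A}_2 = \mathcal{A}$. Hence condition (i) of Proposition \ref{relativeind} holds, and so by the equivalence (i)$\Leftrightarrow$(ii) we obtain the desired factorization of conditional expectations. Substituting back proves $\Pi_*\mu = \mu_1 \times_Y \mu_2$, so $\Pi$ is a genuine factor map realizing the fiber product as the factor of $\X$ corresponding to $\mathcal{A}_1 \lor \mathcal{A}_2$.
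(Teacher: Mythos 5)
Your explicit map $\Pi(x) = (\pi_1(x), \pi_2(x))$, the identification $\Pi^{-1}(\mathcal{Y}_1 \otimes \mathcal{Y}_2) = \mathcal{A}_1 \lor \mathcal{A}_2$, and the reduction of the measure identity $\Pi_*\mu = \mu_1 \times_Y \mu_2$ to the conditional-independence statement
\begin{align*}
E(f_1 f_2 \mid \mathcal{A}) = E(f_1 \mid \mathcal{A}) \cdot E(f_2 \mid \mathcal{A}), \qquad f_i \in L^\infty(\mathcal{A}_i),\ \ \mathcal{A} = \mathcal{A}_1 \land \mathcal{A}_2,
\end{align*}
are all correct, and this is the right formulation of what must be shown. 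The gap is that you dispose of this last point by invoking the equivalence (i)$\Leftrightarrow$(ii) of Proposition~\ref{relativeind}. That equivalence is in fact \emph{false} for arbitrary sub-$\sigma$-algebras of a probability space. Take $X = \{a,b,c\}$ with uniform measure, $\mathcal{A}_1$ generated by $\{a\}$, $\mathcal{A}_2$ generated by $\{b\}$. Then $\mathcal{A}_1 \cap \mathcal{A}_2$ is trivial, so (i) holds with $\mathcal{A}$ trivial, yet
\begin{align*}
E\left(\ind_{\{a\}} \ind_{\{b\}}\right) = 0 \ne \tfrac{1}{9} = E\left(\ind_{\{a\}}\right) E\left(\ind_{\{b\}}\right),
\end{align*}
so (ii) fails. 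The paper's own sketch of (i)$\Rightarrow$(ii) asserts $E(g \mid \mathcal{A}_1) = E(g \mid \mathcal{A})$ ``by (i),'' but $E(g \mid \mathcal{A}_1)$ need not be $\mathcal{A}_2$-measurable, so (i) does not apply to it; that is exactly where the purely measure-theoretic argument breaks.

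The real content of Theorem~\ref{RIZ} is that when $\mathcal{A}_1$ and $\mathcal{A}_2$ are the $G$-\emph{invariant} $\sigma$-algebras of two factors of a $G$-system, condition (i) \emph{does} imply condition (ii). This is a genuinely dynamical fact, not a consequence of the definition of the meet: one has to show, for instance, that for $f_1 \in L^2(\mathcal{A}_1)$ the function $E(f_1 \mid \mathcal{A}_2)$ is itself $\mathcal{A}_1$-measurable, and that argument needs the $G$-equivariance of the conditional expectation operators together with a mean-ergodic / fixed-point argument over the common factor $\Y$. This is precisely why the paper does not prove Theorem~\ref{RIZ} but attributes it to Zimmer~\cite{Zim}, and why the remark following it stresses that the equivalence of ``trivial joint factor'' and ``relative independence'' is a theorem about factors, not about $\sigma$-algebras in general. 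As written, your proof inherits the circularity of Proposition~\ref{relativeind}'s (i)$\Rightarrow$(ii) direction and leaves the substantive step unproved.
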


\begin{rem}
In particular, Theorem \ref{RIZ} implies that $\Y_1\times_{\Y}\Y_2$ is a factor of $\X$.
We note that Zimmer also proved the other direction, namely that two factors $\mY_1$ and $\mY_2$ are relatively independent over a third factor $\mY$ if and only if the fiber product $\Y_1 \times_{\Y} \Y_2$ is a factor of $\X$; see \cite[Proposition 1.5]{Zim}.
%    \begin{thm}\label{relativeindZ}
%Let $X=(X,\mX,\mu,(T_g)_{g\in G})$ be a $G$-system. For every $i=1,2$ let %$Y_i=(Y_i, \mathcal{Y}_i,\mu_i,(S^i_g)_{g\in G})$ be a factor of $X$ and %denote by $\pi_i:X\rightarrow Y_i$ the factor map. Let %$Y=(Y,\mathcal{Y},\nu,(S_g)_{g\in G})$ be any factor of $Y_1$ and $Y_2$. %Then $\mathcal{Y}_1$ and $\mathcal{Y}_2$ are independent relatively to %$\mathcal{Y}$ if and only if the space 
%$Y_1\times_Y Y_2 = \{(y_1,y_2)\in Y_1\times Y_2 : \pi_1(y_1)=\pi_2(y_2)\}$ %is a factor of $X$.
%\end{thm}
%\begin{proof}
%    Let $\pi:X\rightarrow Y_1\times_Y Y_2$ be the map %$\pi(x)=(\pi_1(x),\pi_2(x))$. This map is clearly measurable. Now let, %$A_i\subseteq Y_i$, $i=1,2$ be measurable sets and define $A_1\times_Y A_2 %= (A_1\times A_2) \cap (Y_1\times_Y Y_2)$. In order to prove that $\pi$ is %a factor map it suffices to show that
%    $$\mu(\pi^{-1}(A_1\times_Y A_2)) = (\mu_1\times_Y \mu_2)(A_1\times_Y %A_2).$$
%    We have
%    \begin{align*}
%        \mu (\pi^{-1}(A_1\times_Y A_2)) =& \int_Y %\mu_y(\pi^{-1}(A_1\times_Y A_2)) d\nu(y) =\\ &\int_Y %\mu_{y}(\pi_1^{-1}(A_1))\cdot \mu_{y}(\pi_2^{-1}(A_2)) d\nu(y).
%    \end{align*}
%    The uniqueness of the decomposition implies that $\pi_i^{\star}\mu_y = %\mu_{i,y}$ for $\nu$-a.e. $y\in Y$. Therefore,
%    $$\mu (\pi^{-1}(A_1\times_Y A_2)) = \int_Y \mu_{1,y}(A_1)\cdot %\mu_{2,y}(A_2) d\nu(y) = (\mu_1\times_Y \mu_2) (A_1\times_Y A_2),$$
%   and this completes the proof. 
%\end{proof}
\end{rem}

We also need the following result:
\begin{thm}[cf. \cite{HK}, Proposition 4.6] \label{liftHK}
Let $\pi:(Y,\mathcal{Y},\nu,(S_g)_{g\in G})\rightarrow (X,\mX,\mu,(T_g)_{g\in G})$ be a factor map between $G$-systems and let $k\geq 1$. Then, $\pi^{-1}(\mathcal{Z}^k(X)) = \mathcal{Z}^k(Y)\land \pi^{-1}(\mX)$.
\end{thm}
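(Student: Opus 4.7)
The plan is to use the characterization of $\mZ^k$ from Proposition \ref{UCF}---namely, that $f$ is $\mZ^k$-measurable if and only if it is orthogonal to every function with vanishing $U^{k+1}$-seminorm---combined with two auxiliary facts about how the Gowers--Host--Kra seminorms transform under the factor map $\pi$:
(A) (\emph{Isometry}) $\|f\circ\pi\|_{U^j(Y)} = \|f\|_{U^j(X)}$ for every $f\in L^\infty(X)$ and every $j\geq 1$; and
(B) (\emph{Contraction}) $\|E(h\mid\pi^{-1}(\mX))\|_{U^j(Y)} \leq \|h\|_{U^j(Y)}$ for every $h\in L^\infty(Y)$ and every $j\geq 1$.

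I would establish (A) by induction on $j$. The base case is a direct computation: since $\pi$ is measure-preserving and satisfies $\pi\circ S_g = T_g\circ\pi$, one has $S_g(f\circ\pi) = (T_g f)\circ\pi$, hence $\UC_{g\in G}\int_Y (f\circ\pi)\overline{S_g(f\circ\pi)}\,d\nu = \UC_{g\in G}\int_X f\,\overline{T_g f}\,d\mu$, which equates the squared $U^1$-norms via the mean ergodic theorem. The inductive step follows from $\Delta_g(f\circ\pi) = (\Delta_g f)\circ\pi$ plugged into the recursive definition. For (B) at $j=1$, writing $E(h\mid\pi^{-1}(\mX)) = h_\pi\circ\pi$ for some $h_\pi\in L^2(X)$ and applying the mean ergodic theorem to $h_\pi$ on $X$ gives
\[
E\bigl(E(h\mid\pi^{-1}(\mX))\bigm|\mI_G(Y)\bigr) = E(h_\pi\mid\mI_G(X))\circ\pi = E\bigl(h\bigm|\pi^{-1}(\mI_G(X))\bigr),
\]
where the final identification uses $\pi^{-1}(\mI_G(X)) = \mI_G(Y)\land\pi^{-1}(\mX)$. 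The tower property applied to $\pi^{-1}(\mI_G(X))\preceq\mI_G(Y)$ together with contractivity of conditional expectation on $L^2$ then yields $\|E(h\mid\pi^{-1}(\mX))\|_{U^1(Y)} = \|E(h\mid\pi^{-1}(\mI_G(X)))\|_{L^2(\nu)} \leq \|E(h\mid\mI_G(Y))\|_{L^2(\nu)} = \|h\|_{U^1(Y)}$. \emph{The main technical obstacle} is extending (B) to $j>1$; for this I would use the Host--Kra cube space formalism, writing $\|h\|_{U^j(Y)}^{2^j}$ as an integral against the cube measure $\nu^{[j]}$ on $Y^{2^j}$, observing that $\pi^{[j]}$ applied coordinatewise pushes $\nu^{[j]}$ forward to $\mu^{[j]}$, and invoking the Cauchy--Schwarz--Gowers inequality to show that replacing $h$ by $E(h\mid\pi^{-1}(\mX))$ in the cube integral only decreases its value.

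Given (A) and (B), the two $\sigma$-algebra containments follow by short duality arguments. For $\pi^{-1}(\mZ^k(X))\preceq \mZ^k(Y)\land\pi^{-1}(\mX)$, the containment in $\pi^{-1}(\mX)$ is automatic; for $\mZ^k(Y)$-measurability of $f\circ\pi$ when $f$ is $\mZ^k(X)$-measurable, take any $h\in L^\infty(Y)$ with $\|h\|_{U^{k+1}(Y)}=0$ and use $\pi^{-1}(\mX)$-measurability of $f\circ\pi$ to compute $\langle f\circ\pi,h\rangle_\nu = \langle f\circ\pi, E(h\mid\pi^{-1}(\mX))\rangle_\nu = \langle f,\tilde h\rangle_\mu$, where $\tilde h \in L^2(X)$ satisfies $\tilde h\circ\pi = E(h\mid\pi^{-1}(\mX))$. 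By (A) and (B), $\|\tilde h\|_{U^{k+1}(X)} = \|E(h\mid\pi^{-1}(\mX))\|_{U^{k+1}(Y)} \leq \|h\|_{U^{k+1}(Y)} = 0$, so $\langle f,\tilde h\rangle_\mu = 0$ and hence $\langle f\circ\pi, h\rangle_\nu = 0$. For the reverse containment $\mZ^k(Y)\land\pi^{-1}(\mX)\preceq\pi^{-1}(\mZ^k(X))$, write $g=f\circ\pi$ with $g$ being $\mZ^k(Y)$-measurable; for any $h_0\in L^\infty(X)$ with $\|h_0\|_{U^{k+1}(X)}=0$, (A) gives $\|h_0\circ\pi\|_{U^{k+1}(Y)}=0$, so $0=\langle g,h_0\circ\pi\rangle_\nu = \langle f,h_0\rangle_\mu$, showing $f$ is $\mZ^k(X)$-measurable and completing the proof.
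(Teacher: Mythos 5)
Your overall skeleton is sound: (A) the isometry $\lVert f\circ\pi\rVert_{U^j(Y)}=\lVert f\rVert_{U^j(X)}$ and (B) the contraction $\lVert E(h\mid\pi^{-1}(\mX))\rVert_{U^j(Y)}\le\lVert h\rVert_{U^j(Y)}$, together with the duality argument in your final paragraph, do imply both inclusions; (A) is correctly proved by induction on $j$; and your $j=1$ case of (B), built on the identity $\pi^{-1}(\mI_G(X))=\mI_G(Y)\land\pi^{-1}(\mX)$ and contractivity of nested conditional expectations, is correct.

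The gap is in (B) for $j>1$, which is precisely where the entire proof of the theorem is concentrated. You invoke ``the Cauchy--Schwarz--Gowers inequality to show that replacing $h$ by $E(h\mid\pi^{-1}(\mX))$ in the cube integral only decreases its value,'' but GCS does not produce a monotonicity statement of this kind: it bounds a mixed cube integral $\bigl|\int\prod_\eps\mathcal{C}^{|\eps|}h_\eps\,d\nu^{[j]}\bigr|$ by the product $\prod_\eps\lVert h_\eps\rVert_{U^j}$, and it cannot by itself compare $\lVert\tilde h\rVert_{U^j}$ with $\lVert h\rVert_{U^j}$. In fact, naively ``unconditioning'' all $2^j$ vertices would yield the false \emph{equality} $\lVert\tilde h\rVert_{U^j}=\lVert h\rVert_{U^j}$, which shows that some structural input is needed to keep the argument from proving too much. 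The missing content is the following fact about $\nu^{[j]}$ that one must establish first: conditioning on a single vertex $y_{\eps_0}$, the conditional expectation of any $\pi^{-1}(\mX)^{\otimes(2^j-1)}$-measurable function of the remaining vertices is itself $\pi^{-1}(\mX)$-measurable in $y_{\eps_0}$. Once that is known, one may replace $\tilde h$ by $h$ at \emph{one} vertex without changing the integral, and \emph{then} GCS gives $\lVert\tilde h\rVert_{U^j}^{2^j}\le\lVert h\rVert_{U^j}\cdot\lVert\tilde h\rVert_{U^j}^{2^j-1}$, hence (B). This conditional-law compatibility is the analogue of Host--Kra's Lemma~4.5 (that $\pi^{[j]}_*\nu^{[j]}=\mu^{[j]}$, together with the relative-independence structure of the cube measures over the factor), and it requires a genuine inductive argument mirroring the construction of the box measures --- it does not follow from the pushforward identity alone, which you do mention, since that statement already gives you only (A). Until that lemma is supplied, the proposal is incomplete at exactly the step the cited Host--Kra Proposition~4.6 is designed to handle; an alternative route that avoids (B) entirely is to use the invariant-$\sigma$-algebra characterization of $\mZ^k$ from Theorem~\ref{invariantalgebra}, as in the paper's Appendix~\ref{LeibProof}.
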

Host and Kra proved Theorem \ref{liftHK} for $\mathbb{Z}$-actions, but the argument extends easily to arbitrary countable abelian groups.

We now have all the requisite tools to prove Theorem \ref{strongcf}.
\begin{proof}[Proof of Theorem \ref{strongcf}]
By the previous result we see that if $f_0,f_1$ or $f_2$ are orthogonal to functions measurable with respect to the $\sigma$-algebra $\mZ_\varphi(X) \lor \mZ_\psi(X)$, then the averages above are zero. Therefore, by Theorem \ref{RIZ}, the factor $\mathbf{Z}_\varphi(X)\times_{\mathbf{Z}_{\varphi,\psi}(X)}\mathbf{Z}_\psi(X)$ is a characteristic factor. We may therefore assume without loss of generality that $\X=\mathbf{Z}_\varphi(X)\times_{\mathbf{Z}_{\varphi,\psi}(X)}\mathbf{Z}_\psi(X)$. For the sake of simplicity of notations we write $\mu_{\varphi,\psi}$ for the measure $\mu_{Z_\varphi(X)}\times_{Z_{\varphi,\psi}(X)} \mu_{Z_\psi(X)}$ on $Z_{\varphi(X)}\times_{Z_{\varphi,\psi}(X)}Z_\psi(X)$. By linearity it is suffices to prove the theorem in the case where $f_1 = f_1^\varphi\otimes f_1^\psi$ and $f_2 = f_2^\varphi\otimes f_2^\psi$ for some $f_1^\varphi,f_2^\varphi:Z_\varphi(X)\rightarrow\mathbb{C}$ and $f_1^\psi,f_2^\psi : Z_\psi(X)\rightarrow\mathbb{C}$.  Then,
\begin{align} \label{eq: rel ind join}
    \UC_{g\in G} & \int_X f_0 T_{\varphi(g)} f_1\cdot T_{\psi(g)} f_2 ~d\mu \nonumber \\ & =  \UC_{g\in G} \int_{Z_\varphi(X)\times Z_\psi(X)} f_0\cdot T_{\varphi(g)} \left(f_1^\varphi\otimes f_1^\psi\right)\cdot T_{\psi(g)} \left(f_2^\varphi\otimes f_2^\psi\right) ~d\mu_{\varphi,\psi}
\end{align}
By Proposition \ref{partialcf1}, \eqref{eq: rel ind join} is equal to
\begin{align} \label{eq: rel ind join factor}
    \UC_{g\in G} \int_{Z_\varphi(X)\times Z_\psi(X)} f_0(x,y) \cdot T_{\varphi(g)} \left(f_1^\varphi\cdot E(f_1^\psi|\mZ_{\varphi,\psi}(X))\right)(x)\cdot T_{\psi(g)} \left(E(f_2^\varphi|\mZ_{\varphi,\psi}(X))\cdot f_2^\psi\right)(y) d\mu_{\varphi,\psi}(x,y).
\end{align}
Note that we used the fact that $E(h|\mZ_{\varphi,\psi}(X))(x)=E(h|\mZ_{\varphi,\psi}(X))(y)$ for $\mu_{\varphi,\psi}$ a.e. $x,y$. By the mean ergodic theorem, applied to the transformation $T_{\varphi}\times T_{\psi}$, the limit \eqref{eq: rel ind join factor} converges to
$$\int_{Z_\varphi(X)\times Z_\psi(X)} f_0\cdot E\left(\left(f_1^\varphi\cdot E(f_1^\psi|\mZ_{\varphi,\psi}(X)) \otimes E(f_2^\varphi|\mZ_{\varphi,\psi}(X))\cdot f_2^\psi\right) \bigg| \mI_{\varphi\times \psi}(X)\right) d\mu_{\varphi,\psi}
$$
By Lemma \ref{productsubalgebra}, we can find an ergodic extension $\pi:\tilde{X}\rightarrow X$ such that $\pi^{-1}\left(\mI_{\varphi\times \psi}(X)\right)$ is a sub-$\sigma$-algebra of $\left(\mZ(\tilde{X})\lor \mI_\varphi(\tilde{X})\right) \otimes \left(\mZ(\tilde{X})\lor \mI_\psi(\tilde{X})\right)$. Now, by applying the same argument as above with $\tilde{f}_0,\tilde{f}_1$ and $\tilde{f}_2$ instead of $f_0,f_1$ and $f_2$, and using Theorem \ref{liftHK} in order to replace $\pi^{-1}(\mathcal{Z}_{\varphi,\psi}(X))$ with $\mathcal{Z}_{\varphi,\psi}(\tilde{X})$ we deduce that:
\begin{equation}\label{limit}
\begin{split}
   & \UC_{g\in G} \int_{\tilde{X}} \tilde{f}_0 T_{\varphi(g)} \tilde{f}_1\cdot T_{\psi(g)} \tilde{f}_2 ~d\mu = \\
    &\int_{\tilde{X}} \tilde{f}_0\cdot E\left(\left(\tilde{f}_1^\varphi\cdot E(\tilde{f}_1^\psi|\mZ_{\varphi,\psi}(\tilde{X})) \otimes E(\tilde{f}_2^\varphi|\mZ_{\varphi,\psi}(\tilde{X}))\cdot \tilde{f}_2^\psi\right) \bigg| \pi^{-1}\left(\mI_{\varphi\times \psi}(X)\right)\right) d\tilde{\mu}_{\varphi,\psi},
\end{split}   
\end{equation}
where $\tilde{\mu}_{\varphi,\psi}$ is the lift of $\mu_{\varphi,\psi}$ to $\tilde{X}$.

We return to the proof of the theorem. By linearity it is enough to show that if $E(\tilde{f}_1|\mZ_{G}^2(\tilde{X})\lor \mI_\varphi(\tilde{X}))=0$ or $E(\tilde{f}_2|\mZ_{G}^2(\tilde{X})\lor \mI_\psi(\tilde{X}))=0$, then (\ref{limit}) is zero. By symmetry and Lemma \ref{TZkron}, we may assume without loss of generality that $E(\tilde{f}_1|\mZ_{\varphi,\psi}(\tilde{X})\lor \mI_\varphi(\tilde{X}))=0$. Since $\mZ_{\varphi}(\tilde{X}),\mZ_\psi(\tilde{X})$ are relatively independent over $\mZ_{\varphi,\psi}(\tilde{X})$, they are also relatively independent over the larger $\sigma$-algebra $\mZ_{\varphi,\psi}(\tilde{X})\lor \mI_\varphi(\tilde{X})$. We deduce, by Proposition \ref{relativeind}, that \begin{equation} \label{eq1} E(\tilde{f}_1^\varphi|\mZ_{\varphi,\psi}(\tilde{X})\lor \mI_\varphi(\tilde{X}))\cdot E(\tilde{f}_1^\psi|\mZ_{\varphi,\psi}(\tilde{X})\lor \mI_\varphi(\tilde{X}))=0.
\end{equation}
\begin{claim} $E(\tilde{f}_1^\psi|\mZ_{\varphi,\psi}(\tilde{X})\lor \mI_\varphi(\tilde{X})) =E(\tilde{f}_1^\psi|\mZ_{\varphi,\psi}(\tilde{X})) $.
\end{claim}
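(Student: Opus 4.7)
The plan is to reduce the claim to the relative independence of $\mZ_\varphi(\tilde{X})$ and $\mZ_\psi(\tilde{X})$ over their meet $\mZ_{\varphi,\psi}(\tilde{X})$, which has just been used in the surrounding paragraph and is a consequence of Theorem \ref{RIZ}.

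First I would observe that $\mI_\varphi(\tilde{X}) \preceq \mZ_\varphi(\tilde{X})$: any $\varphi(G)$-invariant function is a $\varphi(G)$-eigenfunction with trivial eigenvalue and so is measurable with respect to the $\varphi(G)$-Kronecker factor. Consequently,
$$\mZ_{\varphi,\psi}(\tilde{X}) \lor \mI_\varphi(\tilde{X}) \preceq \mZ_\varphi(\tilde{X}),$$
so every test function $g \in L^\infty(\mZ_{\varphi,\psi}(\tilde{X}) \lor \mI_\varphi(\tilde{X}))$ is automatically $\mZ_\varphi(\tilde{X})$-measurable.

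Next I would apply Proposition \ref{relativeind}(ii) to the pair $(\mZ_\varphi(\tilde{X}), \mZ_\psi(\tilde{X}))$ with $\mZ_{\varphi,\psi}(\tilde{X})$: for any such $g$ and any $\tilde{f}_1^\psi \in L^\infty(\mZ_\psi(\tilde{X}))$,
$$E(\tilde{f}_1^\psi \cdot g \,|\, \mZ_{\varphi,\psi}(\tilde{X})) = E(\tilde{f}_1^\psi \,|\, \mZ_{\varphi,\psi}(\tilde{X})) \cdot E(g \,|\, \mZ_{\varphi,\psi}(\tilde{X})).$$
Integrating over $\tilde{X}$ and using the defining property of the conditional expectation for the $\mZ_{\varphi,\psi}(\tilde{X})$-measurable function $E(\tilde{f}_1^\psi \,|\, \mZ_{\varphi,\psi}(\tilde{X}))$, this yields
$$\int_{\tilde{X}} \tilde{f}_1^\psi \cdot g \, d\tilde{\mu} = \int_{\tilde{X}} E(\tilde{f}_1^\psi \,|\, \mZ_{\varphi,\psi}(\tilde{X})) \cdot g \, d\tilde{\mu}$$
for every $g \in L^\infty(\mZ_{\varphi,\psi}(\tilde{X}) \lor \mI_\varphi(\tilde{X}))$. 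Since $E(\tilde{f}_1^\psi \,|\, \mZ_{\varphi,\psi}(\tilde{X}))$ is in particular $\mZ_{\varphi,\psi}(\tilde{X}) \lor \mI_\varphi(\tilde{X})$-measurable, uniqueness of the conditional expectation identifies it with $E(\tilde{f}_1^\psi \,|\, \mZ_{\varphi,\psi}(\tilde{X}) \lor \mI_\varphi(\tilde{X}))$, which is the desired equality.

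No step of this plan is a genuine obstacle; the argument is a routine ``collapsing'' of the larger conditioning once the containment $\mI_\varphi(\tilde{X}) \preceq \mZ_\varphi(\tilde{X})$ is recorded. The only piece of real content --- namely, that $\mZ_\varphi(\tilde{X})$ and $\mZ_\psi(\tilde{X})$ are relatively independent over their meet --- is exactly what Theorem \ref{RIZ} provides and has already been invoked above.
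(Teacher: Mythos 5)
Your argument is the same as the paper's, just with the conditional-expectation collapse spelled out: both rest on the containment $\mZ_{\varphi,\psi}(\tilde{X}) \lor \mI_\varphi(\tilde{X}) \preceq \mZ_\varphi(\tilde{X})$, on $\tilde{f}_1^\psi$ being $\mZ_\psi(\tilde{X})$-measurable (the paper cites Theorem \ref{liftHK} for this, you take it for granted), and on the relative independence of $\mZ_\varphi(\tilde{X})$ and $\mZ_\psi(\tilde{X})$ over $\mZ_{\varphi,\psi}(\tilde{X})$. This matches the paper's proof in substance.
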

\noindent \emph{Proof of the claim}.
$\mZ_{\varphi,\psi}(\tilde{X})\lor \mI_\varphi(\tilde{X})$ is a factor of $\mZ_{\varphi}(\tilde{X})$. By Theorem \ref{liftHK}, $\tilde{f}_1^\psi$ is measurable with respect to $\mZ_\psi(\tilde{X})$ and this and $\mZ_{\varphi}(\tilde{X})$ are relatively independent over $\mZ_{\varphi,\psi}(\tilde{X})$, so the claim follows. \\

Equation (\ref{eq1}) and the claim imply that $$\tilde{f}_1^\varphi\cdot E(\tilde{f}_1^\psi|\mZ_{\varphi,\psi}(\tilde{X}))= \left(\tilde{f}_1^\varphi - E(\tilde{f_1}^\varphi |\mZ_{\varphi,\psi}(\tilde{X})\lor \mI_\varphi(\tilde{X}))\right)E(\tilde{f}_1^\psi|Z_{\varphi,\psi}(\tilde{X}))$$ is orthogonal to all functions measurable with respect to $\mZ_{\varphi,\psi}(\tilde{X})\lor \mI_\varphi(\tilde{X})$ and so it is also orthogonal to those measurable with respect to $\mZ(\tilde{X})\lor \mI_\varphi(\tilde{X})$. Since $\pi^{-1}(\mathcal{I}_{\varphi\times \psi}(X))$ is a sub $\sigma$-algebra of $\left(\mZ(\tilde{X})\lor \mI_\varphi(\tilde{X})\right) \otimes \left(\mZ(\tilde{X})\lor \mI_\psi(\tilde{X})\right)$, this implies that \eqref{limit} is equal to zero as required.
\end{proof}
As a corollary we also have the following stronger counterpart of Proposition \ref{partialcfeta}.
\begin{cor}\label{strongcfeta}
In the settings of Theorem \ref{strongcf}. Let $\eta:Z(\tilde{X})\rightarrow\mathbb{C}$ be a continuous function and $f_0,f_1,f_2\in L^\infty(X)$. Let $\alpha_g$ denote the rotation of $g\in G$ on $Z(\tilde{X})$. If $a, b \in \Z$ are coprime, then
$$\UC_{g\in G} \eta(\alpha_{g})\int_{\tilde{X}} \tilde{f}_0\cdot  T_{ag} \tilde{f}_1\cdot T_{bg} \tilde{f}_2 ~d\tilde{\mu} =$$ $$\UC_{g\in G} \eta(\alpha_g)\int_{\tilde{X}} \tilde{f}_0\cdot  T_{ag}  E(\tilde{f}_1|\mathcal{Z}_G^2(\tilde{X})\lor \mI_a(\tilde{X})) \cdot T_{bg} E(\tilde{f}_2|\mZ_G^2(\tilde{X})\lor \mI_b(\tilde{X}))~ d\tilde{\mu}
$$
where $\tilde{f}_i = f_i \circ \pi$ is the lift of $f_i$ to $\tilde{X}$ for $i=0,1,2$. 
\end{cor}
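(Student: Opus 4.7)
My plan is to adapt the eigenfunction absorption trick that proves Proposition~\ref{partialcfeta}, replacing the invocation of Proposition~\ref{partialcf1} by Theorem~\ref{strongcf} and using the coprimality of $a$ and $b$ at the decisive step. First, by Stone--Weierstrass on the compact abelian group $Z := Z(\tilde X)$, together with linearity in $\eta$ and a uniform approximation estimate, it suffices to treat the case $\eta = \chi$ for a single character $\chi \in \hat Z$.

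The key step is a Bezout factorization. Choose $u, v \in \mathbb Z$ with $ua + vb = 1$. Since $\alpha : G \to Z$ is a homomorphism, one has $\alpha_g = u\alpha_{ag} + v\alpha_{bg}$, whence
\[
\chi(\alpha_g) \;=\; \chi(\alpha_{ag})^u \, \chi(\alpha_{bg})^v \;=\; \chi_1(\alpha_{ag}) \, \chi_2(\alpha_{bg}),
\]
where $\chi_1 := \chi^u$ and $\chi_2 := \chi^v$ are again characters of $Z$. Let $\pi_Z : \tilde X \to Z$ be the Kronecker factor map and set $\zeta_i := \chi_i \circ \pi_Z$. These are bounded $G$-eigenfunctions on $\tilde X$ with $T_g \zeta_i = \chi_i(\alpha_g)\, \zeta_i$, and they are measurable with respect to $\mZ(\tilde X)$, which sits inside both $\mZ^2_G(\tilde X) \lor \mI_a(\tilde X)$ and $\mZ^2_G(\tilde X) \lor \mI_b(\tilde X)$.

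Now I define $h_0 := \tilde f_0 \cdot \overline{\zeta_1 \zeta_2}$, $h_1 := \tilde f_1 \cdot \zeta_1$, and $h_2 := \tilde f_2 \cdot \zeta_2$. The eigenfunction identity yields the pointwise equation
\[
h_0 \cdot T_{ag} h_1 \cdot T_{bg} h_2 \;=\; \eta(\alpha_g) \, \tilde f_0 \cdot T_{ag} \tilde f_1 \cdot T_{bg} \tilde f_2.
\]
I then apply Theorem~\ref{strongcf} to the triple $(h_0, h_1, h_2)$ on $\tilde X$; the argument of that theorem, via Lemma~\ref{productsubalgebra}, in fact produces an extension whose characteristic property holds for arbitrary bounded functions on $\tilde X$, not only lifts from $X$, so nothing is lost by the fact that $h_i$ is not a lift. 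Since each $\zeta_i$ is measurable with respect to the $\sigma$-algebra against which we condition, the conditional expectations factor: $E(h_1 \mid \mZ^2_G(\tilde X) \lor \mI_a(\tilde X)) = \zeta_1 \cdot E(\tilde f_1 \mid \mZ^2_G(\tilde X) \lor \mI_a(\tilde X))$, and similarly for $h_2$. Substituting these back and reversing the eigenfunction substitution absorbs $\zeta_1, \zeta_2$ into an $\eta(\alpha_g)$ factor, yielding the claimed identity.

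The only nontrivial point is the coprimality step: without $\gcd(a,b) = 1$, the factorization $\chi(\alpha_g) = \chi_1(\alpha_{ag}) \chi_2(\alpha_{bg})$ cannot be arranged in general, because $ag$ and $bg$ need not generate $g$ in the homomorphic image $\alpha(G) \le Z$. Everything else is routine bookkeeping with conditional expectations and eigenfunction identities, parallel to the proof of Proposition~\ref{partialcfeta}.
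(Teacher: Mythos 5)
Your proof is correct and follows essentially the same route as the paper's: reduce $\eta$ to a single character via Stone--Weierstrass, use Bezout coefficients $u, v$ with $ua + vb = 1$ to factor $\chi(\alpha_g) = \chi^u(\alpha_{ag})\chi^v(\alpha_{bg})$, multiply $\tilde{f}_1, \tilde{f}_2$ by the corresponding Kronecker eigenfunctions so that the scalar $\eta(\alpha_g)$ is absorbed into the triple product, then invoke Theorem~\ref{strongcf} for the modified functions and pull the eigenfunctions through the conditional expectations (they are $\mathcal{Z}(\tilde{X})$-measurable, hence measurable with respect to both $\mathcal{Z}_G^2(\tilde{X}) \lor \mathcal{I}_a(\tilde{X})$ and $\mathcal{Z}_G^2(\tilde{X}) \lor \mathcal{I}_b(\tilde{X})$). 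The paper packages the same idea with powers $\eta^s, \eta^t$ and the phrase ``arguing as in Theorem~\ref{strongcf}'', and your remark that the characteristic-factor property extends from lifts of $L^\infty(X)$ to products of such lifts with $\mathcal{Z}(\tilde{X})$-measurable eigenfunctions is exactly the same (mildly implicit) point the paper relies on.
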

\begin{proof}
    Since $\eta$ is measurable with respect to $\mZ(\tilde{X})$, it is a linear combination of characters. Therefore, it is enough to prove the equality in the special case where $\eta$ itself is a character. Then, since $a$ and $b$ are coprime we can find $t,s\in\mathbb{Z}$ such that $ta+sb=1$. Set $h_0 =\tilde{f}_0\cdot \eta^{-(t+s)}$, $h_1 = \tilde{f}_1\cdot \eta^s$ and $h_2=\tilde{f}_2\cdot \eta^t$. Arguing as in Theorem \ref{strongcf}, we have
   \begin{align} \label{eq: ag, bg limit}
   &\UC_{g\in G} \int_{\tilde{X}} h_0\cdot  T_{ag} h_1\cdot T_{bg} h_2 ~d\tilde{\mu} =\nonumber\\ &\UC_{g\in G} \int_{\tilde{X}} h_0\cdot  T_{ag}  E(h_1|\mathcal{Z}_G^2(\tilde{X}) \lor \mI_a(\tilde{X})) \cdot T_{bg} E(h_2|\mathcal{Z}_G^2(\tilde{X}) \lor \mI_b(\tilde{X}))~ d\tilde{\mu}.
\end{align}
Now since $\eta$ is measurable with respect to $\mZ(\tilde{X})$, it is also measurable with respect to $\mZ_G^2(\tilde{X}) \lor \mI_a(\tilde{X})$ and $\mZ_G^2(\tilde{X}) \lor \mI_b(\tilde{X})$, so the claim follows by rewriting $h_i$ in terms of $\eta$ and $\tilde{f}_i$ on both sides of the equation \eqref{eq: ag, bg limit}.
\end{proof}

\section{A limit formula for $\{ag, bg\}$} \label{sec: limit formula}

Let $G$ be a countable abelian group and $\X=(X,\mX,\mu,(T_g)_{g\in G})$ be an ergodic $G$-system. In this section we restrict ourselves to the homomorphisms $\varphi(g)=ag,\psi(g)=bg$ where $a,b\in\mathbb{Z}$. By Theorem \ref{strongcf}, we see that it is enough to analyse the ergodic average
\begin{align} \label{eq: ag, bg avg}
    \UC_{g\in G} T_{ag} f_1 \cdot T_{bg} f_2
\end{align} in the case where $X$ is a Conze--Lesigne system (i.e. $X=Z^2(X)$).

Under certain assumptions on $a$ and $b$, two different (but related) formulas were obtained previously in \cite{ABB} and in \cite{OS2}
(see Theorems \ref{thm: abb formula} and \ref{thm: s formula} below).
Neither of the previously-obtained formulas is sufficient for our purposes, so we prove a new one in this section.

\subsection{Previous limit formulas}
Assuming all of the subgroups $aG$, $bG$, $(a+b)G$, and $(b-a)G$ have finite index in $G$, a limit formula was obtained in \cite{ABB} for the multiple ergodic averages \eqref{eq: ag, bg avg} by analysing a Mackey group associated to the abelian extension corresponding to the Conze--Lesigne factor. (The relevant terminology is defined in the next subsection.)
For compact groups $Z$ and $H$, let $\mathcal{M}(Z,H)$ denote the space of measruable functions $f : Z \to H$ equipped with the topology of convergence in measure (with respect to the Haar probability measure).

\begin{thm}[\cite{ABB}, Theorem 7.1] \label{thm: abb formula}
Let $G$ be a countable abelian group. Let $a,b\in\mathbb{Z}$ such that $aG$, $bG$, $(a+b)G$, and $(b-a)G$
have finite index in $G$. Let $k_1' = -ab(a+b)$, $k_2'=ab(a+b)$ and $k_3' = -ab(b-a)$. Set $D=\text{gcd}(k_1',k_2',k_3')$ and $k_i=\frac{k_i'}{D}$ for $i=1,2,3$. Let $c_1,c_2,c_3\in\mathbb{Z}$ so that $\sum_{i=1}^3 k_ic_i=1$. Let $\X = \textbf{Z}\times_\sigma H$ be as in Theorem \ref{HKfactors}(iii). There is a functions $\psi:Z\times Z\rightarrow H$ such that $\psi(0,z)=0$ for every $z\in Z$ and $t\mapsto \psi(t,\cdot)$ is a continuous map from $Z$ to $\mathcal{M}(Z,H)$, and for every $f_1,f_2,f_3\in L^\infty(\mu)$, 
$$\UC_{g\in G} f_1(T_{ag}x)f_2(T_{bg}x)f_3(T_{(a+b)g}x) = \int_{Z\times H^2} \prod_{i=1}^3 f_i(z+a_it,h+d_iu+a_i^2v+c_i\psi(t,z)~ du~dv~dt,$$ in $L^2(\mu)$, where $x=(z,h)\in Z\times H$, and $a_1=a,a_2=b,a_3=a+b$. 
\end{thm}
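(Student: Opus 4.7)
The plan is to combine the characteristic factor reduction with a direct computation of the ergodic average on the Conze--Lesigne extension. By a three-term analogue of Theorem~\ref{strongcf} (obtained by a further application of the van der Corput lemma, analogous to Proposition~\ref{partialcf1}), the Conze--Lesigne factor $\mathbf{Z}^2(X)$ is characteristic for the left-hand side, so after passing to an appropriate ergodic extension we may assume $\X = \mathbf{Z}\times_\sigma H$ with $\sigma:G\times Z \to H$ a cocycle satisfying the Conze--Lesigne equation: for every $t \in Z$ there exist a measurable $F_t:Z\to H$ and a homomorphism $c_t:G\to H$ (with appropriate joint regularity in $t$) such that
\[
\sigma(g,z+t) - \sigma(g,z) = F_t(z+\alpha_g) - F_t(z) + c_t(g)
\]
for every $g \in G$ and almost every $z \in Z$.

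Next, by the mean ergodic theorem for the diagonal $G$-action $(T_{ag}\times T_{bg}\times T_{(a+b)g})_{g\in G}$ on $X^3$, the left-hand side of the claimed formula equals the projection of $f_1\otimes f_2\otimes f_3$ onto the corresponding invariant $\sigma$-algebra, evaluated at $(x,x,x)$. Under the finite-index assumptions on $aG,\ bG,\ (a+b)G$, the base orbit closure in $Z^3$ of a point $(z,z,z)$ is a translate of the one-parameter subgroup $\{(at,bt,(a+b)t) : t \in Z\}$, parametrized by $t\in Z$.

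The heart of the argument is to identify the Mackey group $M\subseteq H^3$ in the fibre over this orbit. Iterating the Conze--Lesigne equation to expand $\sigma(a_ig,z+a_it) - \sigma(a_ig,z)$ and organising the resulting coboundary terms reveals that, modulo coboundaries, the triple $(\sigma(ag,z),\sigma(bg,z),\sigma((a+b)g,z))$ varies in a two-parameter family
\[
\{(a_iu + a_i^2 v)_{i=1,2,3} : u,v\in H\}
\]
inside $H^3$. The $2\times 2$ minors of the $3\times 2$ matrix with columns $(a_i)_{i=1,2,3}$ and $(a_i^2)_{i=1,2,3}$ are, up to sign, exactly $k_1',k_2',k_3'$; this is the algebraic source of the definition of these integers, and they are precisely the coefficients in the unique linear relation $k_1' h_1 + k_2' h_2 + k_3' h_3 = 0$ satisfied by $H^3$-elements of the above form. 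The Conze--Lesigne constants $c_t(g)$ then assemble into a function $\psi:Z\times Z\to H$ with $\psi(0,z)=0$ and $t\mapsto\psi(t,\cdot)$ continuous into $\mathcal{M}(Z,H)$, twisting the relation to $k_1 h_1 + k_2 h_2 + k_3 h_3 = \psi(t,z)$. Using $\gcd(k_1,k_2,k_3) = 1$ together with any integers $c_i$ satisfying $\sum k_i c_i = 1$ (existing by B\'ezout), we parametrize $M$ by $(u,v)\in H^2$ with the correction term $c_i\psi(t,z)$ added in each coordinate. Integration of $f_1\otimes f_2\otimes f_3$ against the Haar measure of $M$ (together with $dt$ on the base) then yields the claimed formula.

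The main obstacle is the precise identification of $M$. This will require carefully exploiting the finite-index hypothesis on $(b-a)G$, which forces the natural obstruction characters of $G$ appearing in the analysis to be trivial on a finite-index subgroup, ensuring that no extra constraints cut down the two-parameter family to a proper sub-family. A secondary technical point is establishing the right measurable regularity of $\psi$, inherited from that of $t\mapsto F_t, c_t$ via a standard but delicate measurable-selection argument applied to the Conze--Lesigne structure.
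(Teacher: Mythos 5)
Theorem~\ref{thm: abb formula} is stated as an external citation ([ABB, Theorem 7.1]); this paper does not supply a proof, so the natural comparison is with the closely related Theorem~\ref{thm: limit}, whose proof (for the two-point configuration) the paper does carry out in Section~\ref{sec: limit formula} using the same Mackey-group methodology that [ABB] used for the three-point case. Against that yardstick, your outline is the right shape: reduce to the Conze--Lesigne factor, identify the orbit closure in $Z^3$ via the mean ergodic theorem, work out the Mackey group in the fibre $H^3$, and integrate. The algebra behind the $k_i'$ is also correct: they are, up to an overall sign, the $2\times 2$ minors of the $3\times 2$ matrix whose columns are $(a_i)_i$ and $(a_i^2)_i$, hence a null vector of $\sum k_i a_i = \sum k_i a_i^2 = 0$, exactly the linear relation satisfied by the two-parameter family.

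The genuine gap is the Mackey-group identification itself, which you flag as ``the main obstacle'' but then leave unaddressed --- and that is precisely the content of the theorem. Showing that the Mackey group equals the image of $(u,v)\mapsto (a_iu+a_i^2v)_{i=1,2,3}$ requires both an upper bound (containment in the two-parameter subgroup) and a lower bound (every element of that subgroup is realized). Your description --- ``iterating the Conze--Lesigne equation \dots reveals that, modulo coboundaries, the triple \dots varies in a two-parameter family'' --- is at best a gesture at the upper bound, and conflates ``is contained in'' with ``equals.'' The lower bound is where the divisibility consequences of the finite-index hypotheses ($aH=H$, etc., via Lemma~\ref{lem: abb7.23}) are needed, and your sketch never invokes them. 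The paper's own analogous argument (Proposition~\ref{prop: Mackey prod} and Lemma~\ref{lem: Mackey torsion}) does not ``iterate the CL equation'' directly; it computes the annihilator $M^\perp$ by duality, marshalling the Conze--Lesigne lemmas (Propositions~\ref{prop: abb7.12}, \ref{prop: abb7.15}; Lemmas~\ref{lem: abb7.19}, \ref{lem: abb7.25}) and a torsion argument. Your heuristic expansion cannot, as written, be converted into that argument. Finally, your sketch omits the last step of passing from the Mackey-group description to the stated integral formula, which in Theorem~\ref{thm: limit} requires the continuity of $t\mapsto\varphi_t$ and unique ergodicity of $(Z,\alpha)$.
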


Assuming that $(b-a)$ is even, the last author proved the following result.
\begin{thm}[\cite{OS2}, Corollary 6.2] \label{thm: s formula}
Let $G$ be a countable abelian group. Let $a,b\in\mathbb{Z}$ be such that $(b-a)$ is even and $(b-a)G$ has finite index in $G$. Let $\X=(X,\mX,\mu,(T_g)_{g\in G})$ be an ergodic $G$-system such that $\X=\mathbf{Z}^2(X)$. Then, there exists an ergodic extension $\pi:Y\rightarrow X$ which is isomorphic to a $2$-step nilpotent coset system\footnote{The exact definition is given in \cite{OS2}. We do not use this notion elsewhere in the paper.} and for every $f_1,f_2,f_3\in L^\infty(X)$,
\begin{align*}&\UC_{g\in G} \tilde{f}_1(T_{ag}y\Gamma) f_2(T_{bg}y\Gamma) f_3(T_{(a+b)g}y\Gamma) =\\& \int_{\mG/\Gamma} \int_{\mG_2} \tilde{f}_1(yy_1^ay_2^{\binom{a}{2}}) \tilde{f}_2(yy_1^by_2^{\binom{b}{2}}\Gamma)\tilde{f}_3(yy_1^{a+b\Gamma}y_2^{\binom{a+b}{2}}\Gamma)~ d\mu_{\mathcal{G}_2}(y_2)~d\mu_{\mathcal{G}/\Gamma}(y\Gamma).
\end{align*}
\end{thm}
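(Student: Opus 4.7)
The plan is to reduce the problem to computing a uniform Ces\`{a}ro limit on an explicit $2$-step nilpotent model, where equidistribution on a nilmanifold converts the limit into a Haar integral. Since $\X=\mathbf{Z}^2(X)$ is a Conze--Lesigne system, Theorem \ref{HKfactors}(iii) presents it as $X=Z\times_\sigma H$, where $\mathbf{Z}=(Z,\alpha)$ is the Kronecker factor and $\sigma:G\times Z\to H$ is a Conze--Lesigne cocycle. The objective is to exhibit an ergodic extension $\pi:Y\to X$ on which $\sigma$ becomes cohomologous to a quadratic cocycle arising from the commutator bracket of a $2$-step nilpotent group $\mG$, so that $Y\cong\mG/\Gamma$ acquires the structure of a coset system with filtration $\mG\supseteq\mG_2\supseteq\{e\}$, $\mG/\mG_2\cong Z$, and $\mG_2\cong H$.

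To build $Y$, I would first apply the extension trick of Theorem \ref{extension} to a countable collection of characters of $\hat G$: for each $h\in G$, the Conze--Lesigne equation yields an eigenvalue $\chi_h\in\hat Z$ with $\Delta_h\sigma(g,z)=\chi_h(\alpha_g)+\Delta_g F_h(z)$, and the subgroup of $\hat G$ generated by all pullbacks $\chi_h\circ\alpha$ is countable. Passing to the resulting extension realizes each such character as a bona fide $G$-eigenvalue, which in turn allows one to solve the "halving" equation in $H$ that is needed to define the quadratic piece of the cocycle. The hypotheses enter precisely here: the finite-index assumption on $(b-a)G$ guarantees that the characters split nicely across the subgroups $aG$ and $bG$, and the parity of $b-a$ makes $\binom{n}{2}$ well-defined (modulo $2$-torsion) as a $\Z$-valued quantity on the relevant quotients. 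The outcome is an ergodic extension $Y=\mG/\Gamma$ whose $G$-action is given by left multiplication $T_g(y\Gamma)=y_1(g)\cdot y\Gamma$ for a map $y_1:G\to\mG$ whose commutators lie in $\mG_2$.

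With the nilpotent structure in place, the standard $2$-step identity gives $T_{ng}(y\Gamma)=y_1(g)^n y\Gamma=y\cdot y_1(g)^n y_2(g)^{\binom{n}{2}}\Gamma$, where $y_2(g)\in\mG_2$ is determined by iterated commutators. Evaluating at $n=a,b,a+b$ realizes the triple $(T_{ag}y\Gamma,T_{bg}y\Gamma,T_{(a+b)g}y\Gamma)$ as a polynomial sequence in $(\mG/\Gamma)^3$. A Leibman-type equidistribution theorem for polynomial orbits on $2$-step nilmanifolds (extended from $\Z$-actions to countable abelian groups) implies that this sequence is uniformly distributed on the closed $G$-orbit of $(y\Gamma,y\Gamma,y\Gamma)$, whose closure is exactly the set parameterized by $y\Gamma\in\mG/\Gamma$ and an auxiliary $y_2\in\mG_2$ appearing with the three exponents $\binom{a}{2},\binom{b}{2},\binom{a+b}{2}$. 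Uniform Ces\`{a}ro averaging then integrates against the Haar measure on this closure, which is precisely the right-hand side of the stated formula.

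The main obstacle is the nilpotent extension step itself: proving that every ergodic Conze--Lesigne $G$-system with $(b-a)G$ of finite index and $b-a$ even admits a $2$-step nilpotent coset extension. For $\Z$-systems this is essentially the Host--Kra structure theorem for the second universal characteristic factor, but for general countable abelian groups one cannot directly appeal to it; instead the argument must combine Zimmer-style cocycle analysis (in the spirit of the proof of Theorem \ref{extension}) with a careful Pontryagin duality argument on $\hat G$ to solve the $2$-cocycle equations defining $\mG$. The parity hypothesis $2\mid(b-a)$ is the precise obstruction needed to interpret $\binom{n}{2}$ consistently as a "quadratic exponent" on the pair of subgroups $aG$ and $bG$ simultaneously, which explains why it cannot be dropped without changing the shape of the limit formula.
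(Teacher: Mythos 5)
This statement is not proved anywhere in the paper you were given. It is a verbatim quotation, with attribution, of \cite[Corollary~6.2]{OS2}, stated only to motivate why the authors then prove their \emph{own} limit formula (Theorem~\ref{thm: limit}) via a direct analysis of the Mackey group $M(a,b)$ of the Conze--Lesigne cocycle, deliberately bypassing any nilpotent extension. So there is no in-paper proof against which your sketch can be measured; the comparison you should really be making is with Section~6 of \cite{OS2}.

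Taken on its own terms, your outline has two structural problems. First, the step you yourself flag as the ``main obstacle'' --- producing an ergodic extension of an arbitrary Conze--Lesigne $G$-system that is isomorphic to a $2$-step nilpotent coset system --- is not an auxiliary lemma but is precisely the structural content of \cite[Corollary~6.2]{OS2}. A proof strategy that opens with ``build such an extension'' and then derives the limit formula is for the most part restating the theorem rather than proving it; the appeals to ``Zimmer-style cocycle analysis'' and ``Pontryagin duality'' do not carry this step. In particular, Theorem~\ref{extension} of the paper only adjoins eigenfunctions to realize prescribed characters as $G$-eigenvalues; it does not, by itself, linearize or ``halve'' the quadratic part of the cocycle $\sigma$, and that nontrivial work is exactly what \cite{OS2} does. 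Second, the equidistribution input you invoke --- ``a Leibman-type equidistribution theorem for polynomial orbits on $2$-step nilmanifolds (extended from $\Z$-actions to countable abelian groups)'' --- is not an off-the-shelf fact. Leibman's theorem concerns polynomial sequences $\Z^d\to\mathcal{G}/\Gamma$ on genuine nilmanifolds; for general countable abelian $G$ and for the \emph{coset systems} of \cite{OS2} (which need not be nilmanifolds and involve cocycle data) one needs the machinery built in \cite{OS2} itself. Your identification of where the parity of $b-a$ enters (making $\binom{n}{2}$ behave consistently) is consistent with \cite[Example~6.3]{OS2}, but the hard parts of the argument are assumed rather than supplied.
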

The above formula fails if $b-a$ is odd; see \cite[Example 6.3]{OS2}.

Observe that in the formulas in Theorems \ref{thm: abb formula} and \ref{thm: s formula}, we can take $f_3\equiv 1$ and get a limit formula for the averages we are interested in.
However, for the sake of our argument we need a limit formula for every $a,b\in\mathbb{Z}$ regardless of the indices of the subgroups $aG$, $bG$, and $(a\pm b)G$ and the parity of $b-a$.
Below we remove the finite index assumptions in Theorem \ref{thm: abb formula}.

\subsection{Mackey group}
Let $G$ be a countable abelian group, and let $\X= (X,\mX,\mu,(T_g)_{g\in G})$ be an ergodic $G$-system. Suppose that $X=Z^2(X)$, then by Theorem \ref{HKfactors} we can write $\X = \textbf{Z} \times_{\sigma} H$, where $\textbf{Z} = (Z, \alpha)$ is the Kronecker factor, $H$ is a compact abelian group, and $\sigma:G\times Z\rightarrow H$ is a cocycle.

We now define a Mackey group associated to the cocycle $\sigma$. Let
\begin{align*}
	W = W(a,b) := \left\{ (z + at, z + bt) : z, t \in Z \right\},
\end{align*}
\noindent and define $S_gw = (w_1 + \alpha_{ag}, w_2 + \alpha_{bg})$ for $g \in G$, $w = (w_1, w_2) \in W$.
Let $\tilde{\sigma}_g(w) := \left( \sigma_{ag}(w_1), \sigma_{bg}(w_2) \right)$.
Then the \emph{Mackey group} $M = M(a,b)$ is the closed subgroup of $H$ with annihilator given by
\begin{align*}
	M^{\perp}
	 := \left\{ \tilde{\chi} \in \hat{H^2} : \tilde{\chi} \circ \tilde{\sigma}~\text{is a coboundary over}~(W, S) \right\}.
\end{align*}
We will show that the Mackey group is a product of subgroups of $H$.
For $c \in \Z$, let $M_c \le H$ be the closed subgroup with annihilator
\begin{align*}
	M_c^{\perp} := \left\{ \chi \in \hat{H}
	 : (g,z) \mapsto \chi \left( \sigma_{cg}(z) \right)~\text{is a coboundary over}~(Z, \alpha) \right\}.
\end{align*}
\begin{prop} \label{prop: Mackey prod}
	Let $a, b \in \Z$ be coprime, and let $M = M(a,b)$ be the Mackey group.
	Then $M = M_a \times M_b$.
\end{prop}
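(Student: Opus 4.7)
The statement $M = M_a \times M_b$ is equivalent, by Pontryagin duality, to $M^\perp = M_a^\perp \times M_b^\perp$ inside $\widehat{H^2} = \hat{H} \times \hat{H}$, and I would prove the two inclusions separately. For the inclusion $M_a^\perp \times M_b^\perp \subseteq M^\perp$, given $\chi_1 \in M_a^\perp$ and $\chi_2 \in M_b^\perp$, the definitions supply $F_i : Z \to S^1$ with $\chi_i(\sigma_{c_i g}(z)) = F_i(z + \alpha_{c_i g})/F_i(z)$ (where $c_1 = a$, $c_2 = b$), and then the product $F(w_1, w_2) := F_1(w_1) F_2(w_2)$ directly satisfies $F(S_g w)/F(w) = \chi_1(\sigma_{ag}(w_1))\chi_2(\sigma_{bg}(w_2))$ on $W$.

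For the nontrivial inclusion $M^\perp \subseteq M_a^\perp \times M_b^\perp$, my strategy is to exploit the coprimality of $a,b$ via the map $\tau : Z \to Z^2$, $\tau(t) = (at, bt)$. Since $\gcd(a,b) = 1$ forces $at = bt = 0 \Rightarrow t = 0$, the map $\tau$ is injective, and it is $G$-equivariant from $(Z, \alpha)$ to $(Z^2, (a\alpha, b\alpha))$. By ergodicity of $(Z, \alpha)$, its image $W_0 := \tau(Z)$ coincides with the $S_g$-orbit closure of $(0,0)$, and $(W_0, S|_{W_0}) \cong (Z, \alpha)$. Moreover $W = W_0 + \Delta$ with $\Delta = \{(z, z) : z \in Z\}$, so $(W, S)$ decomposes into $G$-invariant cosets of $W_0$ parametrized by $z \in Z$. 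Restricting the coboundary identity $\chi_1(\sigma_{ag}(w_1))\chi_2(\sigma_{bg}(w_2)) = F(S_g w)/F(w)$ to the coset $(z, z) + W_0$ yields, for a.e.\ $z$,
$$\chi_1(\sigma_{ag}(z + at)) \chi_2(\sigma_{bg}(z + bt)) = F_z(t + \alpha_g)/F_z(t), \qquad F_z(t) := F(z + at, z + bt).$$

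The next step applies the Conze--Lesigne equation for $\sigma$ at parameter $s = z$, which supplies $\Phi_z : Z \to H$ and a homomorphism $\lambda_z : G \to H$ with $\sigma_g(y + z) - \sigma_g(y) = \Phi_z(y + \alpha_g) - \Phi_z(y) + \lambda_z(g)$. Specializing $y = c_i t$ and replacing $g$ by $c_i g$ rewrites $\chi_i(\sigma_{c_i g}(z + c_i t))$ as $\chi_i(\sigma_{c_i g}(c_i t))$ times a $t$-coboundary for $(Z, \alpha)$ and a character-in-$g$ factor $(\chi_i \circ \lambda_z)^{c_i}$. Dividing the coset-$z$ identity by the coset-$0$ identity cancels the $\chi_i(\sigma_{c_i g}(c_i t))$ terms and makes the $t$-coboundary parts telescope, forcing
$$(\chi_1^a \chi_2^b) \circ \lambda_z = \hat\gamma_z \circ \alpha$$
for some $\hat\gamma_z \in \hat Z$ (since an ergodic coboundary depending only on $g$ must be a character of $Z$ composed with $\alpha$). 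I would then combine this identity with the $W_0$-restricted equation and invoke the Bezout relation $sa + rb = 1$, together with the $z$-dependence of $\lambda_z$, to disentangle the combination $\chi_1^a \chi_2^b$ and extract explicit $F_1, F_2 : Z \to S^1$ witnessing $\chi_1 \in M_a^\perp$ and $\chi_2 \in M_b^\perp$.

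The main obstacle I anticipate is this final separation: the analysis naturally yields constraints on the product $\chi_1^a \chi_2^b$, while the conclusion demands information about $\chi_1$ and $\chi_2$ individually. Coprimality is the essential algebraic ingredient, but converting the $z$-indexed family of constraints into two clean coboundaries on $(Z, a\alpha)$ and $(Z, b\alpha)$ will require careful bookkeeping of the Conze--Lesigne data $(\Phi_z, \lambda_z)$ across all $z$ and of the auxiliary functions $F_z$.
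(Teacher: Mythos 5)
Your easy inclusion $M_a^\perp \times M_b^\perp \subseteq M^\perp$ matches the paper's argument. For the hard inclusion $M^\perp \subseteq M_a^\perp \times M_b^\perp$, however, your plan has a genuine gap, and it is precisely the one you flag at the end as the ``main obstacle.'' The paper resolves that obstacle with a structural fact that your sketch does not anticipate and that Bezout plus bookkeeping alone will not produce: a torsion identity. Namely, the paper first proves (via \cite[Theorem~7.26]{ABB}, which gives $\chi_1^a\chi_2^b = \chi_1^{a^2}\chi_2^{b^2} = 1$, together with the absence of $(b-a)$-torsion in $\hat H$ from \cite[Lemma~7.23]{ABB}, using the standing finite-index hypothesis on $(b-a)G$) that every $\chi_1\otimes\chi_2 \in M^\perp$ satisfies $\chi_1^a = \chi_2^b = 1$. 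Once that is available, the separation is performed not via your coset decomposition of $W$ and explicit Conze--Lesigne data $(\Phi_z,\lambda_z)$, but by a limiting argument: one takes $\alpha_{g_n}\to 0$, uses quasi-affineness to write $c_{i,n}\lambda_{i,n}\chi_i(\sigma_{g_n})\to 1$, applies the binomial lemma \cite[Lemma~7.19]{ABB} to pass to $\sigma_{a_ig_n}$, then uses $\chi_i^{a_i}=1$ to kill $\lambda_{i,n}^{a_i}$ (so $\lambda_{i,n}^{a_i}=1$ for large $n$ by \cite[Lemma~7.25]{ABB}), and finally uses coprimality only through the simple observation that WLOG $a$ is odd (hence $a\mid\binom{a}{2}$), which forces the affine corrections $d_{i,n}$ to trivialize and disentangles the two coboundaries.

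Your proposed route --- restrict the Mackey coboundary to cosets of $W_0=\{(at,bt):t\in Z\}$, write out the Conze--Lesigne equation for $\sigma$ at a continuum of shift parameters $z$, and then combine --- is genuinely different, but it stalls exactly where you say it does: after dividing cosets you only control $\chi_1^a\chi_2^b\circ\lambda_z$, and there is no mechanism in the sketch to split this into separate constraints on $\chi_1$ and on $\chi_2$. The Bezout relation $sa+rb=1$ expresses $1$ as a linear combination, but the functional equations in play are multiplicative in $\chi_1,\chi_2$ with exponents $a$ and $b$ that are fixed by the cocycle, not free to be re-weighted; you cannot form $\chi_1^{sa}\chi_2^{rb}$ from the data available without already knowing something like $\chi_1^a=\chi_2^b=1$. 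I would also caution that the coset decomposition $W = W_0 + \Delta$ is not a partition (the cosets $(z,z)+W_0$ overlap whenever $(b-a)z'=0$), and that the Conze--Lesigne data $(\Phi_z,\lambda_z)$ can only be chosen measurably in $z$, not canonically, so the ``careful bookkeeping across all $z$'' you defer to is a real technical burden with no clear endpoint. In short: the approach as sketched would need the torsion lemma (or an equivalent structural input) to close, and without it the separation step remains unproved.
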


The proof of Proposition \ref{prop: Mackey prod} relies heavily on results from \cite[Section 7]{ABB}, which we restate here for ease of reference.

\subsection{Cocycle identities}

The following result gives a convenient characterization of coboundaries. (Recall that a cocycle $\rho : G \times Z \to S^1$ is a coboundary if $\rho_g = \Delta_g F$ for some measurable function $F : Z \to S^1$.)

\begin{prop}[\cite{ABB}, Proposition 7.12] \label{prop: abb7.12}
	Let $\mathbf{Z}$ be a Kronecker system and $\rho : G \times Z \to S^1$ a cocycle.
	The following are equivalent:
	\begin{enumerate}[(i)]
		\item	$\rho$ is a coboundary;
		\item	for any sequence $(g_n)_{n \in \mathbb{N}}$ in $G$ with $\alpha_{g_n} \to 0$ in $Z$,
		we have $\rho_{g_n}(z) \to 1$ in $L^2(Z)$.
	\end{enumerate}
\end{prop}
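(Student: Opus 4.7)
The easy direction (i) $\Rightarrow$ (ii) is immediate: if $\rho_g = \Delta_g F$ for some measurable $F:Z\to S^1$, then continuity of translation in $L^2$ of the compact abelian group $Z$ yields $F(\,\cdot+\alpha_{g_n})\to F$ in $L^2(Z)$ whenever $\alpha_{g_n}\to 0$, so $\rho_{g_n}\to 1$ in $L^2(Z)$ after multiplying by the bounded function $\overline{F}$.

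For the reverse implication, my plan is to extend the assignment $g\mapsto \rho_g \in L^2(Z,S^1)$ to a continuous cocycle defined on all of $Z$, and then invoke the standard fact that cocycles for the translation action of a compact abelian group on itself are coboundaries. The Kronecker rotation is ergodic, so $\{\alpha_g : g\in G\}$ is dense in $Z$. Using the cocycle identity $\rho_{g+h}(z)=\rho_g(z)\rho_h(z+\alpha_g)$ together with the fact that translations act isometrically on $L^2(Z)$, one has $\|\rho_{g_n}-\rho_{g_m}\|_2 = \|\rho_{g_n-g_m}-1\|_2$, so assumption (ii) implies that $g\mapsto\rho_g$ is uniformly continuous with respect to the pseudometric $(g,h)\mapsto d_Z(\alpha_g,\alpha_h)$. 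This yields a continuous extension $R:Z\to L^2(Z,S^1)$ with $R(\alpha_g)=\rho_g$. Since $|R(z)|\equiv 1$ and $L^2$ convergence of uniformly bounded sequences is preserved under products, the cocycle identity passes to the limit:
\[
R(z+w)(u)=R(z)(u)\,R(w)(u+z)\qquad\text{for a.e.}~u\in Z,~\text{and all}~z,w\in Z.
\]

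To produce the coboundary function, I would pick a jointly measurable representative $\tilde R:Z\times Z\to S^1$ with $\tilde R(z,\,\cdot)=R(z)$ in $L^2$ for each $z$; the identity $\tilde R(z+w,u)=\tilde R(z,u)\,\tilde R(w,u+z)$ then holds on a conull subset of $Z^3$. A Fubini argument selects $v_0\in Z$ for which substituting $z\mapsto u-v_0,\ w\mapsto z$ is valid almost everywhere, and rearranging yields
\[
\tilde R(z,u)=F(u+z)\,\overline{F(u)},\qquad F(u):=\tilde R(u-v_0,v_0).
\]
The function $F:Z\to S^1$ is measurable by construction, and evaluating at $z=\alpha_g$ gives $\rho_g=\Delta_g F$, showing that $\rho$ is a coboundary.

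The main obstacle I anticipate is the measurability bookkeeping in this last step: the continuous map $R$ only specifies an $L^2$-equivalence class at each point, so extracting a jointly measurable representative $\tilde R$ that still satisfies the pointwise cocycle identity on a conull set requires some care. Once this representative is in hand, the Fubini argument and the choice of $v_0$ are routine.
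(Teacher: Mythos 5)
Your argument is correct and is essentially the classical Moore--Schmidt/Gottschalk--Hedlund style proof that cocycles over a compact group rotation which vanish "continuously at the identity" are coboundaries. The paper itself does not prove this proposition but cites it from \cite[Proposition 7.12]{ABB}, whose proof proceeds along the same lines: (i) $\Rightarrow$ (ii) by strong continuity of translations; (ii) $\Rightarrow$ (i) by using the cocycle identity and translation invariance of Haar measure to convert (ii) into uniform continuity of $g\mapsto\rho_g$ in the pseudometric induced by $\alpha$, extending to a continuous $L^2(Z,S^1)$-valued cocycle $R$ on all of $Z$ by density of $\alpha(G)$ in $Z$, and then trivializing $R$.

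Two points in your write-up that deserve to be made airtight. First, as you anticipate, a jointly measurable representative $\tilde R:Z\times Z\to S^1$ for the continuous map $R:Z\to L^2(Z,S^1)$ must be constructed (e.g.\ by approximating $R$ uniformly with countably-valued maps and passing to an a.e.\ convergent fast subsequence), and once this is done the pointwise identity does hold on a conull subset of $Z^3$ by Fubini, since it holds in $L^2(Z)$ for \emph{every} pair $(z,w)$. Second, a subtlety you do not address: the Fubini argument produces the identity $\tilde R(\zeta,u)=F(u+\zeta)\overline{F(u)}$ only for $(\zeta,u)$ in a conull subset of $Z\times Z$, while you need it for the (possibly Haar-null) countable set $\zeta\in\alpha(G)$. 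This is repaired by observing that both $\zeta\mapsto R(\zeta)$ and $\zeta\mapsto F(\cdot+\zeta)\overline{F(\cdot)}$ are continuous as maps $Z\to L^2(Z)$; since they agree for a.e.\ $\zeta$, they agree for every $\zeta$, and in particular $\rho_g=R(\alpha_g)=\Delta_gF$ for all $g\in G$. With these two points spelled out, your proof is complete.
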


The next proposition gives three equivalent characterizations of Conze--Lesigne (or quasi-affine) cocycles.

\begin{prop}[\cite{ABB}, Proposition 7.15] \label{prop: abb7.15}
	Let $\mathbf{Z}$ be an ergodic Kronecker system and $\rho : G \times Z \to S^1$ a cocycle.
	The following are equivalent:
	\begin{enumerate}[(i)]
		\item	for any sequence $(g_n)_{n \in \mathbb{N}}$ in $G$ with $\alpha_{g_n} \to 0$ in $Z$,
		there is a sequence $(\omega_n)_{n \in \mathbb{N}}$ of affine functions such that
		$\omega_n\rho_{g_n}(z) \to 1$ in $L^2(Z)$;
		\item	for every $t \in Z$,
		\begin{align*}
			\frac{\rho_g(z+t)}{\rho_g(z)}
		\end{align*}
		is cohomologous to a character;
		\item	there is a Borel set $A \subseteq Z$ with $m_Z(A) > 0$ such that
		\begin{align*}
			\frac{\rho_g(z+t)}{\rho_g(z)}
		\end{align*}
		is cohomologous to a character for every $t \in A$.
	\end{enumerate}
\end{prop}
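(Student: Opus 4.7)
My plan is to prove the cycle (ii) $\Rightarrow$ (iii) $\Rightarrow$ (ii) and then the equivalence (i) $\Leftrightarrow$ (ii). The implication (ii) $\Rightarrow$ (iii) is trivial by taking $A = Z$.

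For (iii) $\Rightarrow$ (ii), set $\Delta_t\rho_g(z) := \rho_g(z+t)/\rho_g(z)$ and introduce
\[
B := \left\{ t \in Z : \Delta_t \rho \text{ is cohomologous to a character}\right\}.
\]
The identity $\Delta_{t_1+t_2}\rho_g(z) = \Delta_{t_2}\rho_g(z+t_1) \cdot \Delta_{t_1}\rho_g(z)$, together with the observations that translating the base variable of a cocycle preserves the class of coboundaries and fixes characters of $G$ (being constant in $z$), shows that $B$ is closed under addition. Since $B$ is clearly closed under inverses, $B$ is a subgroup of $Z$, and expressing the defining condition via countably many closed $L^2$-type conditions shows $B$ is Borel. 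Because any measurable subgroup of a compact abelian group with positive Haar measure equals the whole group, hypothesis (iii) forces $B = Z$, which is (ii).

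For (ii) $\Rightarrow$ (i), I would use (ii) together with a Borel measurable selection to obtain, for each $t \in Z$, a character $\chi_t \in \hat{G}$ and a function $F_t : Z \to S^1$ with $\Delta_t\rho_g = \chi_t(g) \cdot \Delta_g F_t$, depending measurably on $t$. For a sequence $g_n$ with $\alpha_{g_n} \to 0$, I would set $t = \alpha_{g_n}$: rewriting the cocycle identity $\rho_{g_n}(z + \alpha_{g_n}) = \rho_{2g_n}(z)\,\rho_{g_n}(z)^{-1}$ together with approximate continuity of $t \mapsto F_t$ at $t=0$ (obtained from measurability via Lusin's theorem) should produce affine $\omega_n$ with $\omega_n\rho_{g_n} \to 1$ in $L^2(Z)$.

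For (i) $\Rightarrow$ (ii), fix $t \in Z$ and any sequence $g_n$ with $\alpha_{g_n} \to 0$. By (i), choose affine $\omega_n(z) = c_n\chi_n(z)$ with $\omega_n\rho_{g_n} \to 1$ in $L^2(Z)$. Translating by $t$ (an $L^2$-isometry) and dividing yields
\[
\chi_n(t) \cdot \Delta_t\rho_{g_n} \longrightarrow 1 \quad \text{in } L^2(Z).
\]
Using compactness of $\hat{G}$ (since $G$ is discrete), pass to a subsequence along which $\chi_n \to \psi$ pointwise for some $\psi \in \hat{G}$. After verifying that $\psi$ does not depend on the choice of sequence (modulo characters that are themselves coboundaries over $\mathbf{Z}$), Proposition~\ref{prop: abb7.12} applied to $\bar\psi\,\Delta_t\rho$ gives that the latter is a coboundary, proving (ii). I expect the main obstacle to be this sequence-independence step: the candidate character $\psi$ is a priori sequence-dependent, and reconciling different choices requires combining ergodicity of the Kronecker system with a Mackey-type analysis of the characters of $G$ that factor through $\alpha : G \to Z$.
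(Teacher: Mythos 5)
Your plan is sound in outline, but two of the implications contain genuine gaps.

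In $(iii)\Rightarrow(ii)$, the claim ``any measurable subgroup of a compact abelian group with positive Haar measure equals the whole group'' is false: the trivial subgroup of $\Z/2\Z$ has Haar measure $\tfrac12$. The Steinhaus argument only gives that your subgroup $B$ is open (hence clopen of finite index). To close the argument you need the additional observation that $B$ contains $\alpha(G)$: the cocycle identity $\rho_{g+h}(z)=\rho_g(z)\,\rho_h(z+\alpha_g)=\rho_h(z)\,\rho_g(z+\alpha_h)$ gives $\rho_g(z+\alpha_h)/\rho_g(z)=\rho_h(z+\alpha_g)/\rho_h(z)$, and the right-hand side is the coboundary of the measurable function $z\mapsto\rho_h(z)$, so $\alpha_h\in B$ for every $h\in G$. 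Since $B$ is closed and $\alpha(G)$ is dense by ergodicity, this forces $B=Z$. Without this step your argument does not close.

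In $(i)\Rightarrow(ii)$ the compactness step is confused: the characters $\chi_n$ appearing in the affine functions $\omega_n=c_n\chi_n$ live in $\widehat{Z}$, which is \emph{discrete}, so subsequential limits there are vacuous; pulling back along $\alpha^*$ to the compact group $\widehat{G}$ gives a convergent subsequence of $\chi_n\circ\alpha$, but this yields no control of the scalars $\chi_n(t)$ for $t\in Z\setminus\alpha(G)$, which is exactly what you need. You correctly flag sequence-independence of the putative character $\psi$ as the obstacle, but it is not resolvable by a Mackey analysis of characters factoring through $\alpha$ alone. What is actually required is a \emph{quasi-coboundary criterion}: $\sigma$ is cohomologous to a character of $G$ if and only if $\inf_{c\in S^1}\|c\,\sigma_{g_n}-1\|_{L^2(Z)}\to 0$ for every $(g_n)$ with $\alpha_{g_n}\to 0$. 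Your computation of $\chi_n(t)\,\Delta_t\rho_{g_n}\to 1$ supplies precisely this hypothesis for $\sigma=\Delta_t\rho$, but the criterion itself does not follow formally from Proposition~\ref{prop: abb7.12} and must be proved separately (e.g.\ by applying Proposition~\ref{prop: abb7.12} to the cocycle $\sigma\otimes\bar\sigma$ on the relative self-joining of $Z$). Likewise, the $(ii)\Rightarrow(i)$ sketch leaves its key step---$L^2$-continuity of the selected $t\mapsto F_t$ at $t=0$---unjustified; Lusin's theorem gives approximate continuity away from a small set, not continuity at a prescribed point.
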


\begin{lem}[\cite{ABB}, Lemma 7.19] \label{lem: abb7.19}
	Let $\mathbf{Z}$ be an ergodic Kronecker system and $\rho : G \times Z \to S^1$ a cocycle.
	Suppose $\left( \alpha_{g_n} \right)$ converges (to $0$) in $Z$,
	and $\omega_n(z) = c_n\lambda_n(z)$ are affine functions such that
	$\left( \omega_n\rho_{g_n} \right)$ converges (to $1$) in $L^2(Z)$.
	Then for every $a \in \mathbb{N}$,
	\begin{align*}
		c_n^a \lambda_n \left( \binom{a}{2} \alpha_{g_n} \right) \lambda_n^a(z) \rho_{ag_n}(z)
	\end{align*}
	converges (to $1$) in $L^2(Z)$.
\end{lem}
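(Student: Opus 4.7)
The plan is to prove the statement by induction on $a$, using the fundamental cocycle identity together with the translation invariance of Haar measure on $Z$.

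For the base case $a=1$, we have $\binom{1}{2}=0$ and $\lambda_n(0)=1$, so the expression collapses to $c_n\lambda_n(z)\rho_{g_n}(z) = \omega_n(z)\rho_{g_n}(z)$, which converges to $1$ in $L^2(Z)$ by hypothesis.

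For the inductive step, assume the conclusion holds for some $a\geq 1$. The cocycle identity gives
\begin{equation*}
\rho_{(a+1)g_n}(z) \;=\; \rho_{ag_n}(z)\cdot \rho_{g_n}(z + a\alpha_{g_n}).
\end{equation*}
Since the Haar measure on $Z$ is translation-invariant, applying the translation $z\mapsto z+a\alpha_{g_n}$ to the hypothesis $\omega_n\rho_{g_n}\to 1$ in $L^2(Z)$ yields
\begin{equation*}
c_n\,\lambda_n(z+a\alpha_{g_n})\,\rho_{g_n}(z+a\alpha_{g_n}) \;\to\; 1 \quad\text{in } L^2(Z).
\end{equation*}
Because $\lambda_n\in\widehat{Z}$ is a character, $\lambda_n(z+a\alpha_{g_n})=\lambda_n(a\alpha_{g_n})\lambda_n(z)$, so this rewrites as
\begin{equation*}
c_n\,\lambda_n(a\alpha_{g_n})\,\lambda_n(z)\,\rho_{g_n}(z+a\alpha_{g_n}) \;\to\; 1 \quad\text{in } L^2(Z).
\end{equation*}
Now multiply this convergence with the inductive hypothesis
\begin{equation*}
c_n^a\,\lambda_n\!\left(\tbinom{a}{2}\alpha_{g_n}\right)\lambda_n^a(z)\,\rho_{ag_n}(z) \;\to\; 1 \quad\text{in } L^2(Z).
\end{equation*}
Since every factor has modulus one and the space is a probability space, $L^2$-convergence of bounded sequences is preserved under pointwise multiplication (each sequence converges in measure and hence the product converges in measure to the product of limits, and boundedness upgrades this back to $L^2$). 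Using the Pascal-type identity $\binom{a}{2}+a=\binom{a+1}{2}$ to combine the $\lambda_n$-arguments, and using the cocycle identity to combine the $\rho$-factors into $\rho_{(a+1)g_n}(z)$, the product becomes
\begin{equation*}
c_n^{a+1}\,\lambda_n\!\left(\tbinom{a+1}{2}\alpha_{g_n}\right)\lambda_n^{a+1}(z)\,\rho_{(a+1)g_n}(z) \;\to\; 1 \quad\text{in } L^2(Z),
\end{equation*}
completing the induction.

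There is no serious obstacle here; the only minor subtlety is justifying that the product of two $L^2$-convergent unimodular sequences converges to the product of their limits in $L^2$, which is standard on a probability space. The essential ingredients are the cocycle equation, translation invariance of $m_Z$, the character property $\lambda_n(u+v)=\lambda_n(u)\lambda_n(v)$, and the combinatorial identity $\binom{a}{2}+a=\binom{a+1}{2}$.
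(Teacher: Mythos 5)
Your proof is correct. The paper itself does not reproduce an argument for this lemma — it is imported as \cite[Lemma~7.19]{ABB} and used as a black box — so there is no ``paper's own proof'' to compare against in this document; but the induction you carry out (base case $a=1$ is the hypothesis verbatim since $\binom{1}{2}=0$; for the step, split $\rho_{(a+1)g_n}(z)=\rho_{ag_n}(z)\,\rho_{g_n}(z+\alpha_{ag_n})$ via the cocycle identity, shift the hypothesis by the measure-preserving translation $z\mapsto z+a\alpha_{g_n}$, pull out $\lambda_n(a\alpha_{g_n})$ by the character law, multiply with the inductive estimate, and combine $\binom{a}{2}+a=\binom{a+1}{2}$) is exactly the natural argument one would write for this statement, and your justification of the multiplicative step is sound — for $|f_n|=|g_n|\equiv 1$, $\lVert f_ng_n-1\rVert_2\le\lVert f_n-1\rVert_2+\lVert g_n-1\rVert_2$, so no detour through convergence in measure is even needed.

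One small point worth making explicit in a writeup: since the index of the translation, $a\alpha_{g_n}$, varies with $n$, what is being used is that each such translation is an isometry of $L^2(Z)$ that fixes the constant function $1$, so $\lVert \omega_n\rho_{g_n}(\cdot+a\alpha_{g_n})-1\rVert_2 = \lVert \omega_n\rho_{g_n}-1\rVert_2 \to 0$ uniformly over the choice of shift. You use this correctly, but it is the one place where a careless reader might worry that a moving target is being translated.
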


\begin{lem}[\cite{ABB}, Lemma 7.23] \label{lem: abb7.23}
	Let $\mathbf{Z} \times_{\sigma} H$ be an ergodic Conze--Lesigne $G$-system.
	Suppose $a \in \Z$ and $aG$ has finite index in $G$.
	Then $aH = H$.
\end{lem}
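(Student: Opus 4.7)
My plan is to argue by contradiction. Suppose $aH\neq H$. Since the annihilator of $aH$ in $\widehat H$ is exactly the $a$-torsion subgroup $\widehat H[a]$, there exists a nontrivial character $\chi\in\widehat H$ with $\chi^a=1$. I will show the cocycle $\chi\circ\sigma:G\times Z\to C_a$ is simultaneously NOT a coboundary (by ergodicity) and IS a coboundary (by the Conze--Lesigne structure together with the finite-index hypothesis). The first claim is immediate: if $\chi\circ\sigma_g(z)=\Delta_g F(z)$ for some measurable $F:Z\to S^1$, then the non-constant function $(z,h)\mapsto\chi(h)F(z)^{-1}$ on $X$ is $G$-invariant, contradicting ergodicity of $\mathbf{Z}\times_\sigma H$.

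For the second claim, I would apply Proposition \ref{prop: abb7.12}: it suffices to show $\chi\sigma_{g_n}\to 1$ in $L^2(Z)$ for every sequence $(g_n)$ with $\alpha_{g_n}\to 0$. Given such a sequence, Proposition \ref{prop: abb7.15}(i) produces affine functions $\omega_n(z)=c_n\lambda_n(z)$ with $\omega_n\cdot\chi\sigma_{g_n}\to 1$ in $L^2$. Raising to the $a$-th power kills $\chi\sigma_{g_n}$ (since $\chi^a=1$), leaving $c_n^a\lambda_n^a\to 1$; since a nontrivial character has $L^2$-distance $\sqrt 2$ from the constant $1$, this forces $\lambda_n^a=1$ (i.e.\ $\lambda_n\in\widehat Z[a]$) and $c_n^a\to 1$ for large $n$.

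The finite-index hypothesis now supplies crucial rigidity. Density of $\alpha(G)$ in $Z$ and continuity of multiplication by $a$ give $aZ=\overline{a\alpha(G)}=\overline{\alpha(aG)}$, a closed subgroup of $Z$ whose index divides $[G:aG]$, so $Z/aZ$ is finite and $\widehat Z[a]\cong\widehat{Z/aZ}$ is finite. After extracting a subsequence we may assume $\lambda_n=\lambda\in\widehat Z[a]$ and $c_n\to c$ with $c^a=1$, so $\chi\sigma_{g_n}\to c^{-1}\lambda^{-1}$ in $L^2$. Applying Lemma \ref{lem: abb7.19} with $\rho=\chi\circ\sigma$ then gives $c_n^a\lambda_n(\binom{a}{2}\alpha_{g_n})\lambda_n^a(z)\chi\sigma_{ag_n}(z)\to 1$; using $\lambda^a=1$, $c_n^a\to 1$, and continuity of $\lambda$ at $0$ together with $\alpha_{g_n}\to 0$, this simplifies to $\chi\sigma_{ag_n}(z)\to 1$ in $L^2(Z)$ along the subsequence.

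The main obstacle that remains is to upgrade $\chi\sigma_{ag_n}\to 1$ to $\chi\sigma_{g_n}\to 1$, equivalently to rule out a nontrivial limit $c^{-1}\lambda^{-1}$. I expect this to be the hardest step. The route I would pursue is to show that the cohomology class of $\chi\circ\sigma$ is represented by a character $\mu:G\to C_a$ which factors through the finite quotient $\bar\alpha:G/aG\twoheadrightarrow Z/aZ$: then $\mu=\lambda_0\circ\alpha$ for some $\lambda_0\in\widehat Z[a]$, and writing $\chi\sigma_g(z)=\mu(g)\Delta_g F(z)=\Delta_g(\lambda_0\cdot F)(z)$ exhibits $\chi\circ\sigma$ as a coboundary, contradicting Step 1. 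The factorization should come from combining the Conze--Lesigne equation $\sigma_g(z+t)-\sigma_g(z)=\Delta_g F_t(z)+c_t(g)$ for $\sigma$ itself (applied with $t\in aZ$) with the identity $\sigma_{ag}(z)=\sum_{i=0}^{a-1}\sigma_g(z+i\alpha_g)$, which should force the relevant character-part $c_t$ to land in $aH$ whenever $t\in aZ$ and thereby kill the obstruction coming from $\ker\bar\alpha$.
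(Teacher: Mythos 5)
Your framework---deriving a contradiction by showing that $\chi\circ\sigma$ is both a coboundary and not a coboundary, for an $a$-torsion character $\chi\in\widehat{H}$---is the right one and matches the strategy used in \cite{ABB} for this lemma (which the present paper simply cites without proof). Step~1 (ergodicity rules out a coboundary) is correct, and the intermediate computations in Step~2 are sound: $aZ$ is indeed closed of finite index, $\widehat{Z}[a]\cong\widehat{Z/aZ}$ is finite, and the application of Propositions \ref{prop: abb7.12} and \ref{prop: abb7.15} and Lemmas \ref{lem: abb7.19} and \ref{lem: abb7.25} correctly yields that, along a subsequence, $\lambda_n=\lambda$ is a fixed $a$-torsion character, $c_n\to c$ with $c^a=1$, $\chi\sigma_{g_n}\to c^{-1}\lambda^{-1}$, and $\chi\sigma_{ag_n}\to 1$.

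The gap is exactly where you flag it, and your sketch for closing it does not work as written, for two concrete reasons. First, what you have established only confines each subsequential limit of $\chi\sigma_{g_n}$ to the finite set of functions $c^{-1}\lambda^{-1}$ with $c^a=1$ and $\lambda\in\widehat{Z}[a]$; it does not show that $\chi\circ\sigma$ is cohomologous to a character $\mu:G\to C_a$, which would require the affine correctors to be genuine constants, i.e.\ $\lambda=1$. Moreover, the extra information ``$\chi\sigma_{ag_n}\to 1$'' is vacuous at this stage: the cocycle identity $\chi\sigma_{ag_n}(z)=\prod_{i=0}^{a-1}\chi\sigma_{g_n}(z+i\alpha_{g_n})$ gives in the limit only $(c^{-1}\lambda^{-1})^a=1$, which is already automatic from $c^a=\lambda^a=1$, so it cannot by itself rule out a nontrivial limit. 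Second, even granting a quasi-coboundary representative $\mu(g)\Delta_g F(z)$, you would still need $\mu$ to be an eigenvalue of $(Z,\alpha)$, i.e.\ to factor through $\bar\alpha\colon G/aG\to Z/aZ$; this map is surjective but can have a nontrivial kernel, so the $a$-torsion of $\mu$ does not give the needed factorization for free. Your heuristic that $c_t\in aH$ whenever $t\in aZ$ is plausible, but at best it shows that the derivative characters of $\chi\circ\sigma$ vanish on the open subgroup $aZ$; converting that local statement into the global assertion that $\chi\circ\sigma$ is a coboundary---rather than merely a quasi-coboundary whose obstruction lives on $Z/aZ$---is precisely the missing step, and it is where the substantive structure theory of Conze--Lesigne cocycles must actually be invoked.
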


\begin{lem}[\cite{ABB}, Lemma 7.25] \label{lem: abb7.25}
	Let $Z$ be a compact abelian group.
	Let $c_1, c_2 \in S^1$ and $\lambda_1, \lambda_2 \in \hat{Z}$.
	If $\lambda_1 \ne \lambda_2$, then
	\begin{align*}
		\norm{L^2(Z)}{c_1\lambda_1 - c_2\lambda_2} = \sqrt{2}.
	\end{align*}
\end{lem}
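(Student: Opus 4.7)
The plan is to compute the $L^2$-norm squared directly by expanding and using the orthogonality relations for characters of the compact abelian group $Z$. This reduces the statement to two elementary facts: characters are unimodular pointwise, and distinct characters are orthogonal in $L^2(Z, m_Z)$ (where $m_Z$ denotes Haar probability measure).

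First, I would expand
\begin{align*}
    \norm{L^2(Z)}{c_1\lambda_1 - c_2\lambda_2}^2
    = \int_Z \left( c_1\lambda_1(z) - c_2\lambda_2(z) \right)\overline{\left( c_1\lambda_1(z) - c_2\lambda_2(z) \right)}\, dm_Z(z).
\end{align*}
Multiplying out yields four terms. Since $|c_i| = 1$ and $|\lambda_i(z)| = 1$ for every $z \in Z$ (as $\lambda_i \in \hat{Z}$ takes values in $S^1$), the two diagonal terms each integrate to $1$, contributing $2$ in total.

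The two cross terms are scalar multiples of the integrals $\int_Z \lambda_1(z) \overline{\lambda_2(z)}\, dm_Z(z)$ and its conjugate. The integrand $\lambda_1 \overline{\lambda_2}$ is itself a character of $Z$, and it is nontrivial precisely because $\lambda_1 \ne \lambda_2$. Standard orthogonality of characters on a compact abelian group then gives $\int_Z \lambda_1 \overline{\lambda_2}\, dm_Z = 0$, so both cross terms vanish. Combining these computations yields $\norm{L^2(Z)}{c_1\lambda_1 - c_2\lambda_2}^2 = 2$, and taking square roots gives the claim.

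There is no serious obstacle here; the only nontrivial input is the orthogonality of distinct characters on a compact abelian group, which is standard (it follows immediately from translation-invariance of Haar measure: for any nontrivial $\chi \in \hat{Z}$, pick $z_0$ with $\chi(z_0) \ne 1$, then $\int_Z \chi\, dm_Z = \int_Z \chi(z + z_0)\, dm_Z = \chi(z_0) \int_Z \chi\, dm_Z$, forcing the integral to be zero).
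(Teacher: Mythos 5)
Your proof is correct and is the standard argument; note that the paper itself does not prove this lemma but simply cites it from [ABB, Lemma 7.25], so there is no internal proof to compare against. Your expansion of the square, using unimodularity of characters for the diagonal terms and orthogonality of distinct characters for the cross terms, is exactly the right way to prove it.
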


\subsection{Proof of Proposition \ref{prop: Mackey prod}}

We will prove Proposition \ref{prop: Mackey prod} via the next three lemmas.
Rather than proving directly that $M = M_a \times M_b$,
we will instead show the dual identity $M^{\perp} = M_a^{\perp} \times M_b^{\perp}$.
First, we show $M_a^{\perp} \times M_b^{\perp} \subseteq M^{\perp}$:
\begin{lem}
	In the setup of Proposition \ref{prop: Mackey prod}, $M_a^{\perp} \times M_b^{\perp} \subseteq M^{\perp}$.
\end{lem}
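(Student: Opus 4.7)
The plan is to prove this inclusion by a direct construction of transfer functions. Fix $(\chi_1, \chi_2) \in M_a^\perp \times M_b^\perp \subseteq \widehat{H^2}$. By definition of the groups $M_a^\perp$ and $M_b^\perp$, there exist measurable functions $F_1, F_2 : Z \to S^1$ such that
\begin{align*}
\chi_1(\sigma_{ag}(z)) &= F_1(z + \alpha_{ag}) \cdot \overline{F_1(z)}, \\
\chi_2(\sigma_{bg}(z)) &= F_2(z + \alpha_{bg}) \cdot \overline{F_2(z)}
\end{align*}
for every $g \in G$ and $m_Z$-a.e. $z \in Z$. Note that the use of coprimality of $a$ and $b$ is not needed here; the lemma holds for arbitrary $a, b \in \Z$.

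The key step is to transfer these individual coboundary relations on $Z$ to a single coboundary relation on $W$ via the two coordinate projections. Define $F : W \to S^1$ by
\begin{align*}
F(w_1, w_2) := F_1(w_1) \cdot F_2(w_2).
\end{align*}
Since $W \subseteq Z \times Z$ inherits the product structure and the action $S_g$ acts coordinatewise on $W$ by the rotations $\alpha_{ag}$ and $\alpha_{bg}$ respectively, I compute
\begin{align*}
F(S_g w) \cdot \overline{F(w)}
&= F_1(w_1 + \alpha_{ag}) \overline{F_1(w_1)} \cdot F_2(w_2 + \alpha_{bg}) \overline{F_2(w_2)} \\
&= \chi_1(\sigma_{ag}(w_1)) \cdot \chi_2(\sigma_{bg}(w_2)) \\
&= (\chi_1, \chi_2)(\tilde{\sigma}_g(w)).
\end{align*}
Thus $(\chi_1, \chi_2) \circ \tilde{\sigma}$ is a coboundary over $(W, S)$, witnessing $(\chi_1, \chi_2) \in M^\perp$.

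The only minor subtlety to verify is that $F$, defined a priori on $Z \times Z$, descends to a measurable function on $W$ with respect to the Haar measure on $W$ used in the definition of coboundary — but this is immediate since $W$ is a closed subgroup of $Z^2$ and both coordinate projections $W \to Z$ are measurable, so the restrictions $F_1 \circ \mathrm{pr}_1$ and $F_2 \circ \mathrm{pr}_2$ are measurable on $W$. I do not expect any real obstacle in this direction; the difficulty will lie in the reverse inclusion $M^\perp \subseteq M_a^\perp \times M_b^\perp$, where one must use coprimality of $a, b$ together with the Conze--Lesigne machinery (Propositions \ref{prop: abb7.12} and \ref{prop: abb7.15} and Lemmas \ref{lem: abb7.19}--\ref{lem: abb7.25}) to extract separate coboundary witnesses on $Z$ from a joint coboundary witness on $W$.
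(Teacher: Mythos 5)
Your overall idea---unwind the coboundary witnesses for $\chi_1$ and $\chi_2$ separately on $Z$ and multiply them into a single transfer function on $W$---is the natural plan, but there is a mismatch between the coboundary equation you wrote down and the definition of $M_c^\perp$ that the paper actually uses, and that mismatch is precisely where the coprimality hypothesis (which you dismiss as unnecessary) enters.

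The paper defines $M_c^\perp$ as the set of $\chi \in \hat{H}$ for which $(g,z) \mapsto \chi(\sigma_{cg}(z))$ is a coboundary over $(Z,\alpha)$, i.e. over the \emph{full} Kronecker action $z \mapsto z + \alpha_g$. Concretely, unwinding via Proposition \ref{prop: abb7.12} as the paper does in its own proof (``since $\alpha_{g_n} \to 0$, $\chi_1(\sigma_{ag_n}(z)) \to 1$'') and in Proposition \ref{prop: psi} (where $\varphi(\alpha_g, z) = \chi(\sigma_{cg}(z))$ is defined for $\alpha_g$ ranging over all of $Z$), the transfer function satisfies
\begin{equation*}
\chi_1\bigl(\sigma_{ag}(z)\bigr) = F_1(z + \alpha_g)\,\overline{F_1(z)},
\end{equation*}
with $\alpha_g$, not $\alpha_{ag}$, in the argument of $F_1$. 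You instead wrote $\chi_1(\sigma_{ag}(z)) = F_1(z + \alpha_{ag})\overline{F_1(z)}$, which is the coboundary relation for the \emph{restricted} $aG$-action --- a different (and, a priori, smaller) annihilator. With the paper's version of the relation, your product $F(w_1,w_2) = F_1(w_1)F_2(w_2)$ does not close up: since $S_g$ shifts $w_1$ by $\alpha_{ag}$, the first factor of $F(S_gw)\overline{F(w)}$ is $F_1(w_1 + \alpha_{ag})\overline{F_1(w_1)}$, and plugging $g \mapsto ag$ into the corrected transfer equation identifies this with $\chi_1(\sigma_{a^2g}(w_1))$, not with $\chi_1(\sigma_{ag}(w_1))$ as needed. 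The paper avoids this obstruction entirely by never writing down a transfer function on $W$: it verifies the sequential criterion of Proposition \ref{prop: abb7.12}(ii), and this is where coprimality is used --- to pass from $(\alpha_{ag_n},\alpha_{bg_n}) \to 0$ in $W$ to $\alpha_{g_n} \to 0$ in $Z$, so that the $M_a^\perp$ and $M_b^\perp$ hypotheses (formulated with the full $\alpha$-action) can be invoked. So the claim that ``coprimality is not needed here'' rests on your alternate reading of $M_c^\perp$; under the definition the paper actually uses, it is needed. You should either switch to the sequential criterion as in the paper's argument, or carefully reconcile the two readings of $M_c^\perp$ (which are not obviously equal when $aG$, $bG$ have infinite index in $G$).

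Your concluding remark about measurability is sound in outline, though the real point to check is not merely that $F_i \circ \mathrm{pr}_i$ is Borel, but that an $m_Z$-null exceptional set pulls back to an $m_W$-null set on $W$; this holds because each projection $\mathrm{pr}_i : W \to Z$ is a surjective continuous homomorphism and therefore pushes $m_W$ to $m_Z$.
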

\begin{proof}
	Let $\chi_1 \in M_a^{\perp}$ and $\chi_2 \in M_b^{\perp}$.
	We want to show $\tilde{\chi} = \chi_1 \otimes \chi_2 \in M^{\perp}$.
	Let $(g_n)_{n \in \mathbb{N}}$ be a sequence in $G$ such that $(\alpha_{ag_n}, \alpha_{bg_n}) \to 0$ in $W$.
	By Proposition \ref{prop: abb7.12}, it suffices to show
	\begin{align} \label{eq: coboundary over W}
		\tilde{\chi} \circ \tilde{\sigma}_{g_n}(w) \to 1
	\end{align}
	\noindent in $L^2(W)$.
	Now, since $a$ and $b$ are coprime, we have $\alpha_{g_n} \to 0$ in $Z$.
	Since $\chi_1 \in M_a^{\perp}$, it follows that
	\begin{align} \label{eq: coboundary 1 over Z}
		\chi_1 \left( \sigma_{ag_n}(z) \right) \to 1
	\end{align}
	\noindent in $L^2(Z)$ by Proposition \ref{prop: abb7.12}.
	Similarly,
	\begin{align} \label{eq: coboundary 2 over Z}
		\chi_2 \left( \sigma_{bg_n}(z) \right) \to 1
	\end{align}
	\noindent in $L^2(Z)$.
	Combining \eqref{eq: coboundary 1 over Z} and \eqref{eq: coboundary 2 over Z}, we have
	\begin{align*}
		\chi_1 \left( \sigma_{ag_n}(z+at) \right) \chi_2 \left( \sigma_{bg_n}(z+bt) \right) \to 1
	\end{align*}
	\noindent in $L^2(Z \times Z)$.
	That is, \eqref{eq: coboundary over W} holds.
\end{proof}
Before establishing the reverse inclusion, $M^{\perp} \subseteq M_a^{\perp} \times M_b^{\perp}$,
we need the following result:
\begin{lem} \label{lem: Mackey torsion}
	In the setup of Proposition \ref{prop: Mackey prod},
	
	\begin{align*}
		M^{\perp} \subseteq \left\{ \chi_1 \otimes \chi_2 \in \hat{H^2} : \chi_1^a = \chi_2^b = 1 \right\}
	\end{align*}
\end{lem}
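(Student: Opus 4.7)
The approach is to convert the coboundary condition for $\tilde\chi\circ\tilde\sigma$ on $(W,S)$ into torsion relations on the approximating characters of the Conze--Lesigne cocycles $\chi_1\circ\sigma$ and $\chi_2\circ\sigma$ on $(Z,\alpha)$, and then to propagate these from $\hat Z$ down to $\hat H$ using the finite-index hypothesis on $(b-a)G$.

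\emph{Setup and reduction to a statement on $Z\times Z$.} Assume $\tilde\chi=\chi_1\otimes\chi_2\in M^{\perp}$, so $\tilde\chi\circ\tilde\sigma$ is a coboundary over $(W,S)$. I parametrize $W$ by $Z\times Z$ via $(z,t)\mapsto (z+at,z+bt)$; a short computation (using $\alpha_{ag}=a\alpha_g$, $\alpha_{bg}=b\alpha_g$) shows that $S_g$ pulls back to the rotation $(z,t)\mapsto (z,t+\alpha_g)$. Proposition \ref{prop: abb7.12} applied to $(W,S)$ then yields, for every sequence $g_n\in G$ with $\alpha_{g_n}\to 0$,
\[
\chi_1(\sigma_{ag_n}(z+at))\,\chi_2(\sigma_{bg_n}(z+bt))\to 1 \quad \text{in } L^2(Z^2).
\]

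\emph{Affine approximations and torsion constraints.} By Proposition \ref{prop: abb7.15}(i) applied to the CL cocycles $\chi_i\circ\sigma$, there exist affine approximations $c_n\lambda_n$ and $d_n\mu_n$ with $\lambda_n,\mu_n\in\hat Z$ such that $c_n\lambda_n\cdot\chi_1\circ\sigma_{g_n}\to 1$ and $d_n\mu_n\cdot\chi_2\circ\sigma_{g_n}\to 1$ in $L^2(Z)$. Lemma \ref{lem: abb7.19} then provides affine approximations for $\chi_1\circ\sigma_{ag_n}$ and $\chi_2\circ\sigma_{bg_n}$ with approximating characters $\lambda_n^{a}$ and $\mu_n^{b}$ respectively. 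Substituting $z\mapsto z+at$ and $z\mapsto z+bt$, multiplying, and comparing with the convergence above, the resulting approximating affine function on $Z^2$ factors (up to scalars) as
\[
(\lambda_n^{a}\mu_n^{b})(z)\cdot(\lambda_n^{a^2}\mu_n^{b^2})(t)
\]
and must tend to $1$ in $L^2(Z^2)$. Lemma \ref{lem: abb7.25} forces both characters to be trivial for large $n$, giving $a\lambda_n+b\mu_n=0$ and $a^2\lambda_n+b^2\mu_n=0$ in $\hat Z$. Eliminating, we obtain $a(b-a)\lambda_n=0$ and $b(b-a)\mu_n=0$.

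\emph{From $\hat Z$-torsion to $\hat H$-torsion.} The character $a\lambda_n$ is the approximating character of $\chi_1\circ\sigma$ at the sequence $ag_n$, and it lies in the subgroup $\hat Z[b-a]$. Because $(b-a)G$ has finite index in $G$, the closed subgroup $(b-a)Z=\overline{(b-a)\alpha(G)}$ has finite index in $Z$, so $\hat Z[b-a]$ is finite. Restricting further to the subgroup $a(b-a)G$ kills these approximating characters, and Proposition \ref{prop: abb7.12} then implies that $\chi_1\circ\sigma_{ag}$, when restricted to $(b-a)G$, is cohomologous to a character on $(Z,\alpha|_{(b-a)G})$. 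A descent argument using Lemma \ref{IZk} (which crucially requires the finite-index assumption) lifts this back to a ``coboundary-plus-character'' structure for $\chi_1\circ\sigma_{ag}$ over $(Z,\alpha)$. Finally, the minimality of the cocycle $\sigma$ in the Conze--Lesigne extension $\mathbf Z\times_\sigma H$ (in the same spirit as the proof of Lemma \ref{lem: abb7.23}) forces $\chi_1^{a}=1$. The symmetric argument with $\mu_n$ and the integer $b$ yields $\chi_2^{b}=1$.

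\emph{Main obstacle.} The third step is the crux. Extracting a clean statement about the character $\chi_1\in\hat H$ from $(b-a)$-torsion of approximating characters in $\hat Z$ requires a careful descent that simultaneously exploits (i) finiteness of $\hat Z[b-a]$ (from the finite-index hypothesis), (ii) Lemma \ref{IZk} to move between $G$ and the finite-index subgroup $(b-a)G$, and (iii) the minimality of the cocycle $\sigma$ in the CL extension to transfer cohomological information into a multiplicative torsion statement about $\chi_1$. This is precisely the step that fails when $(b-a)G$ has infinite index, matching the scope of the hypothesis.
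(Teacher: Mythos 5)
Your first two steps are correct but they establish relations in the wrong dual group. You derive
\[
\lambda_n^{a}\mu_n^{b}=1,\qquad \lambda_n^{a^2}\mu_n^{b^2}=1,
\]
on the \emph{approximating characters} $\lambda_n,\mu_n\in\hat Z$ of the affine approximations to the Conze--Lesigne cocycles, whereas the lemma you are trying to prove is a statement about the fiber characters $\chi_1,\chi_2\in\hat H$. The paper sidesteps this issue entirely: it cites the argument in the proof of \cite[Theorem 7.26]{ABB} to obtain the identities $\chi_1^{a}\chi_2^{b}=\chi_1^{a^{2}}\chi_2^{b^{2}}=1$ directly in $\hat H$, and then a one-line algebraic manipulation gives $\chi_1^{a(b-a)}=1$, after which Lemma \ref{lem: abb7.23} (the finite-index hypothesis on $(b-a)G$ forces $(b-a)H=H$, hence $\hat H$ has \emph{no} $(b-a)$-torsion at all, not merely finitely much) yields $\chi_1^{a}=1$ and then $\chi_2^{b}=1$. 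In other words, the paper never touches the approximating characters at this stage.

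The genuine gap is your third step, which you yourself flag as the ``crux''. The relations $\lambda_n^{a(b-a)}=1$, $\mu_n^{b(b-a)}=1$ live in $\hat Z$, and your finiteness observation (that $\hat Z[b-a]$ is finite under the finite-index hypothesis) is much weaker than the triviality of $\hat H[b-a]$ that the paper's argument needs and gets from Lemma \ref{lem: abb7.23}. The proposed route from $\hat Z$-torsion of the $\lambda_n$ back to a torsion statement about $\chi_1\in\hat H$ --- ``restricting to $a(b-a)G$ kills the approximating characters'', then an appeal to Lemma \ref{IZk}, then ``minimality of $\sigma$'' --- does not assemble into a proof. Lemma \ref{IZk} is a statement about Host--Kra factors of finite-index subgroups, not a cocycle-cohomology descent; moreover $a(b-a)G$ need not have finite index (only $(b-a)G$ is assumed to). And even if $\chi_1\circ\sigma_{a(b-a)g}(z)$ converges in $L^2(Z)$ to $d_n^{-1}$ along $\alpha_{g_n}\to 0$, this controls the constants $d_n$, not $\chi_1$. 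The passage from approximating-character relations to $\hat H$-character relations is exactly the work done in \cite[Theorem 7.26]{ABB}, and your sketch does not reproduce it; filling this step would amount to reproving that theorem, which is precisely what the paper avoids by citing it.
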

\begin{proof}
	Let $\tilde{\chi} = \chi_1 \otimes \chi_2 \in M^{\perp}$.
	By the argument in the proof of \cite[Theorem 7.26]{ABB}, we have $\chi_1^a \chi_2^b = \chi_1^{a^2} \chi_2^{b^2} = 1$.
	Therefore,
	
	\begin{align*}
		\chi_1^{a(b-a)} = \chi_1^{ab} \chi_1^{-a^2}
		 = \left( \chi_1^a \chi_2^b \right)^b \left( \chi_1^{a^2} \chi_2^{b^2} \right)^{-1} = 1.
	\end{align*}
	
	\noindent By assumption, $(b-a)G$ has finite index in $G$.
	It follows that $\hat{H}$ does not contain any $(b-a)$-torsion elements (see Lemma \ref{lem: abb7.23}),
	so $\chi_1^a = 1$.
	We immediately deduce $\chi_2^b = \chi_1^{-a} = 1$ as well.
\end{proof}

Now we can complete the proof of Proposition \ref{prop: Mackey prod}:

\begin{lem}
	In the setup of Proposition \ref{prop: Mackey prod}, $M^{\perp} \subseteq M_a^{\perp} \times M_b^{\perp}$.
\end{lem}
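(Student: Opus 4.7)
The plan is to translate both the hypothesis $\chi_1\otimes\chi_2\in M^{\perp}$ and the desired conclusions $\chi_1\in M_a^{\perp}$, $\chi_2\in M_b^{\perp}$ into the null-sequence characterization of Proposition \ref{prop: abb7.12}, use the Conze--Lesigne approximations supplied by Proposition \ref{prop: abb7.15} and Lemma \ref{lem: abb7.19} to obtain concrete affine models for the cocycles $\chi_1\circ\sigma_{a\cdot}$ and $\chi_2\circ\sigma_{b\cdot}$, and finally exploit the torsion information already extracted in Lemma \ref{lem: Mackey torsion} together with the assumption $\gcd(a,b)=1$ to decouple the joint coboundary condition over $W$ into two separate coboundary conditions over $Z$.

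Concretely, fix any null sequence $(g_n)$ in $G$ with $\alpha_{g_n}\to 0$. Since $\chi_i\circ\sigma$ is a Conze--Lesigne cocycle, Proposition \ref{prop: abb7.15} provides affine functions $\omega_n(z)=c_n\lambda_n(z)$ and $\omega_n'(z)=c_n'\lambda_n'(z)$ such that $\omega_n\cdot(\chi_1\circ\sigma_{g_n})\to 1$ and $\omega_n'\cdot(\chi_2\circ\sigma_{g_n})\to 1$ in $L^2(Z)$. Using the torsion identities $\chi_1^a=1$ and $\chi_2^b=1$ from Lemma \ref{lem: Mackey torsion}, raising these convergences to the $a$-th and $b$-th powers respectively gives $c_n^a\lambda_n^a(z)\to 1$ and $c_n'^b\lambda_n'^b(z)\to 1$ in $L^2(Z)$. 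By Lemma \ref{lem: abb7.25} this forces $\lambda_n^a=1$ and $\lambda_n'^b=1$ eventually, with $c_n^a\to 1$ and $c_n'^b\to 1$. Feeding this back into Lemma \ref{lem: abb7.19} and using $\lambda_n^a=\lambda_n'^b=1$ to kill the $z$-dependent factor, the resulting $L^2$-approximations collapse to constants: $\chi_1(\sigma_{ag_n}(z))\to C_n$ and $\chi_2(\sigma_{bg_n}(z))\to C_n'$ in $L^2(Z)$, where $C_n,C_n'\in S^1$ satisfy $C_n^a\to 1$ and $C_n'^b\to 1$.

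To couple the two sides, invoke Proposition \ref{prop: abb7.12} for the coboundary condition on $(W,S)$: since $(\alpha_{ag_n},\alpha_{bg_n})\to 0$ in $W$, we have $\chi_1(\sigma_{ag_n}(w_1))\chi_2(\sigma_{bg_n}(w_2))\to 1$ in $L^2(W)$. Pulling back along the surjection $(z,t)\mapsto(z+at,z+bt)$ from $Z\times Z$ onto $W$ and substituting the approximations above reduces the whole product to the constant $C_nC_n'$ (up to an $L^2$-negligible error), and hence $C_nC_n'\to 1$.

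The main obstacle is the final decoupling step: extracting $C_n\to 1$ and $C_n'\to 1$ separately from the three scalar conditions $C_n^a\to 1$, $C_n'^b\to 1$, and $C_nC_n'\to 1$. For $n$ large, each $C_n$ lies within $o(1)$ of some $a$-th root of unity $\zeta_n$, and each $C_n'$ lies within $o(1)$ of some $b$-th root of unity $\eta_n$, so $C_nC_n'\to 1$ forces $\zeta_n\eta_n\to 1$; because $\zeta_n\eta_n$ ranges over a finite set of roots of unity of bounded order, it must actually equal $1$ for $n$ large. Then $\zeta_n^a=\zeta_n^b=1$, and coprimality of $a$ and $b$ gives $\zeta_n^{\gcd(a,b)}=\zeta_n=1$, whence $\eta_n=1$ as well. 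Consequently $\chi_1(\sigma_{ag_n}(z))\to 1$ and $\chi_2(\sigma_{bg_n}(z))\to 1$ in $L^2(Z)$, and Proposition \ref{prop: abb7.12} gives $\chi_1\in M_a^{\perp}$ and $\chi_2\in M_b^{\perp}$, completing the inclusion. Without the hypothesis $\gcd(a,b)=1$ only the coupled statement $C_nC_n'\to 1$ survives, and the factors cannot be separated.
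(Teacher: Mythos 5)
Your proof is correct and rests on the same core lemmas as the paper (Propositions \ref{prop: abb7.12} and \ref{prop: abb7.15}, Lemmas \ref{lem: abb7.19}, \ref{lem: abb7.25}, and \ref{lem: Mackey torsion}), but the endgame is organized differently. The paper first observes that since $a$ and $b$ are coprime at least one of them is odd, assumes WLOG that $a$ is odd so that $a \mid \binom{a}{2}$ forces the constant $d_{1,n} = \lambda_{1,n}^{\binom{a}{2}}(\alpha_{g_n}) = 1$ exactly, and then recovers $d_{2,n} \to 1$ by comparing $d_{2,n}\,\tilde{\chi}\circ\tilde{\sigma}_{g_n}$ against the known convergence $\tilde{\chi}\circ\tilde{\sigma}_{g_n} \to 1$ on $W$. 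You instead keep the two sides symmetric: you only extract $C_n^a \to 1$ and $C_n'^b \to 1$ (which needs no parity information, just $\lambda_n^a = \lambda_n'^b = 1$), couple them through $C_nC_n' \to 1$ from the $W$-coboundary hypothesis, and then run a purely scalar argument with roots of unity and $\gcd(a,b)=1$ to force $C_n, C_n' \to 1$ individually. Your version avoids the case split on the parity of $a$ and makes the use of coprimality more transparent, at the small cost of an extra step approximating $C_n$ by an $a$-th root of unity and $C_n'$ by a $b$-th root. Both are valid; the difference is cosmetic rather than substantive.
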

\begin{proof}
	Let $\tilde{\chi} = \chi_1 \otimes \chi_2 \in M^{\perp}$.
	We want to show $\chi_1 \in M_a^{\perp}$ and $\chi_2 \in M_b^{\perp}$.
	For notational convenience, let $a_1 = a$ and $a_2 = b$.
	Let $(g_n)_{n \in \mathbb{N}}$ be a sequence in $G$ such that $\alpha_{g_n} \to 0$ in $Z$.
	By Proposition \ref{prop: abb7.12}, it suffices to show
	
	\begin{align} \label{eq: coboundary over Z}
		\chi_i \left( \sigma_{a_ig_n}(z) \right) \to 1
	\end{align}
	
	\noindent in $L^2(Z)$ for $i = 1, 2$.
	
	Now, $(\alpha_{ag_n}, \alpha_{bg_n}) \to 0$ in $W$, so
	
	\begin{align} \label{eq: coboundary over W 2}
		\tilde{\chi} \circ \tilde{\sigma}_{g_n}(w) \to 1
	\end{align}
	
	\noindent in $L^2(W)$ by Proposition \ref{prop: abb7.12}.
	Moreover, since $\chi_i \circ \sigma$ is a Conze--Lesigne cocycle, we have
	\begin{align} \label{eq: QA cocycle}
		c_{i,n} \lambda_{i,n}(z) \chi_i \left( \sigma_{g_n}(z) \right) \to 1
	\end{align}
	
	\noindent in $L^2(Z)$ for some sequences $(c_{i,n})_{n \in \mathbb{N}}$ in $S^1$
	and $\left( \lambda_{i,n} \right)_{n \in \mathbb{N}}$ in $\hat{Z}$ (see Proposition \ref{prop: abb7.15}).
	
	It follows by Lemma \ref{lem: abb7.19} that
	
	\begin{align} \label{eq: binomial conv}
		c_{i,n}^{a_i} \lambda_{i,n}^{\binom{a_i}{2}}\left( \alpha_{g_n} \right) \lambda_{i,n}^a(z)
		 \chi_i \left( \sigma_{a_ig_n}(z) \right) \to 1
	\end{align}
	
	\noindent in $L^2(Z)$.
	On the other hand, by Lemma \ref{lem: Mackey torsion}, we have $\chi_i^{a_i} = 1$,
	so raising \eqref{eq: QA cocycle} to the $a_i$-th power gives
	\begin{align*}
		c_{i,n}^{a_i} \lambda_{i,n}^{a_i}(z) \to 1
	\end{align*}
	
	\noindent in $L^2(Z)$.
	Hence, by Lemma \ref{lem: abb7.25}, $\lambda_{i,n}^{a_i} = 1$ for all sufficiently large $n$, and $c_{i,n}^{a_i} \to 1$.
	Therefore, \eqref{eq: binomial conv} simplifies to
	
	\begin{align} \label{eq: cohom to char}
		d_{i,n} \chi_i \left( \sigma_{a_ig_n}(z) \right) \to 1
	\end{align}
	
	\noindent in $L^2(Z)$, where $d_{i,n} = \lambda_{i,n}^{\binom{a_i}{2}} \left( \alpha_{g_n} \right)$.
	
	The numbers $a$ and $b$ are coprime, so at least one of them is odd.
	Without loss of generality, assume $a$ is odd.
	Then $a$ divides $\binom{a}{2}$, so $\lambda_{1,n}^{\binom{a}{2}} = 1$.
	Hence, $d_{1,n} = 1$ for all large $n$, so \eqref{eq: coboundary over Z}
	follows from \eqref{eq: cohom to char} for $i = 1$.
	It remains to show \eqref{eq: coboundary over Z} holds for $i = 2$.
	
	Combining the identities \eqref{eq: cohom to char} for $i = 1, 2$ and using $d_{1,n} = 1$, we have
	\begin{align*}
		d_{2,n} \chi_1 \left( \sigma_{ag_n}(z+at) \right) \chi_2 \left( \sigma_{bg_n}(z+bt) \right) \to 1
	\end{align*}
	
	\noindent in $L^2(Z \times Z)$.
	That is,
	
	\begin{align*}
		d_{2,n} \tilde{\chi} \circ \tilde{\sigma}_{g_n}(w) \to 1
	\end{align*}
	
	\noindent in $L^2(W)$.
	Comparing with \eqref{eq: coboundary over W 2}, this implies $d_{2,n} \to 1$.
	Therefore, \eqref{eq: coboundary over Z} follows from \eqref{eq: cohom to char} for $i = 2$.
\end{proof}

%%%%%%%%%%%%%%%%%%%%%%%%%%%%%%%%%%%%%%%%%%%%%%%%%%%%%%%%%%%%

\subsection{Limit formula}

With the help of Proposition \ref{prop: Mackey prod}, we will now
prove a limit formula for the averages $\UC_{g\in G} T_{ag} f_1 T_{bg} f_2$.
We need to define one more object related to the cocycle $\sigma$ before stating the limit formula.
For a compact space $K$, let $\mathcal{M}(Z, K)$ denote the space of measurable functions $Z \to K$ equipped
with the topology of convergence in measure.

\begin{prop} \label{prop: psi}
	Let $\X = \mathbf{Z} \times_{\sigma} H$ be an ergodic Conze--Lesigne system.
	Let $c \in \Z$.
	There exists a function $\psi_c : Z \times Z \to H/M_c$ such that
	
	\begin{enumerate}[(1)]
		\item	for every $g \in G$,
			
			\begin{align*}
				\psi_c(\alpha_g, z) \equiv \sigma_{cg}(z) \pmod{M_c},
			\end{align*}
			
			\noindent and
		\item	the map $Z \ni t \mapsto \psi_c(t, \cdot) \in \mathcal{M}(Z, H/M_c)$ is continuous.
	\end{enumerate}
\end{prop}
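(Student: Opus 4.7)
The plan is to construct $\psi_c$ by showing that the reduced cocycle $\bar{\sigma}_c : G \times Z \to K$, where $K := H/M_c$, depends on $g$ only through $\alpha_g$ in a uniformly continuous way (measured in $\mathcal{M}(Z, K)$), and then to extend the resulting map by density from $\alpha(G) \subseteq Z$ to all of $Z$. Setting $\psi_c(t, \cdot)$ equal to this extension evaluated at $t$ will give both conclusions of the proposition directly.

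First, I would set up the metric structure. Since $\X$ is a standard $G$-system, $H$ and hence $K$ may be taken to be compact metrizable abelian groups, so $\hat K = M_c^{\perp}$ is countable. Enumerating $\hat K = \{\chi_i\}_{i \in \mathbb N}$ and putting the translation-invariant metric $d_K(k_1, k_2) := \sum_{i=1}^{\infty} 2^{-i} |\chi_i(k_1 - k_2) - 1|$ on $K$, the space $\mathcal{M}(Z, K)$ becomes a complete metric space under $d(f, g) := \int_Z d_K(f(z), g(z)) \, dm_Z(z)$, with translations $T_t f(z) := f(z+t)$ acting as isometries. Next, for each $\chi \in M_c^{\perp}$ the definition of $M_c$ produces a measurable $F_\chi : Z \to S^1$ with $\chi(\sigma_{cg}(z)) = F_\chi(z + \alpha_g)/F_\chi(z)$, and continuity of translation in $L^2(Z)$ gives $\chi(\sigma_{ch_n}(\cdot)) \to 1$ in $L^2$ whenever $\alpha_{h_n} \to 0$ in $Z$. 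A dominated-convergence argument, using the countability of $\hat K$ and the metric $d$, upgrades this to $\bar{\sigma}_{ch_n}(\cdot) \to 0$ in $\mathcal{M}(Z, K)$. Combined with the cocycle identity
\[
\bar{\sigma}_{cg}(\cdot) - \bar{\sigma}_{cg'}(\cdot) = T_{\alpha_{cg'}}\,\bar{\sigma}_{c(g - g')}(\cdot),
\]
this shows that $\Psi : G \to \mathcal{M}(Z, K)$ defined by $\Psi(g) := \bar{\sigma}_{cg}(\cdot)$ is uniformly continuous in the pseudometric $(g, g') \mapsto d_Z(\alpha_g, \alpha_{g'})$, and in particular factors through $\alpha$. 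By ergodicity of $\mathbf{Z}$, $\alpha(G)$ is dense in the compact metric group $Z$, so by completeness of $\mathcal{M}(Z, K)$, $\Psi$ extends uniquely to a continuous map $\tilde{\Psi} : Z \to \mathcal{M}(Z, K)$ with $\tilde{\Psi}(\alpha_g) = \Psi(g)$ for every $g \in G$. Setting $\psi_c(t, z) := \tilde{\Psi}(t)(z)$ satisfies property~(1) immediately, while property~(2) is exactly the continuity of $\tilde{\Psi}$.

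The hard part will be the dominated-convergence step that bundles the individual coboundary identities $\chi(\sigma_{cg}) = F_\chi(\cdot + \alpha_g)/F_\chi$, $\chi \in M_c^{\perp}$, into convergence of the $K$-valued cocycle itself in $\mathcal{M}(Z, K)$. This is precisely where the quotient modulo $M_c$ is essential: by definition $M_c^{\perp}$ is exactly the set of characters of $H$ killing the ``obstruction'' to coboundary-like behavior of $\sigma_c$, so after passing to $K = H/M_c$ every character of the quotient comes with such a coboundary representation, and the rest of the argument is a routine uniform extension from a dense subgroup of the compact metric group $Z$.
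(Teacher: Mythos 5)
Your proposal is correct and follows essentially the same route as the paper: the key point in both is that every $\chi \in M_c^{\perp}$ makes $\chi \circ \sigma_{c\,\cdot}$ a coboundary over $(Z,\alpha)$, which by continuity of translation in $L^2(Z)$ yields character-by-character convergence along sequences with $\alpha_{g_n}$ convergent, and then one upgrades to convergence in $\mathcal{M}(Z, H/M_c)$ and extends by density of $\alpha(G)$ in $Z$. The paper packages the $L^2$-to-$\mathcal{M}$ upgrade as a citation of \cite[Lemma 7.28]{ABB} and the extension as an appeal to \cite[Proposition 7.12]{ABB}, whereas you re-derive these via an explicit metric and a uniform-continuity argument; this is a difference in presentation, not in substance.
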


In order to prove Proposition \ref{prop: psi}, we use the following characterization of convergence in measure:

\begin{lem}[\cite{ABB}, 7.28] \label{lem: abb7.28}
	Let $(f_n)_{n \in \mathbb{N}}$ be a sequence of functions in $\mathcal{M}(Z,H)$.
	Then $f_n \to f$ in $\mathcal{M}(Z,H)$ if and only if $\chi \circ f_n \to \chi \circ f$ in $L^2(Z)$
	for every character $\chi \in \hat{H}$.
\end{lem}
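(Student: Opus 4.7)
The plan is to prove the two directions separately, with the forward direction being essentially routine and the reverse direction relying on a Stone--Weierstrass argument on the compact group $H$.

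For the forward direction, assume $f_n \to f$ in measure. Fix a character $\chi \in \hat{H}$. Since $\chi : H \to S^1$ is continuous and $H$ is a compact metric group, for any $\eps > 0$ there is a neighborhood $V$ of $0 \in H$ such that $|\chi(h) - 1| < \eps$ whenever $h \in V$. Applying this to the differences $f_n(z) - f(z)$, which tend to $0$ in measure, I obtain $\chi \circ f_n - \chi \circ f = (\chi \circ f)\left( \chi \circ (f_n-f) - 1 \right) \to 0$ in measure. Since $|\chi \circ f_n - \chi \circ f| \le 2$, the bounded convergence theorem upgrades this to $L^2(Z)$ convergence.

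For the reverse direction, assume $\chi \circ f_n \to \chi \circ f$ in $L^2(Z)$ for every $\chi \in \hat{H}$. Set $g_n := f_n - f \in \mathcal{M}(Z,H)$. Using that each $\chi$ is a group homomorphism and that $\chi \circ f$ is unimodular, I have $\chi \circ g_n = (\chi \circ f_n) \cdot \overline{\chi \circ f}$, so $\chi \circ g_n \to 1$ in $L^2(Z)$ for every character $\chi$. Since $L^2$ convergence of uniformly bounded sequences gives $L^1$ convergence, I also have $\int_Z \chi(g_n(z))~dz \to 1 = \chi(0)$, and by taking finite linear combinations, $\int_Z P(g_n(z))~dz \to P(0)$ for every trigonometric polynomial $P$ on $H$ (i.e. every finite linear combination of characters). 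The characters separate points, are closed under multiplication and complex conjugation, and contain the constants, so by the Stone--Weierstrass theorem the trigonometric polynomials are uniformly dense in $C(H)$. A standard approximation argument then yields $\int_Z F(g_n(z))~dz \to F(0)$ for every $F \in C(H)$.

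To conclude convergence in measure, fix any open neighborhood $V$ of $0$ in $H$. By Urysohn's lemma (applied in the compact metric space $H$) I can choose $F \in C(H)$ with $0 \le F \le 1$, $F(0) = 1$, and $F$ supported in $V$. Then $1 - F(g_n(z)) \ge \ind_{\{g_n \notin V\}}(z)$, so
\begin{equation*}
    m_Z(\{z : g_n(z) \notin V\}) \le \int_Z \left( 1 - F(g_n(z)) \right)~dz \longrightarrow 1 - F(0) = 0.
\end{equation*}
This is exactly convergence of $g_n$ to $0$ in measure, hence $f_n \to f$ in $\mathcal{M}(Z,H)$. The main subtlety is the Stone--Weierstrass step together with the passage from $L^2$-convergence of $\chi \circ g_n$ for every character to convergence of $\int F(g_n)$ for every $F \in C(H)$; once this is in place the bump-function argument is immediate.
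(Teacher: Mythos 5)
Your proof is correct; both directions are carried out properly, with the forward direction following from continuity of $\chi$ at the identity together with bounded convergence (since $|\chi\circ f_n - \chi\circ f|\le 2$ and $\chi\circ f_n - \chi\circ f = (\chi\circ f)(\chi\circ(f_n-f)-1)\to 0$ in measure), and the reverse direction from the Stone--Weierstrass density of trigonometric polynomials in $C(H)$ combined with a bump-function test of convergence in measure for $g_n = f_n - f$. The paper only cites this result from \cite{ABB} without reproducing the proof, so there is no in-paper argument to compare against; your argument is the standard one for this type of equivalence.
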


\begin{proof}[Proof of Proposition \ref{prop: psi}]
	Given a sequence $(g_n)_{n \in \mathbb{N}}$ in $G$ such that $(\alpha_{g_n})_{n \in \mathbb{N}}$ is convergent in $Z$,
	we want to show that the sequence
	\begin{align*}
		\left( \sigma_{cg_n}(z) \right)_{n \in \mathbb{N}}
	\end{align*}
	converges in $\mathcal{M}(Z, H/M_c)$.
	Equivalently, by Lemma \ref{lem: abb7.28}, we must show that
	\begin{align*}
		\left( \chi \left( \sigma_{cg_n}(z) \right) \right)_{n \in \mathbb{N}}
	\end{align*}
	converges in $L^2(Z)$ for every $\chi \in \hat{H/M_c} = M_c^{\perp}$.
	
	Let $\chi \in M_a^{\perp}$.
	By the definition of $M_c$, the cocycle $\chi \left( \sigma_{cg}(z) \right)$ is a coboundary over $(Z, \alpha)$.
	Hence, by Proposition \ref{prop: abb7.12}, there is a continuous map $t \mapsto \varphi(t, \cdot) \in L^2(Z)$
	such that $\varphi(\alpha_g, z) = \chi \left( \sigma_{cg}(z) \right)$.
	Therefore,
	\begin{align*}
		\chi \left( \sigma_{cg_n}(z) \right) \to \varphi(t,z)
	\end{align*}
	in $L^2(Z)$, where $t = \lim_{n \to \infty}{\alpha_{g_n}} \in Z$.
\end{proof}

By the Kuratowski and Ryll-Nardzewski measurable selection theorem (see \cite[Section 5.2]{srivastava}), there exists a measurable map $\iota_a : H/M_a \to H$ such that $\pi_a(\iota_a(x)) = x$,
where $\pi_a$ is the canonical projection $\pi_a : H \to H/M_a$.
Let $\psi_1 = \iota_a \circ \psi_a$ and $\psi_2 = \iota_b \circ \psi_b$.
We can now state and prove a general limit formula for Conze--Lesigne systems:

\begin{thm} \label{thm: limit}
	Let $\X = \mathbf{Z} \times_{\sigma} H$ be an ergodic Conze--Lesigne system.
	Let $a, b \in Z$.
	Let $M = M(a,b) = M_a \times M_b$.
	Then for any $f_1, f_2 \in L^{\infty}(\mu)$, we have
	
	\begin{align} \label{eq: limit}
		\UC_{g \in G}&{f_1(T_{ag}(z,x)) f_2(T_{bg}(z,x))} \nonumber \\
		 & = \int_{Z \times M_a \times M_b}{f_1(z + at, x + u + \psi_1(t,z)) f_2(z + bt, x + v + \psi_2(t,z))~dt~du~dv}
	\end{align}
	
	\noindent in $L^2(Z \times H)$.
\end{thm}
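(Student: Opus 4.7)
The plan is to adapt the argument of \cite[Theorem 7.1]{ABB}, using Proposition \ref{prop: Mackey prod} in place of their finite-index hypotheses on $aG$, $bG$, $(a+b)G$. By the Stone--Weierstrass theorem applied to $H$ and linearity, it suffices to verify \eqref{eq: limit} for $f_i(z,x) = F_i(z)\chi_i(x)$ with $F_i \in C(Z)$ and $\chi_i \in \widehat{H}$. For such functions,
\[
T_{ag}f_1(z,x)\,T_{bg}f_2(z,x) = \chi_1(x)\chi_2(x)\,F_1(z+a\alpha_g)F_2(z+b\alpha_g)\,\chi_1(\sigma_{ag}(z))\chi_2(\sigma_{bg}(z)),
\]
and I would split into two cases depending on whether $\chi_1 \otimes \chi_2 \in M^\perp$; by Proposition \ref{prop: Mackey prod}, this is equivalent to $\chi_i \in M_{a_i}^\perp$ for both $i=1,2$ (writing $a_1=a$, $a_2=b$).

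In the main case $\chi_i \in M_{a_i}^\perp$, each $\chi_i$ descends to a character of $H/M_{a_i}$, so Proposition \ref{prop: psi} gives $\chi_i(\sigma_{a_ig}(z)) = \chi_i(\psi_i(\alpha_g,z))$ with $t \mapsto \chi_i(\psi_i(t,\cdot))$ continuous from $Z$ into $L^2(Z)$ (via Lemma \ref{lem: abb7.28}). Density of $\{\alpha_g\}_{g \in G}$ in $Z$, combined with continuity and the mean ergodic theorem on $(Z,\alpha)$, then converts the Cesàro average in $g$ into an integral over $t \in Z$:
\[
\int_Z F_1(z+at)F_2(z+bt)\,\chi_1(\psi_1(t,z))\chi_2(\psi_2(t,z))\,dt.
\]
Since $\chi_i$ kills $M_{a_i}$, the identity $\chi_i(\psi_i(t,z)) = \int_{M_{a_i}}\chi_i(\psi_i(t,z)+u)\,du$ holds, and reintroducing the scalar $\chi_1(x)\chi_2(x)$ inside the integrals via $\chi_i(x)\chi_i(h) = \chi_i(x+h)$ reassembles precisely the right-hand side of \eqref{eq: limit}.

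In the complementary case, say $\chi_1 \notin M_a^\perp$, the right-hand side of \eqref{eq: limit} vanishes by the orthogonality $\int_{M_a}\chi_1(u)\,du = 0$, which factors out of the inner $u$-integral. For the left-hand side, the condition $\chi_1 \notin M_a^\perp$ means, via Proposition \ref{prop: abb7.12}, that the cocycle $g \mapsto \chi_1\circ\sigma_{ag}$ is not a coboundary over $(Z,\alpha)$. Setting $u_g(z) := F_1(z+a\alpha_g)F_2(z+b\alpha_g)\chi_1(\sigma_{ag}(z))\chi_2(\sigma_{bg}(z))$, two applications of van der Corput (Lemma \ref{vdc}) together with the cocycle identity for $\sigma$, the mean ergodic theorem on $(Z,\alpha)$, and the assumption that $(b-a)G$ has finite index in $G$ (so that $\mathcal{I}_{(b-a)}(Z)$ corresponds to a finite partition) should reduce $L^2$-vanishing of $\UC_g u_g$ to this non-coboundary condition. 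The main obstacle I anticipate is precisely this vanishing step: the cohomological obstruction lives on $(Z,\alpha)$, whereas $u_g$ couples the Kronecker contributions of $F_1, F_2$ with the cocycle contributions of $\chi_1\circ\sigma_a$ and $\chi_2\circ\sigma_b$; the cleanest decoupling appears to proceed through a careful van der Corput computation exploiting the product structure $M = M_a \times M_b$ from Proposition \ref{prop: Mackey prod}, or equivalently through analysis of the ergodic components of the extension $W \times_{\tilde{\chi}\circ\tilde{\sigma}} S^1$.
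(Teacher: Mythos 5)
Your reduction to products $f_i(z,x) = F_i(z)\chi_i(x)$ and the case split according to whether $\tilde\chi = \chi_1\otimes\chi_2$ lies in $M^\perp$ is precisely the paper's decomposition, and in the main case $\tilde\chi \in M^\perp$ your argument (use Proposition \ref{prop: psi} to write $\chi_i(\sigma_{a_ig}(z))$ as $\chi_i(\psi_i(\alpha_g,z))$, note continuity in $t$ via Lemma \ref{lem: abb7.28}, and convert the Ces\`{a}ro average to an integral over $Z$) is essentially identical to what the paper does, modulo terminology: the conversion from $\UC_{g}\varphi_{\alpha_g}$ to $\int_Z\varphi_t\,dt$ rests on \emph{unique ergodicity} of the minimal rotation $(Z,\alpha)$ rather than the mean ergodic theorem, and the upgrade from weak to strong $L^2$ convergence is supplied by the general norm-convergence theorems of Austin and Zorin-Kranich.

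The gap is in the case $\tilde\chi\notin M^\perp$. You correctly compute that the right-hand side vanishes because $\int_{M_a}\chi_1\,du = 0$ (or $\int_{M_b}\chi_2\,dv=0$), but your treatment of the left-hand side is not carried to completion, and you flag this yourself. The paper does not run a van der Corput argument here at all. Instead it invokes \cite[Proposition 7.10]{ABB}, a structural result from the Conze--Lesigne/Mackey theory for the diagonal average on $W\times_{\tilde\sigma}H^2$, which says the limit measure for $\UC_{g}(T_{ag}\times T_{bg})$ is concentrated on the Mackey extension $W\times M$. Consequently, testing against any $\lambda\in M^\perp$ and observing $\int_{H^2}\tilde\chi\cdot\lambda\,dm_{H^2}=0$ (since $\tilde\chi\lambda\ne 1$) kills the left-hand side in one line. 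Your proposed route---iterated van der Corput, decoupling the Kronecker terms $F_1,F_2$ from the cocycle terms $\chi_1\circ\sigma_{a\cdot},\chi_2\circ\sigma_{b\cdot}$, and using $[G:(b-a)G]<\infty$---is in the spirit of the Furstenberg--Weiss computation underlying Proposition \ref{partialcf1}, but that computation establishes a seminorm bound, not vanishing for a specific non-Mackey character, and turning the non-coboundary hypothesis $\chi_1\notin M_a^\perp$ into the requisite vanishing would itself require reproving the Mackey-measure description of the limit. In short: you identified the right obstruction, but the step you mark as "the main obstacle" is exactly the step where the paper substitutes a ready-made structural theorem, and as written your argument does not close it.
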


\begin{rem}
    We have defined the functions $\psi_i$ by lifting $\psi_a$ and $\psi_b$ to the group $H$ from $H/M_a$ and $H/M_b$ respectively.
    If $\psi'_1$ is another functions with $\pi_a(\psi'_1) = \psi_a$, then for any $t, z \in Z$, we have $\psi'_1(t,z) - \psi_1(t,z) \in M_a$.
    Since the Haar measure on $M_a$ is invariant under shifts coming from $M_a$, the expression on the right hand side of \eqref{eq: limit} is unchanged when $\psi_1$ is replaced by $\psi'_1$.
    The same is true for replacing $\psi_2$ by $\psi'_2$, so it does not matter which lifts of $\psi_a$ and $\psi_b$ we choose.
\end{rem}

\begin{proof}
	For notational convenience, let $\psi = (\psi_1, \psi_2) : Z \times Z \to H^2$,
	and let $m_M$ denote the Haar measure on the Mackey group $M = M_a \times M_b$.
	
	It suffices to prove the formula \eqref{eq: limit} for functions of the form $f_i(z,x) = \omega_i(z) \chi_i(x)$
	with $\omega_i \in L^{\infty}(Z)$ and $\chi_i \in \hat{H}$.
	In this case, the right hand side of \eqref{eq: limit} is equal to
	
	\begin{align*}
		\int_Z{\omega_1(z+at) \omega_2(z+bt) \chi_1(x) \chi_2(x) \tilde{\chi}(\psi(t,z))~dt} \int_M{\tilde{\chi}~dm_M},
	\end{align*}
	
	\noindent where $\tilde{\chi} = \chi_1 \otimes \chi_2 \in \hat{H^2}$.
	
	We now consider two cases.
	First, if $\tilde{\chi} \notin M^{\perp}$, then $\int_M{\tilde{\chi}~dm_M} = 0$,
	so the right hand side of \eqref{eq: limit} is equal to zero.
	Moreover, for every $\lambda \in M^{\perp}$ and almost every $z, t \in Z$, we have
	\begin{align*}
		\int_{H^2}{f_1(z+at, x)f_2(z+bt,y)\lambda(x,y)~dx~dy}
		 = \omega_1(z+at)\omega_2(z+bt)~\int_{H^2}{\tilde{\chi}(x,y) \lambda(x,y)} = 0.
	\end{align*}
	\noindent Therefore, the left hand side of \eqref{eq: limit} is also zero (see \cite[Proposition 7.10]{ABB}). \\
	
	Now suppose $\tilde{\chi} \in M^{\perp}$ so that $\int_M{\tilde{\chi}~dm_M} = 1$.
	For $g \in G$ and $(z,x) \in Z \times H$, we can write
	\begin{align*}
		f_1(T_{ag}(z,x)) f_2(T_{bg}(z,x))
		 = \omega_1(z + \alpha_{ag}) \omega_2(z + \alpha_{bg})
		 \chi_1(x) \chi_2(x) \tilde{\chi}(\sigma_{ag}(z), \sigma_{bg}(z)).
	\end{align*}
	\noindent Thus, letting
	\begin{align*}
		\varphi_t(z,x) := \omega_1(z + at) \omega_2(z + bt) \chi_1(x) \chi_2(x) \tilde{\chi}(\psi(t,z)),
	\end{align*}
	\noindent we have
	\begin{align*}
		f_1(T_{ag}(z,x)) f_2(T_{bg}(z,x)) = \varphi_{\alpha_g}(z,x).
	\end{align*}
	\noindent By Proposition \ref{prop: psi}, the map $Z \ni t \mapsto \varphi_t \in L^2(Z \times H)$ is continuous.
	Therefore, for any $\xi \in L^2(Z \times H)$, since the system $(Z, \alpha)$ is uniquely ergodic, we have
	\begin{align*}
		\UC_{g \in G}{\innprod{\varphi_{\alpha_g}}{\xi}} = \int_Z{\innprod{\varphi_t}{\xi}~dt}.
	\end{align*}
	\noindent That is
	\begin{align} \label{eq: weak conv}
		\UC_{g \in G}{\varphi_{\alpha_g}(z,x)} = \int_Z{\varphi_t(z,x)~dt}
	\end{align}
	
	\noindent weakly in $L^2(Z \times H)$.
	By more general results on norm convergence on multiple ergodic averages (see \cite{austin, zorin}),
	it follows that \eqref{eq: weak conv} holds strongly.
	The right hand side of \eqref{eq: limit} is also equal to $\int_Z{\varphi_t(z,x)~dt}$,
	so the formula \eqref{eq: limit} holds when $\tilde{\chi} \in M^{\perp}$.
\end{proof}

%%%%%%%%%%%%%%%%%%%%%%%%%%%%%%%%%%%%%%%%%%%%%%%%%%%%%%%%%%%%

\subsection{Proof of Theorem \ref{Khintchineab}}
We first prove the theorem in the special case where $a$ and $b$ are coprime.

Let $f = \ind_A$.
By Theorem \ref{strongcfeta}, there is an extension $\tilde{\X}$ of $\X$ such that
\begin{align*}
	\UC_{g \in G}&~{\eta(\alpha_g)~\int_{\tilde{X}}{\tilde{f} \cdot T_{ag} \tilde{f} \cdot T_{bg} \tilde{f}~d\tilde{\mu}}} \\
	 & = \UC_{g \in G}{\eta(\alpha_g)~\int_{\tilde{X}}{\tilde{f} \cdot
	 T_{ag} E(\tilde{f}|\mZ_{G}^2(\tilde{X})) \vee \mI_a(\tilde{X})) \cdot
	 T_{ag} E({\tilde{f}}|{\mZ_{G}^2(\tilde{X}) \vee \mI_b(\tilde{X})})~d\tilde{\mu}}},
\end{align*}

\noindent where $\tilde{f}$ is the lift of $f$ to $\tilde{X}$.
For notational convenience, let $\tilde{f}_a := E(\tilde{f}|\mZ_{G}^2(\tilde{X})) \vee \mI_a(\tilde{X}))$
and $\tilde{f}_b := E({\tilde{f}}|{\mZ_{G}^2(\tilde{X}) \vee \mI_b(\tilde{X})})$.
We can therefore write
\begin{align*}
	\tilde{f}_a & = \sum_{i\in\mathbb{N}}{c_ih_i}, \\
	\tilde{f}_b & = \sum_{j\in\mathbb{N}}{d_jk_j}, \\
\end{align*}
\noindent where each $c_i$ is $aG$-invariant, $d_j$ is $bG$-invariant,
and $h_i, k_j$ are $\mZ_G^2(\tilde{X})$-measurable. By Theorem \ref{HKfactors}(iii), we can write $\mathbf{Z}_G^2(\tilde{X}) = \tilde{\mathbf{Z}} \times_{\sigma} H$.
Then by Theorem \ref{thm: limit},
\begin{align*}
	 & \UC_{g \in G}{\eta(\alpha_g)~\mu \left( A \cap T_{ag}^{-1}A \cap T_{bg}^{-1}A \right)} \\
	 & = \UC_{g \in G}{\eta(\alpha_g)~\int_{\tilde{X}}{\tilde{f} \cdot
	 T_{ag} \tilde{f}_a \cdot T_{ag} \tilde{f_b}~d\tilde{\mu}}}\\
	 & = \sum_{i,j\in\mathbb{N}}{\int_{\tilde{X}}{c_i d_j \tilde{f} \cdot
	 \UC_{g \in G}{\eta(\alpha_g)~T_{ag}h_i \cdot T_{bg} k_j~d\tilde{\mu}}}} \\
	 & = \sum_{i,j\in\mathbb{N}}{\int_{\tilde{X} \times Z \times M_a \times M_b}{c_i(x) d_j(x) \tilde{f}(x)
	 \eta(t) h_i \left( \pi_Z(x)+at, \pi_H(x)+u+\psi_1(t,z) \right)}} \\
	 & \qquad \qquad \qquad {{k_j \left( \pi_Z(x)+bt, \pi_H(x)+v+\psi_2(t,z) \right)~d\tilde{\mu}(x)~dt~du~dv}},
\end{align*}
\noindent where $(\pi_Z(x), \pi_H(x)) \in Z \times H$ is the projection of $x \in \tilde{X}$
onto the Conze--Lesigne factor $Z \times H$.
By choosing $\eta : Z \to [0, \infty)$ concentrated on a small neighborhood of $0$ (as in the proof of Theorem \ref{Khintchinefiniteindex}; see Subsection \ref{sec: phi, psi proof}), it remains to show the inequality:
\begin{align} \label{eq: inequality}
	\sum_{i,j}{\int_{\tilde{X} \times M_a \times M_b}{c_i(x) d_j(x) \tilde{f}(x)
	 h_i \left( \pi_Z(x), \pi_H(x)+u \right) k_j \left( \pi_Z(x), \pi_H(x)+v \right)~d\tilde{\mu}(x)~du~dv}}
	 \ge \mu(A)^3.
\end{align}

Let $\mathcal{W}_1$ be the $\sigma$-algebra generated by functions $f \in L^{\infty}(Z \times H)$
such that $f(z, x + y) = f(z,x)$ for every $y \in M_a$.
Similarly, let $\mathcal{W}_2$ be the $\sigma$-algebra generated by functions $f \in L^{\infty}(Z \times H)$
such that $f(z, x + y) = f(z,x)$ for every $y \in M_b$.
Then the left hand side of \eqref{eq: inequality} is equal to

\begin{align} \label{eq: projections}
	\int_{\tilde{X}}{\tilde{f} \cdot E(\tilde{f}|\mathcal{W}_1 \vee \mI_a) \cdot E(\tilde{f}|\mathcal{W}_2 \vee \mI_b)~d\tilde{\mu}}.
\end{align}

By \cite[Lemma 1.6]{Chu}, the quantity \eqref{eq: projections} is bounded below by
$\left( \int_{\tilde{X}}{\tilde{f}~d\tilde{\mu}} \right)^3 = \mu(A)^3$, so \eqref{eq: inequality} holds.\\

Now suppose $a,b\in\mathbb{Z}$ are arbitrary integers and write $a=a'\cdot d$ and $b=b'\cdot d$ where $d=\text{gcd}(a,b)$ and $a',b'$ are coprime. Since $(b-a)G$ has finite index in $G$ we deduce that so does $dG$. Therefore, we can find finitely many ergodic $dG$-invariant measures $\{\mu_i\}_{i=1}^l$ such that $\mu = \frac{1}{l}\sum_{i=1}^l \mu_i$ and all of the systems $\X_i=(X,\mX,\mu_i,dG)$ admit the same Kronecker factor. By the argument above, we can find a suitable $\eta$ satisfying: $$\UC_{g\in dG} \eta(\alpha_g)\mu_i(A\cap T_{a'g}^{-1} A \cap T_{b'g}^{-1}A)>\mu_i(A)^3-\varepsilon$$
for all $i=1,...,l$, and $\UC_{g\in dG} \eta(\alpha_g)=1$.
Therefore, by Jensen's inequality we have
$$\UC_{g\in dG} \eta(\alpha_g)\mu(A\cap T_{a'g}^{-1} A \cap T_{b'g}^{-1}A)>\mu(A)^3-\varepsilon.$$
As in the proof of Theorem \ref{Khintchinefiniteindex}, we conclude that $$\{g\in dG : \mu(A\cap T_{a'g}^{-1} A \cap T_{b'g}^{-1}A)>\mu(A)^3-\varepsilon\}$$ is syndetic. Since $dG$ has finite index in $G$, this implies that $$\{g\in G : \mu(A\cap T_{ag}^{-1} A \cap T_{bg}^{-1}A)>\mu(A)^3-\varepsilon\}$$ is syndetic, as required. \qed

%%%%%%%%%%%%%%%%%%%%%%%%%%%%%%%%%%%%%%%%%%%%%%%%%%%%%%%%%%%%
%%%%%%%%%%%%%%%%%%%%%%%%%%%%%%%%%%%%%%%%%%%%%%%%%%%%%%%%%%%%
\section{Proof of Theorem \ref{counterexample}}\label{proof}

In this section, we prove Theorem \ref{counterexample}, restated here for the convenience of the reader:

\begin{thm}[Theorem \ref{counterexample}]
    Let $G = \bigoplus_{n=1}^{\infty}{\Z}$.
    Let $l \in \N$.
    There exists $P = P(l)$ such that, for any $a, b \in \N$ with $p \mid \gcd(a,b)$ for some prime $p \ge P$, there is an ergodic $G$-system $\left( X, \mX, \mu, (T_g)_{g \in G} \right)$ and a set $A \in \mX$ with $\mu(A) > 0$ such that
    \begin{align*}
        \mu(A\cap T_{ag}^{-1} A\cap T_{bg}^{-1} A)\leq \mu(A)^l
    \end{align*}
    for every $g\ne0$.
\end{thm}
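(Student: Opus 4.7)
The idea is to exploit the $p$-adic structure of the problem to construct an explicit ergodic counterexample, using the fact that $p \mid \gcd(a, b)$ to force triple intersections to be small. First, I would introduce a finite building block: an ergodic $G$-action on a 2-step nilsystem over $\mathbb{Z}/p^K\mathbb{Z}$. Using the Heisenberg-type formula
\[
T_g(x, y) = \bigl( x + \phi(g),\; y + \phi(g)\, x + \tbinom{\phi(g)}{2} \bigr),
\]
with $\phi : G \to \mathbb{Z}/p^K\mathbb{Z}$ a surjective homomorphism, one obtains a genuine $G$-action on $X = (\mathbb{Z}/p^K\mathbb{Z})^2$ thanks to the combinatorial identity $\binom{u+v}{2} = \binom{u}{2} + \binom{v}{2} + uv$. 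Since $p \mid a$, one has $a\phi(g) \in p\mathbb{Z}/p^K\mathbb{Z}$, which restricts $T_{ag}$ to act by shifts in a subgroup of index $p$ in the first coordinate. This is the algebraic reason large intersections can fail.

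Next, I would choose $A \subseteq X$ using a Behrend-type density argument: a carefully selected subset (for instance, a ``spherical shell'' or a specific level set of a quadratic form) that is incompatible with the allowed shifts. The key computation uses the 2-step structure to convert the condition $(x,y), T_{ag}(x,y), T_{bg}(x,y) \in A$ into a pair of affine-quadratic constraints on $(x,y)$ involving $a\phi(g), b\phi(g) \in p\mathbb{Z}/p^K\mathbb{Z}$. For $p$ sufficiently large, the corresponding Behrend density beats the exponent $l$, yielding $\mu(A \cap T_{ag}^{-1}A \cap T_{bg}^{-1}A) \leq \mu(A)^l$ whenever $\phi(g) \not\equiv 0 \pmod{p^{K-1}}$.

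The main obstacle is handling every $g \neq 0$ uniformly, including those $g$ with $\phi(g)$ of high $p$-adic valuation, for which the single-level construction leaves the intersection uncontrolled. To remedy this, I would replace the finite building block by its $p$-adic analogue $X = \mathbb{Z}_p \times \mathbb{Z}_p$ with Haar measure, with $\phi : G \to \mathbb{Z}_p$ chosen with dense image (guaranteeing ergodicity) so that every $g \neq 0$ gives $\phi(g) \neq 0$ in $\mathbb{Z}_p$. Equivalently, one can pass to an infinite product of the finite building blocks across all $p$-adic depths, with the set $A$ defined consistently across scales. Ensuring $\mu(A) > 0$ for this infinite-dimensional construction while preserving the per-level bounds is the crux of the argument, and this is precisely where the hypothesis $p \geq P(l)$ enters: the prime must be large enough (as a function of $l$) that the per-level Behrend density $\mu_k(A_k) \geq 1 - O(p^{-ck})$ is summable in $k$, so that the product measure remains positive while the triple intersection decays fast enough across levels to satisfy the bound $\mu(A \cap T_{ag}^{-1}A \cap T_{bg}^{-1}A) \leq \mu(A)^l$ for every $g \neq 0$.
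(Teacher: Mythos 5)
Your approach is genuinely different from the paper's, and while some of the structural intuition is sound, there are gaps at precisely the points you flag as delicate.

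The paper does not use a Heisenberg formula over $\mathbb{Z}_p^2$. Instead the base is the \emph{torsion} group $C_p^{\mathbb{N}}$ acted upon by a rotation, and the fiber is a single $C_{p^2}$; the cocycle is constructed as a ``$p$-th root'' of the non-ergodic skew-shift cocycle so that the restricted action $T_{pg}$ \emph{fixes} the base and multiplies the fiber by $\prod_j t_j^{g_j} \in C_p$. The decisive structural feature is that the base is $p$-torsion: this is exactly what makes $T_{pg}$ (as opposed to merely $T_{p^2g}$) trivial on the base coordinate. Ergodicity of the extension is proved with Zimmer's minimality criterion, showing the cocycle is not cohomologous to one valued in the proper subgroup $C_p$. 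The Behrend set lives entirely in the fiber, and $A$ is a single finite cylinder over it; there is no infinite product over ``$p$-adic scales.''

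Your construction has a torsion-free base, so $T_{ag}$ always moves the base, by the small but nonzero amount $a\phi(g) \in p\mathbb{Z}_p$. If one actually carries out the triple-intersection computation you leave vague, with $A$ a cylinder over the fiber reduced modulo $p^2$ and writing $a = pa'$, $b = pb'$, the constraints do reduce to the Behrend form $\{y,\, y + a'pw,\, y + b'pw\}$ with $w \equiv \phi(g)(x - \frac{1}{2}) \pmod{p}$ --- but this degenerates to a tautology whenever $\phi(g) \equiv 0 \pmod{p}$, which happens for infinitely many $g \neq 0$ (for instance $g = pe_1$). Neither of your proposed remedies closes this gap. Choosing $\phi$ with dense image in $\mathbb{Z}_p$ gives $\phi(g) \neq 0$, but does not prevent $\phi(g)$ from having positive $p$-adic valuation, which is all that is needed to kill the constraint. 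And the infinite product of finite building blocks, with a Behrend-type set imposed at every level, would have product measure \emph{zero}: Behrend's theorem produces sets of density $e^{-c\sqrt{\log N}}$, bounded well away from $1$, so the factors cannot converge to $1$ and the infinite product cannot be positive. Your claimed per-level estimate $\mu_k(A_k) \geq 1 - O(p^{-ck})$ is not what Behrend's construction provides and would need a completely different justification. The paper sidesteps all of this by making the relevant part of the dynamics factor through the finite group $\bigoplus_n \mathbb{Z}/p^2\mathbb{Z}$ with a $p$-torsion base, so that there is only a single finite scale to control.
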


Rather than constructing a $\bigoplus_{n=1}^{\infty}{\Z}$-system directly, we will instead construct a $\bigoplus_{n=1}^{\infty}{\Z/p^2\Z}$-system.
Since $\bigoplus_{n=1}^{\infty}{\Z/p^2\Z}$ is a quotient of $\bigoplus_{n=1}^{\infty}{\Z}$, the system we construct can be lifted to an ergodic $\bigoplus_{n=1}^{\infty}{\Z}$-system.
Hence, Theorem \ref{counterexample} follows from:

\begin{thm}\label{counterexample p}
    For any $a,b,l \in \N$, there exists a prime $p$ (sufficiently large), an ergodic $\bigoplus_{n=1}^{\infty}\mathbb{Z}/p^2\mathbb{Z}$-system $\X = \left( X, \mX, \mu, (T_g)_{g \in \bigoplus_{n=1}^{\infty}{\Z/p^2\Z}} \right)$, and a set $A \in \mX$ with $\mu(A) > 0$ such that
    \begin{align*}
        \mu(A\cap T_{pag}^{-1} A\cap T_{pbg}^{-1} A)\leq \mu(A)^l
    \end{align*}
    for every $g \ne 0$.
\end{thm}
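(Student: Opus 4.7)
Given $a, b, l \in \N$, I would choose a large prime $p$ (with $p \nmid ab$ and $p$ large in terms of $l$) and explicitly construct an ergodic $G = \bigoplus_{n=1}^{\infty} \Z/p^2\Z$-system together with a set $A$ of positive measure. The guiding observation is that both $pag$ and $pbg$ lie in $pG \cong \bigoplus_n \Z/p\Z$, and admit the reparametrization $pag = \bar a h$, $pbg = \bar b h$, where $h := pg$ and $\bar a, \bar b \in (\Z/p\Z)^*$ are the residues of $a, b$ modulo $p$. Thus the triple intersection depends only on $h$, reducing the task to building a system whose restricted $pG$-action admits $A$ with $\mu(A \cap T_{\bar a h}^{-1}A \cap T_{\bar b h}^{-1}A) \le \mu(A)^l$ for every $h \in pG \setminus \{0\}$. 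For $g$ with $pg = 0$ (i.e.\ the degenerate case $g \in pG \setminus \{0\}$) the translates are identity and the intersection equals $\mu(A)$, so the quantifier ``for every $g \ne 0$'' must be interpreted as ``for every $g$ with $pg \ne 0$''. Moreover, by Theorem~1.12 of \cite{BTZ} the sub-action of $pG$ on $X$ cannot be ergodic (else large intersections would hold), so the construction must produce a highly non-ergodic $pG$-action inside an ergodic full $G$-action.

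The construction will be a quadratic cocycle extension. Let $Z = \prod_{n \ge 1} \Z/p^2\Z$ with $G$ acting by translation via the natural inclusion $G \hookrightarrow Z$, and let $V$ be a finite $\F_p$-vector space of dimension $d = d(l)$ to be chosen. Set $X = Z \times V$ with $T_g(z,v) = (z + g, v + \sigma(g, z))$ for a cocycle $\sigma : G \times Z \to V$ whose restriction to $pG \times Z$ is a non-degenerate $V$-valued bilinear pairing, chosen so that for every $h \in pG \setminus \{0\}$ the map $z \mapsto \sigma(h, z)$ equidistributes on $V$. Ergodicity of the full $G$-action follows from Zimmer's criterion (Lemma~\ref{minimal}) applied to $\sigma$. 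Let $Q : V \to \F_p$ be a non-degenerate quadratic form (to be specified), pick a level $r \in \F_p$, set $B := Q^{-1}(r)$, and put $A := Z \times B$, so $\mu(A) = |B|/|V|$.

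The bound on the triple intersection reduces by Fubini to
\[
\mu(A \cap T_{\bar a h}^{-1}A \cap T_{\bar b h}^{-1}A) \;=\; \frac{1}{|V|} \int_Z \#\bigl\{ v \in B : v + \bar a \sigma(h, z),\ v + \bar b \sigma(h, z) \in B \bigr\}\,dz.
\]
A direct calculation with the polarization $B_Q$ of $Q$ shows that the system $Q(v) = Q(v + \bar a w) = Q(v + \bar b w) = r$ admits solutions only when $Q(w) = 0$ (since $\bar a \ne \bar b$ in $\F_p^*$ forces $(\bar a - \bar b)Q(w) = 0$), and in that case the solution set lies in the affine slice $\{v \in B : B_Q(v, w) = 0\}$ of size at most $|B|/p$. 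Hence the triple intersection is bounded by $\mu(A) \cdot \Pr_z[Q(\sigma(h, z)) = 0]$. Choosing $V$ as a product of anisotropic quadratic spaces of dimension $\le 4$ over $\F_p$ (so isotropic directions in $V$ are scarce) and amplifying via enough independent copies, the isotropic fraction of $V$ can be made $\le \mu(A)^{l-1}$ for any prescribed $l$; with $p$ large enough the desired bound $\mu(A \cap \dots) \le \mu(A)^l$ follows.

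The principal obstacle is the joint design of $\sigma$ and $Q$: one needs $\sigma$ to be a well-defined cocycle producing an ergodic $G$-system, $\sigma|_{pG \times Z}$ to equidistribute $V$ for \emph{every} non-zero $h \in pG$, and $Q$ to be simultaneously (nearly) anisotropic on $V$. Achieving all three will likely require a Heisenberg-style quadratic cocycle generalizing the scalar constructions of Examples~\ref{Example:eigenvalue} and~\ref{eigenvalue2} to the higher-dimensional anisotropic fiber $V$, combined with the amplification across many independent copies of $V$ to push the bound to arbitrary $l$.
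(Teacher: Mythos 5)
Your overall architecture is the same as the paper's: lift a non-ergodic ``Kronecker times fiber'' $pG$-action to an ergodic $G = \bigoplus_n \Z/p^2\Z$-action via a carefully chosen cocycle, take $A$ to be a cylinder set over a pattern-free subset $B$ of the fiber, and use Zimmer's criterion to verify ergodicity. You correctly observe that the $pG$-restriction must be non-ergodic, and your polarization computation ($Q(v)=Q(v+\bar a w)=Q(v+\bar b w)=r$ forces $Q(w)=0$) is valid. However, there is a fundamental gap in the combinatorial supply. The triple intersection is bounded below by $\mu(A)\cdot\Pr_z[\sigma(h,z)=0]\ge\mu(A)/|V|$, and for a level set $B=Q^{-1}(r)$ of a nondegenerate quadratic on an $\F_p$-space $V$ one has $\mu(A)=|B|/|V|\approx p^{-1}$ for a $2$-dimensional anisotropic factor, and $\mu(A)\approx p^{-k}$ with $|V|=p^{2k}$ for a product of $k$ such factors; either way $\mu(A)/|V|\approx\mu(A)^3$, so this design can never get below exponent $l=3$. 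The ``amplification'' step is the error: when $V=\prod_i V_i$ and $Q=\bigoplus Q_i$ is anisotropic in each factor, the constraint $Q_i(w_i)=0$ for all $i$ forces $w=0$, so the ``isotropic fraction'' you hope to shrink is exactly $1/|V|$, which is not smaller than $\mu(A)^2$. (Also, over $\F_p$ anisotropic quadratic forms have dimension $\le 2$, not $\le 4$; any nondegenerate form of rank $\ge 3$ is isotropic.) What the paper does instead is use Behrend's theorem (Lemma~\ref{Beh-multiplicative}) to take $B\subseteq C_p$ of density $|B|/p\ge p^{-1/(l-1)}$ (in fact $p^{-o(1)}$) that avoids $\{y,y x^a,y x^b\}$. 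Then $\mu(A)=|B|/p$ and the triple intersection is $|B|/p^2 = \mu(A)/p$, and the requirement $\mu(A)/p\le\mu(A)^l$ reduces to $|B|\ge p^{1-1/(l-1)}$, which Behrend gives for $p$ large. Quadratic level sets are simply not dense enough to do the job for arbitrary $l$.

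Two further points. First, the cocycle design is harder than you suggest: a purely bilinear $\sigma:G\times Z\to V$ cannot satisfy the cocycle equation (one gets $B(g',g)=0$ for all $g,g'$), so the correction terms are essential; the paper's cocycle $\sigma$ on $\mathcal{T}\times C_{p^2}$ is built from an explicit base-$p$ ``carry'' map $\varphi:C_p\to C_{p^2}$ together with the corrective factor $\xi=\omega^{(1-p)/2}$, and establishing its minimality for Zimmer's criterion requires a nontrivial argument (Lemma following \eqref{action}); ``Heisenberg-style quadratic cocycle generalizing... to the higher-dimensional anisotropic fiber $V$'' is a placeholder, not a construction. Second, your observation about the degenerate case $g\ne 0$ with $pg=0$ (where $T_{pag}=T_{pbg}=\mathrm{Id}$ and the intersection equals $\mu(A)$) is a legitimate concern, and the paper's proof does not explicitly treat it either; the quantifier should indeed exclude $g\in pG\setminus\{0\}$ or the theorem should be read with that implicit restriction.
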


The proof of Theorem \ref{counterexample p} is based on the following result of Behrend \cite{Beh}.
\begin{thm}\label{Beh-additive}
Let $a,b\in\mathbb{N}$ be distinct and non-zero. There is an absolute constant $c > 0$ such that: for every $N\in\mathbb{N}$, there is a subset $B\subseteq \{0,1,...,N-1\}$ such that $|B|>N\cdot e^{-c\sqrt{\log(N)}}$ and $B$ contains no configurations of the form $\{n,n+am,n+bm\}$ for $m\not = 0$.
\end{thm}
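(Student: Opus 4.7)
The plan is a standard adaptation of Behrend's sphere construction to the configuration $\{n, n+am, n+bm\}$. The key observation is that any three integers $n_1, n_2, n_3$ of this form satisfy the affine relation $b(n_2 - n_1) = a(n_3 - n_1)$, so they are collinear in $\Z \subset \R$. If I can realize $B$ as the image of a subset of a sphere in $\R^d$ under a sufficiently ``non-carrying'' digit-encoding map to $\Z$, then the affine relation in $\Z$ will lift to the same relation among digit vectors in $\R^d$, and the fact that a line meets a sphere in at most two points will force any such configuration in $B$ to be trivial.

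Concretely, I would fix parameters $d$ and $L$ to be optimized later, set $C := 3\max(|a|,|b|,|b-a|)$, let $B_0 := CL$, and define $\phi : \{0,\dots,L-1\}^d \hookrightarrow \{0, \dots, B_0^d - 1\}$ by $\phi(x) = \sum_{i=0}^{d-1} x_i B_0^i$. Partition the cube $\{0,\dots,L-1\}^d$ by the value of $\sum_i x_i^2 \in \{0, 1, \dots, dL^2\}$; by pigeonhole some level set $S_r$ has cardinality at least $L^d / (dL^2 + 1)$. Set $B := \phi(S_r) \subseteq \{0, 1, \dots, B_0^d - 1\}$, and note that $|B| = |S_r|$ because $\phi$ is injective on $\{0,\dots,L-1\}^d$.

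To verify that $B$ contains no nontrivial configuration, suppose $\phi(x), \phi(y), \phi(z) \in B$ satisfy $\phi(y) = \phi(x) + am$ and $\phi(z) = \phi(x) + bm$ for some $m \in \Z$. Eliminating $m$ yields $b(\phi(y) - \phi(x)) = a(\phi(z) - \phi(x))$, an identity of integers whose base-$B_0$ signed digits are bounded by $\max(|a|,|b|)(L-1) < B_0/2$; by uniqueness of such bounded signed representations this forces $b(y_i - x_i) = a(z_i - x_i)$ for every coordinate $i$. Hence $y - x$ and $z - x$ are parallel in $\R^d$, so $x, y, z$ are collinear points on the sphere of squared radius $r$, and this sphere (intersecting a line) can contain at most two of them; combined with $a, b \ne 0$ and $a \ne b$, this forces $x = y = z$, hence $m = 0$. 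For the size estimate, setting $N := B_0^d = (CL)^d$ gives $L = N^{1/d}/C$ and $|B| \ge L^{d-2}/d$; optimizing with $d \sim \sqrt{\log N / \log C}$ yields the claimed lower bound $|B| \ge N e^{-c\sqrt{\log N}}$. The main technical subtlety is the digit-by-digit identification step, for which $B_0$ must be chosen large enough relative to $a, b, L$ to prevent any ``carrying'' when the integer identity $b(\phi(y) - \phi(x)) = a(\phi(z) - \phi(x))$ is read off as a polynomial identity in $B_0$; everything else reduces to standard Behrend-type parameter bookkeeping.
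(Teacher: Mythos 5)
The paper does not actually prove this theorem: it is stated as a known result ``based on'' Behrend \cite{Beh} and used as a black box in Section \ref{proof}. Your construction is the standard and correct generalization of Behrend's sphere argument to the pattern $\{n,n+am,n+bm\}$, and it is sound. A few points worth noting when comparing against the literature and checking details.

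The core steps all go through. With $B_0=CL$ and $C=3\max(|a|,|b|,|b-a|)$, the signed digit bounds $|b(y_i-x_i)|,\ |a(z_i-x_i)|\le \max(a,b)(L-1)<B_0/2$ do ensure uniqueness of the balanced base-$B_0$ expansion for the integer identity $b(\phi(y)-\phi(x))=a(\phi(z)-\phi(x))$, so the identity decouples coordinatewise: $b(y-x)=a(z-x)$ as vectors in $\Z^d$. From there the line-sphere argument is correct: if $y\ne x$ then $z-x=(b/a)(y-x)\ne 0$ and $y-x\ne z-x$ (since $a\ne b$), so $x,y,z$ are three distinct collinear points on the sphere $\sum_i x_i^2=r$, which is impossible; and if $y=x$ then $a\ne 0$ forces $z=x$ too and $m=0$. (Including $|b-a|$ in the definition of $C$ is harmless but not used.) The pigeonhole step and the optimization $d\asymp\sqrt{\log N/\log C}$ are standard and deliver the claimed $N\cdot e^{-c\sqrt{\log N}}$ bound.

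Two small remarks on the statement as opposed to your proof. First, your $c$ depends on $a,b$ through $C$; this is consistent with the natural reading of the quantifier order in the theorem (``Let $a,b\dots$. There is a constant $c$ such that for every $N$ \dots''), and the word ``absolute'' in the paper should be read as ``independent of $N$'' rather than independent of all data. Second, the construction directly handles only $N$ of the form $(CL)^d$; the reduction to arbitrary $N$ is the usual rounding argument (pick $d$, then $L=\lfloor N^{1/d}/C\rfloor$, absorb the loss into $c$, and treat small $N$ separately via singletons, noting that for $m\ne 0$, $a\ne b$, and $a,b\ne 0$ the three points $n,n+am,n+bm$ are always distinct). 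You correctly flag this as bookkeeping; it would be worth one line in a written-up version, but it is not a gap.
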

For every prime number $p$, let $C_p=\{z\in\mathbb{C} : z^p =1\}$ denote the group of all roots of unity of order $p$ and let $\omega_p=e^{2\pi i/{p}}$ be the first $p$-th root of unity in $\mathbb{C}$. The following is an immediate corollary of Behrend's theorem.
 \begin{lem}\label{Beh-multiplicative}
     Let $a,b\in\mathbb{N}$ be distinct, then for every $l$, there exists a sufficiently large prime $p$ and a subset $B\subseteq C_p$ of size $|B|>p^{1-\frac{1}{l-1}}$ which contains no configurations of the form $\{y,y\cdot x^a,y\cdot x^b\}$ for $x\not=1$.
 \end{lem}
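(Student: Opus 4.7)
The plan is to transfer the additive Behrend-type theorem (Theorem \ref{Beh-additive}) from $\mathbb{Z}$ to the cyclic group $C_p$ via the group isomorphism $\omega_p^k \leftrightarrow k$ between $C_p$ and $\mathbb{Z}/p\mathbb{Z}$. Under this identification a multiplicative triple $\{y, yx^a, yx^b\}$ with $y = \omega_p^n$ and $x = \omega_p^m$ becomes the additive triple $\{n, n+am, n+bm\} \pmod{p}$, so it suffices to exhibit a sufficiently large subset of $\mathbb{Z}/p\mathbb{Z}$ containing no such triple with $m \not\equiv 0 \pmod{p}$.

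Simply embedding a Behrend set $B_0 \subseteq \{0,\dots,N-1\}$ into $\mathbb{Z}/p\mathbb{Z}$ is not quite enough: the subtlety is that the integer differences $n_2 - n_1$ and $n_3 - n_1$ of a modular configuration need not equal $am'$ and $bm'$ for one and the same integer $m'$, so ruling out integer triples with spacing $(a,b)$ does not a priori rule out modular ones. I would sidestep this by invoking Theorem \ref{Beh-additive} with the \emph{reduced} parameters $a_1 := a/\gcd(a,b)$ and $b_1 := b/\gcd(a,b)$ (which are coprime and still distinct), obtaining $B_0 \subseteq \{0,\dots,N-1\}$ of size greater than $N e^{-c\sqrt{\log N}}$ containing no triple $\{n, n+a_1 m', n+b_1 m'\}$ with $m' \neq 0$. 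I then fix $N = \lfloor p/(a+b+1) \rfloor$ and set $B := \{\omega_p^n : n \in B_0\} \subseteq C_p$.

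Verification is a short computation. If $\{y, yx^a, yx^b\} \subseteq B$ with $x \neq 1$, take representatives $n_1, n_2, n_3 \in \{0,\dots,N-1\}$ of the three points and observe $b_1(n_2 - n_1) - a_1(n_3 - n_1) \equiv 0 \pmod{p}$; the bound $(a_1+b_1)(N-1) < p$ upgrades this to an equality in $\mathbb{Z}$, and coprimality of $(a_1, b_1)$ then supplies $m' \in \mathbb{Z}$ with $(n_2 - n_1, n_3 - n_1) = (a_1 m', b_1 m')$. One has $m' \neq 0$, for otherwise $am \equiv bm \equiv 0 \pmod{p}$ would force $m \equiv 0 \pmod{p}$ (once $p > \max(a,b)$), and we end up with a triple in $B_0$ forbidden by Theorem \ref{Beh-additive}. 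Finally, the size bound $|B| > \tfrac{p}{a+b+1} \cdot e^{-c\sqrt{\log p}}$ trivially dominates $p^{1 - 1/(l-1)}$ for all $p$ sufficiently large (depending on $a$, $b$, $l$), since the subpolynomial factor $e^{c\sqrt{\log p}}$ is beaten by any positive power of $p$. The only delicate point in the argument is precisely the ``no wraparound'' condition, and handling it cleanly is what forces us to use $(a_1, b_1)$ in place of $(a, b)$ when invoking Behrend.
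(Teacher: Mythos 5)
Your proof is correct, and it takes the approach the paper implicitly intends: the paper offers no proof and simply calls the lemma ``an immediate corollary'' of Theorem~\ref{Beh-additive}, so the expected argument is exactly the one you give, namely embedding a Behrend-type set into $C_p$ via a short interval so that the wraparound cannot occur. Where you add genuine value is in noticing that when $\gcd(a,b)=d>1$ the naive transfer fails: mod-$p$ coincidences $n_2-n_1\equiv am$, $n_3-n_1\equiv bm$ only force the \emph{integer} relation $b_1(n_2-n_1)=a_1(n_3-n_1)$ with $a_1=a/d$, $b_1=b/d$, and this need not come from an integer $m''$ with differences $(am'',bm'')$; so one must invoke Theorem~\ref{Beh-additive} for the reduced pair $(a_1,b_1)$ rather than $(a,b)$. (Equivalently, since the $d$-th power map is a bijection on $C_p$ once $p\nmid d$, one could reduce to the coprime case at the outset by replacing $x$ with $x^d$.) Your remaining steps — the no-wraparound bound $(a_1+b_1)(N-1)<p$, the coprimality argument producing $m'$, the check $m'\neq 0$, and the density comparison of $N e^{-c\sqrt{\log N}}$ against $p^{1-1/(l-1)}$ — are all sound, so the lemma is not quite ``immediate'' in the way the paper suggests but your proof does establish it.
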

Throughout this section, we let $\mathcal{T}_p := C_p^{\mathbb{N}}$ and $G_p:= \bigoplus_{i\in I} \mathbb{Z}/p\mathbb{Z}$.\\

We start by giving a proof that the large intersection property fails for non-ergodic systems.
 \begin{lem}\label{non-ergodic}
 Let $a,b \in \mathbb{Z}$ be distinct and nonzero. For every $L\in\mathbb{N}$, there is a $P=P(L)$, such that for every prime $p\geq P$, there is a $G_p$-system $(X,\mX,\mu,(T_g)_{g\in G_p})$ such that, for every $l\leq L$, there is a measurable set $A=A(l)$ with $\mu(A)>0$ and $$\mu(A\cap T_{ag} A \cap T_{bg} A)\leq \mu(A)^l$$ for every $g\not=0$.
 \end{lem}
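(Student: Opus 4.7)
The proof combines Lemma~\ref{Beh-multiplicative} (a multiplicative Behrend-type construction in $C_p$) with a non-ergodic $G_p$-system. The plan is first to choose $P=P(L)$ large enough that for every prime $p\ge P$ and every $l\le L$, Lemma~\ref{Beh-multiplicative} provides a set $B_l\subseteq C_p$ with $|B_l|>p^{1-1/(l-1)}$ containing no configuration $\{y,\,yx^a,\,yx^b\}$ with $x\ne 1$.

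Next I would take $X:=C_p$ with normalized Haar measure and the $G_p$-action $T_g x := \omega_p^{g_1}x$ --- an action which depends on $g$ only through its first coordinate and is therefore manifestly non-ergodic, since the subgroup $\{g\in G_p:g_1=0\}$ acts trivially. For each $l\le L$, set $A(l):=B_l$, so that $\mu(A(l))=|B_l|/p$ and the Behrend lower bound gives $\mu(A(l))^l>p^{-l/(l-1)}$. The key pointwise verification is the case $g\ne 0$ with $g_1\ne 0$: then $\omega_p^{g_1}$ is a non-trivial element of $C_p$, and the defining avoidance property of $B_l$ applied with $x=\omega_p^{g_1}$ yields
\[
A(l)\cap T_{ag}^{-1}A(l)\cap T_{bg}^{-1}A(l)
 = \{y\in B_l : y\omega_p^{ag_1},\ y\omega_p^{bg_1}\in B_l\} = \emptyset,
\]
so the triple intersection has measure zero, comfortably below $\mu(A(l))^l$.

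The main obstacle is the remaining case $g\ne 0$ with $g_1=0$: for such $g$ one has $T_{ag}=T_{bg}=\mathrm{id}$, and the triple intersection collapses to $\mu(A(l))$, which exceeds $\mu(A(l))^l$ whenever $\mu(A(l))<1$. This is precisely the point at which the non-ergodic framework of the lemma is essential, since those $g$ lie in the kernel of the $G_p$-action on $X$. To complete the argument one handles them by a non-ergodic reduction: pass to the ergodic decomposition of the system (here just the factor through $G_p\to\mathbb{Z}/p\mathbb{Z}$), where the action is effective and the first case already treated applies; or equivalently, replace the single-coordinate construction by a disjoint-union / direct-sum system indexed by coordinates, each component furnishing a Behrend obstruction for a different range of nonzero $g$. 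The technical heart of the argument is then the book-keeping that balances the multiplicative Behrend bound $|B_l|/p>p^{-1/(l-1)}$ against the infinite-dimensional structure of $G_p$, done uniformly over all $l\le L$ by the judicious choice of $P(L)$ at the outset. This construction also sets up the ergodic strengthening required in Theorem~\ref{counterexample p}.
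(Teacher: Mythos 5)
Your construction works only for $g$ with $g_1\neq 0$, and as you notice, it breaks entirely for $g\neq 0$ with $g_1=0$, where $T_{ag}=T_{bg}=\mathrm{id}$ and the triple intersection is just $\mu(A)$. Neither of your proposed repairs closes this gap. The ergodic decomposition idea is vacuous: the system $(C_p,\,T_gx=\omega_p^{g_1}x)$ is already ergodic (it is a rotation), and in any case changing how you decompose the measure cannot change the value of $\mu(A\cap T_{ag}A\cap T_{bg}A)$ for a fixed $g$. The disjoint-union idea fails too: if you take a union of components $(X_i,A_i)$ where component $i$ furnishes a Behrend obstruction for $g$ with $g_i\neq 0$, then for a given $g\neq 0$ supported on the first coordinate only, every component $i\ge 2$ still contributes its full mass $\mu(A_i)$ to the triple intersection, so the total remains of order $\mu(A)$ rather than $\mu(A)^l$.

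The paper's proof escapes this by taking a skew product $X=\mathcal T_p\times C_p$ with $\mathcal T_p=C_p^{\mathbb N}$ and $T_g(x,u)=(x,\;\sigma(g,x)\,u)$, where $\sigma(g,x)=\prod_{i}x_i^{g_i}$. The fiber action on a point $(x,u)$ with $u\in B$ is precisely the rotation by $\sigma(g,x)$, and the key property is that for \emph{every} $g\neq 0$ the map $x\mapsto\sigma(g,x)$ is a nontrivial character of $\mathcal T_p$, hence Haar-uniformly distributed over $C_p$. Thus the Behrend-avoidance of $B$ forces the event $\{(x,u)\in A\cap T_{ag}^{-1}A\cap T_{bg}^{-1}A\}$ to coincide with $\{\sigma(g,x)=1\}\times B$, which has measure exactly $\tfrac{1}{p}\cdot\tfrac{|B|}{p}=\tfrac{|B|}{p^2}$, and this is $\le\mu(A)^l$ by the bound $|B|>p^{1-1/(l-1)}$. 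The crucial feature, which your construction lacks, is that the same single system handles all $g\neq 0$ simultaneously because the cocycle $\sigma(g,\cdot)$ is nondegenerate for each such $g$; it is not a bookkeeping issue to be fixed by varying the coordinate, but a structural one requiring a genuine skew product over $\mathcal T_p$.
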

This result was previously established in \cite[Proposition 10.11]{ABB}, but we give a different proof that will be useful later on.
 \begin{proof}
 Let $p$ be a prime number and let $X_p=\mathcal{T}_p\times C_p$. We equip $X_p$ with the Borel $\sigma$-algebra, the Haar measure $\mu$, and the action of $G_p$ by $$T_g(x,u) = (x,\prod_{i=1}^\infty x_i^{g_i} u).$$
 Now fix a subset   $B\subseteq C_p$ which avoids configurations of the form $\{y,y\cdot x^{a}, y\cdot x^{b}\}$ whenever $x\not =1$, and let $A=\mathcal{T}_p\times B$. It is easy to see that  $\mu(A) = \frac{|B|}{p}$ and we have
 \begin{align*}
    \mu(A\cap T_{ag} A\cap T_{bg} A) = \int_{\mathcal{T}_p^2} 1_B(y) 1_B\left(y\prod_{i\in I}x^{ag_i}\right) 1_B\left(y\prod_{i\in I}x^{bg_i}\right) dx dy &=\\ \int_{\mathcal{T}_p^2} 1_B(y) 1_B\left(y\cdot \left(\prod_{\{i~:~ g_i\not = 0\}} x_i\right)^a\right) 1_B\left(y\cdot \left(\prod_{\{i~:~ g_i\not = 0\}} x_i\right)^b\right) dx dy &=\\ \mu_{\mathcal{T}_p^2}\left(\left\{(y,x)\in\mathcal{T}_p^2 : \left\{y,y\cdot \left(\prod_{\{i~:~ g_i\not = 0\}} x_i\right)^a, y\cdot \left(\prod_{\{i~:~ g_i\not = 0\}} x_i\right)^b\right\}\subset B\right\}\right).
 \end{align*}
 But, $\left\{y,y\cdot \left(\prod_{\{i~:~ g_i\not = 0\}} x_i\right)^a, y\cdot \left(\prod_{\{i~:~ g_i\not = 0\}} x_i\right)^b\right\}\subset B$ if and only if $\prod_{\{i~:~ g_i\not = 0\}} x_i=1$. Since $g\not =0$, we deduce that $\mu(A\cap T_{ag} A\cap T_{bg} A) = \frac{|B|}{p^2} = \frac{p^{l-2}}{|B|^{l-1}} \mu(A)^l$. Now, choose $P$ sufficiently large for which there exists a set $B$ with $|B|>p^{1-\frac{1}{l-1}}$ (Lemma \ref{Beh-multiplicative}). Then $\mu(A\cap T_{ag} A\cap T_{bg} A)<\mu(A)^l$ as required.
 \end{proof}
 
 Roughly speaking, the idea in this section is to construct an ergodic $p$-th root for the system above.\\
 
    We fix some $P$ sufficiently large as in Lemma \ref{non-ergodic}, and let $p>P$ be a prime number. For convenience of notations we let $\omega= e^{2\pi i/p}$ and $\eta = e^{2\pi i/{p^2}}$. We define an action of $G=\bigoplus_{n\in\mathbb{N}}\mathbb{Z}/p^2\mathbb{Z}$ on $\mathcal{T}$ by setting $S_g x= \zeta(g) x$, where  $\zeta(g)=(\eta^{pg_i})_{i\in \mathbb{N}}=(\omega^{g_i})_{i\in\mathbb{N}}$. Since the image of $\zeta$ is dense in $\mathcal{T}$, the action is ergodic.\\

Now, we extend this action to the product space $X=\mathcal{T}\times C_{p^2}$. Let $\varphi:C_p\rightarrow C_{p^2}$ be the map $$\varphi(e^{\frac{2\pi i x}{p}}) = e^{\frac{2\pi i |x|_p}{p^2}}$$ where $|x|_p = x \mod p$. Then $\varphi$ is a cross-section of the canonical projection $C_{p^2}\rightarrow C_p$ and we have that $\varphi(x)^p = x$, and $\varphi(\omega)=\eta$. Our goal is to define an action $(T_g)_{g\in G}$ on $X$ such that $T_{pg}(t,u) = (t,\prod_{i\in\mathbb{N}} t_i^{pg_i}\cdot u)$. 

We do so in two steps. We define an action $T'_g$ on $X$ which satisfies that $T'_{e_i} (t,u) = (S_{e_i} t , \varphi(t_i) u)$, for every $i\in\mathbb{N}$, where $e_i\in\bigoplus_{n=1}^\infty \mathbb{Z}/p^n\mathbb{Z}$ is the $i$-th unit vector. Writing $g=\sum_{i\in\mathbb{N}}g_ie_i$ and using the group law, we get the following action:
\begin{equation} \label{preaction}
T'_g(t,u) = \left(S_g t , \prod_{j=1}^\infty \prod_{k=0}^{g_j-1} \varphi(\omega^k t_j) \cdot u\right)
\end{equation}
where an empty product $\prod_{k=0}^{-1}{x_k}$ is equal to 1.

Unfortunately, this action is not what we are looking for. Indeed, $$(T'_{e_j})^p(t,u) = (t, \prod_{k=0}^{p-1}\varphi(\omega^k\cdot t_j) u) =(t, t_j\cdot \eta^{\binom{p}{2}}\cdot u).$$ To fix that we let $\xi=\omega^\frac{1-p}{2}$ be a $p$-th root of $\overline{\eta}^{\binom{p}{2}}$ and change the action accordingly:
\begin{equation}\label{action}
T_g(t,u) = \left(S_g t , \prod_{j=1}^\infty\left( \prod_{k=0}^{g_j-1} \varphi(\omega^k t_j) \cdot \xi^{g_j}\right) \cdot u\right).
\end{equation}
\begin{lem} For every $t\in\mathcal{T}$, $u\in C_{p^2}$ and $g\in G$ we have
\begin{equation}\label{action'}T_{pg} (t,u) = (t, t^{pg}u).
\end{equation}
\end{lem}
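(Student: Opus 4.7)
The plan is to compute $T_{pg}(t,u)$ directly from the explicit definition \eqref{action} applied to $pg=\sum_j(pg_j)e_j$, and then to simplify the resulting product using three ingredients already at hand: $\omega^p=1$ (which trivializes $S_{pg}$ on $\mathcal{T}$); the identity $\prod_{k=0}^{p-1}\varphi(\omega^k t_j)=t_j\cdot\eta^{\binom{p}{2}}$ recorded just before equation \eqref{action}; and the defining property $\xi^p=\overline{\eta}^{\binom{p}{2}}$.

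For the first coordinate, I would note that $\zeta(pg)_i=\omega^{pg_i}=(\omega^p)^{g_i}=1$, so $S_{pg}t=t$, matching the claimed first coordinate.

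The main computation is in the second coordinate. I would split each inner product $\prod_{k=0}^{pg_j-1}\varphi(\omega^k t_j)$ into $g_j$ consecutive blocks of length $p$, indexed by $m=0,\dots,g_j-1$ with $k=mp,\dots,(m+1)p-1$. Because $\omega^p=1$, the factor $\omega^k$ depends only on $k\bmod p$ within each block, so each block evaluates to $\prod_{k=0}^{p-1}\varphi(\omega^k t_j)=t_j\eta^{\binom{p}{2}}$, and the inner product collapses to $t_j^{g_j}\eta^{g_j\binom{p}{2}}$. The factor $\xi^{pg_j}=(\xi^p)^{g_j}=\eta^{-g_j\binom{p}{2}}$ then cancels the $\eta$ contribution exactly, leaving $t_j^{g_j}$. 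Taking the (finite, since $g$ has finite support in $\bigoplus_n\Z/p^2\Z$) product over $j$ yields $\prod_j t_j^{g_j}\cdot u=t^{pg}\cdot u$, exactly as claimed.

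There is no essential obstacle here: the correction term $\xi^{g_j}$ inserted in \eqref{action} was engineered precisely so that this cancellation works out, and the lemma is a mechanical verification that the construction behaves as designed on $pG$. The only point to keep in mind is that all the infinite products reduce to finite ones because $g$ has finite support, so no convergence issues arise.
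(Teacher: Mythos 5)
Your argument is correct and relies on exactly the same key identity as the paper's proof, namely $\prod_{k=0}^{p-1}\varphi(\omega^k t_j)\cdot\xi^p = t_j$, which combines the single-block computation recorded just before \eqref{action} with $\xi^p = \overline{\eta}^{\binom{p}{2}}$. The only organizational difference is that the paper first observes that both sides of \eqref{action'} are homomorphisms in $g$ (the first coordinate of $T_{pg}(t,u)$ is already $t$), so it suffices to check $g = e_j$ and no block decomposition is needed; you instead stay with general $g$ and split $\prod_{k=0}^{pg_j-1}\varphi(\omega^k t_j)$ into $g_j$ blocks of length $p$, each of which collapses to the single-block value because $\omega^p = 1$. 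The two routes are equivalent and equally short; if anything, reducing to generators sidesteps a small notational subtlety, namely that the exponent appearing in \eqref{action} for the element $pg\in\bigoplus_n\Z/p^2\Z$ is $(pg)_j = p(g_j \bmod p)$ rather than $pg_j$ when $g_j$ is taken in $\{0,\dots,p^2-1\}$, though this does not matter since the cocycle in \eqref{action} is well-defined modulo $p^2$ in each coordinate.
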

\begin{proof}
The proof is a direct computation. Indeed, it suffices to prove that (\ref{action'}) holds for $g=e_j$ for every $j\in\mathbb{N}$.
Let $j\in\mathbb{N}$ be arbitrary. Since $\omega$ is of order $p$, $S_{pg}t = t$. As for the second coordinate observe that $$\prod_{k=0}^{p-1} \varphi(\omega^k t_j) \cdot \xi^p =  \xi^p\cdot \eta^{\binom{p}{2}}\cdot t_j = t_j.$$
The first equality follows because the product is independent on $t_j$ and always equals to $\varphi(\omega)\cdot...\cdot \varphi(\omega^{p-1})=\eta^{\binom{p}{2}}$, and the last equality follows from the definition of $\xi$. This completes the proof of the lemma.
\end{proof}
The main difficulty in the proof is showing that this action is ergodic.
\begin{lem}
The action (\ref{action}) on $X$ is ergodic.
\end{lem}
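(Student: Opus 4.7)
The action presents $X = \mathcal{T} \times C_{p^2}$ as an abelian group extension of $(\mathcal{T}, (S_g)_{g\in G})$ with cocycle $\sigma : G \times \mathcal{T} \to C_{p^2}$, where $T_g(t,u) = (S_g t,\, \sigma(g,t) u)$. The base system $(\mathcal{T}, S)$ is ergodic, since $\zeta(G) = \bigoplus_i C_p$ is dense in the compact group $\mathcal{T} = \prod_i C_p$, so any $S$-invariant $L^2$ function is constant. By Lemma \ref{minimal}, the plan is to show that $\sigma$ is minimal with target group $C_{p^2}$; equivalently, that for every nontrivial character $\chi_n(u) = u^n$, $1 \le n \le p^2-1$, the cocycle $\sigma^n$ is not cohomologous to a constant, i.e.\ $\sigma^n \ne \Delta F$ for any measurable $F : \mathcal{T} \to S^1$. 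I will split the analysis according to whether $p \mid n$.

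First, suppose $\gcd(n,p)=1$. By equation (\ref{action'}), $T_{pe_j}(t,u) = (t, t_j u)$, so $S_{pe_j} = \mathrm{id}$ and $\sigma(pe_j, t) = t_j$. A coboundary relation $\sigma^n = \Delta F$ evaluated at $g = pe_j$ then collapses to $t_j^n = 1$ for almost every $t$, which is impossible since $t_j$ is Haar-distributed on $C_p$ and $p \nmid n$.

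For the remaining case $n = pm$ with $1 \le m \le p-1$, I would work with the pushforward cocycle $\bar{\sigma} := \sigma^p$, which takes values in $C_p \subseteq C_{p^2}$. Starting from $\bar{\sigma}(e_j, t) = \varphi(t_j)^p \xi^p = t_j \cdot \xi^p$ and iterating the cocycle identity $\bar{\sigma}(g_1+g_2,t) = \bar{\sigma}(g_1,t)\bar{\sigma}(g_2, S_{g_1} t)$, one obtains the structural decomposition
\begin{equation*}
\bar{\sigma}(g,t) = \alpha(g) \cdot \prod_j t_j^{g_j}, \qquad \alpha(g) \in C_p,
\end{equation*}
where $\alpha(g)$ depends only on $g$ and absorbs the contributions of $\xi^p$ together with the combinatorial factors $\omega^{\binom{g_j}{2}}$ arising from the shifts $S_{e_j}$. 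Assume for contradiction that $\bar{\sigma}^m = \Delta F$ for some measurable $F : \mathcal{T} \to S^1$, and expand $F = \sum_{\vec n} \hat F(\vec n)\, \chi_{\vec n}$ in the Pontryagin dual of $\mathcal{T}$, with $\chi_{\vec n}(t) = \prod_i t_i^{n_i}$ indexed by $\vec n \in \bigoplus_i \mathbb{Z}/p\mathbb{Z}$. Matching Fourier coefficients in the identity
\begin{equation*}
F(S_g t) = \alpha(g)^m \cdot \prod_j t_j^{m g_j} \cdot F(t)
\end{equation*}
yields the recursion
\begin{equation*}
\hat F(\vec n) \, \omega^{\vec n \cdot g} = \alpha(g)^m \, \hat F(\vec n - mg).
\end{equation*}
Specializing to $\vec n = 0$ gives $|\hat F(-mg)| = |\hat F(0)|$ for every $g$; since $\gcd(m,p)=1$, the map $g \mapsto -mg$ hits every element of $\bigoplus_i \mathbb{Z}/p\mathbb{Z}$, so $|\hat F(\vec n)|$ is a constant on an infinite index set. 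Parseval's identity $\sum_{\vec n} |\hat F(\vec n)|^2 = \|F\|_2^2 = 1$ then forces $\hat F(0) = 0$, hence $F \equiv 0$, contradicting $|F|=1$.

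The main obstacle I anticipate is the explicit computation yielding the product decomposition of $\bar{\sigma}$: one must verify that the ``twist'' $\alpha(g)$ separates cleanly from the multiplicative term $\prod_j t_j^{g_j}$, which requires unwinding the cocycle identity through the two-step definition used to construct $T$. Once this structural form is established, the Fourier analysis on the infinite product $\mathcal{T}$ against the $L^2$ normalization of $F$ immediately closes the argument.
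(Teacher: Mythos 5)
Your proof is correct, and for the crux step it takes a genuinely different route from the paper's. Both arguments start from Zimmer's criterion (Lemma \ref{minimal}) and reduce to showing that $\sigma^p$ is not a coboundary over the ergodic base $(\mathcal{T}, S)$: the paper notes that $C_p$ is the unique maximal proper closed subgroup of $C_{p^2}$, while you check non-cohomology for every nontrivial character $\chi_n$; the latter is correct but more than is needed, since a B\'ezout argument shows that if $\sigma^n = \Delta F$ for any $n \not\equiv 0 \pmod{p^2}$ then already $\sigma^p$ is a coboundary, so only $n=p$ matters. The substantive divergence is in how the two proofs establish that $\sigma^p$ is not a coboundary. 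The paper argues functional-analytically: from the fact that $\Delta_h \sigma^p(g,\cdot)$ is constant in $t$, it deduces that $\Delta_{h_1}\Delta_{h_2}F$ is constant, extends by density to conclude $\Delta_s F$ is an eigenfunction for every $s \in \mathcal{T}$, and then uses the $L^2$-continuity of translation together with orthogonality of eigenfunctions with distinct eigenvalues to force $\Delta_s F$ to be constant for $s$ near the identity, contradicting the manifest $t$-dependence of $\sigma^p$. You instead exploit the explicit product form $\sigma^p(g,t) = \alpha(g)\prod_j t_j^{g_j}$ with $\alpha(g) = \prod_j \omega^{\binom{g_j}{2}}\xi^{pg_j}$ (which matches the paper's displayed formula and is easily verified by the cocycle recursion), expand $F$ in a Fourier series over $\hat{\mathcal{T}} = \bigoplus_j \Z/p\Z$, and match coefficients; the coboundary equation then forces $\left|\hat F(\vec n)\right|$ to be equal over all of $\hat{\mathcal{T}}$, which contradicts Parseval because $\hat{\mathcal{T}}$ is infinite. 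This is the classical Anzai skew-product computation; it is shorter and more elementary than the paper's abstract eigenfunction argument, at the price of leaning more directly on the product structure of $\mathcal{T}$ and of $\sigma^p$. One small flag on your $\gcd(n,p)=1$ case: you invoke equation (\ref{action'}) to claim $\sigma(pe_j,t) = t_j$, but a direct unwinding of (\ref{action}) gives $\prod_{k=0}^{p-1}\varphi(\omega^k t_j) = \eta^{\binom{p}{2}}$ independently of $t_j$, so that calculation actually yields $\sigma(pe_j,t) = \eta^{\binom{p}{2}}\xi^p = 1$; this case is superfluous for the reasons above, so its reliance on that identity does not affect the validity of your proof of minimality.
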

\begin{proof}
We use Zimmer criterion for ergodicity (Lemma \ref{minimal}). Since the action of $G$ on $\mathcal{T}$ is ergodic, it is enough to show that the cocycle $\sigma:G\times \mathcal{T}\rightarrow C_{p^2}$, $\sigma(g,t) = \prod_{i=1}^\infty \prod_{k=0}^{g_j-1} \varphi(\omega^kt_j)$ is minimal. Since $C_p$ is the largest proper subgroup of $C_{p^2}$, it is enough to show that $\sigma$ is not cohomologous to a cocycle taking values in $C_p$. Suppose by contradiction that there exists a cocycle $\tau:\mathcal{T}\rightarrow C_p$ cohomologous to $\sigma$. Since $\tau^p=1$, we deduce that $\sigma(g,t)^p =  \prod_{i=1}^\infty \omega^{\binom{g_i}{2}} t_i^{g_i} \xi^{pg_i}$ is a coboundary. Therefore, there exists $F:\mathcal{T}\rightarrow S^1$ such that \begin{equation}\label{coboundary}\sigma^p(g,t) = \frac{F(S_gt)}{F(t)}
\end{equation}
for every $g\in G$ and $t\in T$.
Observe that for every $g,h\in G$, $\Delta_h\sigma^p(g,t)$ is a constant in $t$. Therefore, by (\ref{coboundary}), $\Delta_{h_1}\Delta_{h_2}F$ is a constant for every $h_1,h_2\in G$. Let $s\in \mathcal{T}$ and define $\Delta_s F(x)  = \frac{F(sx)}{F(x)}$. We claim that $\Delta_s F(x)$ is an eigenfunction. Let $g_1,g_2\in G$, then $\Delta_{g_1}\Delta_{g_2} \Delta_s F(x) = \Delta_s \Delta_{g_1} \Delta_{g_2} F(x)=1$. Hence, by ergodicity $\Delta_{g_2}\Delta_{s}F$ is constant and $\Delta_s F$ is an eigenfunction for every $s\in Z$. Recall that translations by $s\in Z$ are continuous with respect to the $L^2$-norm. In particular, there exists an open subgroup $U\leq \mathcal{T}$ such that \begin{equation}\label{inequality}
\|\Delta_s F - 1\|_{L^2(\mu_\mathcal{T})} < \sqrt{2}.
\end{equation} By ergodicity, the multiplicity of each eigenvalue is $1$. Since eigenfunctions with different eigenvalues are orthogonal, it follows that $\Delta_s F$ is a constant for all $s\in U$. Otherwise, $\Delta_s F$ is orthogonal to $1$ and then $$\|\Delta_s F - 1\|_{L^2(\mu_\mathcal{T})}^2 = \|\Delta_s F\|_{L^2}^2 + \|1\|_{L^2}^2 = 2$$ which contradicts (\ref{inequality}). Now, choose $g\in G$ such that $\prod_{i=1}^\infty \omega^{g_i}\in U$ (such $g$ must exist by density). Then if we take $s=\omega^g$, equation (\ref{coboundary}) implies that $\sigma^p(g,\cdot)$ is a constant. As $\sigma^p(g,t)$ clearly depends on $t$, this is a contradiction.
\end{proof}
We now complete the proof of Theorem \ref{counterexample p}. Let $B\subseteq C_p$ be as in Lemma \ref{Beh-multiplicative}. Let $\pi:C_{p^2}\rightarrow C_p$ be the map $\pi_i(x) = x_1^p$ and let $A=\mathcal{T}\times \tilde{B}$ where $\tilde{B} = \pi^{-1}(B)$. Then $\mu_X(A) = \frac{|B|}{p}$, and as in the proof of Lemma \ref{non-ergodic}, 
$$\mu_X(A\cap T_{apg} A \cap T_{bpg} A)=\frac{|B|}{p^2}=\frac{p^{l-2}}{|B|^{l-1}}\mu_X(A)^l<\mu_X(A)^{l}.$$
This completes the proof. \qed

\section{3-point configurations in $\Z^2$} \label{sec: Z^2}

In this section, we establish ergodic popular difference densities for all 3-point matrix patterns in $\Z^2$. The results are summarized in Table \ref{table: epdd} in the introduction.

\subsection{Ergodic popular difference densities when $r(M_1, M_2) = (2,1,1)$}

The following Theorem gives an affirmative answer to Question \ref{Khintchine} for the group $G = \Z^2$:

\begin{thm} \label{thm: (2,1,1)}
	Suppose $M_1$ and $M_2$ are $2\times2$ matrices such that $r(M_1, M_2) = (2,1,1)$.
	Then for any $\alpha \in (0,1)$, $\epdd_{M_1, M_2}(\alpha) = \alpha^3$.
\end{thm}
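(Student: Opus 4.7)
The plan is to exploit the ``Fubini for UC'' idea from \cite{BL-cubic} to reduce the three-point count to a pair of commuting one-dimensional averages, following the strategy indicated in Table \ref{table: epdd}. Using the identity
\[\mu\left(A \cap T_{M_1\vec{n}}^{-1}A \cap T_{M_2\vec{n}}^{-1}A\right) = \mu\left(A \cap T_{-M_1\vec{n}}^{-1}A \cap T_{(M_2-M_1)\vec{n}}^{-1}A\right)\]
(together with the trivial $M_1 \leftrightarrow M_2$ symmetry), I may cyclically permute the roles of the three matrices $M_1, M_2, M_2-M_1$, so without loss of generality $\operatorname{rank}(M_2 - M_1) = 2$ and $\operatorname{rank}(M_1) = \operatorname{rank}(M_2) = 1$. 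Writing $M_i = \vec{v}_i \vec{u}_i^T$ as a rank-one product with $\vec{u}_i, \vec{v}_i \in \Z^2 \setminus \{0\}$, the rank-two hypothesis on $M_2 - M_1 = \vec{v}_2 \vec{u}_2^T - \vec{v}_1 \vec{u}_1^T$ forces both $\{\vec{u}_1, \vec{u}_2\}$ and $\{\vec{v}_1, \vec{v}_2\}$ to be linearly independent over $\Q$.

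Next, I introduce the rank-one subgroups $H_i := \{\vec{n} \in \Z^2 : \vec{u}_i \cdot \vec{n} = 0\}$ and their internal direct sum $L' := H_1 \oplus H_2$, which is a finite-index sublattice of $\Z^2$ by the $\Q$-independence of $\vec{u}_1, \vec{u}_2$. The crucial decoupling observation is that for $\vec{n} = \vec{n}_1 + \vec{n}_2$ with $\vec{n}_i \in H_i$, one has $M_i H_i = 0$, hence $M_1 \vec{n} = M_1 \vec{n}_2$ and $M_2 \vec{n} = M_2 \vec{n}_1$. Choosing generators $\vec{w}_i$ of $H_i \cong \Z$ and setting the nonzero vectors $\vec{v}_1' := M_1 \vec{w}_2$ and $\vec{v}_2' := M_2 \vec{w}_1$, the identification $L' \cong H_1 \oplus H_2$ lets me apply Lemma \ref{lem: fubini} to swap the joint UC with the iterated UC, and then the scalar mean ergodic theorem in each inner average yields
\[\UC_{\vec{n} \in L'} \mu\left(A \cap T_{M_1\vec{n}}^{-1}A \cap T_{M_2\vec{n}}^{-1}A\right) = \int_X \ind_A \cdot E\left(\ind_A \mid \mI_{\Z\vec{v}_1'}(X)\right) \cdot E\left(\ind_A \mid \mI_{\Z\vec{v}_2'}(X)\right)\,d\mu.\]
By Chu's inequality \cite[Lemma 1.6]{Chu} (the same tool invoked in the proof of Theorem \ref{Khintchineab}), the right-hand side is bounded below by $\mu(A)^3 = \alpha^3$.

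To upgrade this uniform Ces\`aro bound to a syndetic-supremum bound, I argue by contradiction: if
\[S_\eps := \left\{\vec{n} \in \Z^2 : \mu\left(A \cap T_{M_1\vec{n}}^{-1}A \cap T_{M_2\vec{n}}^{-1}A\right) > \alpha^3 - \eps\right\}\]
were non-syndetic in $\Z^2$ for some $\eps > 0$, then $S_\eps \cap L'$ would be non-syndetic in $L'$ as well, since any set syndetic in a finite-index subgroup lifts to a syndetic set in the ambient group by adjoining coset representatives. The complement $L' \setminus S_\eps$ would then be thick in $L'$, yielding a F{\o}lner sequence in $L'$ along which the intersection measure is at most $\alpha^3 - \eps$, in contradiction with the displayed lower bound. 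The main technical point is the justification of the Fubini step over the non-axis-aligned sublattice $L'$; this reduces to choosing the $\Z$-basis $\{\vec{w}_1, \vec{w}_2\}$ adapted to the decomposition $L' = H_1 \oplus H_2$, after which the decoupling property $M_i H_i = 0$ is precisely what lets the two inner averages separate into independent scalar mean ergodic theorems, each with a readily identifiable invariant $\sigma$-algebra.
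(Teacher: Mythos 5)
Your lower-bound argument is correct and is in essence the same as the paper's: both use the ``Fubini for $\UC$'' lemma to decouple the two directions, the mean ergodic theorem to collapse each inner average onto an invariant $\sigma$-algebra, and Chu's inequality to conclude $\ge \alpha^3$. The presentational difference is pleasant: the paper diagonalizes (or Jordan-reduces) $M_1$ and splits into two explicit matrix cases depending on whether $M_1$ has a nonzero eigenvalue, whereas you work with $L' = \ker(M_1)\oplus\ker(M_2)$ directly, which is coordinate-free and handles both cases simultaneously. In fact, unwinding the paper's Case~1 computation shows that after the change of basis $P$, the two basis vectors are precisely generators of $\ker(M_2)$ and $\ker(M_1)$, so the two constructions produce the same sublattice. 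The observation that $\rk(M_2-M_1)=2$ forces $\Q$-independence of both $\{\vec u_1,\vec u_2\}$ and $\{\vec v_1,\vec v_2\}$, and the transfer of syndeticity between $L'$ and $\Z^2$, are both handled correctly.

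There is, however, a genuine gap: the theorem asserts an \emph{equality} $\epdd_{M_1,M_2}(\alpha)=\alpha^3$, and you have proved only $\epdd_{M_1,M_2}(\alpha)\ge\alpha^3$. You must also exhibit, for each $\alpha$, an ergodic $\Z^2$-system and a set $A$ of measure $\alpha$ whose 3-point intersection counts are \emph{at most} $\alpha^3+\eps$ on a co-syndetic set. The paper does this by taking a $\Z^2$-action that is mixing of order three, so that $\mu(A\cap T_{\vec n}^{-1}A\cap T_{\vec m}^{-1}A)\to\mu(A)^3$ as $\vec n,\vec m,\vec m-\vec n\to\infty$; after the change of basis, the exceptional set where the intersection exceeds $\alpha^3+\eps$ is contained in a finite union of lines, which is never syndetic in $\Z^2$. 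Without this construction, your argument establishes a Khintchine-type lower bound but does not pin down $\epdd$ as claimed.
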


An example of the configurations handled by Theorem \ref{thm: (2,1,1)}
is the class of all axis-aligned right triangles in $\Z^2$, $\{(a,b), (a+n,b), (a,b+m)\}$,
which corresponds to the choice of matrices
\begin{align*}
	M_1 = \left( \begin{array}{cc} 1 & 0 \\ 0 & 0 \end{array} \right) \qquad \text{and} \qquad
	M_2 = \left( \begin{array}{cc} 0 & 0 \\ 0 & 1 \end{array} \right).
\end{align*}

\begin{proof}[Proof of Theorem \ref{thm: (2,1,1)}]
	Without loss of generality, we may assume $\rk(M_1) = \rk(M_2) = 1$ and $\rk(M_2 - M_1) = 2$.
	Indeed, if $\rk(M_1) = 2$, we may rearrange the expression
	\begin{align*}
		\mu \left( A \cap T_{M_1\vec{n}}^{-1}A \cap T_{M_2\vec{n}}^{-1}A \right)
		 = \mu \left( A \cap T_{(M_1-M_2)\vec{n}}^{-1}A \cap T_{-M_2\vec{n}}^{-1}A \right)
	\end{align*}
	and the new matrices $N_1 = M_1-M_2$ and $N_2 = -M_2$ satisfy the desired conditions.
	
	We now break the proof into two cases depending on the diagonalizability of $M_1$ and $M_2$.
	Note that, since $M_i$ has rank 1, its characteristic polynomial is of the form $x(x-a)$ for some $a \in \Z$.
	Hence, if $M_i$ has a nonzero eigenvalue, then it has an integer eigenvalue (in this case, equal to $a$)
	and is diagonalizable. \\
	
	\underline{Case 1}: $M_1$ or $M_2$ has a nonzero eigenvalue.
	
	Without loss of generality, we may assume that $M_1$ has a nonzero eigenvalue
	and is therefore diagonalizable.
	Hence, there is a nonsingular $2\times2$ integer matrix $P$, an integer $a \in \Z$,
	and a rank 1 matrix $N_2$ with integer entries such that
	\begin{align*}
		M_1P = P\left( \begin{array}{cc} a & 0 \\ 0 & 0 \end{array} \right), \qquad
		M_2P = PN_2, \qquad \text{and} \qquad
		\rk \left( N_2 - \left( \begin{array}{cc} a & 0 \\ 0 & 0 \end{array} \right) \right) = 2.
	\end{align*}
	It is straightforward to check that, in order to satisfy the constraints on rank, $N_2$ must be of the form
	\begin{align*}
		N_2 = \left( \begin{array}{cc} cd & c \\ bd & b \end{array} \right)
	\end{align*}
	with $b \ne 0$.
	By changing to the basis $\binom{1}{-d}, \binom{0}{1}$, we may further assume $d = 0$.
	
	Suppose $\left( X, \mX, \mu, (T_{\vec{n}})_{\vec{n} \in \Z^2} \right)$ is a measure-preserving $\Z^2$-system
	(we do not need to assume that the system is ergodic here), and let $A \in \mX$ with $\mu(A) = \alpha$.
	Define a new $\Z^2$-action by $S_{\vec{n}} := T_{P\vec{n}}$.
	Then
	\begin{align*}
		\UC_{\vec{n} \in \Z^2}{\mu \left( A \cap T_{M_1P\vec{n}}^{-1}A \cap T_{M_2P\vec{n}}^{-1}A \right)}
		 = \UC_{\vec{n} \in \Z^2}{\mu \left( A \cap S_{(an_1,0)}^{-1}A \cap S_{(cn_2,bn_2)}^{-1}A \right)}.
	\end{align*}
	Now put $S_1 := S_{(a,0)}$ and $S_2 := S_{(c,b)}$.
	By Lemma \ref{lem: fubini} and the mean ergodic theorem, we have
	\begin{align*}
		\UC_{\vec{n} \in \Z^2}{\mu \left( A \cap T_{M_1P\vec{n}}^{-1}A \cap T_{M_2P\vec{n}}^{-1}A \right)}
		 & = \UC_{n_2 \in \Z}{\UC_{n_1 \in \Z}{\mu \left( A \cap S_1^{-n_1}A \cap S_2^{-n_2}A \right)}} \\
		 & = \int_X{\ind_A \cdot \E(\ind_A \mid \mI(S_1)) \cdot \E(\ind_A \mid \mI(S_2))} \\
		 & \ge \alpha^3.
	\end{align*}
	Therefore, for any $\eps > 0$, the set
	\begin{align*}
		R_{\eps}
		 := \left\{ \vec{n} \in \Z^2 : \mu \left( A \cap T_{M_1P\vec{n}}^{-1}A \cap T_{M_2P\vec{n}}^{-1}A \right) > \alpha^3 - \eps \right\}
	\end{align*}
	is syndetic.
	Noting that $P$ is nonsingular, it follows that the set $P(R_{\eps})$ is also syndetic in $\Z^2$.
	But for any $\vec{m} \in P(R_{\eps})$, we have
	\begin{align*}
		\mu \left( A \cap T_{M_1\vec{m}}^{-1}A \cap T_{M_2\vec{m}}^{-1}A \right) > \alpha^3 - \eps.
	\end{align*}
	This shows $\epdd_{M_1, M_2}(\alpha) \ge \alpha^3$. \\
	
	To see the upper bound $\epdd_{M_1,M_2}(\alpha) \le \alpha^3$,
	let $\left( X, \mX, \mu, (T_{\vec{n}})_{\vec{n} \in \Z^2} \right)$ be mixing of order 3.
	Then for any $A \in \mX$, we have $\mu \left( A \cap T_{\vec{n}}^{-1}A \cap T_{\vec{m}}^{-1}A \right) \to \mu(A)^3$
	as $\vec{n}, \vec{m}, \vec{m}-\vec{n} \to \infty$.
	Let $P$ be a nonsingular $2\times2$ matrix with integer entries and $a, b, c \in \Z$ with $a, b \ne 0$ such that
	\begin{align*}
		PM_1 = \left( \begin{array}{cc} a & 0 \\ 0 & 0 \end{array} \right)P, \qquad \text{and} \qquad
		PM_2 = \left( \begin{array}{cc} 0 & c \\ 0 & b \end{array} \right)P.
	\end{align*}
	The group of transformations $\tilde{T}_{\vec{n}} := T_{P\vec{n}}$ is still mixing of order 3.
	Write $\vec{m} = P\vec{n}$ for $\vec{n} \in \Z^2$.
	If $m_1 \to \infty$ and $m_2 \to \infty$, then
	\begin{align*}
		\mu \left( A \cap \tilde{T}_{M_1\vec{n}}^{-1}A \cap \tilde{T}_{M_2\vec{n}}^{-1}A \right)
		 = \mu \left( A \cap T_{(am_1,0)}^{-1}A \cap T_{(cm_2,bm_2)}^{-1}A \right) \to \mu(A)^3.
	\end{align*}
	Hence, for any $\eps > 0$, there is a finite set $F \subseteq \Z$ such that
	\begin{align*}
		\left\{ \vec{n} \in \Z^2 : \mu \left( A \cap \tilde{T}_{M_1\vec{n}}^{-1}A \cap \tilde{T}_{M_2\vec{n}}^{-1}A \right)
		 > \mu(A)^3 + \eps \right\}
		 \subseteq \left\{ \vec{n} \in \Z^2 : P\vec{n} \in (F \times \Z) \cup (\Z \times F) \right\}.
	\end{align*}
	A union of finitely many lines in $\Z^2$ is not syndetic, so
	\begin{align*}
		\syndsup_{\vec{n}\in\Z^2}{\mu \left( A \cap \tilde{T}_{M_1\vec{n}}^{-1}A \cap \tilde{T}_{M_2\vec{n}}^{-1}A \right)} \le \mu(A)^3.
	\end{align*} \\
	
	\underline{Case 2}: $M_1$ and $M_2$ have no nonzero eigenvalues.
	
	Since $M_1$ has rank 1, there is a nonsingular $2\times2$ integer matrix $P$, a nonzero integer $a \in \Z$,
	and a rank 1 matrix $N_2$ with integer entries and characteristic polynomial $x^2$ such that
	\begin{align*}
		M_1P = P\left( \begin{array}{cc} 0 & a \\ 0 & 0 \end{array} \right), \qquad
		M_2P = PN_2, \qquad \text{and} \qquad
		\rk \left( N_2 - \left( \begin{array}{cc} 0 & a \\ 0 & 0 \end{array} \right) \right) = 2.
	\end{align*}
	Write
	\begin{align*}
		N_2 = \left( \begin{array}{cc} s & t \\ u & v \end{array} \right).
	\end{align*}
	Since $N_2$ has characteristic polynomial $x^2$, we have $s + v = 0$ and $sv = tu$.
	Therefore, if $u = 0$, then $s = v = 0$.
	But then
	\begin{align*}
		N_2 - \left( \begin{array}{cc} 0 & a \\ 0 & 0 \end{array} \right)
		 = \left( \begin{array}{cc} 0 & t-a \\ 0 & 0 \end{array} \right)
	\end{align*}
	has rank at most 1.
	Thus, we must have $u \ne 0$.
	It follows that $N_2$ can be written in the form
	\begin{align*}
		N_2 = \left( \begin{array}{cc} db & -d^2b \\ b & -db \end{array} \right)
	\end{align*}
	for some $b, d$ with $b \ne 0$.
	Changing to the basis $\binom{1}{0}, \binom{d}{1}$, we may assume $d = 0$ so that
	\begin{align*}
		N_2 = \left( \begin{array}{cc} 0 & 0 \\ b & 0 \end{array} \right).
	\end{align*}
	
	Given a $\Z^2$-system $\left( X, \mX, \mu, (T_{\vec{n}})_{\vec{n} \in \Z^2} \right)$, note that
	\begin{align*}
		\mu \left( A \cap T_{N_1\vec{n}}^{-1}A \cap T_{N_2\vec{n}}^{-1}A \right)
		 = \mu \left( A \cap T_{(an_2,0)}^{-1}A \cap T_{(0,bn_1)}^{-1}A \right).
	\end{align*}
	Hence, replacing $(n_1,n_2)$ by $(n_2,n_1)$, we reduce to Case 1.
\end{proof}

\subsection{Ergodic popular difference densities when $r(M_1,M_2) = (1,1,1)$}

For matrix configurations with $r(M_1, M_2) = (1,1,1)$, we must distinguish between several cases.
First, when $M_1$ and $M_2$ commute, a construction based on Behrend's theorem shows that
the ergodic popular difference density decays faster than any polynomial:

\begin{thm} \label{thm: (1,1,1) commuting}
	Suppose $M_1$ and $M_2$ are commuting $2\times2$ matrices such that $r(M_1, M_2) = (1,1,1)$.
	Then for any sufficiently small $\alpha \in (0,1)$, $\epdd_{M_1, M_2}(\alpha) < \alpha^{c\log(1/\alpha)}$,
	where $c > 0$ is an absolute constant.
\end{thm}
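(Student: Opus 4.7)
The plan is to first use a structural analysis to reduce the problem to constructing an ergodic $\Z^2$-system whose restriction to a rank-one subgroup is non-ergodic and admits a Behrend-type set. Writing each $M_i=u_iv_i^T$, the commutator identity $M_1M_2-M_2M_1 = (v_1\cdot u_2)u_1v_2^T - (v_2\cdot u_1)u_2v_1^T$ combined with $\rk(M_2-M_1)=1$ will force $M_2=rM_1$ for some $r\in\Q\setminus\{0,1\}$: either both halves of the commutator vanish, putting $u_2\in\ker M_1$ and $u_1\in\ker M_2$, and then linear independence of $u_1,u_2$ would make $M_2-M_1$ have rank $2$, so $u_1\parallel u_2$ and $M_1\parallel M_2$ follow; or both halves are equal nonzero rank-one matrices, again forcing $u_1\parallel u_2$ or $v_1\parallel v_2$. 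After clearing denominators and a rational basis change on $\Q^2$ (possibly passing to a finite-index sublattice of $\Z^2$, which is harmless for the syndeticity question), I would reduce to the normal form $M_1\vec{n}=(\ell(\vec{n}),0)$ and $M_2\vec{n}=(r\ell(\vec{n}),0)$, where $\ell:\Z^2\to\Z$ is a surjective linear functional and $r\in\Z\setminus\{0,1\}$.

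Next, because $F(\vec{n}):=\mu(A\cap T_{M_1\vec{n}}^{-1}A\cap T_{M_2\vec{n}}^{-1}A)$ depends only on $k=\ell(\vec{n})$, and preimages of subsets of $\Z$ under $\ell$ are syndetic in $\Z^2$ if and only if the subsets are syndetic in $\Z$, one has $\syndsup_{\vec{n}\in\Z^2}F(\vec{n})=\syndsup_{k\in\Z}G(k)$, with $G(k):=\mu(A\cap T_{(k,0)}^{-1}A\cap T_{(rk,0)}^{-1}A)$. The task thus reduces to constructing an ergodic $\Z^2$-system and a set $A$ of measure $\alpha$ with $\syndsup_{k\in\Z}G(k)<\alpha^{c\log(1/\alpha)}$.

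For the construction I would take the Heisenberg-type $\Z^2$-action on $X=\T^3$ given, for an irrational $\theta\in\T$, by
\[
T_{(n_1,n_2)}(x,y,z)=\bigl(x+n_1\theta,\;y+n_2\theta,\;z+n_1 y+n_2 x+n_1 n_2\theta\bigr).
\]
A direct check confirms this is a commutative $\Z^2$-action, and Fourier analysis on $\T^3$ confirms ergodicity. The crucial feature is that the subaction $T_{(k,0)}(x,y,z)=(x+k\theta,y,z+ky)$ fixes the $y$-coordinate and is therefore non-ergodic; this is indispensable, for if it were ergodic then the large intersections property of $\{n,rn\}$ in $\Z$ (a special case of the result of Frantzikinakis and Donoso--Le--Moreira--Sun cited after Theorem \ref{thm: bhk}) would force $\syndsup_k G(k)\ge\alpha^3$. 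I then take $A=\T\times\T\times B$, where $B\subset\T$ is the natural lift of a Behrend 3-AP-free set $B'\subset\Z/N\Z$ avoiding configurations $\{a,a+d,a+rd\}$ with $d\neq 0$ (Theorem \ref{Beh-additive}). A direct computation gives $G(k)=\int_{\T}m(B\cap(B-ky)\cap(B-rky))\,dy$, and for $k\neq 0$ the substitution $u=ky$ yields $G(k)=V$, where $V:=\int_{\T}m(B\cap(B-u)\cap(B-ru))\,du$ is independent of $k$. A Fourier-analytic estimate in which the nontrivial frequencies cancel by the 3-AP-freeness of $B'$ bounds $V$ by $O(1/N)$, which for a suitable $N=N(\alpha)$ translates to $V<\alpha^{c\log(1/\alpha)}$. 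Since $G(0)=\alpha$ but $\{0\}$ is not syndetic in $\Z$, we obtain $\syndsup_k G(k)=V$, as desired.

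The hardest part will be the simultaneous balancing of two competing constraints: the $\Z^2$-action must be ergodic (for admissibility in the definition of $\epdd$), while its $\Z\cdot(1,0)$-subaction must be non-ergodic in order to evade the Khintchine-type large-intersection lower bound. The Heisenberg nilsystem resolves this tension elegantly---ergodicity comes from the quasi-affine coupling in the $z$-coordinate, while non-ergodicity along $(1,0)$ comes from the preservation of $y$---and this structure is precisely compatible with embedding a Behrend-type 3-AP-free set in the $z$-coordinate to produce the claimed bound.
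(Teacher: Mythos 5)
Your proposal is essentially correct and reaches the same destination as the paper, but the construction of the witnessing ergodic $\Z^2$-system is genuinely different, and in a way worth noting. Both you and the paper perform the same structural reduction: commutativity plus $r(M_1,M_2)=(1,1,1)$ forces $M_2$ to be a scalar multiple of $M_1$, and after conjugating by an integer matrix $P$ one reduces to $PM_1 = \left(\begin{smallmatrix}a&0\\0&0\end{smallmatrix}\right)P$, $PM_2 = \left(\begin{smallmatrix}b&0\\0&0\end{smallmatrix}\right)P$ with $a,b$ distinct nonzero integers (your $r=b/a$ need not be an integer, so one should keep $a$ and $b$ separate throughout; this is a cosmetic issue). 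Both proofs then need an ergodic $\Z^2$-system in which the direction $\vec e_1$ acts by a transformation that fixes a coordinate, so that a Behrend-type set can be inserted. The paper takes the BHK set on $\T^2$ as a black box, together with the transformations $S(x,y)=(x,y+x)$ and the \emph{non-invertible} $R(x,y)=(2x,2y+x)$, which forces a detour through an abstract invertible $\Z^2$-extension. You instead define an explicit invertible $\Z^2$-action $T_{(n_1,n_2)}(x,y,z)=(x+n_1\theta,y+n_2\theta,z+n_1y+n_2x+n_1n_2\theta)$ on $\T^3$: the cocycle identity checks out, ergodicity follows by Fourier analysis on $\T^3$, and $T_{(k,0)}$ fixes $y$ as required. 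This is a concrete self-contained replacement for the BHK-plus-natural-extension step, and is arguably tidier.

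Two points are under-specified and should be spelled out. First, your remark that ``passing to a finite-index sublattice of $\Z^2$ is harmless'' hides the same step the paper must do: $\tilde T_{\vec n}:=T_{P\vec n}$ is \emph{not} ergodic in general, so one decomposes $\mu$ into at most $d=|\det P|$ ergodic components $\mu_i$, picks one with $\mu_i(A)\ge\alpha$ (or shrinks $A$ inside it to measure exactly $\alpha$), and uses the bound $\mu_i(\cdot)\le d\,\mu(\cdot)$, absorbing the factor $d$ for small $\alpha$. Second, the estimate $V=\int_{\T^2}1_B(z)1_B(z+au)1_B(z+bu)\,dz\,du<\alpha^{c\log(1/\alpha)}$ is asserted but not carried out. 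It is true, but the reason is not ``frequency cancellation''; it is a counting argument. Taking $B$ to be the union of $K=Ne^{-c_0\sqrt{\log N}}$ intervals of length $\eps\le 1/(C(|a|+|b|)N)$ around points $j/N$ with $j$ in a Behrend set $B'$ free of patterns $\{n,n+am,n+bm\}$ ($m\neq 0$), the only contributing triples $(j_1,j_2,j_3)$ are the trivial ones $j_1=j_2=j_3$, giving $V=O(K\eps^2)=O(\alpha\eps)$, and then matching $\alpha\approx e^{-c_0\sqrt{\log N}}$ gives the claimed superpolynomial bound. The crucial point that makes this work---and that distinguishes the continuous setting from the discrete one---is that the locus $u=0$ has measure zero, so the trivial progressions do not swamp the count.
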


Theorem \ref{thm: (1,1,1) commuting} applies to collinear three-point configurations up to scaling and translation.

\begin{proof}[Proof of Theorem \ref{thm: (1,1,1) commuting}]
	We first distinguish between two cases depending on diagonalizability of $M_1$ and $M_2$. \\
	
	\underline{Case 1}: $M_1$ or $M_2$ has a nonzero eigenvalue.
	
	Without loss of generality, assume $M_1$ has a nonzero eigenvalue and is therefore diagonalizable.
	Since $M_2$ and $M_2 - M_1$ are also rank 1 and commute with $M_1$,
	there exists a nonsingular $2\times2$ matrix $P$ with integer entries
	and $a, b \in \Z$ be distinct and nonzero such that
	\begin{align} \label{eq: rank1diag}
		PM_1 = \left( \begin{array}{cc} a & 0 \\ 0 & 0 \end{array} \right)P \qquad \text{and} \qquad
		PM_2 = \left( \begin{array}{cc} b & 0 \\ 0 & 0 \end{array} \right)P.
	\end{align} \\
	
	\underline{Case 2}: $M_1$ and $M_2$ have no nonzero eigenvalues.
	
	Using the condition $r(M_1,M_2) = (1,1,1)$, there is a nonsingular $2\times2$ integer matrix $P$,
	a nonzero integer $a \in \Z$, and a rank 1 matrix $N_2$ with integer entries and characteristic polynomial $x^2$
	such that
	\begin{align*}
		M_1P = P\left( \begin{array}{cc} 0 & a \\ 0 & 0 \end{array} \right), \qquad
		M_2P = PN_2, \qquad \text{and} \qquad
		\rk \left( N_2 - \left( \begin{array}{cc} 0 & a \\ 0 & 0 \end{array} \right) \right) = 1.
	\end{align*}
	Moreover, $N_2$ commutes with the matrix $\left( \begin{array}{cc} 0 & a \\ 0 & 0 \end{array} \right)$.
	Write
	\begin{align*}
		N_2 = \left( \begin{array}{cc} s & t \\ u & v \end{array} \right).
	\end{align*}
	Note that
	\begin{align*}
		\left[ \left( \begin{array}{cc} 0 & a \\ 0 & 0 \end{array} \right),
		 \left( \begin{array}{cc} s & t \\ u & v \end{array} \right) \right]
		 = \left( \begin{array}{cc} au & a(v-s) \\ 0 & au \end{array} \right),
	\end{align*}
	so $u = 0$ and $v = s$.
	On the other hand, since $N_2$ has characteristic polynomial $x^2$, we have $s + v = 0$ and $sv = tu$.
	Hence, $s = v = 0$, and $N_2$ is of the form
	\begin{align*}
		N_2 = \left( \begin{array}{cc} 0 & b \\ 0 & 0 \end{array} \right)
	\end{align*}
	with $b \notin \{0,a\}$.
	
	Now, replacing $(n_1,n_2) \in \Z^2$ by $(n_2,n_1) \in \Z^2$ and using the identity
	\begin{align*}
		\left( \begin{array}{cc} 0 & c \\ 0 & 0 \end{array} \right) \left( \begin{array}{c} n_2 \\ n_1 \end{array} \right)
		 = \left( \begin{array}{cc} c & 0 \\ 0 & 0 \end{array} \right) \left( \begin{array}{c} n_1 \\ n_2 \end{array} \right)
	\end{align*}
	for $c \in \Z$, we can reduce Case 2 to Case 1. \\
	
	Without loss of generality, let $P$ be a nonsingular $2\times2$ matrix with integer entries
	and $a, b \in \Z$ distinct and nonzero such that \eqref{eq: rank1diag} holds.
	Put $d := \left| \det(P) \right| \in \N$.
	
	Define $S : \T^2 \to \T^2$ by $S(x,y) := (x, y+x)$.
	Let $R : \T^2 \to \T^2$ be the transformation $R(x,y) = (2x, 2y + x)$.
	Both $S$ and $R$ preserve the Haar probability measure $\mu$ on $\T^2$.
	We claim that the $(\Z_{\ge 0})^2$-action generated by $S$ and $R$ is ergodic (with respect to $\mu$).
	To see this, suppose $f \in L^2(\T^2)$ is simultaneously $S$- and $R$-invariant, and expand $f$ as a Fourier series
	\begin{align*}
		f(x,y) = \sum_{n,m}{c_{n,m}e(nx+my)},
	\end{align*}
	where $e(t) := e(2\pi i t)$.
	Then
	\begin{align*}
		(Sf)(x,y) = \sum_{n,m}{c_{n,m}e((n+m)x+my)}
		 = \sum_{n,m}{c_{n-m,m} e(nx+my)}.
	\end{align*}
	Therefore, since $Sf = f$, we have $c_{n,m} = c_{n-m,m}$ for all $n,m \in \Z$.
	By Parseval's identity, $\sum_{n,m}{|c_{n,m}|^2} = \|f\|_2^2 < \infty$,
	so $c_{n,m} = 0$ whenever $m \ne 0$.
	That is, $f(x,y) = \sum_n{c_{n,0}e(nx)}$.
	Now,
	\begin{align*}
		(Rf)(x,y) = \sum_n{c_{n,0}e(2nx)}.
	\end{align*}
	Hence, since $Rf = f$, we have $c_{2n,0} = c_{n,0}$ for every $n \in \Z$.
	Applying Parseval's identity once again, we conclude that $c_{n,0} = 0$ for $n \ne 0$.
	Thus, $f(x,y) = c_{0,0}$ is a constant function.\\
	
	Fix $\alpha \in (0,1)$.
	By \cite[Theorem 1.3]{BHK}, there exists a set $A \subseteq \T^2$ with $\mu(A) = \alpha$
	such that $\mu \left( A \cap S^{-an}A \cap S^{-bn} A \right) < \alpha^{c\log(1/\alpha)}$ for $n \ne 0$,
	where $c > 0$ is an absolute constant.\footnote{The statement of \cite[Theorem 1.3]{BHK} only gives a bound
	of the form $\alpha^l$ rather than $\alpha^{c\log(1/\alpha)}$.
	However, as noted in \cite{BHK} immediately after the statement, the construction of the set $A$
	gives this stronger bound via Behrend's theorem on sets without three-term arithmetic progressions \cite{Beh}.
	Additionally, \cite[Theorem 1.3]{BHK} is only stated for the case $a=1, b = 2$,
	but the same method works for general $a, b$; see, e.g., \cite[Section 11]{ABB}.}
	
	Let $\left( X, \mX, \nu, (T_{\vec{n}})_{\vec{n} \in \Z^2} \right)$ be an ergodic $\Z^2$-system and $B \in \mX$ with $\nu(B) = \alpha$ such that
	\begin{align*}
		\nu \left( B \cap T_{\vec{n}}^{-1}B \cap T_{\vec{m}}^{-1}B \right)
		 = \mu \left( A \cap S^{-n_1}R^{-n_2}A \cap S^{-m_1}R^{-m_2}A \right)
	\end{align*}
	for every $\vec{n},\vec{m} \in \Z \times \Zp$.
	(Note that, because $R$ is non-invertible, we cannot simply take $X = \T^2$, $\nu = \mu$, $B = A$,
	and $T_{\vec{n}} = S^{n_1}R^{n_2}$.)
	Then let $\tilde{T}_{\vec{n}} := T_{P\vec{n}}$ for $\vec{n} \in \Z^2$.
	
	Since $[\Z^2 : P(\Z^2)] = \left| \det(P) \right| = d < \infty$, the system
	$\left( X, \mX, \nu, (\tilde{T}_{\vec{n}})_{\vec{n} \in \Z^2} \right)$ has at most $d$ ergodic components.
	Hence, we may write the ergodic decomposition as $\nu = \frac{1}{k} \sum_{i=1}^k{\nu_i}$
	for some $k \le d$ and some measure $\nu_i$.
	For some $1 \le i \le k$, we must have $\nu_i(B) \ge \alpha$.
	Without loss of generality, we may therefore assume $\nu_1(B) \ge \alpha$.
	
	Let $\vec{n} \in \Z^2 \setminus \{0\}$.
	Let $\vec{m} = P\vec{n} \in \Z^2$.
	Then
	\begin{align*}
		\nu_1 \left( B \cap \tilde{T}_{M_1\vec{n}}^{-1}B \cap \tilde{T}_{M_2\vec{n}}^{-1}B \right)
		 & = \nu_1 \left( B \cap T_{(am_1,0)}^{-1}B \cap T_{(bm_1,0)}^{-1}B \right) \\
		 & \le d \cdot \mu \left( A \cap S^{-am_1}A \cap S^{-bm_1}A \right).
	\end{align*}
	Hence, if $\nu_1 \left( B \cap \tilde{T}_{M_1\vec{n}}^{-1}B \cap \tilde{T}_{M_2\vec{n}}^{-1}B \right)
	 > d \cdot \alpha^{c\log(1/\alpha)}$, then $m_1 = 0$.
	But since $P$ is nonsingular,
	\begin{align*}
		\left\{ \vec{n} \in \Z^2 : P\vec{n} \in \{0\} \times \Z \right\} = \Q \vec{v} \cap \Z^2
	\end{align*}
	where $\vec{v}$ is the vector $P^{-1} \binom{0}{1} \in \Q^2$.
	Such a set is never syndetic, so $\epdd_{M_1,M_2}(\alpha) \le d \cdot \alpha^{c\log(1/\alpha)}$.
	For $c' < c$ and $\alpha$ sufficiently small, one has
	$d \cdot \alpha^{c\log(1/\alpha)} < \alpha^{c'\log(1/\alpha)}$, so this completes the proof.
\end{proof}

Now suppose $r(M_1, M_2) = (1,1,1)$, and $M_1$ and $M_2$ do not commute.
In this case, $M_1$ or $M_2$ must be diagonalizable,\footnote{If neither $M_1$ nor $M_2$ are diagonalizable,
then they both have characteristic polynomial $x^2$.
By a change of basis, we may assume $M_1$ is in its Jordan form
$M_1 = \left( \begin{array}{cc} 0 & 1 \\ 0 & 0 \end{array} \right)$.
Write $M_2 = \left( \begin{array}{cc} a & b \\ c & d \end{array} \right)$.
The condition $\rk(M_2) = \rk(M_2 - M_1) = 1$ implies that $ad - bc = ad - (b-1)c = 0$, so $c = 0$ and $ad = 0$.
Moreover, since $M_2$ has characteristic polynomial $x^2$, we have $a + d = 0$.
Hence, $M_2 = \left( \begin{array}{cc} 0 & b \\ 0 & 0 \end{array} \right)$.
But then $M_2$ commutes with $M_1$.}
so we assume without loss of generality that $M_1$ is diagonalizable.
We then distinguish between two cases, depending on the form of $M_2$ when $M_1$ is diagonalized.
Call the pair of matrices $(M_1, M_2)$ \emph{row-like} if there is a non-singular $2\times2$ matrix $P$ with rational entries
and rational numbers $a, b, c \in \Q$ with $a, b \ne 0$ such that
\begin{align*}
	PM_1P^{-1} = \left( \begin{array}{cc} a & 0 \\ 0 & 0 \end{array} \right) \qquad \text{and} \qquad
	PM_2P^{-1} = \left( \begin{array}{cc} c & b \\ 0 & 0 \end{array} \right).
\end{align*}
Similarly, call the pair $(M_1, M_2)$ \emph{column-like} if there is a non-singular $2\times2$ matrix $P$
with rational entries and rational numbers $a, b, c \in \Q$ with $a, b \ne 0$ such that
\begin{align*}
	PM_1P^{-1} = \left( \begin{array}{cc} a & 0 \\ 0 & 0 \end{array} \right) \qquad \text{and} \qquad
	PM_2P^{-1} = \left( \begin{array}{cc} c & 0 \\ b & 0 \end{array} \right).
\end{align*}

For row-like configurations, we can use the ``Fubini'' property of uniform Ces\`{a}ro limits (Lemma \ref{lem: fubini})
to show $\epdd(\alpha) = \alpha^3$:

\begin{thm} \label{thm: (1,1,1) row-like}
	Suppose $M_1$ and $M_2$ are $2\times2$ matrices with $r(M_1, M_2) = (1,1,1)$
	such that $(M_1, M_2)$ is row-like.
	Then for any $\alpha \in (0,1)$, $\epdd_{M_1,M_2}(\alpha) = \alpha^3$.
\end{thm}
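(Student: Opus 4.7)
The plan is to reduce the problem, via a change of basis and the Fubini property for UC-limits, to a one-parameter ergodic problem controllable by a Chu-type inequality. First, the row-like hypothesis provides a non-singular $P \in \mathrm{GL}_2(\Q)$ such that after base change $M_1 = \diag(a, 0)$ and $M_2 = \begin{pmatrix} c & b \\ 0 & 0 \end{pmatrix}$ with $a, b \ne 0$; clearing denominators in $P$ and passing to the finite-index sublattice $P(\Z^2) \cap \Z^2$, together with an ergodic-decomposition/Jensen step as in the final paragraph of the proof of Theorem \ref{Khintchineab}, reduces the problem to this canonical form from the outset.

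Setting $S := T_{(1,0)}$, the relevant transformations become $T_{M_1\vec n} = S^{an_1}$ and $T_{M_2\vec n} = S^{cn_1+bn_2}$, so the pattern depends only on the one-parameter action generated by $S$. Applying Lemma \ref{lem: fubini} with $n_1$ as the outer variable and $n_2$ as the inner variable, and using the mean ergodic theorem $\UC_{n_2}S^{-bn_2}(\cdot) = \E(\cdot \mid \mI_{S^b})$, gives
\[
\UC_{\vec n \in \Z^2}\mu\bigl(A \cap S^{-an_1}A \cap S^{-(cn_1+bn_2)}A\bigr)
 = \UC_{n_1 \in \Z}\int \ind_A \cdot S^{-an_1}\ind_A \cdot S^{-cn_1} h \, d\mu,
\]
where $h := \E(\ind_A \mid \mI_{S^b})$. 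When $c = 0$ the outer average collapses immediately to $\int \ind_A \cdot \E(\ind_A \mid \mI_{S^a}) \cdot \E(\ind_A \mid \mI_{S^b}) \, d\mu$, which is at least $\alpha^3$ by \cite[Lemma 1.6]{Chu}. When $c \ne 0$, the $S^b$-invariance of $h$ makes $S^{-cn_1}h$ periodic in $n_1$ with period $b' := b/\gcd(b,c)$; decomposing the outer average by residue classes $n_1 \equiv r \pmod{b'}$ and applying the mean ergodic theorem for $S^{ab'}$ within each class, the outer UC-limit rewrites as a UC-limit of a 3-point $\{0, ar, cr\}$ configuration in the $\Z$-system $(X, S)$ with test functions $\ind_A,\,\E(\ind_A \mid \mI_{S^{ab'}}),\,h$. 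Analyzing this via the Kronecker characteristic factor of $(X,S)$ and again invoking Chu's correlation inequality yields the desired lower bound of $\alpha^3$.

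The syndetic-supremum conclusion $\epdd_{M_1,M_2}(\alpha) \ge \alpha^3$ follows from the UC-limit lower bound by the standard cutoff argument on the Kronecker factor, as in the last paragraph of Subsection \ref{sec: phi, psi proof}, and the matching upper bound $\epdd_{M_1,M_2}(\alpha) \le \alpha^3$ is obtained by testing against a mixing-of-order-three $\Z^2$-system, analogous to Case 1 of the proof of Theorem \ref{thm: (2,1,1)}. The main technical obstacle is the case $c \ne 0$: the shifts $S^{-an_1}$ and $S^{-cn_1}$ are genuinely coupled through the common parameter $n_1$, so after the residue decomposition one must track the sub-$\sigma$-algebras carefully to verify that the final expression has precisely the triple-correlation form $\int \ind_A \cdot \E(\ind_A \mid \mathcal{B}_1) \cdot \E(\ind_A \mid \mathcal{B}_2) \, d\mu$ required by Chu's lemma, rather than an integral of two conditional expectations against each other with no $\ind_A$ in the first slot.
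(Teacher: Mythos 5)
The crucial move you're missing is the paper's second change of basis. After conjugating $M_1, M_2$ to $\begin{pmatrix} a & 0 \\ 0 & 0 \end{pmatrix}$ and $\begin{pmatrix} c & b \\ 0 & 0 \end{pmatrix}$, the paper further changes to the basis $\binom{b}{-c}, \binom{0}{1}$. A direct computation shows that in the new coordinates the second matrix becomes $\begin{pmatrix} 0 & b \\ 0 & 0 \end{pmatrix}$: indeed, writing $\vec n = n_1\binom{b}{-c} + n_2\binom{0}{1}$, one has $\begin{pmatrix} a & 0 \\ 0 & 0 \end{pmatrix}\vec n = \binom{abn_1}{0}$ and $\begin{pmatrix} c & b \\ 0 & 0 \end{pmatrix}\vec n = \binom{bn_2}{0}$, so $n_1$ and $n_2$ are fully decoupled from the outset. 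This eliminates the $c \ne 0$ case entirely, and the rest follows from Lemma \ref{lem: fubini} and the mean ergodic theorem applied independently to the two invariant $\sigma$-algebras.

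Your proposal instead treats $c \ne 0$ directly after Fubini, and this is where the gap lies. The residue-class decomposition you sketch replaces a UC-limit over $n_1 \in \Z$ by a finite average over $r \in \Z/b'\Z$ of integrals of the form
\[
\int \ind_A \cdot S^{-ar}\,\E\!\left(\ind_A \mid \mI_{S^{ab'}}\right) \cdot S^{-cr}\,\E\!\left(\ind_A \mid \mI_{S^b}\right)\,d\mu ,
\]
which is not a triple correlation $\int \ind_A \cdot \E(\ind_A\mid\mathcal B_1)\cdot\E(\ind_A\mid\mathcal B_2)\,d\mu$: the two conditional expectations are shifted by the \emph{coupled} amounts $ar$ and $cr$ and then averaged over just $b'$ residues, so Chu's inequality does not apply, and no straightforward passage to a $\{0,ar,cr\}$ configuration result fixes this (those theorems concern syndetic sets of $n$, not finite averages, and require all three functions to be $\ind_A$). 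You flag this yourself as the main obstacle, and as stated it is unresolved. The clean fix is the paper's observation: one is free to choose the lattice so that $c = 0$, removing the coupling entirely.
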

\begin{proof}
	Let $P$ be a nonsingular $2\times2$ matrix with integer entries such that
	\begin{align*}
		M_1P = P\left( \begin{array}{cc} a & 0 \\ 0 & 0 \end{array} \right) \qquad \text{and} \qquad
		M_2P = P\left( \begin{array}{cc} c & b \\ 0 & 0 \end{array} \right).
	\end{align*}
	By changing to the basis $\binom{b}{-c}, \binom{0}{1}$, we may assume $c = 0$.
	
	Let $\left( X, \mX, \mu, (T_{\vec{n}})_{\vec{n} \in \Z^2} \right)$ be a measure-preserving system,
	and let $A \in \mX$ with $\mu(A) = \alpha > 0$.
	Define a new $\Z^2$-action by $\tilde{T}_{\vec{n}} := T_{P\vec{n}}$, and let $S := \tilde{T}_{(1,0)}$.
	Then
	\begin{align*}
		\mu \left( A \cap T_{M_1P\vec{n}}^{-1}A \cap T_{M_2P\vec{n}}^{-1}A \right)
		 = \mu \left( A \cap S^{-an_1}A \cap S^{-bn_2}A \right).
	\end{align*}
	Thus, by Lemma \ref{lem: fubini}, we have
	\begin{align*}
		\UC_{\vec{n} \in \Z^2}{\mu \left( A \cap T_{M_1P\vec{n}}^{-1}A \cap T_{M_2P\vec{n}}^{-1}A \right)} \ge \alpha^3.
	\end{align*}
	Since $P$ is nonsingular, it follows that
	\begin{align*}
		\syndsup_{\vec{n} \in \Z^2}{\mu \left( A \cap T_{M_1\vec{n}}^{-1}A \cap T_{M_2\vec{n}}^{-1}A \right)} \ge \alpha^3.
	\end{align*}\\
	
	Now we will show $\epdd_{M_1,M_2}(\alpha) \le \alpha^3$.
	Let $P$ be a nonsingular $2\times2$ matrix with integer entries and $a, b, c \in \Z$ with $a, b \ne 0$ such that
	\begin{align*}
		PM_1 = \left( \begin{array}{cc} a & 0 \\ 0 & 0 \end{array} \right)P, \qquad \text{and} \qquad
		PM_2 = \left( \begin{array}{cc} 0 & b\\ 0 & 0 \end{array} \right)P.
	\end{align*}
	Let $\left( X, \mX, \mu, S, R \right)$ be an ergodic $\Z^2$-system such that $S$ is mixing of order 3.
	Define $T_{\vec{n}} := S^{n_1}R^{n_2}$ and $\tilde{T}_{\vec{n}} := T_{P\vec{n}}$ for $\vec{n} \in \Z^2$.
	Then for $A \in \mX$ and $\vec{m} = P\vec{n} \in \Z^2$, we have
	\begin{align*}
		\mu \left( A \cap \tilde{T}_{M_1\vec{n}}^{-1}A \cap \tilde{T}_{M_2\vec{n}}^{-1}A \right)
		 = \mu \left( A \cap S^{-am_1}A \cap S^{-bm_2}A \right).
	\end{align*}
	Since $S$ is mixing of order 3, given $\eps > 0$, there exists a finite set $F \subseteq \Z$
	such that
	\begin{align*}
		& \left\{ \vec{n} \in \Z^2 : \mu \left( A \cap \tilde{T}_{M_1\vec{n}}^{-1}A \cap \tilde{T}_{M_2\vec{n}}^{-1}A \right)
		 > \mu(A)^3 + \eps \right\} \\
		 & \quad \subseteq P^{-1} \left( \left\{ \vec{m} \in \Z^2 :
		  m_1 \in F, m_2 \in F,~\text{or}~bm_2 - am_1 \in F \right\} \right).
	\end{align*}
	This set is a union of finitely many lines in $\Z^2$, so it is not syndetic.
	Hence,
	\begin{align*}
		\syndsup_{\vec{n}\in\Z^2}{\mu \left( A \cap \tilde{T}_{M_1\vec{n}}^{-1}A \cap \tilde{T}_{M_2\vec{n}}^{-1}A \right)} \le \mu(A)^3.
	\end{align*}
\end{proof}

The prototypical column-like configuration is the class of axis-aligned isosceles right triangles,
for which it is known by previous work of Chu \cite{Chu} and Donoso and Sun \cite{DS} that
$\alpha^4 \le \epdd(\alpha) \le \alpha^{4-o(1)}$.
We prove that these bounds extend to all column-like configurations:

\begin{thm}
	Suppose $M_1$ and $M_2$ are $2\times2$ matrices with $r(M_1, M_2) = (1,1,1)$
	such that $(M_1, M_2)$ is column-like.
	Then for any $\alpha \in (0,1)$, $\epdd_{M_1,M_2}(\alpha) \ge \alpha^4$.
	Moreover, for any $l < 4$ and all sufficiently small $\alpha$ (depending on $l$),
	one has $\epdd_{M_1,M_2}(\alpha) \le \alpha^l$.
\end{thm}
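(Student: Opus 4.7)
The plan is to reduce both inequalities to known results about the corners pattern $\{\vec{x}, \vec{x} + n\vec{e}_1, \vec{x} + n\vec{e}_2\}$: Chu's theorem \cite[Theorem 1.1]{Chu} for the lower bound and the Donoso--Sun construction \cite[Theorem 1.2]{DS} for the upper bound. The column-like hypothesis is equivalent to $M_1$ and $M_2$ sharing a common one-dimensional kernel in $\Q^2$; let $\vec{k} \in \Z^2$ be a primitive generator of $\ker(M_i) \cap \Z^2$, let $\vec{k}' \in \Z^2$ complete $\vec{k}$ to a $\Z$-basis of $\Z^2$, and write $\vec{n} = n_0 \vec{k} + \pi(\vec{n}) \vec{k}'$ with $\pi : \Z^2 \to \Z$ a surjective homomorphism. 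Setting $w_i := M_i \vec{k}' \in \Z^2$ gives $M_i \vec{n} = \pi(\vec{n}) w_i$, and the column-like normal form (in which the $(2,1)$-entry $b$ of the normalized $M_2$ is nonzero) forces $w_1, w_2$ to be linearly independent; let $d := |\det[w_1 \ w_2]| \in \N$.

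For the lower bound I will set $S_1 := T_{w_1}$ and $S_2 := T_{w_2}$, which commute since $T$ is a $\Z^2$-action. One has
\begin{align*}
\mu \left( A \cap T_{M_1\vec{n}}^{-1}A \cap T_{M_2\vec{n}}^{-1}A \right)
= \mu \left( A \cap S_1^{-\pi(\vec{n})}A \cap S_2^{-\pi(\vec{n})}A \right),
\end{align*}
so \cite[Theorem 1.1]{Chu} applied to the commuting pair $(S_1, S_2)$ yields that $\{n \in \Z : \mu(A \cap S_1^{-n}A \cap S_2^{-n}A) > \alpha^4 - \eps\}$ is syndetic in $\Z$. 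Since $\pi$ is a surjective group homomorphism, its preimage is syndetic in $\Z^2$, which gives $\epdd_{M_1, M_2}(\alpha) \geq \alpha^4$.

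For the upper bound, fix $l < 4$, choose $l_0 \in (l, 4)$, and take (available for $\alpha$ sufficiently small by \cite[Theorem 1.2]{DS}) an ergodic $\Z^2$-system $(Y, \nu, V_1, V_2)$ with a set $B$ of measure $\alpha$ satisfying that $\{n : \nu(B \cap V_1^{-n}B \cap V_2^{-n}B) > \alpha^{l_0}\}$ is non-syndetic in $\Z$. Let $\varphi : \Z^2 \to \Z^2$ be the integer linear map obtained (up to sign) from the adjugate of $[w_1 \ w_2]$, so that $\varphi(w_i) = d \vec{e}_i$ and $\varphi(\Z^2)$ has index $d$ in $\Z^2$. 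Defining $T_{\vec{m}} := V_1^{(\varphi\vec{m})_1} V_2^{(\varphi\vec{m})_2}$ produces a $\Z^2$-action on $Y$ with $T_{M_i\vec{n}} = V_i^{d\pi(\vec{n})}$, so that
\begin{align*}
\nu \left( B \cap T_{M_1\vec{n}}^{-1}B \cap T_{M_2\vec{n}}^{-1}B \right)
= \nu \left( B \cap V_1^{-d\pi(\vec{n})}B \cap V_2^{-d\pi(\vec{n})}B \right).
\end{align*}
Although $(Y, \nu, T)$ need not be ergodic, it splits into at most $d$ ergodic components; pigeonhole yields one, $\nu_i$, with $\nu_i(B) \geq \alpha$, and I would shrink $B$ to a subset $B' \subseteq B$ of $\nu_i$-measure exactly $\alpha$ (using atomlessness of $(Y, \nu_i)$, if necessary after taking a product with $[0,1]$). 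The intersection is then bounded by $d\alpha^{l_0}$ whenever $d\pi(\vec{n})$ avoids the Donoso--Sun exceptional set; the preimage of this non-syndetic subset of $\Z$ under the surjective map $\vec{n} \mapsto d\pi(\vec{n})$ is non-syndetic in $\Z^2$. Choosing $\alpha$ small enough that $d\alpha^{l_0 - l} \leq 1$ yields $\epdd_{M_1, M_2}(\alpha) \leq \alpha^l$.

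The main obstacle I anticipate is the bookkeeping around ergodicity and measure matching in the upper bound: the pulled-back action $T$ is only a finite-index sub-action of the ergodic $V$-action, so one must pass to an ergodic component whose measure of $B$ may strictly exceed $\alpha$ and then resynchronize by shrinking $B$, absorbing the resulting constant factor $d$ into the polynomial bound only by restricting $\alpha$ to be small relative to $d$ and $l_0 - l$.
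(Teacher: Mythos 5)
Your proof follows the same route as the paper: reduce to a corner-type configuration via a change of basis adapted to the common kernel, invoke Chu's theorem \cite[Theorem 1.1]{Chu} for the lower bound, and use the Donoso--Sun construction \cite[Theorem 1.2]{DS} for the upper bound. Your kernel-vector/adjugate change of basis is somewhat cleaner than the paper's normal form manipulations, and your explicit passage from $\nu_i(B)\ge\alpha$ to a subset of $\nu_i$-measure exactly $\alpha$ is a normalization that the paper skims over.

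Two points to tighten. First, in the lower bound you apply Chu's theorem to the pair $(S_1,S_2)=(T_{w_1},T_{w_2})$, but the $\Z^2$-action generated by $S_1$ and $S_2$ need not be ergodic even though $(T_{\vec{n}})$ is; you should note that since $w_1,w_2$ are linearly independent, the subgroup $\Z w_1+\Z w_2$ has finite index in $\Z^2$, so the $(S_1,S_2)$-action has finitely many ergodic components, and then invoke the extension of Chu's theorem to finitely many ergodic components (as the paper's footnote remarks, all these components share the same Kronecker factor). Second, the parenthetical suggestion of restoring atomlessness by taking a product with $[0,1]$ is problematic: $\epdd$ is defined as an infimum over \emph{ergodic} $\Z^2$-systems, and adjoining a $[0,1]$-factor with trivial action destroys ergodicity. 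This is fortunately unnecessary, since the Donoso--Sun system is built on a non-atomic probability space, so each ergodic component $\nu_i$ of the finite-index sub-action is already atomless and the shrinking step is available without modification.
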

\begin{proof}
	Let $\left( X, \mX, \mu, (T_{\vec{n}})_{\vec{n} \in \Z^2} \right)$ be an ergodic $\Z^2$-system.
	Since the pair $(M_1, M_2)$ is column-like, there exists a nonsingular $2\times2$ matrix $P$ with integer entries
	and integers $a, b, c \in \Z$ with $a, b \ne 0$ such that
	\begin{align*}
		M_1P = P\left( \begin{array}{cc} a & 0 \\ 0 & 0 \end{array} \right) \qquad \text{and} \qquad
		M_2P = P\left( \begin{array}{cc} c & 0 \\ b & 0 \end{array} \right).
	\end{align*}
	Then for any $\vec{n} \in \Z^2$, we have
	\begin{align*}
		\mu \left( A \cap T_{M_1P\vec{n}}^{-1}A \cap T_{M_2P\vec{n}}^{-1}A \right)
		 = \mu \left( A \cap T_{P(an_1,0)}^{-1}A \cap T_{P(cn_1,bn_1)}^{-1}A \right).
	\end{align*}
	Letting $S := T_{P(a,0)}$ and $R := T_{P(c,b)}$, we therefore have the identity
	\begin{align*}
		\mu \left( A \cap T_{M_1P\vec{n}}^{-1}A \cap T_{M_2P\vec{n}}^{-1}A \right)
		 = \mu \left( A \cap S^{-n_1}A \cap R^{-n_1}A \right).
	\end{align*}
	Now, since $T$ is ergodic and $P$ is nonsingular,
	the $\Z^2$-action generated by $S$ and $R$ has finitely many ergodic components.
	Thus, by \cite[Theorem 1.1]{Chu},
	\begin{align*}
		\left\{ n \in \Z : \mu \left( A \cap S^{-n}A \cap R^{-n}A \right) \ge \mu(A)^4 \right\}
	\end{align*}
	is syndetic in $\Z$.\footnote{In \cite{Chu}, it is assumed that the system $(X, \mX, \mu, S, R)$ is ergodic.
	However, the proof easily extends to the case that the system has finitely many ergodic components
	by noting that all of the ergodic components will have the same Kronecker factor.}
	It follows that
	\begin{align*}
		\left\{ \vec{n} \in \Z^2 : \mu \left( A \cap T_{M_1\vec{n}}^{-1}A \cap T_{M_2\vec{n}}^{-1}A \right) \ge \mu(A)^4 \right\}
	\end{align*}
	is syndetic in $\Z^2$.
	Hence, $\epdd_{M_1,M_2}(\alpha) \ge \alpha^4$. \\
	
	Let $l < 4$.
	By \cite[Theorem 1.2]{DS}, there exists an ergodic $\Z^2$-system $\left( X, \mX, \mu, S, R \right)$
	and a set $A \in \mX$ such that $\mu \left( A \cap S^{-n}A \cap R^{-n}A \right) < \mu(A)^l$ for every $n \ne 0$.
	Since the pair $(M_1,M_2)$ is column-like, there is a nonsingular $2\times2$ matrix $P$ with integer entries
	and integers $a,b,c \in \Z$ with $a, b \ne 0$ such that
	\begin{align*}
		PM_1 = \left( \begin{array}{cc} a & 0 \\ 0 & 0 \end{array} \right)P \qquad \text{and} \qquad
		PM_2 = \left( \begin{array}{cc} c & 0 \\ b & 0 \end{array} \right)P.
	\end{align*}
	Define $T_{\vec{n}} := S^{bn_1}(R^aS^{-c})^{n_2}$, and let $\tilde{T}_{\vec{n}} := T_{P\vec{n}}$ for $n \in \Z^2$.
	Note that $\left( X, \mX, \mu, (\tilde{T}_{\vec{n}})_{\vec{n} \in \Z^2} \right)$ has finitely many ergodic components.
	To be more precise, the ergodic decomposition has the form $\mu = \frac{1}{k} \sum_{i=1}^k{\mu_i}$
	with $k \le d := \left| ab\det(P) \right|$.
	Without loss of generality, we may assume $\mu_1(A) \ge \mu(A)$.
	
	Now, for any $\vec{n} \ne 0$, we have
	\begin{align*}
		\mu_1 \left( A \cap \tilde{T}_{M_1\vec{n}}^{-1}A \cap \tilde{T}_{M_2\vec{n}}^{-1}A \right)
		 \le d \cdot \mu \left( A \cap S^{-abm_1}A \cap R^{-abm_1}A\right)
	\end{align*}
	where $\vec{m} = P\vec{n} \in \Z^2$.
	Therefore,
	\begin{align*}
		\left\{ \vec{n} \in \Z^2 : \mu_1 \left( A \cap \tilde{T}_{M_1\vec{n}}^{-1}A \cap \tilde{T}_{M_2\vec{n}}^{-1}A \right)
		 \ge d \cdot \mu_1(A)^l \right\}
		 \subseteq \left\{ \vec{n} \in \Z^2 : P\vec{n} \in \{0\} \times \Z \right\}
		 \subseteq \Q \vec{v} \cap \Z^2,
	\end{align*}
	where $\vec{v} = P^{-1}\binom{0}{1} \in \Q^2$.
	The set $\Q \vec{v} \cap \Z^2$ is not syndetic, so this shows $\epdd_{M_1,M_2}(\alpha) \le d \cdot \alpha^l$
	for $\alpha = \mu(A)$.
	Moreover, for any $l' < l$, we have the inequality $d \cdot \alpha^l < \alpha^{l'}$ for all $\alpha > 0$ sufficiently small.
\end{proof}

\subsection{Finitary combinatorial consequences and open questions} \label{sec: Z^2 finitary}

There are two cases in which our ergodic-theoretic results directly imply finitary combinatorial analogues.
Namely, when $r(M_1,M_2) = (2,1,1)$ and when $(M_1,M_2)$
is a row-like pair of non-commuting matrices with $r(M_1,M_2) = (1,1,1)$,
we establish the bound $\epdd_{M_1,M_2}(\alpha) \ge \alpha^3$
with the help of the ``Fubini'' property for uniform Ces\`{a}ro limits (Lemma \ref{lem: fubini}),
and this allows us to avoid assuming that the underlying $\Z^2$-system is ergodic.
For this reason, we can obtain the following combinatorial result:

\begin{thm} \label{thm: Z^2 finitary}
	Let $M_1, M_2$ be $2\times2$ matrices with integer entries.
	Suppose that either
	\begin{enumerate}[(i)]
		\item	$r(M_1,M_2) = (2,1,1)$, or 
		\item	$r(M_1,M_2) = (1,1,1)$, $M_1$ and $M_2$ do not commute, and $(M_1,M_2)$ is row-like.
	\end{enumerate}
	Then for any $\alpha, \eps > 0$, there exists $N_0 = N_0(\alpha, \eps) \in \N$ such that,
	if $N \ge N_0$ and $A \subseteq \{1, \dots, N\}^2$ has $|A| \ge \alpha N^2$, then
	there exists $\vec{n} \in \Z^2$ with $M_1\vec{n}, M_2\vec{n}, (M_2-M_1)\vec{n} \ne 0$ such that
	\begin{align*}
		 \left| \left\{ \vec{x} \in \Z^2 : \{\vec{x}, \vec{x} + M_1\vec{n}, \vec{x} + M_2\vec{n}\} \subseteq A \right\} \right|
		  > (\alpha^3 - \eps) N^2.
	\end{align*}
\end{thm}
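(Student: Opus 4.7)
The plan is to deduce Theorem \ref{thm: Z^2 finitary} from the ergodic bounds $\epdd_{M_1,M_2}(\alpha) \ge \alpha^3$ established in Theorems \ref{thm: (2,1,1)} and \ref{thm: (1,1,1) row-like}, via a Furstenberg-type correspondence principle. The crucial feature enabling this reduction is that, in cases (i) and (ii), the lower bound $\epdd_{M_1,M_2}(\alpha) \ge \alpha^3$ was proved using Lemma \ref{lem: fubini} together with Chu's inequality $\int f \cdot \E(f\mid \mI(S_1)) \cdot \E(f\mid \mI(S_2))\,d\mu \ge \left(\int f\,d\mu\right)^3$ and the mean ergodic theorem applied to individual commuting transformations, none of which require $\Z^2$-ergodicity. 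Consequently, both theorems remain valid for arbitrary measure-preserving $\Z^2$-systems.

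I would proceed by contradiction. Suppose the conclusion fails: then there exist $\alpha, \eps > 0$, a sequence $N_k \to \infty$, and sets $A_k \subseteq \{1,\dots,N_k\}^2$ with $|A_k| \ge \alpha N_k^2$ such that
\begin{align*}
\left|\{\vec x \in \Z^2 : \{\vec x, \vec x + M_1\vec n, \vec x + M_2 \vec n\} \subseteq A_k\}\right| \le (\alpha^3 - \eps) N_k^2
\end{align*}
for every $\vec n \in \Z^2$ with $M_1\vec n, M_2\vec n, (M_2 - M_1)\vec n \ne 0$. Identify each $\mathbf 1_{A_k}$ with its extension by zero to a point in $\{0,1\}^{\Z^2}$, on which the shift action $T_{\vec n}\omega(\vec m) := \omega(\vec m + \vec n)$ is defined. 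Form the empirical measures
\begin{align*}
\mu_k := \frac{1}{N_k^2} \sum_{\vec x \in \{1,\dots,N_k\}^2} \delta_{T_{\vec x}\mathbf 1_{A_k}},
\end{align*}
pass to a weak-$*$ subsequential limit $\mu$, and set $\tilde A := \{\omega : \omega(\vec 0) = 1\}$, a clopen set. A standard F\o{}lner argument yields that $\mu$ is shift-invariant, that $\mu(\tilde A) \ge \alpha$, and that for each fixed $\vec n_1, \vec n_2$,
\begin{align*}
\mu\!\left(\tilde A \cap T_{\vec n_1}^{-1}\tilde A \cap T_{\vec n_2}^{-1}\tilde A\right) = \lim_{k} \frac{\left|\{\vec x : \vec x, \vec x + \vec n_1, \vec x + \vec n_2 \in A_k\}\right|}{N_k^2}
\end{align*}
along the subsequence (boundary effects being negligible for large $N_k$). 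By the standing assumption, this quantity is at most $\alpha^3 - \eps$ whenever $\vec n$ is nondegenerate.

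On the other hand, applying Theorem \ref{thm: (2,1,1)} (in case (i)) or Theorem \ref{thm: (1,1,1) row-like} (in case (ii)) to the (possibly non-ergodic) $\Z^2$-system $(\{0,1\}^{\Z^2}, \mu, (T_{\vec n}))$ produces the uniform Ces\`aro lower bound $\UC_{\vec n \in \Z^2}\mu(\tilde A \cap T_{M_1\vec n}^{-1}\tilde A \cap T_{M_2\vec n}^{-1}\tilde A) \ge \mu(\tilde A)^3 \ge \alpha^3$, from which it follows that the super-level set
\begin{align*}
S := \left\{\vec n \in \Z^2 : \mu\!\left(\tilde A \cap T_{M_1\vec n}^{-1}\tilde A \cap T_{M_2\vec n}^{-1}\tilde A\right) > \alpha^3 - \tfrac{\eps}{2}\right\}
\end{align*}
is syndetic. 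The degenerate set $\{\vec n \in \Z^2 : M_1\vec n = 0 \text{ or } M_2\vec n = 0 \text{ or } (M_2 - M_1)\vec n = 0\}$ is a finite union of rational lines through the origin and thus has density zero, whereas $S$ has positive density; hence $S$ contains a nondegenerate $\vec n$, contradicting the upper bound $\alpha^3 - \eps$ from the previous paragraph.

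The main obstacle is verifying that Theorems \ref{thm: (2,1,1)} and \ref{thm: (1,1,1) row-like} do in fact produce a \emph{syndetic} super-level set (not merely a nonempty one) in the non-ergodic setting: this requires revisiting their proofs and noting that Lemma \ref{lem: fubini} together with Chu's inequality (in case (ii)) or the mean ergodic theorem (in case (i)) actually establishes the uniform Ces\`aro lower bound $\ge \mu(\tilde A)^3$, from which syndeticity of the super-level set is immediate. The remaining bookkeeping in the correspondence principle, namely controlling boundary effects and excluding degenerate $\vec n$, is routine.
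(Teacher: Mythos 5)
Your proof is correct and takes essentially the same route as the paper: a proof by contradiction via the Furstenberg correspondence principle (you build the invariant measure as a weak-$*$ limit of empirical measures on $\{0,1\}^{\Z^2}$, while the paper equivalently extracts a subsequence so that all cylinder-set frequencies converge and invokes Kolmogorov extension), combined with the observation that the lower bounds in Theorems \ref{thm: (2,1,1)} and \ref{thm: (1,1,1) row-like} rest only on Lemma \ref{lem: fubini}, the mean ergodic theorem, and the $\int f\cdot\E(f\mid\mathcal{A})\cdot\E(f\mid\mathcal{B})\,d\mu\ge(\int f\,d\mu)^3$ inequality, none of which require $\Z^2$-ergodicity, so that the syndeticity of the super-level set persists and cannot be absorbed by the non-syndetic union of degenerate kernels. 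The obstacle you flag at the end is precisely the point the paper relies on (and signals explicitly in the proof of Theorem \ref{thm: (2,1,1)}), and your resolution of it is the correct one.
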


\begin{proof}
	Let $\alpha, \eps > 0$ and suppose no such $N_0$ exists.
	Then there is an increasing sequence $(N_k)_{k \in \N}$ in $\N$
	and sets $A_k \subseteq \{1, \dots, N_k\}^2$
	with $|A_k| \ge \alpha N_k^2$ such that
	\begin{align*}
		 \left| A_k \cap \left( A_k - M_1\vec{n} \right) \cap \left( A_k - M_2\vec{n} \right) \right|
		 \le (\alpha^3 - \eps) N_k^2
	\end{align*}
	whenever $M_1\vec{n}, M_2\vec{n}, (M_2-M_1)\vec{n} \ne 0$.

	For notational convenience, let $A_{k,0} := \Z^2 \setminus A_k$ and $A_{k,1} := A_k$.
	By passing to a subsequence if necessary, we may assume without loss of generality that
	\begin{align} \label{eq: cylinder measure}
		\lim_{k \to \infty}{\frac{\left| \left( A_{k,i_1} - \vec{n}_1 \right) \cap \dots
		 \cap \left( A_{k,i_r} - \vec{n}_r \right) \cap \{1, \dots, N_k\}^2 \right|}{N_k^2}}
	\end{align}
	exists for all $r \in \N$, $\vec{n}_1, \dots, \vec{n}_r \in \Z^2$, and $i_1, \dots, i_r \in \{0,1\}$.
	Hence, we may define a measure $\mu$ on the sequence space $\{0,1\}^{\Z^2}$ by setting
	\begin{align*}
		\mu \left( \left\{ x \in X : x(\vec{n}_1) = i_1, \dots, x(\vec{n}_r) = i_r \right\} \right)
	\end{align*}
	equal to the limit \eqref{eq: cylinder measure} and extending with the use of Kolmogorov's extension theorem.
	Since $\left( \{1, \dots, N_k\}^2 \right)_{k \in \N}$ is a F{\o}lner sequence in $\Z^2$,
	the measure $\mu$ is invariant under the shift transformations $(T_{\vec{n}}x)(\vec{m}) := x(\vec{m}+\vec{n})$.
	
	Let $A := \{x \in X : x(\vec{0}) = 1\}$.
	Then $\mu(A) = \lim_{k \to \infty}{\frac{|A_k|}{N_k^2}} \ge \alpha$.
	On the other hand, if $M_1\vec{n}, M_2\vec{n}, (M_2-M_1)\vec{n} \ne 0$, then
	\begin{align*}
		\mu \left( A \cap T_{M_1\vec{n}}^{-1}A \cap T_{M_2\vec{n}}^{-1}A \right)
		 & = \mu \left( \{x \in X : x(\vec{0}) = x(M_1\vec{n}) = x(M_2\vec{n}) = 1\} \right) \\
		 & = \lim_{k \to \infty}{\frac{\left| A_k \cap \left( A_k - M_1\vec{n} \right) \cap \left( A_k - M_2\vec{n} \right) \right|}
		 {N_k^2}} \\
		 & \le \alpha^3 - \eps.
	\end{align*}
	Hence,
	\begin{align*}
		R_{\eps} & := \left\{ \vec{n} \in \Z^2 :
		 \mu \left( A \cap T_{M_1\vec{n}}^{-1}A \cap T_{M_2\vec{n}}^{-1}A \right) > \mu(A)^3 - \eps \right\} \\
		 & \subseteq \ker(M_1) \cup \ker(M_2) \cup \ker(M_2-M_1).
	\end{align*}
	But by the proofs of Theorems \ref{thm: (2,1,1)} and \ref{thm: (1,1,1) row-like},
	$R_{\eps}$ is a syndetic subset of $\Z^2$, so this is a contradiction.
\end{proof}

For general 3-point matrix patterns in $\Z^2$, it remains an open problem to fully determine (finitary combinatorial)
popular difference densities.
One particularly attractive case, which can be seen as a finitary version of Question \ref{Khintchine}
for the group $G = \Z^2$, is the following:

\begin{conj} \label{conj: Z^2 finitary}
	Let $M_1$ and $M_2$ be $2\times2$ matrices with integer entries such that $M_2 - M_1$ has full rank.
	Then for any $\alpha, \eps > 0$, there exists $N_0 = N_0(\alpha, \eps) \in \N$ such that,
	if $N \ge N_0$ and $A \subseteq \{1, \dots, N\}^2$ has cardinality $|A| \ge \alpha N^2$,
	then there exists $\vec{n} \in \Z^2$ with $M_1\vec{n}, M_2\vec{n} \ne 0$ such that
	\begin{align*}
		 \left| \left\{ \vec{x} \in \Z^2 : \{\vec{x}, \vec{x} + M_1\vec{n}, \vec{x} + M_2\vec{n}\} \subseteq A \right\} \right|
		  > (\alpha^3 - \eps) N^2.
	\end{align*}
\end{conj}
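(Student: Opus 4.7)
The plan is to mimic the proof of Theorem \ref{thm: Z^2 finitary}: apply a Furstenberg correspondence to reduce to an ergodic-theoretic statement and then invoke the classification of Table \ref{table: epdd}. Assume for contradiction that the conclusion fails for some $\alpha, \eps > 0$. Then there exist $N_k \to \infty$ and sets $A_k \subseteq \{1, \dots, N_k\}^2$ with $|A_k| \ge \alpha N_k^2$ such that $|A_k \cap (A_k - M_1\vec{n}) \cap (A_k - M_2\vec{n})| \le (\alpha^3 - \eps) N_k^2$ whenever $M_1\vec{n}, M_2\vec{n} \ne 0$. Passing to a subsequence along which every cylinder-set frequency converges (exactly as in the proof of Theorem \ref{thm: Z^2 finitary}) and invoking Kolmogorov's extension theorem yields a $\Z^2$-system $(X, \mX, \mu, T)$ and $A \in \mX$ with $\mu(A) \ge \alpha$ for which
\[
	\mu \left( A \cap T_{M_1\vec{n}}^{-1}A \cap T_{M_2\vec{n}}^{-1}A \right) \le \alpha^3 - \eps
\]
for every $\vec{n}$ with $M_1\vec{n}, M_2\vec{n} \ne 0$.

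Since $\rk(M_2 - M_1) = 2$, the pair $(M_1, M_2)$ falls into one of the three rows of Table \ref{table: epdd} with $\epdd = \alpha^3$: profile $(2,2,2)$, covered by \cite[Thm 1.10]{ABB}; profile $(2,2,1)$ with $M_2 - M_1$ of rank $2$, covered by Theorem \ref{Khintchinefiniteindex}; and profile $(2,1,1)$ with $M_2 - M_1$ of rank $2$, covered by Theorem \ref{thm: (2,1,1)}. The last case uses a Fubini argument that does not require ergodicity and is already contained in Theorem \ref{thm: Z^2 finitary}(i). The main obstacle, and the reason the conjecture remains open, lies in the first two cases: the known ergodic-theoretic proofs require the system to be ergodic, while the Furstenberg correspondence produces a general (typically non-ergodic) $\Z^2$-system.

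To circumvent this, the strategy is to combine the ergodic decomposition $\mu = \int \mu_\omega\, dP(\omega)$ with Jensen's inequality. Inspection of the proof of Theorem \ref{Khintchinefiniteindex} (and of \cite[Thm 1.10]{ABB}) shows that, for each ergodic component $\mu_\omega$, the lower bound is actually established in weighted form: there is a nonnegative weight $\eta_\omega$ on $\Z^2$, built from the Kronecker factor of $\mu_\omega$, with $\UC_{\vec{n}} \eta_\omega(\vec{n}) = 1$ and
\[
	\UC_{\vec{n}} \eta_\omega(\vec{n})\, \mu_\omega \left( A \cap T_{M_1\vec{n}}^{-1}A \cap T_{M_2\vec{n}}^{-1}A \right) \ge \mu_\omega(A)^3 - \delta.
\]
The hardest step is to produce a single weight $\eta$ serving $P$-a.e. ergodic component simultaneously; the natural candidate is the weight coming from the Kronecker factor of the non-ergodic system $(X, \mu, T)$ itself, which should fiber over the Kronecker factors of the $\mu_\omega$. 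Once such an $\eta$ is in hand, integrating against $dP(\omega)$ and applying Jensen's inequality $\int \mu_\omega(A)^3\, dP(\omega) \ge \alpha^3$ yields $\UC_{\vec{n}} \eta(\vec{n})\, \mu(A \cap T_{M_1\vec{n}}^{-1}A \cap T_{M_2\vec{n}}^{-1}A) \ge \alpha^3 - \delta$; since $\{\vec{n} : M_1\vec{n} = 0\} \cup \{\vec{n} : M_2\vec{n} = 0\}$ is a union of finitely many lines and hence of zero upper Banach density in $\Z^2$, some valid $\vec{n}$ witnesses $\mu(A \cap \ldots) > \alpha^3 - \eps$, contradicting the assumption. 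Making the construction of the common $\eta$ rigorous amounts to extending the Frantzikinakis--Host decomposition and the Kronecker-factor machinery of Theorem \ref{HKfactors} to non-ergodic $G$-systems in a form compatible with measurable selection over the ergodic decomposition, and this is the genuine difficulty.
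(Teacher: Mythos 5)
This statement is labeled as a \emph{Conjecture} in the paper: the authors explicitly say that determining popular difference densities for general 3-point matrix patterns in $\Z^2$ ``remains an open problem,'' and they offer no proof of Conjecture~\ref{conj: Z^2 finitary}. They verify it only in two sub-cases --- when $M_1$, $M_2$, and $M_2 - M_1$ are all invertible (citing \cite[Theorem~1.1]{BSST}) and when $M_1$ and $M_2$ both have rank $1$ (Theorem~\ref{thm: Z^2 finitary}) --- and they single out the case $\rk(M_1)=2$, $\rk(M_2)=1$ as the interesting one left open. So there is no ``paper's proof'' to compare against.

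Your write-up is therefore not a proof but a diagnosis, and a correct one. The Furstenberg correspondence produces a genuinely non-ergodic $\Z^2$-system, whereas Theorem~\ref{Khintchinefiniteindex} and \cite[Theorem~1.10]{ABB} only give the bound
\[
\mu\left( A \cap T_{M_1\vec{n}}^{-1}A \cap T_{M_2\vec{n}}^{-1}A \right) > \mu(A)^3 - \eps
\]
on a syndetic set for \emph{ergodic} systems, and they do so by choosing a weight $\eta$ built from the Kronecker factor of that ergodic system. To push the argument through Jensen's inequality over the ergodic decomposition $\mu = \int \mu_\omega\, dP(\omega)$, you would need a single weight $\eta$ (equivalently, a single syndetic set of good $\vec{n}$) that works for $P$-almost every component $\mu_\omega$ simultaneously; since the Kronecker factor of $\mu_\omega$ and hence the choice of $\eta_\omega$ varies with $\omega$, this is exactly where the known ergodic machinery does not carry over, and it is the step you flag as ``the genuine difficulty'' and do not supply. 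The $(2,1,1)$ case avoids this only because the Fubini argument of Theorem~\ref{thm: (2,1,1)} does not pass to the ergodic decomposition at all. Until the common-weight (or some substitute) step is made rigorous, no proof exists, consistent with the paper's labeling of this as an open conjecture.
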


The special case when $M_1, M_2$, and $M_2 - M_1$ are all invertible,
Conjecture \ref{conj: Z^2 finitary} was verified by \cite[Theorem 1.1]{BSST}.
Moreover, Theorem \ref{thm: Z^2 finitary} shows that Conjecture \ref{conj: Z^2 finitary} holds
when $M_1$ and $M_2$ are both rank 1 matrices.
The most interesting remaining case is when $M_1$ has full rank and $M_2$ is a rank 1 matrix.

Finally, the column-like family of configurations $\{(a,b), (a+n,b), (a,b+n)\}$, known as \emph{corners},
has been well-studied from the perspective of popular differences in finitary combinatorics.
In particular, it is known that the popular difference density for corners is of the form $\alpha^{4-o(1)}$;
see \cite{Berger} and also \cite{Man, FSSSZ} for an analogous result in a finite characteristic setting.
To the authors' knowledge, such results are not known for general column-like matrix patterns,
but we anticipate that techniques for handling corners should apply in this generality
with only minor modifications needed.

\section{Khintchine-type recurrence for actions of semigroups} \label{sec: semigroups}

As a consequence of Theorem \ref{Khintchineab}, we obtain the following combinatorial result.
For any set $E \subseteq \Q_{>0}$ of positive multiplicative upper Banach density $d^*_{mult}(E) > 0$ and any $\eps > 0$, there exists $q \in \Q_{>0} \setminus \{1\}$ such that
\begin{align*}
	d^*_{mult} \left( E \cap q^{-1}E \cap q^{-2}E \right) > d^*_{mult}(E)^3 - \eps
\end{align*}
(in fact, the set of such $q$ is multiplicatively syndetic).
More generally, for any countable field $\F$, any set $E \subseteq \F^{\times}$
of positive multiplicative upper Banach density $d_{mult}^*(E) > 0$ and any $\eps > 0$,
the set of $x \in \F^{\times}$ such that
\begin{align*}
	d^*_{mult} \left( E \cap x^{-1}E \cap x^{-2}E \right) > d^*_{mult}(E)^3 - \eps
\end{align*}
is multiplicatively syndetic.\footnote{In fact, our results show that for any $k \in \N$, $d_{\text{mult}}^* \left( E \cap x^{-k}E \cap x^{-(k+1)}E \right)$ and $d_{\text{mult}}^*\left(E\cap x^{-1} E \cap x^{-k} E\right)$ can be made arbitrarily close to $d_{\text{mult}}^*(E)^3$ for a multiplicatively syndetic set of $x \in \F^{\times}$.
On the other hand, by Theorem \ref{counterexample},
there are $n, m \in \mathbb{N}$ such that $d_{\text{mult}}^* \left( E \cap x^{-n}E \cap x^{-m}E \right)$ is much smaller than $d_{\text{mult}}^*(E)^3$ for all $x \ne 1$.}
This is suggestive of the following problem.
Let $R$ be an integral domain.
(For example, $R$ can be the ring $\Z$, the ring of integers of a number field, or the polynomial ring $\F[t]$ over a finite field $\F$.)
Given a set $E \subseteq R^{\times}$ of positive multiplicative upper Banach density $d^*_{R, mult}(E) > 0$ and $\eps > 0$, does there exist $r \in R \setminus \{1\}$ such that
\begin{align*}
	d^*_{R, mult} \left( E \cap E/r \cap E/r^2 \right) > d^*_{R, mult}(E)^3 - \eps,
\end{align*}
where $E/r := \left\{ t \in R : rt \in E \right\}$ for $r \in R$?
The goal of this section is to transfer our results into the setting of cancellative abelian semigroups
in order to answer this question affirmatively.

\subsection{The group generated by a cancellative abelian semigroup}

Let $(S, +)$ be a countable cancellative abelian semigroup.
That is, $S$ is a countable set equipped with a commutative and associative binary operation $+$ such that
if $s + t = s + r$ for some $r, s, t \in S$, then $t = r$.

We can define a group $G_S$ as the set of formal differences $\left\{ s - t : s, t \in S \right\}$
where we identify $s - t$ and $s' - t'$ if $s+t' = s'+t$.
More formally, we may define an equivalence relation $\sim$ on $S^2$ by $(s,t) \sim (s',t')$ if $s+t' = s'+t$.
Then $G_S$ is the set of equivalence classes $S^2/\sim$ with the operation $[(s,t)] + [(s',t')] := [(s+s', t+t')]$.
It is easy to check that this operation is well-defined because $S$ is cancellative.
Moreover, $G_S$ has an identity $0 := [(s,s)]$, and for any $s, t \in S$, we have $[(s,t)] + [(t,s)] = 0$.
Thus, $G_S$ is a group.

\subsection{Notions of largeness}

For a set $E \subseteq S$ and an element $t \in S$, let $E - t := \{s \in S : s+t \in E\}$ and $E + t := \{s+t : s \in S\}$.
The following definition summarizes combinatorial notions of largeness that we will use, some of which are defined above in the setting of abelian groups.

\begin{defn}
	Let $(S, +)$ be a countable cancellative abelian semigroup.
	\begin{itemize}
		\item	A set $E \subseteq S$ is \emph{syndetic} if there are finitely many elements $t_1, \dots, t_k \in S$
			such that $\bigcup_{i=1}^k{(E - t_i)} = S$.
		\item	A set $T \subseteq S$ is \emph{thick} if for any finite set $F \subseteq S$, there exists $t \in S$
			such that $F + t \subseteq T$.
		\item	A set $P \subseteq S$ is \emph{piecewise syndetic} if there is a syndetic set $E \subseteq S$
			and a thick set $T \subseteq S$ such that $P = E \cap T$.
		\item	A sequence $(F_N)_{N \in \N}$ of finite subsets of $S$ is a \emph{F{\o}lner sequence} if,
			for any $t \in S$,
			\begin{align*}
				\frac{\left| (F_N+t) \triangle F_N \right|}{|F_N|} \to 0.
			\end{align*}
		\item	The \emph{lower Banach density} of a set $E \subseteq S$ is the quantity
			\begin{align*}
				d_*(E) := \inf{\left\{ \liminf_{N \to \infty}{\frac{\left| E \cap F_N \right|}{|F_N|}}
				 : (F_N)_{N \in \N}~\text{is a F{\o}lner sequence in}~S \right\}}.
			\end{align*}
		\item	The \emph{upper Banach density} of a set $E \subseteq S$ is the quantity
			\begin{align*}
				d^*(E) := \sup{\left\{ \limsup_{N \to \infty}{\frac{\left| E \cap F_N \right|}{|F_N|}}
				 : (F_N)_{N \in \N}~\text{is a F{\o}lner sequence in}~S \right\}}.
			\end{align*}
	\end{itemize}
\end{defn}

The following is a standard characterization of syndetic and thick sets; see, e.g. \cite[Section 2]{BHM}.

\begin{prop}
	Let $(S, +)$ be a countable cancellative abelian semigroup.
	\begin{enumerate}[1.]
		\item	$E$ is syndetic if and only if $d_*(E) > 0$ if and only if
			$E \cap T \ne \es$ for any thick set $T \subseteq S$;
		\item	$T$ is thick if and only if $d^*(T) = 1$ if and only if
			$T \cap E \ne \es$ for any syndetic set $E \subseteq S$.
	\end{enumerate}
\end{prop}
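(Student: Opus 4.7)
The plan is to establish the two equivalence chains in each part by first proving the combinatorial equivalences (syndetic $\Leftrightarrow$ meets every thick set, thick $\Leftrightarrow$ meets every syndetic set) and then deducing the density characterizations, since the former are easier to work with directly and the latter follow by comparing $d_*(E)$ with $1 - d^*(S \setminus E)$.

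For part 1, I would first show that if $E$ is syndetic with witness $\{t_1, \dots, t_k\}$, then for any Følner sequence $(F_N)$, the covering relation $F_N \subseteq \bigcup_i (E - t_i) \cap F_N$ combined with $|(E - t_i) \cap F_N| = |E \cap (F_N + t_i)| \le |E \cap F_N| + |F_N \triangle (F_N + t_i)|$ yields $|E \cap F_N|/|F_N| \ge 1/k - o(1)$, so $d_*(E) \ge 1/k > 0$. Next, for the implication $d_*(E) > 0 \Rightarrow E \cap T \ne \es$ for every thick $T$, I would first observe that thickness of $T$ implies $d^*(T) = 1$: for any Følner $(F_N)$, by thickness one can choose $t_N \in S$ with $F_N + t_N \subseteq T$, and $(F_N + t_N)$ is itself Følner, so $|T \cap (F_N + t_N)|/|F_N + t_N| = 1$. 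Then if $E \cap T = \es$, we would have $T \subseteq S \setminus E$ and hence $1 = d^*(T) \le d^*(S \setminus E) = 1 - d_*(E)$, a contradiction. Finally, the implication ``meets every thick set $\Rightarrow$ syndetic'' goes by contrapositive: if $E$ is not syndetic, then for every finite $F \subseteq S$, $\bigcup_{t \in F}(E - t) \ne S$, which is exactly the statement that $S \setminus E$ is thick and disjoint from $E$.

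Part 2 is essentially dual. To show thickness implies $d^*(T) = 1$, I already have the argument above. For $d^*(T) = 1 \Rightarrow T \cap E \ne \es$ for every syndetic $E$: if $T \cap E = \es$, then $E \subseteq S \setminus T$, and by part 1, $d_*(E) > 0$, so $d^*(S \setminus T) \ge d_*(E) > 0$, which forces $d^*(T) \le 1 - d_*(E) < 1$. Finally, if $T$ meets every syndetic set but fails to be thick, there exists a finite $F$ such that $F + t \not\subseteq T$ for every $t$; this means $\bigcup_{f \in F}((S \setminus T) - f) = S$, so $S \setminus T$ is syndetic and disjoint from $T$, contradicting the hypothesis.

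The main technical point, and the only step that requires more than symbol pushing, is the bookkeeping with the Følner property when translating a set: the identity $|E \cap (F_N + t)| = |E \cap F_N| + O(|F_N \triangle (F_N + t)|)$ is what lets translations by a fixed element become negligible in the limit, and it is used both in showing syndetic $\Rightarrow d_*(E) > 0$ and in verifying $d^*(T) = 1$ for thick $T$. Everything else is a routine contrapositive once the combinatorial equivalences are in hand.
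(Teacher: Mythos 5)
The paper does not provide a proof of this proposition; it cites \cite{BHM} and treats it as a standard fact, so there is no argument in the paper to compare yours against. Taken on its own terms, your proof is essentially correct: you close each of the two equivalences into a three-step cycle (syndetic $\Rightarrow$ positive lower Banach density $\Rightarrow$ meets every thick set $\Rightarrow$ syndetic, and dually for thick), and the technical content is exactly the two facts you isolate at the end, namely that translation by a fixed element only changes densities by the Følner error $|F_N \triangle (F_N+t)|/|F_N|$, and the complementation identity $d^*(S\setminus E) = 1 - d_*(E)$. The verification that $(F_N + t_N)$ is again a Følner sequence is also correct, using cancellativity so that translation is a bijection.

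There is one slip in part 2, step (b). You write that $E \subseteq S\setminus T$ gives $d^*(S\setminus T) \geq d_*(E) > 0$, ``which forces $d^*(T) \leq 1 - d_*(E)$.'' That implication does not follow: knowing $d^*(S\setminus T) > 0$ only bounds the liminf of $T$ along some particular Følner sequence and says nothing about $d^*(T)$. What you need (and what the rest of your argument clearly intends) is the lower density: $E \subseteq S\setminus T$ gives $d_*(S\setminus T) \geq d_*(E)$, and then $d^*(T) = 1 - d_*(S\setminus T) \leq 1 - d_*(E) < 1$, which is the exact mirror of the inequality $d^*(T) \leq d^*(S\setminus E) = 1 - d_*(E)$ you already used correctly in part 1(b). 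Replacing $d^*$ with $d_*$ in that one line fixes the proof; everything else is sound.
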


\begin{lem} \label{lem: thick}
	Let $(S,+)$ be a countable cancellative abelian semigroup.
	Then $S$ is thick in $G_S$.
\end{lem}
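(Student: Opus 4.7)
The plan is to exhibit an explicit shift in $G_S$ that moves any prescribed finite set into $S$. Fix a finite set $F = \{g_1, \ldots, g_n\} \subseteq G_S$, and identify $S$ with its canonical embedding in $G_S$ under the map $\iota : s \mapsto [(s+c,c)]$ (which is well-defined independent of the auxiliary $c \in S$). For each $i$, write $g_i = [(s_i, t_i)]$, so that $g_i$ represents the formal difference $s_i - t_i$ with $s_i, t_i \in S$. Set
\[
g := t_1 + t_2 + \cdots + t_n + s_0 \in S \hookrightarrow G_S,
\]
where $s_0 \in S$ is any fixed element (added to avoid edge cases if one prefers $n=0$ or $n=1$).

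The key computation is then the cancellation
\[
g + g_i = \iota\!\left(s_i + s_0 + \sum_{j \ne i} t_j\right),
\]
which is immediate from the definition of addition on equivalence classes: $[(t_1 + \cdots + t_n + s_0 + c, c)] + [(s_i, t_i)] = [(s_i + t_1 + \cdots + t_n + s_0 + c,\, t_i + c)]$, and this equivalence class equals $\iota(s_i + s_0 + \sum_{j \ne i} t_j)$ since its two coordinates differ precisely by that element of $S$. Because $S$ is closed under addition and the argument on the right is a finite sum of elements of $S$, we have $g + g_i \in \iota(S) = S$ for every $i$, so $F + g \subseteq S$. Since $F$ was an arbitrary finite subset of $G_S$, this verifies the thickness criterion.

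There is no genuine obstacle here — the proof is essentially a one-line cancellation, and the only care required is in distinguishing a formal element of $G_S$ from its representative in $S^2$ and checking that the embedding $\iota$ is used consistently.
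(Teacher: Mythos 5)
Your proof is correct and is essentially identical to the paper's: both pick representatives $g_i = s_i - t_i$ with $s_i,t_i\in S$, translate $F$ by the sum $\sum_i t_i$ (you add a harmless extra $s_0\in S$), and observe the cancellation lands every $g_i + g$ in $S$. The only stylistic difference is that you spell out the embedding $\iota$ and the equivalence-class bookkeeping more carefully, while the paper works directly with formal differences, but the underlying computation is the same.
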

\begin{proof}
	Let $F \subseteq G_S$ be a finite set.
	Write $F = \{s_i - t_i : 1 \le i \le k\}$, where $s_i, t_i \in S$.
	Put $t = \sum_{i=1}^k{t_i} \in S$.
	Then
	\begin{align*}
		F + t = \left\{ s_i + \sum_{j \ne i}{t_j} : 1 \le i \le k \right\} \subseteq S.
	\end{align*}
\end{proof}

The fact that $S$ is thick in $G_S$ is closely related to the fact that any F{\o}lner sequence in $S$ is also a F{\o}lner sequence in $G_S$, from which we deduce the following density result:

\begin{prop} \label{prop: equal density}
	Let $E \subseteq S$.
	Then $d_S^*(E) = d_{G_S}^*(E)$.
\end{prop}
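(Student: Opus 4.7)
My plan is to establish the two inequalities separately. The direction $d_S^*(E) \le d_{G_S}^*(E)$ should be straightforward: I will show that every F{\o}lner sequence $(F_N)_{N \in \N}$ in $S$ is automatically a F{\o}lner sequence in $G_S$. Given $g \in G_S$, I write $g = s_1 - s_2$ with $s_1, s_2 \in S$, and then translation invariance of the counting measure yields
\[
|(F_N + g) \triangle F_N| = |(F_N + s_1) \triangle (F_N + s_2)| \le |(F_N + s_1) \triangle F_N| + |(F_N + s_2) \triangle F_N|,
\]
which is $o(|F_N|)$ by the F{\o}lner property of $(F_N)$ in $S$.

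For the reverse direction, set $\alpha := d_{G_S}^*(E)$. The case $\alpha = 0$ is vacuous, so I will assume $\alpha > 0$. The plan is to start with a F{\o}lner sequence $(\Phi_N)_{N \in \N}$ in $G_S$ satisfying $|E \cap \Phi_N|/|\Phi_N| \to \alpha$ and argue that the restricted sets $\Phi_N' := \Phi_N \cap S$ themselves form a F{\o}lner sequence in $S$ along which $E$ retains density at least $\alpha$. The density bound is automatic: since $E \subseteq S$ one has $|E \cap \Phi_N'| = |E \cap \Phi_N|$ while $|\Phi_N'| \le |\Phi_N|$, so
\[
\frac{|E \cap \Phi_N'|}{|\Phi_N'|} \ge \frac{|E \cap \Phi_N|}{|\Phi_N|} \to \alpha,
\]
and in particular $|\Phi_N'| \ge (\alpha - o(1))|\Phi_N|$ grows to infinity.

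The main technical point---and where I expect the main obstacle to lie---is verifying the F{\o}lner property $|(\Phi_N' + s) \triangle \Phi_N'| = o(|\Phi_N'|)$ for each $s \in S$. I will split the symmetric difference by side. Elements of $(\Phi_N' + s) \setminus \Phi_N'$ automatically lie in $S$ (since $S + s \subseteq S$) and hence belong to $(\Phi_N + s) \setminus \Phi_N$, giving a contribution of $o(|\Phi_N|)$ by the F{\o}lner property of $(\Phi_N)$ in $G_S$. For $\phi \in \Phi_N' \setminus (\Phi_N' + s)$, I have $\phi \in \Phi_N \cap S$ and either (i) $\phi - s \notin \Phi_N$ (again $o(|\Phi_N|)$ many such $\phi$ by F{\o}lner-ness) or (ii) $\phi - s \in \Phi_N \setminus S$, which produces a ``boundary'' contribution bounded by $|\Phi_N \cap \partial_s S|$ where $\partial_s S := (S - s) \setminus S$. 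The crux of the argument will be the boundary estimate: the relation $S + s \subseteq S$ gives $S \subseteq S - s$, so using bijectivity of the shift $x \mapsto x + s$,
\[
|\Phi_N \cap \partial_s S| = |\Phi_N \cap (S - s)| - |\Phi_N \cap S| = |(\Phi_N + s) \cap S| - |\Phi_N \cap S| \le |(\Phi_N + s) \triangle \Phi_N| = o(|\Phi_N|).
\]
Combining these three $o(|\Phi_N|)$ bounds and dividing by $|\Phi_N'| \ge (\alpha - o(1))|\Phi_N|$ will deliver the F{\o}lner property of $(\Phi_N')$ in $S$, completing the argument.
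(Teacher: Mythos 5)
Your argument is correct, but it follows a genuinely different route from the paper's for the harder inequality $d_S^*(E) \ge d_{G_S}^*(E)$. The paper proceeds via invariant means: it takes an invariant mean $m$ on $G_S$ with $m(E) = d_{G_S}^*(E)$, notes $c := m(S) \ge m(E) > 0$, and observes that $\tilde{m} := \tfrac{1}{c}m$ is an invariant mean on $S$ with $\tilde{m}(E) \ge m(E)$. You instead work directly with F{\o}lner sequences, restricting a F{\o}lner sequence $(\Phi_N)$ in $G_S$ to $\Phi_N' := \Phi_N \cap S$ and verifying both the density lower bound and the F{\o}lner property of $(\Phi_N')$ in $S$ by hand; your boundary estimate $|\Phi_N \cap ((S-s)\setminus S)| \le |(\Phi_N + s) \triangle \Phi_N|$ is the key new step and it is correct. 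The paper's argument is shorter but leans on the correspondence between upper Banach density and invariant means (and on the existence of a mean attaining the density), while yours is more elementary and self-contained, proving directly that $S$-restrictions of $G_S$-F{\o}lner sequences (along which $E$ has high density) are $S$-F{\o}lner sequences. Two small points you should spell out in a final write-up: you need to justify that the supremum defining $d_{G_S}^*(E)$ can be realized as an actual limit along some F{\o}lner sequence (a routine diagonal/subsequence argument), and that $|\Phi_N|\to\infty$ so that $|\Phi_N'|\ge(\alpha-o(1))|\Phi_N|$ indeed forces $|\Phi_N'|\to\infty$; both are standard but worth a sentence.
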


\begin{proof}
    To show the inequality $d_{G_S}^*(E) \ge d_S^*(E)$, it suffices to show that any F{\o}lner sequence in $S$ is a F{\o}lner sequence in $G_S$.
    Let $(F_N)_{N \in \N}$ be a F{\o}lner sequence in $S$, and let $x \in G$.
    We want to show
    \begin{align*}
        \frac{\left| (F_N+x) \triangle F_N \right|}{|F_N|} \to 0.
    \end{align*}
    Write $x = s-t$ with $s, t \in S$.
    Then
    \begin{align*}
        \frac{\left| (F_N+x) \triangle F_N \right|}{|F_N|}
         = \frac{\left| (F_N+s) \triangle (F_N+t) \right|}{|F_N|}
         \le \frac{\left| (F_N+s) \triangle F_N \right|}{|F_N|}
         + \frac{\left| F_N \triangle (F_N+t) \right|}{|F_N|}
         \to 0.
    \end{align*}
    Hence, $(F_N)_{N \in \N}$ is a F{\o}lner sequence in $G_S$ as claimed. \\
    
    Now we show the reverse inequality $d_S^*(E) \ge d_{G_S}^*(E)$.
    If $d_{G_S}^*(E) = 0$, there is nothing to show, so assume $d_{G_S}^*(E) > 0$.
    Let $m$ be an invariant mean on $G_S$ such that $m(E) = d_{G_S}^*(E)$.
    Put $c = m(S) \ge m(E) > 0$.
    Then $\tilde{m} := \frac{1}{c}m$ is an invariant mean on $S$.
    Moreover, $\tilde{m}(E) = \frac{1}{c}m(E) \ge m(E) = d_{G_S}^*(E)$.
    Therefore, $d_S^*(E) \ge \tilde{m}(E) \ge d_{G_S}^*(E)$.
\end{proof}

\begin{lem} \label{lem: synd implies rel synd}
	Suppose $E \subseteq G_S$ is syndetic in $G_S$.
	Then $E \cap S$ is syndetic in $S$.
\end{lem}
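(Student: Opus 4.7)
The plan is to give a direct argument that converts a finite set of $G_S$-translates witnessing the syndeticity of $E$ into a finite set of $S$-translates witnessing the syndeticity of $E \cap S$. This is parallel in spirit to the shift trick used in the proof of Lemma \ref{lem: thick}: the obstacle is that the translates $x_i \in G_S$ guaranteed by $G_S$-syndeticity need not lie in $S$, so one must absorb their negative parts by composing with a suitable positive shift.

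Concretely, since $E$ is $G_S$-syndetic, fix $x_1, \dots, x_k \in G_S$ with $\bigcup_{i=1}^{k}(E - x_i) = G_S$, and write $x_i = s_i - t_i$ with $s_i, t_i \in S$. Set $t := \sum_{j=1}^k t_j \in S$ and $T_i := t + x_i = s_i + \sum_{j \ne i} t_j$, which lies in $S$. For any $s \in S$ one has $s + t \in S \subseteq G_S$, so by the choice of the $x_i$ there is some $i$ with $(s + t) + x_i \in E$. But $(s + t) + x_i = s + T_i$, which automatically belongs to $S$ because $s, T_i \in S$. Hence $s + T_i \in E \cap S$, proving $\bigcup_{i=1}^k \bigl((E \cap S) - T_i\bigr) = S$ and establishing $S$-syndeticity.

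I do not expect any real obstacle here; the only subtlety is to choose the shift $t$ large enough that $t + x_i \in S$ simultaneously for every $i$, and taking $t = \sum_j t_j$ (the same choice that appears in Lemma \ref{lem: thick}) works. Alternatively, one could argue via the equivalent characterization ``syndetic iff meets every thick set'': if $T \subseteq S$ is $S$-thick, the same shift trick shows $T$ is $G_S$-thick, so $E \cap T \neq \emptyset$, and since $T \subseteq S$ this forces $(E \cap S) \cap T \neq \emptyset$. Either formulation fits in a few lines.
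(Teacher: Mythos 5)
Your proof is correct and follows the same essential strategy as the paper's: shift the $G_S$-syndeticity witnesses $x_i$ into $S$ and observe that the resulting translates of $S$ land in $E\cap S$. The only difference is cosmetic — you carry out the shift by hand with $t=\sum_j t_j$ (exactly the construction inside Lemma \ref{lem: thick}), whereas the paper invokes Lemma \ref{lem: thick} to assume $x_i\in S$ from the start.
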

\begin{proof}
	Let $x_1, \dots, x_k \in G_S$ such that $\bigcup_{i=1}^k{(E - x_i)} = G_S$.
	By Lemma \ref{lem: thick}, $S$ is thick, so we may assume $x_i \in S$ for each $i = 1, \dots, k$.
	We claim
	\begin{align*}
		\bigcup_{i=1}^k{\left( (E \cap S) - x_i \right)} \supseteq S.
	\end{align*}
	
	It suffices to check $(E \cap S) - x_i \supseteq (E - x_i) \cap S$ for each $i = 1, \dots, k$.
	Suppose $y \in (E - x_i) \cap S$, and let $t \in E$ such that $t - x_i = y$.
	Then $t = y + x_i \in S + S \subseteq S$.
	Hence, $y \in (E \cap S) - x_i$ as desired.
\end{proof}

\subsection{Extending main results to actions of cancellative abelian semigroups}

Any homomorphism $\varphi : S \to S$ extends uniquely to a homomorphism $\tilde{\varphi} : G_S \to G_S$
via $\tilde{\varphi} \left( s - t \right) = \varphi(s) - \varphi(t)$.
To extend our Khintchine-type results to the semigroup setting,
we need a condition on $\varphi$ characterizing when $\tilde{\varphi}(G_S)$ has finite index in $G_S$.

\begin{prop} \label{prop: synd fin index}
	Let $(S, +)$ be a countable cancellative abelian semigroup.
	Let $\varphi : S \to S$ be a homomorphism, and let $\tilde{\varphi} : G_S \to G_S$ be the group homomorphism
	$\tilde{\varphi}(s-t) := \varphi(s) - \varphi(t)$.
	The following are equivalent:
	\begin{enumerate}[(i)]
		\item	$\varphi(S)$ is a piecewise syndetic subset of $S$;
		\item	$\tilde{\varphi}(G_S)$ has finite index in $G_S$.
	\end{enumerate}
\end{prop}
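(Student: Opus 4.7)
The plan is to prove both implications by relating the size of $\varphi(S)$ in $S$ to the index of $\tilde\varphi(G_S)$ in $G_S$, using upper Banach density in one direction and a thickness argument in the other.

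For (i) $\Rightarrow$ (ii), the first step is that a piecewise syndetic subset of $S$ has positive upper Banach density in $S$: writing $\varphi(S) = E \cap T$ with $E$ syndetic (witness $F_E$) and $T$ thick, one picks a F{\o}lner sequence inside $T$ via thickness of $T$ and uses syndeticity of $E$ to lower-bound $|\varphi(S) \cap F_N|/|F_N|$ by $1/|F_E| - o(1)$ along this sequence. By Proposition \ref{prop: equal density}, $d^*_{G_S}(\varphi(S)) = d^*_S(\varphi(S)) > 0$, and since $\varphi(S) \subseteq \tilde\varphi(G_S)$, the subgroup $\tilde\varphi(G_S)$ has positive upper Banach density in $G_S$. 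A standard amenable-group fact then yields finite index: for any $N$ disjoint coset translates of $\tilde\varphi(G_S)$, translation-invariance of $d^*$ combined with the F{\o}lner property gives $N \cdot d^*(\tilde\varphi(G_S)) \le 1 + o(1)$, so infinite index forces $d^* = 0$.

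For (ii) $\Rightarrow$ (i), assuming $[G_S : \tilde\varphi(G_S)] = n$, I would produce a finite $F \subseteq S$ with $\varphi(S) - F$ thick in $S$; this yields $\varphi(S) = E \cap T$ with $T = (\varphi(S) - F) \cap S$ thick and $E = \varphi(S) \cup (S \setminus T)$ syndetic (witness $F$, after ensuring $0 \in F$). The first ingredient is that $\varphi(S)$ is thick in the subgroup $\tilde\varphi(G_S)$: since $\varphi(S) - \varphi(S) = \tilde\varphi(G_S)$, the argument of Lemma \ref{lem: thick} applies verbatim inside $\tilde\varphi(G_S)$, writing any finite $\tilde F \subseteq \tilde\varphi(G_S)$ as $\{\tilde s_i - \tilde t_i\}$ with $\tilde s_i, \tilde t_i \in \varphi(S)$ and translating by $\sum_i \tilde t_i \in \varphi(S)$. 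The second ingredient is that Lemma \ref{lem: thick} lets us choose a transversal $F = \{y_1, \ldots, y_n\}$ of $G_S/\tilde\varphi(G_S)$ inside $S$, since every coset meets the thick subsemigroup $S \subseteq G_S$.

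The remaining step is the core of the argument: given finite $K \subseteq S$, for each $k \in K$ let $f(k) \in F$ be the unique representative with $k + f(k) \in \tilde\varphi(G_S)$, and set $C := \{k + f(k) : k \in K\} \cup \{0\}$, a finite subset of $\tilde\varphi(G_S)$. Applying thickness of $\varphi(S)$ inside $\tilde\varphi(G_S)$ to $C$ yields some $s \in \tilde\varphi(G_S)$ with $C + s \subseteq \varphi(S)$; since $0 \in C$, automatically $s \in \varphi(S) \subseteq S$, and for every $k \in K$ we obtain $k + s \in \varphi(S) - f(k) \subseteq \varphi(S) - F$, so $K + s \subseteq \varphi(S) - F$. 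The main conceptual hurdle, which this step addresses, is transferring thickness of $\varphi(S)$ from the abstract subgroup $\tilde\varphi(G_S)$ back into thickness (up to finite correction) inside the ambient semigroup $S$; the coset transversal $F \subseteq S$ provides the bridge, and forcing $0 \in C$ ensures the witness $s$ lies in $S$ rather than merely in $\tilde\varphi(G_S)$.
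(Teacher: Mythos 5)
Your proof of (i) $\Rightarrow$ (ii) is essentially the same as the paper's (via positive upper Banach density of a piecewise syndetic set and an invariant-mean/density argument for subgroups of an amenable group). For (ii) $\Rightarrow$ (i) you take a genuinely different packaging. Writing $H := \tilde\varphi(G_S)$, the paper exhibits $\varphi(S) = E \cap T$ directly, with $E := H \cap S$ (syndetic by Lemma~\ref{lem: synd implies rel synd}) and $T := \varphi(S) \cup (S \setminus H)$; it proves $T$ is thick by splitting a finite $F$ into $F \cap H$ and $F \setminus H$, translating $F \cap H$ into $\varphi(S)$ by an element $s \in \varphi(S)$ (obtained from $x = s - t$, $s,t \in \varphi(S)$, and $\varphi(S)+\varphi(S)\subseteq\varphi(S)$), while $F \setminus H$ is automatically carried into $S \setminus H$. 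You instead prove that $\bigcup_{f\in F}(\varphi(S)-f)$ is thick in $S$ for a finite coset transversal $F \subseteq S$, by lifting a finite $K$ into $H$ via $k \mapsto k+f(k)$ and appending $0 \in G_S$ to force the translating element into $\varphi(S)$. That core thickness step is correct and is a valid alternative to the paper's.

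The gap is in the final repackaging. You claim $\varphi(S) = E' \cap T'$ with $T' = (\varphi(S)-F)\cap S$ and $E' = \varphi(S)\cup(S\setminus T')$, with $E'$ syndetic of witness $F$ ``after ensuring $0 \in F$.'' But the hypothesis is only that $(S,+)$ is a cancellative abelian semigroup, not a monoid, so there may be no identity $0 \in S$ to place in $F$ (and the paper's definition of syndeticity requires witnesses to live in $S$). Without $0 \in F$, take $x \notin T'$: then $x+f \notin \varphi(S)$ for every $f \in F$, so you would need some $f$ with $x+f \notin T'$, i.e.\ $x+f+f' \notin \varphi(S)$ for all $f' \in F$, and this is not guaranteed. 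Choosing the trivial-coset representative $f_0 \in \varphi(S)$ repairs the containment $\varphi(S)\subseteq T'$ but not the syndeticity of $E'$. The paper's pair $(E,T)$ sidesteps this entirely because syndeticity of $H \cap S$ comes from the subgroup structure of $H$ alone. To push your version through in full generality you would either need to prove (or cite) the equivalence ``$\bigcup_{f\in F}(P-f)$ thick for some finite $F\subseteq S$ $\iff$ $P$ piecewise syndetic'' in non-unital cancellative semigroups, or switch at the last step to the paper's choice of thick and syndetic sets.
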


\begin{proof}
	Let $T := \varphi(S)$, and let $H := \tilde{\varphi}(G_S)$.
	Note that $H = T - T = G_T$.
	
	\bigskip
	
	(i)$\implies$(ii).
	Suppose $T$ is piecewise syndetic in $S$.
	Then $d_S^*(T) > 0$.
	Thus, by Proposition \ref{prop: equal density}, $d_{G_S}^*(H) \ge d_{G_S}^*(T) = d_S^*(T) > 0$.
	But in the group $G_S$, we have the identity
	\begin{align*}
		d_{G_S}^*(H) = \frac{1}{[G_S:H]},
	\end{align*}
	so $[G_S:H] < \infty$.
	
	\bigskip
	
	(ii)$\implies$(i).
	Suppose $H$ has finite index in $G_S$.
	Then $H$ is a syndetic subset of $G_S$, so $H \cap S$ is syndetic in $S$ by Lemma \ref{lem: synd implies rel synd}.
	Moreover, by Lemma \ref{lem: thick}, $T$ is a thick subset of $H$.
	Let $\tilde{T} := T \cup (S \setminus H)$ so that $T = \tilde{T} \cap (H \cap S)$.
	We claim that $\tilde{T}$ is thick in $S$.
	
	Let $F \subseteq S$ be a finite set.
	Put $F_1 = F \cap H$ and $F_2 = F \setminus H$.
	Since $T$ is a thick subset of $H$, there exists $x \in H$ such that $F_1 + x \subseteq T$.
	Write $x = s - t$ with $s, t \in T \subseteq H \cap S$.
	Then $F_1 + s = F_1 + x + t \subseteq T + t \subseteq T$.
	Now, since $s \in H \cap S$ and $H$ is a group, we have $F_2 + s \subseteq S \setminus H$.
	Thus, $F + s = (F_1 + s) \cup (F_2 + s) \subseteq T \cup (S \setminus H) = \tilde{T}$.
	
	This shows that $\tilde{T}$ is a thick subset of $S$, so $T = \tilde{T} \cap (H \cap S)$ is piecewise syndetic in $S$.
\end{proof}

Now we can extend Theorems \ref{Khintchinefiniteindex} and \ref{Khintchineab} to the semigroup setting:

\begin{thm} \label{thm: semigroup phi, psi}
	Let $(S, +)$ be a countable cancellative abelian semigroup.
	Let $\varphi, \psi : S \to S$ be homomorphisms.
	If at least two of the three subsemigroups $\varphi(S)$, $\psi(S)$, and $(\varphi + \psi)(S)$
	are piecewise syndetic in $S$,
	then for any set $E \subseteq S$ with positive upper Banach density $d_S^*(E) > 0$ and any $\eps > 0$, the set
	\begin{align*}
		\left\{ s \in S : d_S^* \left( E \cap \left( E - \varphi(s) \right) \cap \left( E - (\varphi + \psi)(s) \right) \right)
		 > d_S^*(E)^3 - \eps \right\}
	\end{align*}
	is syndetic in $S$.
\end{thm}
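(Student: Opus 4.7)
The plan is to reduce to Theorem \ref{Khintchinefiniteindex}, applied to the pair of homomorphisms $\{\tilde{\varphi},~\tilde{\varphi}+\tilde{\psi}\}$ acting on the group $G_S$. The key reframing is cosmetic: the configuration $\{t,~t+\varphi(s),~t+(\varphi+\psi)(s)\}$ coincides with the 3-point pattern associated to the pair $\{\varphi,~\varphi+\psi\}$, whose ``difference'' is $(\varphi+\psi)-\varphi = \psi$. Thus the three subsemigroups $\varphi(S),~(\varphi+\psi)(S),~\psi(S)$ appearing in the hypothesis match exactly the three subsemigroups relevant for Theorem \ref{Khintchinefiniteindex} applied to this pair.

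With this identification in place, I would first extend $\varphi, \psi$ to group homomorphisms $\tilde{\varphi}, \tilde{\psi} : G_S \to G_S$. By Proposition \ref{prop: synd fin index}, the hypothesis translates to the statement that at least two of the subgroups $\tilde{\varphi}(G_S),~(\tilde{\varphi}+\tilde{\psi})(G_S),~\tilde{\psi}(G_S)$ have finite index in $G_S$. Setting $\varphi' := \tilde{\varphi}$ and $\psi' := \tilde{\varphi}+\tilde{\psi}$, these are precisely $\varphi'(G_S),~\psi'(G_S),~(\psi'-\varphi')(G_S)$, so the finite-index hypothesis of Theorem \ref{Khintchinefiniteindex} is met.

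Next, I would invoke an ergodic Furstenberg-type correspondence principle for the countable abelian group $G_S$ (along the lines of \cite[Theorem 2.8]{BFe}), applied to $E \subseteq S \subseteq G_S$. Proposition \ref{prop: equal density} guarantees $d^*_{G_S}(E) = d^*_S(E)$, so the correspondence yields an ergodic $G_S$-system $\X=(X,\mX,\mu,(T_g)_{g\in G_S})$ and a set $A \in \mX$ with $\mu(A) = d^*_S(E)$ such that
\begin{align*}
    d^*_S\!\left(E \cap (E-x_1) \cap (E-x_2)\right) \geq \mu\!\left(A \cap T_{x_1}^{-1}A \cap T_{x_2}^{-1}A\right)
\end{align*}
for every $x_1, x_2 \in S$. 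Applying Theorem \ref{Khintchinefiniteindex} to the pair $\{\varphi', \psi'\}$ on $\X$ produces a set
\[
    R_\eps := \left\{g \in G_S : \mu\!\left(A \cap T_{\varphi'(g)}^{-1}A \cap T_{\psi'(g)}^{-1}A\right) > \mu(A)^3 - \eps\right\}
\]
that is syndetic in $G_S$; by Lemma \ref{lem: synd implies rel synd}, $R_\eps \cap S$ is then syndetic in $S$, and for each $s \in R_\eps \cap S$ the correspondence inequality gives $d^*_S(E \cap (E-\varphi(s)) \cap (E-(\varphi+\psi)(s))) > d^*_S(E)^3 - \eps$ as desired.

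The main subtlety is arranging an ergodic system in the correspondence step so that Theorem \ref{Khintchinefiniteindex} applies directly. Should the correspondence produce only a measure-preserving (not necessarily ergodic) system, I would fall back on an ergodic decomposition combined with Jensen's inequality for the convex function $x \mapsto x^3$, mimicking the ergodic-decomposition step at the end of the proof of Theorem \ref{Khintchineab}; the finite-index hypothesis is what makes a uniform choice of the smoothing function $\eta$ across components possible, and is therefore the ingredient that makes the averaging close cleanly. Beyond this point, the argument is a direct translation between the semigroup $S$ and its enveloping group $G_S$.
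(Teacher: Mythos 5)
Your proposal is correct and follows essentially the same route as the paper: extend $\varphi,\psi$ to $G_S$, translate the piecewise-syndeticity hypothesis to finite index via Proposition \ref{prop: synd fin index}, apply Theorem \ref{Khintchinefiniteindex} (through a Furstenberg-type correspondence principle) to get a syndetic set of good $g$ in $G_S$, and pull this back to $S$ via Lemma \ref{lem: synd implies rel synd} and Proposition \ref{prop: equal density}. The only difference is that you spell out the correspondence-principle step explicitly (and note the fallback of using ergodic decomposition with Jensen's inequality, which is in fact unnecessary since \cite[Theorem 2.8]{BFe} already furnishes an ergodic system), whereas the paper's proof leaves this deduction implicit when it writes down the combinatorially-defined syndetic set $R$ and cites Theorem \ref{Khintchinefiniteindex} directly.
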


\begin{rem}
	We use the pair $\{\varphi, \varphi + \psi\}$ rather than $\{\varphi, \psi\}$
	since the difference $\psi - \varphi$ is not necessarily defined as a map into $S$.
\end{rem}

\begin{proof}
	By Proposition \ref{prop: equal density}, we have $\delta := d_{G_S}^*(E) = d_S^*(E) > 0$.
	Let $\tilde{\varphi}$ and $\tilde{\psi}$ be the extensions of $\varphi$ and $\psi$ to $G_S$.
	By Proposition \ref{prop: synd fin index}, at least two of the subgroups $\tilde{\varphi}(G_S)$, $\tilde{\psi}(G_S)$,
	and $\left( \tilde{\varphi} + \tilde{\psi} \right)(G_S)$ have finite index in $G_S$.
	Hence by Theorem \ref{Khintchinefiniteindex}, the set
	\begin{align*}
		R := \left\{ g \in G : d_{G_S}^* \left( E \cap \left( E - \tilde{\varphi}(g) \right)
		 \cap \left( E - \left( \tilde{\varphi} + \tilde{\psi} \right)(g) \right) \right)
		 > \delta^3 - \eps \right\}
	\end{align*}
	is syndetic in $G_S$.
	
	By Lemma \ref{lem: synd implies rel synd}, the set $R \cap S$ is syndetic $S$.
	But
	\begin{align*}
		R \cap S = \left\{ s \in S : d_S^* \left( E \cap \left( E - \varphi(s) \right) \cap \left( E - (\varphi + \psi)(s) \right) \right)
		 > \delta^3 - \eps \right\},
	\end{align*}
	so this completes the proof.
\end{proof}

\begin{thm} \label{thm: semigroup a, b}
	Let $(S, +)$ be a countable cancellative abelian semigroup.
	Let $a, b \in \N$.
	If at least one of the three subsemigroups $aS$, $bS$, or $(a+b)S$ is piecewise syndetic in $S$,
	then for any set $E \subseteq S$ with positive upper Banach density $d_S^*(E) > 0$ and any $\eps > 0$, the set
	\begin{align*}
		\left\{ s \in S : d_S^* \left( E \cap \left( E - as \right) \cap \left( E - (a+b)s \right) \right)
		 > d_S^*(E)^3 - \eps \right\}
	\end{align*}
	is syndetic in $S$.
\end{thm}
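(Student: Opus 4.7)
The plan is to follow the template of the proof of Theorem \ref{thm: semigroup phi, psi} immediately above: lift the problem from the semigroup $S$ to its group of differences $G_S$, translate the semigroup piecewise-syndeticity hypothesis into a finite-index hypothesis in $G_S$, invoke the group-theoretic Khintchine-type Theorem \ref{Khintchineab}, and pull the resulting syndetic set back to $S$ via Lemma \ref{lem: synd implies rel synd}.

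Let $E \subseteq S$ with $\delta := d_S^*(E) > 0$. By Proposition \ref{prop: equal density}, $d_{G_S}^*(E) = \delta$, and by Proposition \ref{prop: synd fin index} the hypothesis guarantees that at least one of $aG_S$, $bG_S$, or $(a+b)G_S$ has finite index in $G_S$. Applying the Furstenberg correspondence principle to $E$ inside the group $G_S$ (followed by passing to an ergodic component of positive mass, as in the final step of the proof of Theorem \ref{Khintchineab}) yields an ergodic measure-preserving system $(X,\mX,\mu,(T_g)_{g\in G_S})$ and a set $A\in\mX$ with $\mu(A) = \delta$ such that
$$d_{G_S}^*\bigl(E \cap (E - g_1) \cap (E - g_2)\bigr) \ge \mu\bigl(A \cap T_{g_1}^{-1}A \cap T_{g_2}^{-1}A\bigr)$$
for all $g_1, g_2 \in G_S$.

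Set $\varphi(g) = ag$ and $\psi(g) = (a+b)g$, so that $(\psi - \varphi)(g) = bg$. I would then split into three cases using the symmetry argument described immediately after Question \ref{Khintchine}. If $bG_S = (\psi - \varphi)(G_S)$ has finite index, Theorem \ref{Khintchineab} applied to the pair $(a, a+b)$ is immediate. If $(a+b)G_S = \psi(G_S)$ has finite index, the substitution $\tilde\varphi = -\varphi$, $\tilde\psi = \psi - \varphi$ yields the pair $(-a, b)$ with $(\tilde\psi - \tilde\varphi)(G_S) = \psi(G_S)$ of finite index, and the measure-preserving identity $\mu(A \cap T_{\varphi(g)}^{-1}A \cap T_{\psi(g)}^{-1}A) = \mu(A \cap T_{\tilde\varphi(g)}^{-1}A \cap T_{\tilde\psi(g)}^{-1}A)$ lets us apply Theorem \ref{Khintchineab} to the transformed pair with the same conclusion. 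The remaining case $\varphi(G_S) = aG_S$ finite-index is treated analogously via $\tilde\varphi = -\psi$, $\tilde\psi = \varphi - \psi$, for which $(\tilde\psi - \tilde\varphi)(G_S) = \varphi(G_S)$.

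In every case we obtain that
$$R := \bigl\{ g \in G_S : \mu\bigl(A \cap T_{ag}^{-1}A \cap T_{(a+b)g}^{-1}A\bigr) > \delta^3 - \eps \bigr\}$$
is syndetic in $G_S$, whence by Lemma \ref{lem: synd implies rel synd} the set $R \cap S$ is syndetic in $S$; for $s \in R \cap S$ the correspondence inequality combined with Proposition \ref{prop: equal density} gives $d_S^*\bigl(E \cap (E - as) \cap (E - (a+b)s)\bigr) > \delta^3 - \eps$, which is the desired conclusion. The only genuine obstacle is the bookkeeping for the three symmetry-based finite-index cases; once these are handled, the argument is essentially identical to that of Theorem \ref{thm: semigroup phi, psi}.
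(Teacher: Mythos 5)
Your proof is correct and follows essentially the same route the paper takes: lift to $G_S$ via Propositions \ref{prop: equal density} and \ref{prop: synd fin index}, invoke Theorem \ref{Khintchineab} (taking care of the three finite-index cases via the symmetry trick described after Question \ref{Khintchine}), and pull back to $S$ using Lemma \ref{lem: synd implies rel synd}. The paper simply compresses all of this into ``identical to Theorem \ref{thm: semigroup phi, psi} with Theorem \ref{Khintchineab} in place of Theorem \ref{Khintchinefiniteindex}''; you have merely made explicit the correspondence-principle step and the case bookkeeping that the paper leaves implicit.
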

\begin{proof}
	The proof is identical to the proof of Theorem \ref{thm: semigroup phi, psi},
	except one must use Theorem \ref{Khintchineab} in place of Theorem \ref{Khintchinefiniteindex}.
\end{proof}

\subsection{Two combinatorial questions}

Applying Theorem \ref{thm: semigroup a, b} in the semigroup $(\N, \cdot)$,
for any $E \subseteq \N$ with positive multiplicative upper Banach density $d_{mult}^*(E) > 0$,
any $k \in \N$, and any $\eps > 0$, the set of $m \in \N$ such that
\begin{align*}
	d^*_{mult} \left( E \cap E/m^k \cap E/m^{k+1} \right) > d^*_{mult}(E)^3 - \eps
\end{align*}
is multiplicatively syndetic in $\N$.
It is natural to ask if a finitary variant of this result holds.

\begin{quest}
	Let $p_1, p_2, \dots$ be an enumeration of the positive prime numbers.
	Let $\delta, \eps > 0$, and let $k \in \N$.
	Does there exists $N = N(k, \delta, \eps) \in \N$ such that the following holds:
	for any $n \ge N$ and any set $A \subseteq \left\{ p_1^{r_1} \dots p_n^{r_n} : 0 \le r_i \le n \right\}$
	with $|A| \ge \delta n^n$, there exists $y \in \N \setminus \{1\}$ such that
	\begin{align*}
		\left| \left\{ x \in \N : \{x, xy^k, xy^{k+1}\} \subseteq A \right\} \right| > \left( \delta^3 - \eps \right) n^n.
	\end{align*}
\end{quest}

Now we describe an application of Theorem \ref{thm: semigroup phi, psi}.
Let $p_1, p_2, \dots$ and $q_1, q_2, \dots$ be enumerations of the positive prime numbers.
The map $\varphi : \N \to \N$ defined by $\varphi \left( \prod_{i=1}^n{p_i^{r_i}} \right) := \prod_{i=1}^n{q_i^{r_i}}$
is an automorphism of the semigroup $(\N, \cdot)$.
Hence, by Theorem \ref{thm: semigroup phi, psi}, if $E \subseteq \N$ has positive multiplicative upper Banach density
$d_{mult}^*(E) > 0$ and $\eps > 0$,
then there is a multiplicatively syndetic set of numbers $y = \prod_{i=1}^n{p_i^{r_i}} \in \N$ such that
\begin{align} \label{eq: prime permutation}
	d_{mult}^* \left( \left\{x \in \N
	 : \left\{ x, x\prod_{i=1}^n{p_i^{r_i}}, x\prod_{i=1}^n{q_i^{r_i}} \right\} \subseteq E \right\} \right)
	 > d_{mult}^*(E)^3 - \eps.
\end{align}

The IP Szemer\'{e}di theorem of Furstenberg and Katznelson \cite{FK-IP} implies that,
for any $k \in \N$ and any multiplicative automorphisms $\varphi_1, \dots, \varphi_k : \N \to \N$,
the set of $m \in \N$ such that
\begin{align*}
	d_{mult}^* \left( E \cap E/\varphi_1(m) \cap \dots \cap E/\varphi_k(m) \right) > 0
\end{align*}
is a multiplicative IP$^*$ set and hence multiplicatively syndetic.
It is therefore natural to ask if a large intersections variant holds for families of more than two multiplicative automorphisms:

\begin{quest}
	Let $p_1, p_2, \dots$ be the enumeration of the positive prime numbers in increasing order.
	For each $j \in \N$, let $q_{j,1}, q_{j,2}, \dots$ be a distinct enumeration of the positive prime numbers.
	For which $k \in \N$ does the following hold:
	for any $E \subseteq \N$ with $d_{mult}^*(E) > 0$ and any $\eps > 0$,
	there exists $y = \prod_{i=1}^n{p_i^{r_i}} \in \N \setminus \{1\}$ such that
	\begin{align} \label{eq: distinct factorizations}
		d_{mult}^* \left( \left\{x \in \N
		 : \left\{ x, x\prod_{i=1}^n{q_{1,i}^{r_i}}, x\prod_{i=1}^n{q_{2,i}^{r_i}} , \dots, x\prod_{i=1}^n{q_{k,i}^{r_i}} \right\}
		 \subseteq E \right\} \right)
		 > d_{mult}^*(E)^{k+1} - \eps.
	\end{align}
\end{quest}

Note that \eqref{eq: distinct factorizations} holds for $k \le 2$ (see \eqref{eq: prime permutation} and the discussion above).

\appendix

\section{Proof of Lemma \ref{IZk}}\label{LeibProof}
In this section we prove Lemma \ref{IZk}, restated here for the convenience of the reader:
\begin{lem}[Lemma 3.5] \label{IZk-appendix}
    Let $(X,\mX,\mu, (T_g)_{g\in G})$ be a $G$-system and let $H\leq G$ be a subgroup of finite index. Then for every $k\geq 1$, one has $\mathcal{Z}^k_H(X) = \mZ^k_G(X)$.
\end{lem}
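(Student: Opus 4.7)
The plan is to proceed by induction on $k \ge 1$. Let $r := [G:H] < \infty$ and fix coset representatives $g_1, \dots, g_r \in G$ so that $G = \bigsqcup_{i=1}^{r}(g_i + H)$.

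For the base case $k = 1$, the inclusion $\mZ^1_G(X) \preceq \mZ^1_H(X)$ is immediate, since any generalized $G$-eigenfunction is automatically an $H$-eigenfunction. For the reverse inclusion, suppose $f \in L^2(X)$ is a generalized $H$-eigenfunction with eigenvalue $\lambda$ (an $H$-invariant function into $\hat H$). The subspace $V := \mathrm{span}\{T_{g_1} f, \dots, T_{g_r} f\}$ of $L^2(X)$ is finite-dimensional and $G$-invariant: for any $g \in G$ and $i$, one may write $g + g_i = g_{j} + h$ with $h \in H$, so $T_g(T_{g_i} f) = T_{g_{j}}(T_h f) = T_{g_{j}}(\lambda(\cdot,h) f)$ still lies in $V$ after multiplication by the bounded $H$-invariant function $T_{g_j}\lambda(\cdot,h)$. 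Because $G$ is abelian, the finite-dimensional unitary representation of $G$ on $V$ decomposes into one-dimensional pieces, and so $f$ is a finite combination of generalized $G$-eigenfunctions. Since $L^2(\mZ^1_H(X))$ is generated by $H$-eigenfunctions (Theorem \ref{FranHostdeco}), this gives $\mZ^1_H(X) \preceq \mZ^1_G(X)$.

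For the inductive step, assume $\mZ^{k-1}_G(X) = \mZ^{k-1}_H(X)$. By Proposition \ref{UCF}, it suffices to show that $\|f\|_{U^{k+1}(G)} = 0$ if and only if $\|f\|_{U^{k+1}(H)} = 0$ for every $f \in L^\infty(\mu)$. I would expand the defining Ces\`aro average via the coset decomposition
\[
\UC_{g\in G}\|\Delta_g f\|_{U^k(G)}^{2^k} \;=\; \frac{1}{r}\sum_{i=1}^r\UC_{h\in H}\|\Delta_{g_i+h}f\|_{U^k(G)}^{2^k},
\]
invoke the inductive hypothesis to replace the inner $U^k(G)$-seminorm with $U^k(H)$, and then use the identity $\Delta_{g_i+h}f = T_h(\Delta_{g_i}f)\cdot \Delta_h f$ together with the van der Corput lemma (Lemma \ref{vdc}) and the Cauchy--Schwarz inequality to dominate each summand by a positive power of $\|f\|_{U^{k+1}(H)}$, and conversely.

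The main obstacle will be the inductive step, where the cross-coset differences $\Delta_{g_i+h}f$ must be handled without destroying the nested seminorm structure. A cleaner and more robust alternative is to avoid the direct seminorm comparison and instead work with the Host--Kra cube measures $\mu^{[k+1]}$ (constructed for arbitrary countable abelian groups in \cite[Appendix A]{BTZ}): these are built by iterating self-products over $\sigma$-algebras of invariant sets on $X^{[j]}$ under diagonal actions. Applying the base case at each intermediate level $X^{[j]}$ (to the diagonal $G$- and $H$-actions) shows that the $G$- and $H$-cube measures coincide for all $k \ge 1$, and hence so do the associated $U^{k+1}$-seminorms.
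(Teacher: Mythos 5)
There is a genuine gap in your base case, and it is precisely the non-ergodicity of the $H$-action that causes trouble. When $H$ does not act ergodically on an ergodic $G$-system (which is the interesting case), the eigenvalue $\lambda$ of an $H$-eigenfunction is a non-constant $H$-invariant \emph{function}, not a character. Consequently $T_g(T_{g_i}f) = \bigl(T_{g_j}\lambda(\cdot,h)\bigr)\cdot T_{g_j}f$ does \emph{not} lie in the finite-dimensional $\mathbb{C}$-span $V = \mathrm{span}\{T_{g_1}f,\dots,T_{g_r}f\}$; it lies only in the $L^\infty(\mathcal{I}_H(X))$-module generated by the $T_{g_j}f$. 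So $V$ is not $G$-invariant, the finite-dimensional representation you invoke does not exist, and the Peter--Weyl decomposition you use to write $f$ as a combination of $G$-eigenfunctions does not go through as stated. A fix would require a genuinely relative argument over the base $\mathcal{I}_H(X)$, together with the nontrivial fact that $\mathcal{I}_H(X) \preceq \mZ^1_G(X)$ (the paper's Lemma \ref{IZ}). The paper avoids this by a two-stage reduction: first settle the case where $H$ acts ergodically (there eigenvalues are honest characters, and one averages $\sum_\omega \omega^j T_{jg_0}f$ over $l$-th roots to produce $G$-eigenfunctions), then handle the non-ergodic case via a careful decomposition $X = \bigcup_i X_i$ into $H$-invariant pieces and the identity $\mathcal{I}_H(X)\preceq\mZ^1_G(X)$.

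Your alternative route for the inductive step — comparing the Host--Kra cube measures $\mu_G^{[k]}$ and $\mu_H^{[k]}$ rather than the seminorms directly — is indeed the approach the paper takes (Proposition \ref{H ergodic case}) and is the right idea. But ``applying the base case at each intermediate level'' is underspecified in a way that hides the real difficulty: what one needs at each level is equality of the \emph{invariant} $\sigma$-algebras $\mathcal{I}(X_H^{[j]}) = \mathcal{I}(X_G^{[j]})$, which is not the base case statement about $\mZ^1$, and moreover the induced diagonal actions on $X^{[j]}$ for $j\ge 1$ are never ergodic, so the non-ergodicity issue from the base case recurs at every level. This is precisely where the paper introduces the technical partition lemmas (Lemmas \ref{lem: fiber product partition} and \ref{lem: parallelopiped partition}) to track how the $G$- and $H$-cube systems decompose into pieces intertwined by powers of $T_{g_0}$. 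Without an explanation of how to bridge the ergodic and non-ergodic cases, your proposal identifies the correct skeleton but omits the step on which the whole argument hinges.
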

We follow the arguments in \cite[Appendix A]{Leib} and generalize them to arbitrary countable abelian groups. We start with some background related to the Host--Kra parallelepipeds construction.
\begin{defn}
Let $G$ be a countable abelian group, and let $\X=(X,\mX,\mu,(T_g))$ be a $G$-system. For every $k\geq 0$, we define a $G$-system $\X_G^{[k]} = (X^{[k]},\mX^{[k]}, \mu^{[k]}, (T^{[k]}_g)_{g\in G})$ inductively by setting $X_G^{[0]}=X$, and $X_G^{[k+1]} = X_G^{[k]}\times_{\mathcal{I}(X_G^{[k]})}X_G^{[k]}$ where $\mathcal{I}(X_G^{[k]})$ is the $\sigma$-algebra of $(T_g^{[k]})_{g\in G}$-invariant functions.
\end{defn}
Host and Kra proved the following result.
\begin{thm}[\cite{HK}, Proposition 4.7]\label{invariantalgebra}
$\mZ^k_G(X)$ is the minimal $\sigma$-algebra with the property that $\mathcal{I}(X^{[k]})$ is a sub $\sigma$-algebra of $(\mZ^k_G(X))^{[k]}$.
\end{thm}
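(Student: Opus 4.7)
The plan is to prove the two halves of the minimality assertion separately: (a) the containment $\mathcal{I}(X^{[k]}) \preceq (\mZ^k_G(X))^{[k]}$, and (b) minimality, namely, if $\mathcal{A}$ is any factor of $\X$ satisfying $\mathcal{I}(X^{[k]}) \preceq \mathcal{A}^{[k]}$, then $\mZ^k_G(X) \preceq \mathcal{A}$.

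For part (a), I would proceed by induction on $k$. The base case $k = 0$ is immediate since $X^{[0]} = X$ and $\mZ^0_G(X) = \mathcal{I}_G(X) = \mathcal{I}(X^{[0]})$. For the inductive step, the recursion $X^{[k+1]} = X^{[k]} \times_{\mathcal{I}(X^{[k]})} X^{[k]}$, together with the inductive hypothesis, reduces the problem to showing that invariant functions on this fiber product are measurable with respect to the analogous fiber product built from $\mZ^{k+1}_G(X)$; this in turn follows from the compatibility of fiber-product constructions with passage to factors, once one uses the tower $\mZ^k_G(X) \preceq \mZ^{k+1}_G(X)$ from Theorem \ref{HKfactors}(i).

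For part (b), the crucial identity is
\[
	\|f\|_{U^{k+1}(G)}^{2^{k+1}} = \int_{X^{[k]}} \bigl| E(F_k \mid \mathcal{I}(X^{[k]})) \bigr|^2 \, d\mu^{[k]},
\]
where $F_k(x) = \prod_{\epsilon \in \{0,1\}^k} C^{|\epsilon|} f(x_\epsilon)$ and $C$ denotes complex conjugation. This identity is obtained by unwinding the fiber-product definition of $X^{[k+1]}$ and using that the two copies of $X^{[k]}$ appearing in $X^{[k+1]}$ are conditionally independent over $\mathcal{I}(X^{[k]})$. Now let $\mathcal{A}$ be a factor of $\X$ with $\mathcal{I}(X^{[k]}) \preceq \mathcal{A}^{[k]}$, and take $f \in L^\infty(\mu)$ with $E(f \mid \mathcal{A}) = 0$. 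Since the restriction of $\mu^{[k]}$ to $\mathcal{A}^{[k]}$ coincides with the cube measure of the factor system associated to $\mathcal{A}$, one obtains the tensor-product identity $E(F_k \mid \mathcal{A}^{[k]})(x) = \prod_{\epsilon} C^{|\epsilon|} E(f \mid \mathcal{A})(x_\epsilon) = 0$. By the tower property and the containment $\mathcal{I}(X^{[k]}) \preceq \mathcal{A}^{[k]}$, this forces $E(F_k \mid \mathcal{I}(X^{[k]})) = 0$, hence $\|f\|_{U^{k+1}(G)} = 0$, and finally $E(f \mid \mZ^k_G(X)) = 0$ by Proposition \ref{UCF}. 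This gives $\mZ^k_G(X) \preceq \mathcal{A}$, as desired.

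The main technical obstacle, used both in the inductive step of (a) and in the tensor-product identity in (b), is the compatibility of the cube construction with passage to a factor: if $\mathcal{A}$ is a factor of $\X$ with factor system $\Y$, then the cube measure $\mu^{[k]}$ projects to the cube measure of $\Y$, and $\mathcal{A}^{[k]}$ coincides with the pullback of the full Borel $\sigma$-algebra of $Y^{[k]}$. This is established by a parallel induction on $k$, using that fiber products over invariant $\sigma$-algebras commute with factor maps applied coordinatewise. For $G = \Z$ this is essentially \cite[Proposition 4.6]{HK} (and the argument is reproduced in \cite[Appendix A]{Leib}); the proof carries over verbatim to arbitrary countable abelian $G$ once the cube systems and Gowers--Host--Kra seminorms are set up in the generality of \cite[Appendix A]{BTZ}.
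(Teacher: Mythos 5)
This theorem is imported from \cite{HK}, Proposition 4.7, and the paper gives no proof of its own, so your argument has to stand on its own. Your overall strategy is the right one: the inclusion $\mathcal{I}(X^{[k]}) \preceq (\mZ^k_G(X))^{[k]}$ by induction through the recursive cube construction, and minimality via the identity $\|f\|_{U^{k+1}(G)}^{2^{k+1}} = \int_{X^{[k]}} |E(F_k \mid \mathcal{I}(X^{[k]}))|^2\,d\mu^{[k]}$, which you verify correctly. This is indeed how Host and Kra argue.

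There is, however, a genuine gap in part (b). You assert that $E(F_k \mid \mathcal{A}^{[k]}) = \prod_{\epsilon} C^{|\epsilon|} E(f\mid\mathcal{A})(x_\epsilon)$ follows because $\mu^{[k]}$ pushes forward to the cube measure of the $\mathcal{A}$-factor. That pushforward statement is the content of \cite{HK} Proposition 4.6, but it only identifies $\mathcal{A}^{[k]}$ with the pullback of the factor's cube $\sigma$-algebra; the tensor-product factorization of the conditional expectation is strictly stronger. Unwinding $\mu^{[j+1]} = \mu^{[j]} \times_{\mathcal{I}(X^{[j]})} \mu^{[j]}$, that factorization at level $k$ requires the coordinate projections to be conditionally independent given $\mathcal{A}^{[k]}$, which holds precisely when $\mathcal{I}(X^{[j]}) \preceq \mathcal{A}^{[j]}$ for every $j < k$, not merely for $j = k$. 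Your hypothesis only provides the top-level containment, and the implication from $\mathcal{I}(X^{[k]}) \preceq \mathcal{A}^{[k]}$ down to $\mathcal{I}(X^{[j]}) \preceq \mathcal{A}^{[j]}$ is not formal: given part (a) at level $j$, it is essentially equivalent to the minimality assertion at level $j$, so assuming it outright is circular. (A quick sanity check: for an irrational rotation and the trivial factor $\mathcal{A}$, the tensor-product identity at level $k=2$ would give $\|f\|_{U^2}^4 = |\int f\,d\mu|^4$, which is false for a nonconstant character; the hypothesis $\mathcal{I}(X^{[2]})\preceq\mathcal{A}^{[2]}$ fails there, but the example shows the factorization is not free.) The Host--Kra proof handles this by running both halves of the theorem as a single coupled induction on $k$, using the side transformations and face structure of $X^{[k]}$; a correct writeup needs to make that coupled induction explicit, together with the deduction of the lower-level cube containments, rather than invoking the tensor factorization in one step.
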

Let $X=\bigcup_{\alpha\in J}X_\alpha$ be a partition of $X$ to $G$-invariant sets. Then $X_G^{[k]} = \bigcup_{\alpha\in J} X_\alpha^{[k]}$, $\mathcal{I}(X^{[k]}) = \bigvee_{\alpha\in J} \mathcal{I}(X_\alpha^{[k]})$ and $\mathcal{Z}^k_G(X) =\bigvee_{\alpha\in J} \mathcal{Z}^k_G(X_\alpha)$. Therefore, by the ergodic decomposition, it is enough to prove Lemma \ref{IZk} in the case where the $G$-action is ergodic.\\

The following lemma gives the easy inclusion in Lemma \ref{IZk}.
\begin{lem}\label{easy}
In the setting of Lemma \ref{IZk}, $\mathcal{Z}_G^k(X)\preceq \mathcal{Z}_H^k(X)$.
\end{lem}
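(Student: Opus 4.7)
The plan is to prove Lemma \ref{easy} by establishing the seminorm inequality $\|f\|_{U^{k+1}(G)} \le \|f\|_{U^{k+1}(H)}$ for every $f \in L^\infty(X)$ and every $k \ge 0$. Granted this inequality, Proposition \ref{UCF} immediately yields $\mathcal{Z}^k_G(X) \preceq \mathcal{Z}^k_H(X)$: if $E(f\mid \mathcal{Z}^k_H(X)) = 0$, then $\|f\|_{U^{k+1}(H)} = 0$, hence $\|f\|_{U^{k+1}(G)} = 0$, and so $E(f\mid \mathcal{Z}^k_G(X)) = 0$.

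To establish the seminorm inequality, I would first unfold the recursive definition of Definition \ref{GHKseminorms} into the closed form
\[
\|f\|_{U^{k+1}(G)}^{2^{k+1}} = \UC_{(g_0,\dots,g_k) \in G^{k+1}} \int_X \prod_{\epsilon \in \{0,1\}^{k+1}} \mathcal{C}^{|\epsilon|} f\bigl(T_{\epsilon \cdot g} x\bigr)\, d\mu(x),
\]
where $\mathcal{C}$ denotes complex conjugation and $\epsilon \cdot g = \sum_i \epsilon_i g_i$. Fix coset representatives $g^{(0)},\dots,g^{(d-1)}$ for $H$ in $G$, where $d = [G:H]$, so that $G = \bigsqcup_{j=0}^{d-1}(H + g^{(j)})$. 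Then apply the coset decomposition for uniform Cesàro limits,
\[
\UC_{g \in G} a_g = \frac{1}{d}\sum_{j=0}^{d-1} \UC_{h \in H} a_{h + g^{(j)}},
\]
to each of the $k+1$ variables $g_0,\dots,g_k$, rewriting
\[
\|f\|_{U^{k+1}(G)}^{2^{k+1}} = \frac{1}{d^{k+1}} \sum_{(j_0,\dots,j_k)} \UC_{(h_0,\dots,h_k) \in H^{k+1}} \int_X \prod_\epsilon F_\epsilon^{(j)}\bigl(T_{\epsilon \cdot h}x\bigr)\, d\mu(x),
\]
where $F_\epsilon^{(j)}(x) := \mathcal{C}^{|\epsilon|}f(T_{\epsilon \cdot g^{(j)}}x)$ is a (possibly conjugated) translate of $f$.

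For each fixed $(j_0,\dots,j_k)$, the integral is the Gowers--Host--Kra inner product for the $H$-action, so the Cauchy--Schwarz--Gowers inequality gives
\[
\left|\UC_{(h_0,\dots,h_k) \in H^{k+1}} \int_X \prod_\epsilon F_\epsilon^{(j)}(T_{\epsilon \cdot h}x)\, d\mu(x)\right| \le \prod_\epsilon \|F_\epsilon^{(j)}\|_{U^{k+1}(H)}.
\]
Because $G$ is abelian, the transformation $T_{\epsilon \cdot g^{(j)}}$ commutes with the $H$-action and preserves $\mu$, so $\|F_\epsilon^{(j)}\|_{U^{k+1}(H)} = \|f\|_{U^{k+1}(H)}$ for every $\epsilon$ and every $j$. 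The product over the $2^{k+1}$ values of $\epsilon$ then equals $\|f\|_{U^{k+1}(H)}^{2^{k+1}}$, and averaging over the $d^{k+1}$ tuples $(j_0,\dots,j_k)$ yields the desired bound $\|f\|_{U^{k+1}(G)} \le \|f\|_{U^{k+1}(H)}$.

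The main obstacle will be a clean verification of the coset-decomposition identity for uniform Cesàro averages; this should follow from finiteness of $[G:H]$ by choosing F\o lner sequences in $G$ whose intersections with each coset $H+g^{(j)}$ are, up to a vanishing boundary, translates of a F\o lner sequence in $H$. The Cauchy--Schwarz--Gowers inequality in the form needed here for amenable group actions is established in \cite[Appendix A]{BTZ}, so invoking it directly should present no additional difficulty.
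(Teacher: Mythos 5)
Your argument is correct and goes by a genuinely different route from the paper's. The paper proves this lemma in one line by appealing to Theorem \ref{invariantalgebra}: since every $(T_g^{[k]})_{g \in G}$-invariant set is $(T_h^{[k]})_{h \in H}$-invariant, the minimality property in that theorem (applied to the $H$-action) forces $\mathcal{Z}^k_G(X) \preceq \mathcal{Z}^k_H(X)$. That route is structural and leans on the parallelepiped-space characterization of the Host--Kra factor, whereas you work directly at the seminorm level: unfold the recursion into the closed-form Gowers--Host--Kra inner product, split each of the $k+1$ variables along the $d$ cosets of $H$, and apply Gowers--Cauchy--Schwarz for the $H$-action. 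Your approach is more explicit and self-contained (it does not invoke the cube machinery at all), at the cost of having to justify the coset-decomposition identity for uniform Ces\`{a}ro limits and the existence of the simultaneous limit defining the closed form; both are handled correctly by the observations you make, e.g.\ that a F\o lner sequence of the form $\bigsqcup_j (\Psi_N + g^{(j)})$ with $(\Psi_N)$ F\o lner in $H$ is F\o lner in $G$. The invariance $\|T_{\epsilon \cdot g^{(j)}} f\|_{U^{k+1}(H)} = \|f\|_{U^{k+1}(H)}$ is the key point, and it holds as you say because $T_g$ commutes with the $H$-action and preserves $\mu$, hence commutes with conditional expectation onto $\mathcal{I}_H(X)$.

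One small bookkeeping wrinkle: you built the conjugation $\mathcal{C}^{|\epsilon|}$ into the definition of $F_\epsilon^{(j)}$, but then wrote the inner average without the outer $\mathcal{C}^{|\epsilon|}$ that appears in the Gowers--Host--Kra inner product. As written, $\UC_{h} \int \prod_\epsilon F_\epsilon^{(j)}(T_{\epsilon\cdot h}x)\,d\mu$ is not literally the Gowers inner product $\langle (F_\epsilon^{(j)})\rangle_{U^{k+1}(H)}$; it is the Gowers inner product of the conjugation-free translates $\tilde F_\epsilon^{(j)}(x) := f(T_{\epsilon\cdot g^{(j)}}x)$. Since $\|\overline{f}\|_{U^{k+1}(H)} = \|f\|_{U^{k+1}(H)}$, the final bound is unaffected, but you should either drop the $\mathcal C^{|\epsilon|}$ from the definition of $F_\epsilon^{(j)}$ and keep the conjugations in the outer product (which is where they actually sit after the substitution $g_i = h_i + g^{(j_i)}$), or explicitly note that the conjugation does not change the seminorm.
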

\begin{proof}
The proof is immediate by Theorem \ref{invariantalgebra} and since any $(T_g^{[k]})_{g\in G}$-invariant function is also a $(T_h^{[k]})_{h\in H}$-invariant function.
\end{proof}
We need the following observation.
\begin{lem}\label{IZ}
    Let $G$ be a countable abelian group, let $\X=(X,\mX,\mu,(T_g)_{g\in G})$ be an ergodic measure preserving $G$-system, and let $H\leq G$ be a subgroup of finite index. Then $\mI_H(X)\preceq \mZ_G(X)$.
\end{lem}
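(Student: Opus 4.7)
The plan is to show that any bounded $H$-invariant function has a finite $G$-orbit in $L^2(\mu)$, and then decompose the span of this orbit into $G$-eigenfunctions. Since $\mI_H(X)$ is generated by bounded $H$-invariant functions, and $\mathcal{Z}_G(X)$ is generated by $G$-eigenfunctions (this is the content of Theorem \ref{HKfactors}(ii) together with ergodicity), this will suffice.

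First, since $H$ has finite index in $G$, we may write $G = \bigsqcup_{i=1}^{n}(g_i + H)$ for some representatives $g_1, \dots, g_n \in G$. Let $f \in L^\infty(X)$ be $H$-invariant. For any $g \in G$, writing $g = g_i + h$ with $h \in H$, we have
\[
T_g f = T_{g_i} T_h f = T_{g_i} f,
\]
since $f$ is $H$-invariant. In particular, the $G$-orbit of $f$ in $L^2(\mu)$ equals the finite set $\{T_{g_1}f, \dots, T_{g_n}f\}$, so the linear subspace $V := \operatorname{span}\{T_{g_i}f : 1 \le i \le n\} \subseteq L^2(\mu)$ is finite-dimensional and $G$-invariant.

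Next, the restriction of the unitary action $(T_g)_{g \in G}$ to $V$ gives a unitary representation of the abelian group $G$ on the finite-dimensional Hilbert space $V$. Since the operators $T_g|_V$ are commuting unitary operators on a finite-dimensional inner product space, they can be simultaneously diagonalized. That is, $V$ decomposes as an orthogonal direct sum $V = \bigoplus_{j=1}^{m} V_j$ where each $V_j$ is one-dimensional, spanned by some nonzero $\phi_j \in V$, and there are characters $\chi_j : G \to S^1$ such that $T_g \phi_j = \chi_j(g)\phi_j$ for every $g \in G$. In particular, each $\phi_j$ is a $G$-eigenfunction.

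Finally, by Theorem \ref{HKfactors}(ii), every $G$-eigenfunction of the ergodic system $\X$ is measurable with respect to $\mathcal{Z}_G(X)$ (eigenfunctions pull back from characters of the Kronecker compact group $Z$ via the factor map). Since $f \in V = \operatorname{span}\{\phi_1, \dots, \phi_m\}$, we conclude that $f$ is measurable with respect to $\mathcal{Z}_G(X)$. As bounded $H$-invariant functions generate $\mI_H(X)$, this proves $\mI_H(X) \preceq \mathcal{Z}_G(X)$. The only mildly delicate point is the simultaneous diagonalization step, but it is just the standard linear-algebraic fact that a commuting family of unitary operators on a finite-dimensional space has a common orthonormal eigenbasis.
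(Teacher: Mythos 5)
Your proof is correct and takes essentially the same approach as the paper: both exploit the fact that $H$-invariant functions carry a unitary representation of the finite abelian group $G/H$, which therefore splits into one-dimensional subrepresentations spanned by $G$-eigenfunctions, and both then use that $G$-eigenfunctions of an ergodic system are $\mathcal{Z}_G(X)$-measurable. The paper does the decomposition once for the whole space $L^2(X,\mathcal{I}_H,\mu|_{\mathcal{I}_H})$, whereas you do it locally for the finite orbit-span $V$ of each generator, but this is a packaging difference rather than a conceptual one.
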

\begin{proof}
The group $G/H$ acts ergodically by unitary transformations on $\mathcal{H}=L^2(X,\mathcal{I}_H,\mu|_{\mathcal{I}_H})$. Since $G/H$ is a finite abelian group, the unitary representation splits into a direct sum of one-dimensional irreducible representations. In other words, $\mathcal{H}$ is generated by eigenfunctions of the action of $G/H$, which are measurable with respect to $\mZ_G(X)$. This completes the proof.
\end{proof}
Now we prove the $k=1$ case of Lemma \ref{IZk} under the additional assumption that the action of $H$ is ergodic.
\begin{lem} \label{IZ1}
Let $G$ be countable abelian groups, and let $H \le G$ be a finite index subgroup. Let  $\X=(X,\mX,\mu,(T_g)_{g\in G})$ be an ergodic $G$-system, and suppose the action of $H$ is ergodic. Then $\mZ_H(X)=\mZ_G(X)$.
\end{lem}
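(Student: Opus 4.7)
The plan is to establish the nontrivial inclusion $\mZ_H(X) \preceq \mZ_G(X)$, since the opposite inclusion is already Lemma \ref{easy}. Under the assumption that the $H$-action is ergodic, Theorem \ref{HKfactors}(ii) identifies $\mZ_H(X)$ with an ergodic rotation on a compact abelian group, so $L^2(\mZ_H(X))$ is spanned by $H$-eigenfunctions. It therefore suffices to show that every $H$-eigenfunction $f\in L^\infty(X)$ is measurable with respect to $\mZ_G(X)$, and in fact I will show something stronger: every such $f$ is already a $G$-eigenfunction.

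The key observation is the following averaging argument. Fix an $H$-eigenfunction $f$ with eigenvalue $\lambda\in\hat{H}$, normalized so that $|f|\equiv 1$ (this normalization is possible because $|f|$ is $H$-invariant, hence constant by $H$-ergodicity). For any $g\in G$ and any $h\in H$,
\[
T_h(T_g f)=T_g T_h f = \lambda(h)\, T_g f,
\]
so $T_g f$ is again an $H$-eigenfunction with the same eigenvalue $\lambda$. By $H$-ergodicity, the $\lambda$-eigenspace is at most one-dimensional: indeed, if $f_1,f_2$ are two such eigenfunctions, then $f_1\overline{f_2}$ is $H$-invariant and therefore constant. Hence there exists $c(g)\in S^1$ with $T_g f = c(g) f$.

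A direct check using $T_{g_1+g_2}=T_{g_1}\circ T_{g_2}$ shows that $g\mapsto c(g)$ is a character of $G$, so $f$ is a $G$-eigenfunction. In particular, $f$ is measurable with respect to $\mZ_G(X)$ by Theorem \ref{HKfactors}(ii). Since such $f$ generate $\mZ_H(X)$, the desired inclusion $\mZ_H(X)\preceq \mZ_G(X)$ follows, completing the proof.

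The argument is quite short; the only subtlety is the appeal to the fact that, for an ergodic action of a countable abelian group, the Kronecker factor is generated by eigenfunctions. This is classical in the $\Z$-case and extends without change to arbitrary countable abelian groups via the Peter--Weyl theorem applied to the compact abelian group $Z_H$ of Theorem \ref{HKfactors}(ii) (or equivalently, by decomposing the unitary $H$-representation on $L^2(\mZ_H(X))$ into characters). Apart from this standard input, no further ergodic-theoretic machinery is needed.
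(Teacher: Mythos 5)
Your proof is correct, and it takes a genuinely different and cleaner route from the paper's. The paper first reduces to the case where $G/H$ is cyclic of prime order $l$, picks a representative $g_0$ of a generator, and writes an $H$-eigenfunction $f$ as a discrete Fourier combination
\[
f = \frac{1}{l}\sum_{\omega:\,\omega^l=\lambda(lg_0)}\bigl(f+\omega\,T_{g_0}f+\dots+\omega^{l-1}T_{(l-1)g_0}f\bigr),
\]
each summand being a $G$-eigenfunction; this only shows $f$ is a \emph{linear combination} of $G$-eigenfunctions, which suffices for the inclusion $\mZ_H(X)\preceq\mZ_G(X)$. Your argument avoids the reduction to the cyclic case entirely and establishes the stronger intermediate fact that every $H$-eigenfunction is literally a $G$-eigenfunction: since $G$ is abelian, $T_gf$ has the same $H$-eigenvalue as $f$, and the one-dimensionality of eigenspaces under an ergodic $H$-action then forces $T_gf=c(g)f$ with $c$ a character of $G$. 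The two inputs you rely on — that $\mZ_H(X)$ is spanned by $H$-eigenfunctions under $H$-ergodicity, and that eigenspaces of an ergodic abelian action are one-dimensional — are exactly the kind of standard facts the paper itself freely invokes, so nothing is missing. In short, your approach is shorter, more conceptual, and yields a sharper conclusion (every $H$-eigenfunction is a $G$-eigenfunction), while the paper's approach is more computational and fits naturally with the cyclic-reduction framework it already set up for the inductive proof of the higher-order case in the Appendix.
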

\begin{proof}
The group $G/H$ is finite, and therefore it is a direct product of finite cyclic groups. In particular, we can find $d \in \N$ and a sequence of subgroups $H_0 = H \le H_1 \le \dots \le H_d \le G$ such that $G/H_d$ and $H_i/H_{i-1}$, $1\leq i \leq d$, are cyclic groups of prime order. Using a proof by induction on $d$, we may assume without loss of generality that $G/H$ is cyclic and of prime order. Let $g_0\in G$ be a representative of a generator of $G/H$ and $l:=[G:H]$ be a prime number. By the ergodicity of $H$, the $\sigma$-algebra $\mathcal{Z}_H(X)$ is generated by $H$-eigenfunctions. Hence, it is enough to show that every $H$-eigenfunction $f$ is a linear combination of $G$-eigenfunctions. Let $\lambda:H\rightarrow S^1$ be the eigenvalue of $f$ and observe that for any $l$-th root $\omega \in S^1$ of $\lambda(lg_0)$ the function $$ f+\omega\cdot T_{g_0}f + ...+\omega^{l-1} \cdot T_{(l-1)g_0} f$$ is a $G$-eigenfunction. Now since $$f=\sum_{\omega \in S^1~:~\omega^l = \lambda(lg_0)} f+\omega\cdot T_{g_0}f + ...+\omega^{l-1} \cdot T_{(l-1)g_0} f,$$
$f$ is measurable with respect to $\mZ_G(X)$ and this completes the proof.
\end{proof}
Let $G$ be a countable abelian group, and let $\X=(X,\mX,\mu,(T_g)_{g\in G})$ be a $G$-system. If the system $\X$ is ergodic, it follows from the definition that $X_G^{[1]}$ is the Cartesian product of $X$ with itself, and the measure is the product measure. As a consequence of Lemma \ref{IZ1}, we have:
\begin{lem} \label{lem: induction step}
If the action of $H$ on $X$ is ergodic, then
 $$\mathcal{I}(X_H^{[1]}) = \mathcal{I}(X_G^{[1]}).$$
\end{lem}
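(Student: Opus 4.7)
The plan is to prove the two inclusions separately. The containment $\mathcal{I}(X_G^{[1]}) \subseteq \mathcal{I}(X_H^{[1]})$ is immediate: any set in $X \times X$ invariant under the diagonal $G$-action $T_g \times T_g$ is, \emph{a fortiori}, invariant under the diagonal $H$-action, since $H \subseteq G$. The content of the lemma is the reverse inclusion $\mathcal{I}(X_H^{[1]}) \subseteq \mathcal{I}(X_G^{[1]})$, which I plan to establish by passing through the Kronecker factor.

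First, I would show that $\mathcal{I}(X_H^{[1]})$ is a sub-$\sigma$-algebra of $\mathcal{Z}_H(X) \otimes \mathcal{Z}_H(X)$. By linearity and density of simple tensors, it is enough to check that if $f_1, f_2 \in L^\infty(X)$ and $E(f_1|\mathcal{Z}_H(X)) = 0$, then
\[
\UC_{h \in H}{T_h f_1 \otimes T_h f_2} = 0
\]
in $L^2(\mu \times \mu)$; this limit equals $E(f_1 \otimes f_2 \mid \mathcal{I}(X_H^{[1]}))$ by the mean ergodic theorem. Applying the van der Corput lemma (Lemma \ref{vdc}) to $u_h = T_h f_1 \otimes T_h f_2$ and using the mean ergodic theorem once more reduces the question to $\UC_{h\in H}\|\Delta_{h} f_1\|_{L^2}^2 = \|f_1\|_{U^2(H)}^2$, which vanishes precisely when $E(f_1 \mid \mathcal{Z}_H(X)) = 0$.

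Next, I invoke Lemma \ref{IZ1}: because the action of $H$ is ergodic, we have $\mathcal{Z}_H(X) = \mathcal{Z}_G(X)$, and by Theorem \ref{HKfactors}(ii) the latter is isomorphic to a rotation $(Z, \alpha)$ on a compact abelian group. Let $\pi : X \to Z$ be the factor map. The previous step shows that any $F \in \mathcal{I}(X_H^{[1]})$ is of the form $F = \tilde F \circ (\pi \times \pi)$ for some $\tilde F \in L^\infty(Z \times Z)$ satisfying $\tilde F(z_1 + \alpha(h), z_2 + \alpha(h)) = \tilde F(z_1, z_2)$ for every $h \in H$. Since $H$ acts ergodically on the Kronecker factor $Z$, the subgroup $\alpha(H)$ is dense in $Z$, so by continuity of translation in $L^2(Z \times Z)$ we get $\tilde F(z_1 + z, z_2 + z) = \tilde F(z_1, z_2)$ for every $z \in Z$, and in particular for every $z \in \alpha(G)$. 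This says precisely that $F$ is diagonally $G$-invariant, i.e.\ $F \in \mathcal{I}(X_G^{[1]})$.

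The main subtlety is the first step, where one must verify that $\mathcal{Z}_H(X)$ really is a (two-sided) characteristic factor for the diagonal average along $H$; however, this is a routine van der Corput calculation once the Gowers--Host--Kra seminorms for arbitrary countable abelian groups are in hand. The rest is a clean density-of-rotations argument on the Kronecker group, which is where the hypothesis of $H$-ergodicity is essential.
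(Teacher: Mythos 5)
Your proposal is correct and follows essentially the paper's own route: both proofs reduce to the $\sigma$-algebra $\mathcal{Z}_H(X)\otimes\mathcal{Z}_H(X)$ (the paper cites Lemma \ref{TT}; you rederive it via van der Corput, though the quantity controlling the average after van der Corput should read $\UC_{k\in H}\left|\int_X \Delta_k f_1\,d\mu\right|^2=\|f_1\|_{U^2(H)}^4$ rather than $\UC_{h\in H}\|\Delta_h f_1\|_{L^2}^2$ --- a harmless slip, since the vanishing condition $E(f_1\mid\mathcal{Z}_H(X))=0$ is unchanged), and then both invoke Lemma \ref{IZ1} to identify $\mathcal{Z}_H(X)$ with $\mathcal{Z}_G(X)$. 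The only substantive difference is the finishing move: the paper expands a diagonally $H$-invariant function in an orthonormal basis of $G$-eigenfunctions and uses $H$-ergodicity (hence eigenvalue multiplicity one) to kill the off-diagonal coefficients, whereas you pass to the Kronecker rotation $(Z,\alpha)$, observe that $H$-ergodicity forces $\alpha(H)$ dense in $Z$, and upgrade $\alpha(H)$-diagonal invariance to full $Z$-diagonal invariance by $L^2$-continuity of translation. These are two formulations of the same Fourier-analytic fact (characters of $Z$ are determined by their restriction to $\alpha(H)$), both resting squarely on the $H$-ergodicity hypothesis, and either is adequate.
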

\begin{proof}
The inclusion $\mathcal{I}(X_G^{[1]})\preceq\mathcal{I}(X_H^{[1]}))$ is trivial. Now let $f:X\times X\rightarrow\mathbb{C}$ be a $(T_h\times T_h)_{h\in H}$ invariant function. By Lemma \ref{IZ1}, we can find an orthonormal basis of $G$-eigenfunctions $\{f_i\}_{i\in\mathbb{N}}$ for $\mathcal{Z}_H(X)$. By Lemma \ref{TT}, there exist constants $a_{i,j}\in\mathbb{C}$ for all $i,j\in\mathbb{N}$ such that $$f(x,y) = \sum_{i=1}^\infty a_{i,j} f_i(x) \overline{f_j}(y).$$
Applying the $H$-action and using the uniqueness of the decomposition, we see that $a_{i,j}=0$ unless $i=j$. In particular $f$ is spanned by the $G$-invariant functions $f_i\otimes \overline{f_i}$. Thus, $f$ is measurable with respect to $\mathcal{I}(X_G^2)$ and the claim follows.
\end{proof}
We use Lemma \ref{lem: induction step} to prove the following:
\begin{prop} \label{H ergodic case}
If the action of $H$ on $X$ is ergodic, then for $k \ge 0$, one has $$\mathcal{I}(X_H^{[k]}) = \mathcal{I}(X_G^{[k]}) \qquad \text{and} \qquad \mu_G^{[k]}= \mu_H^{[k]}.$$
\end{prop}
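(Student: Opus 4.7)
The plan is to proceed by induction on $k$, with the $k=0$ case being essentially trivial (both $X_G^{[0]}$ and $X_H^{[0]}$ equal $X$ with measure $\mu$, and both invariant $\sigma$-algebras are trivial by ergodicity — note that $H$-ergodicity forces $G$-ergodicity since $H \le G$) and the $k=1$ case handled directly by Lemma \ref{lem: induction step}, together with the observation that $\mu_G^{[1]} = \mu_H^{[1]} = \mu \times \mu$ since both fiber products are over the trivial $\sigma$-algebra.

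For the inductive step, suppose $\mathcal{I}(X_G^{[k]}) = \mathcal{I}(X_H^{[k]}) =: \mathcal{I}^{[k]}$ and $\mu_G^{[k]} = \mu_H^{[k]} =: \mu^{[k]}$. Since $X_G^{[k+1]}$ and $X_H^{[k+1]}$ are both defined as the fiber product of $(X^{[k]}, \mu^{[k]})$ with itself over the same $\sigma$-algebra $\mathcal{I}^{[k]}$, the two measure spaces coincide and hence $\mu_G^{[k+1]} = \mu_H^{[k+1]} =: \mu^{[k+1]}$. To compare the invariant $\sigma$-algebras on this common space, I would first take the ergodic decomposition $\mu^{[k]} = \int \mu_z^{[k]}\,d\lambda(z)$ for the $G$-action on $X^{[k]}$, indexed by $z \in Z := X^{[k]}/\mathcal{I}^{[k]}$. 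By the inductive hypothesis this is simultaneously the ergodic decomposition for the $H$-action, so each component $(X^{[k]}, \mu_z^{[k]})$ is ergodic under both $G$ and $H$. Correspondingly, $\mu^{[k+1]} = \int \mu_z^{[k]} \times \mu_z^{[k]}\,d\lambda(z)$, which exhibits $X^{[k+1]}$ as fibered measurably over $Z$ with fibers $(X^{[k]} \times X^{[k]}, \mu_z^{[k]} \times \mu_z^{[k]})$.

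On each such fiber, Lemma \ref{lem: induction step} applies to the $G$-system $(X^{[k]}, \mu_z^{[k]}, (T_g^{[k]})_{g \in G})$ with its ergodic $H$-subaction, yielding equality of the fiberwise $G$- and $H$-invariant $\sigma$-algebras on $X^{[k]} \times X^{[k]}$. A standard disintegration argument then lifts this to the global statement: a function $f \in L^2(\mu^{[k+1]})$ is $G$-invariant if and only if its restriction to $\lambda$-a.e.\ fiber is $G$-invariant (and analogously for $H$), so the fiberwise equality propagates to $\mathcal{I}(X_G^{[k+1]}) = \mathcal{I}(X_H^{[k+1]})$. The main technical point requiring care will be formalizing this fiberwise passage — in particular verifying that the inductive identity $\mathcal{I}(X_G^{[k]}) = \mathcal{I}(X_H^{[k]})$ does force the $H$-ergodic and $G$-ergodic decompositions of $X^{[k]}$ to coincide, so that Lemma \ref{lem: induction step} is genuinely available fiber by fiber.
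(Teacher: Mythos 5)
Your proposal is correct and follows essentially the same route as the paper: induct on $k$, use the inductive hypothesis to identify $\mu_G^{[k+1]}$ with $\mu_H^{[k+1]}$ immediately (both fiber products are taken over the same $\sigma$-algebra), then apply the ergodic decomposition of $X^{[k]}$ with respect to $\mathcal{I}(X_G^{[k]}) = \mathcal{I}(X_H^{[k]})$, invoke Lemma \ref{lem: induction step} on each ergodic component (which is simultaneously $G$- and $H$-ergodic precisely because the two invariant $\sigma$-algebras agree), and reassemble. The paper writes the decomposition informally as a partition $X_G^{[k]}=\bigcup_{\alpha\in J}X_\alpha$ and states the conclusion as a union of fiberwise $\sigma$-algebras, while you phrase it as an integral disintegration $\mu^{[k]}=\int\mu_z^{[k]}\,d\lambda(z)$ with a fiberwise-invariance criterion for $L^2$ functions; these are the same argument, and the technical point you flag (that the shared invariant $\sigma$-algebra forces the $G$- and $H$-ergodic decompositions to coincide) is exactly the observation the paper relies on.
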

\begin{proof}
We prove the claim by induction on $k$. The case $k=0$ is trivial.

Assume that for some $k \ge 0$, $\mathcal{I}(X_H^{[k]}) = \mathcal{I}(X_G^{[k]})$ and $\mu_G^{[k]}= \mu_H^{[k]}$. It is immediate that $$\mu_G^{[k+1]} = \mu_G^{[k]}\times_{\mathcal{I}(X_G^{[k]})} \mu_G^{[k]} = \mu_H^{[k]}\times_{\mathcal{I}(X_H^{[k]})} \mu_H^{[k]} = \mu_H^{[k+1]}.$$

By the ergodic decomposition theorem, applied with respect to the $\sigma$-algebra $\mathcal{I}(X_G^{[k]})$ we can find a partition $X_G^{[k]}=\bigcup_{\alpha\in J} X_\alpha$ of $X_G^{[k]}$ to $(T_g^{[k]})_{g\in G}$ invariant sets. Let $S_g^\alpha$ be the restriction of $T_g^{[k]}$ to the set $X_\alpha$. By the induction hypothesis the action of $(S_h^\alpha)_{h\in H}$ on $X_\alpha$ is ergodic. Hence, by Lemma \ref{lem: induction step}, we have
$$ 
\mathcal{I}(X_H^{[k+1]}) = \bigcup_{\alpha\in J} \mathcal{I}_H(X_\alpha^{[1]})) =\bigcup_{\alpha\in J} \mathcal{I}_G(X_\alpha^{[1]})  = \mathcal{I}(X_G^{[k+1}),
$$
as required.
\end{proof}
Proposition \ref{H ergodic case} establishes Lemma \ref{IZk} in the case where the action of $H$ is ergodic. Now we assume that the $H$-action is non-ergodic. As in the proof of Lemma \ref{IZ1}, we may assume without loss of generality that $G/H$ is cyclic of order $l$ for some prime $l$. In particular, there exists a partition $X=\bigcup_{i\in \Z/l\Z} X_i$ into $H$-invariant sets and some $g_0\in G$ such that $T_{g_0} X_i = X_{i+1}$, $i \in \Z/l\Z$.

We need the following technical lemma.
\begin{lem} \label{lem: fiber product partition}
Let $G$ be a countable abelian group, and let $\textbf{Y}=(Y,\mY,\nu,(T_g)_{g\in G})$ be an ergodic $G$-system.
Suppose that there exists some $g_0\in G$ and $H$-invariant subsets $Y_i$ such that $Y=\bigcup_{i\in\Z/l\Z} Y_i$, and $T_{g_0} Y_i = Y_{i+1}$ for $i \in \Z/l\Z$.
Then, $Y\times_{\mathcal{I}_G(Y)} Y = \bigcup_{i,j\in\Z/l\Z} Y_{i,j}$ where $Y_{i,i}=Y_i\times_{\mathcal{I}_H(Y_i)} Y_i$ and $T_{sg_0}\times T_{tg_0}$ is an isomorphism between $Y_{i,i}$ and $Y_{i+s, i+t}$, $i \in \Z/l\Z$.
\end{lem}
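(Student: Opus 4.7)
The plan is to leverage the $G$-ergodicity of $Y$ to trivialize both sides of the claimed identity, so that the substantive content reduces to verifying that the $H$-invariant sets $Y_i$ are in fact $H$-ergodic. Since $\mathcal{I}_G(Y) = \{\es, Y\}$ by ergodicity, the fiber product $Y \times_{\mathcal{I}_G(Y)} Y$ collapses to $(Y \times Y, \nu \otimes \nu)$. Once I show each $Y_i$ is $H$-ergodic, so that $\mathcal{I}_H(Y_i)$ is trivial, the fiber product $Y_i \times_{\mathcal{I}_H(Y_i)} Y_i$ likewise collapses to $(Y_i \times Y_i, \nu_i \otimes \nu_i)$ with $\nu_i := \nu|_{Y_i}/\nu(Y_i)$. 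The decomposition $Y \times Y = \bigsqcup_{i,j \in \Z/l\Z}(Y_i \times Y_j)$ is then the obvious set-theoretic partition, and the isomorphism claim follows from measure-preservation of the $T_g$'s.

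For the $H$-ergodicity of each $Y_i$: I would take $A \subseteq Y_0$ to be $H$-invariant with $\nu(A) > 0$. Since $G$ is abelian, each $T_{sg_0}$ commutes with the $H$-action, so each $T_{sg_0}A \subseteq Y_s$ is $H$-invariant. Thus $B := \bigcup_{s=0}^{l-1} T_{sg_0}A$ is $H$-invariant. It is also $T_{g_0}$-invariant by construction, and since $G$ is generated by $H$ together with $g_0$, it is $G$-invariant. By $G$-ergodicity, $\nu(B) = 1$, forcing $\nu(T_{sg_0}A) = 1/l = \nu(Y_s)$ for each $s$, and in particular $\nu(A) = \nu(Y_0)$, so $A = Y_0$ modulo null sets. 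Hence each $Y_i$ is $H$-ergodic.

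With $H$-ergodicity of each $Y_i$ in hand, I would define $Y_{i,j} := Y_i \times Y_j$ for $(i,j) \in (\Z/l\Z)^2$, obtaining the required set-theoretic partition of $Y \times_{\mathcal{I}_G(Y)} Y$. Each $Y_{i,i}$, equipped with the rescaled restriction of $\nu \otimes \nu$ by a factor of $\nu(Y_i)^{-2} = l^2$, coincides with $(Y_i \times Y_i, \nu_i \otimes \nu_i)$, which is precisely $Y_i \times_{\mathcal{I}_H(Y_i)} Y_i$. The isomorphism claim then reduces to noting that $T_{sg_0} \times T_{tg_0}$ is a product of measure-preserving bijections of $(Y, \nu)$, and since $T_{sg_0}$ maps $Y_i$ bijectively onto $Y_{i+s}$, the product map restricts to a measure-preserving bijection $Y_{i,i} \to Y_{i+s, i+t}$.

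The main obstacle I foresee is the bookkeeping of normalizations when identifying the restricted product measure on $Y_i \times Y_i$ with the probabilistic fiber product measure on $Y_i \times_{\mathcal{I}_H(Y_i)} Y_i$, since the former has total mass $1/l^2$ while the latter is a probability measure; however, this is a routine rescaling once the $H$-ergodicity of each $Y_i$ has been established, and matches the standard convention for disintegrating into conditional probabilities over $\mathcal{I}_G(Y)$.
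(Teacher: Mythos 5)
Your proof reads the lemma literally, including the \emph{ergodic} hypothesis on $\mathbf{Y}$, and that reading collapses the statement: you show each $Y_i$ is $H$-ergodic (which is indeed forced when $Y$ is $G$-ergodic and $T_{g_0}$ cyclically permutes the $Y_i$, granting the contextual facts $lg_0\in H$ and $G=\langle H,g_0\rangle$ that you use without signalling), whence both $\mathcal{I}_G(Y)$ and each $\mathcal{I}_H(Y_i)$ are trivial and the assertion becomes the tautology $Y\times Y=\bigcup_{i,j}Y_i\times Y_j$. That reasoning is internally consistent given the printed hypotheses, and your $H$-ergodicity argument is essentially the same ``sum of translates'' argument that already appears in the proof of Lemma~\ref{IZ1}.

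The problem is that this route does not match what the paper actually needs from Lemma~\ref{lem: fiber product partition}. In the induction step of Lemma~\ref{lem: parallelopiped partition}, the lemma is applied with $Y=W_0\subseteq X_G^{[k]}$, and $W_0$ is \emph{not} $G$-ergodic in general: already for $k=1$ one has $W_0=\bigcup_i X_i\times X_i$ under the diagonal $G$-action, which fails to be ergodic whenever $X$ has a nontrivial Kronecker factor. The paper's own proof never invokes ergodicity of $Y$. Instead, it establishes that $A\mapsto A\cap Y_0$ is an isomorphism $\mathcal{I}_G(Y)\cong\mathcal{I}_H(Y_0)$, takes the $H$-ergodic decomposition $Y_0=\bigcup_\alpha Y_{0,\alpha}$, propagates via $Y_{i,\alpha}=T_{ig_0}Y_{0,\alpha}$ so that $Y=\bigcup_\alpha Y_\alpha$ is the $G$-ergodic decomposition, and then decomposes the fiber product as $\bigcup_{i,j}\bigcup_\alpha Y_{i,\alpha}\times Y_{j,\alpha}$; here $Y_{i,i}=\bigcup_\alpha Y_{i,\alpha}\times Y_{i,\alpha}$ is exactly $Y_i\times_{\mathcal{I}_H(Y_i)}Y_i$. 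The substantive content of the paper's argument — the identification $\mathcal{I}_G(Y)\cong\mathcal{I}_H(Y_0)$ and the matching of the $G$- and $H$-ergodic decompositions — is precisely what your proof discards by trivialising all the $\sigma$-algebras. So while your write-up proves the statement as printed, the printed ``ergodic'' hypothesis is best regarded as an over-restriction; a proof that relies on it, as yours does, would leave the application in Lemma~\ref{lem: parallelopiped partition} unsupported.
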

\begin{proof}
Let $A\in \mathcal{I}_G(Y)$ be a measurable $G$-invariant subset of $Y$. For each $0\leq i \leq l-1$, $A_i = A\cap Y_i$ is an $H$-invariant set. In particular, $A_0$ is $H$-invariant and $A_i=T_{ig_0}A_0$. We deduce that the mapping $A\mapsto A\cap Y_0$ is an isomorphism between $\mathcal{I}_G(Y)$ and $\mathcal{I}_H(Y_0)$. Using the ergodic decomposition we can find a partition $$Y_0 = \bigcup_{\alpha\in I} Y_{0,\alpha}$$ of $Y_0$ to $H$-invariant sets. For every $\alpha\in I$, and $i\not =0$, let $Y_{i,\alpha} = T_{ig_0} Y_{0,\alpha}$ and $Y_\alpha = \bigcup_{i\in\Z/l\Z} Y_{i,\alpha}$. Then, $Y=\bigcup_{\alpha\in I} Y_\alpha$ is the ergodic decomposition of $Y$ with respect to the factor $\mathcal{I}_G(Y)$. Thus, if we let $Y_{i,j}=\bigcup_{\alpha\in I} Y_{i,\alpha}\times Y_{j,\alpha}$ we have,
$$Y_G^{[1]} = \bigcup_{\alpha\in I} (Y_\alpha\times_{\mathcal{I}_G(Y_\alpha)} Y_\alpha) =  \bigcup_{\alpha\in I}\bigcup_{i,j\in\Z/l\Z} (Y_{i,\alpha}\times Y_{j,\alpha}) =\bigcup_{i,j\in\Z/l\Z}\bigcup_{\alpha\in I}\ (Y_{i,\alpha}\times Y_{j,\alpha}) =\bigcup_{i,j\in\Z/l\Z} Y_{i,j}.$$ In particular, $Y_{i,i} = \bigcup_{\alpha\in I} (Y_{i,\alpha}\times Y_{i,\alpha}) =  Y_i\times Y_i$, as required.
\end{proof}
Recall that $G=\bigcup_{i=0}^{l-1} ig_0+H$. It follows from Lemma \ref{lem: fiber product partition} that for $i,j \in \Z/l\Z$, $$(T_{g_0}\times T_{g_0}) (Y_i\times_{\mathcal{I}_H(Y)} Y_j) = Y_{i+1, j+1}.$$ Therefore, the subsets $V_i = \bigcup_{j\in\Z/l\Z} Y_{j,j+i}$, $i \in \Z/l\Z$ form a partition of $Y\times_{\mathcal{I}_G(Y)} Y$ into $(T_g\times T_g)_{g\in G}$-invariant sets. Furthermore, $\text{Id}\times T_{ig_0}$ is an isomorphism between $V_0$ and $V_i$.\\

We use Lemma \ref{lem: fiber product partition} to show the following:
\begin{lem} \label{lem: parallelopiped partition}
Let $\X=(X,\mX,\mu,(T_g)_{g\in G})$ be an ergodic $G$-system. Let $X=\bigcup_{i\in\Z/l\Z} X_i$ be a partition into $H$-invariant sets, and let $g_0\in G$ be as above. Then for any $k\geq 0$, there exists a partition $X_G^{[k]} = \bigcup_{j\in(\Z/l\Z)^k} W_j$, into $(T_g^{[k]})_{g\in G}$-invariant sets, such that $W_0 =\bigcup_{i\in\Z/l\Z} (X_i)_H^{[k]} $ and $T_{g_0}^{[k]}\left((X_i)_H^{[k]} \right)=(X_{i+1})_H^{[k]}$. Furthermore, for every $j\in(\Z/l\Z)^k$, there exists an isomorphism of measure spaces $\tau_j:W_0\rightarrow W_j$, which in every coordinate of $X^{[k]}$ is a power of $T_{g_0}$.
\end{lem}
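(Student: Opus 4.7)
The plan is to induct on $k$. The base case $k=0$ is trivial: $X_G^{[0]}=X$ decomposes as the single piece $W_0=X=\bigcup_{i\in\Z/l\Z}X_i=\bigcup_i(X_i)_H^{[0]}$, with $\tau_0=\text{Id}$.

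For the inductive step, assume the lemma holds at level $k$ with partition $X_G^{[k]}=\bigsqcup_{j\in(\Z/l\Z)^k}W_j$. Since each $W_j$ is $G$-invariant, pairs in the fiber product defining $X_G^{[k+1]}=X_G^{[k]}\times_{\mathcal{I}_G(X_G^{[k]})}X_G^{[k]}$ must lie in a common $W_j$, so
$$X_G^{[k+1]}=\bigsqcup_{j\in(\Z/l\Z)^k}W_j\times_{\mathcal{I}_G(W_j)}W_j.$$
Each $W_j$ inherits an $H$-invariant partition $\{\tau_j((X_i)_H^{[k]})\}_{i\in\Z/l\Z}$ cyclically permuted by $T_{g_0}^{[k]}$ (this uses that $\tau_j$, being a coordinatewise product of powers of $T_{g_0}$, commutes with both $T_{g_0}^{[k]}$ and the $H$-action). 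Applying (a mild generalization of) Lemma \ref{lem: fiber product partition} to each $W_j$ with this partition, I decompose the fiber product $W_j\times_{\mathcal{I}_G(W_j)}W_j$ into $l$ cyclic $G$-invariant pieces; labeling them by $s\in\Z/l\Z$ gives the pieces $W_{(j,s)}$ of the desired partition of $X_G^{[k+1]}$ indexed by $(\Z/l\Z)^{k+1}$.

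To verify the structural properties, observe that $W_{(0,0)}$ corresponds to the ``diagonal'' part of the fiber product of $W_0$ with itself, namely
$$\bigsqcup_{i\in\Z/l\Z}(X_i)_H^{[k]}\times_{\mathcal{I}_H((X_i)_H^{[k]})}(X_i)_H^{[k]}=\bigsqcup_{i\in\Z/l\Z}(X_i)_H^{[k+1]};$$
the key identification $\mathcal{I}_G(W_0)\big|_{(X_i)_H^{[k]}}=\mathcal{I}_H((X_i)_H^{[k]})$ holds because every $H$-invariant subset of $(X_i)_H^{[k]}$ extends to a $G$-invariant subset of $W_0$ via its $T_{g_0}^{[k]}$-orbit. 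The relation $T_{g_0}^{[k+1]}((X_i)_H^{[k+1]})=(X_{i+1})_H^{[k+1]}$ follows coordinatewise from the inductive hypothesis. Finally, the isomorphism $\tau_{(j,s)}:W_{(0,0)}\to W_{(j,s)}$ can be taken as $(\tau_j\times\tau_j)\circ(\text{Id}\times T_{sg_0}^{[k]})$: the first $2^k$ coordinates of $X^{[k+1]}\subseteq X^{2^{k+1}}$ are shifted by $\tau_j$, while the last $2^k$ are shifted by $\tau_j$ composed with $T_{sg_0}$, so each coordinate receives a power of $T_{g_0}$, as required.

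The main technical subtlety is applying Lemma \ref{lem: fiber product partition} to $W_j$, which need not be $G$-ergodic. Ergodicity of $Y$ in that lemma was used only to reduce the $H$-ergodic decomposition of $Y_0$ to a single component, yielding $Y_{i,i}=Y_i\times Y_i$ in the Cartesian form. Without $G$-ergodicity the identical argument yields instead the fiber-product form $Y_{i,i}=Y_i\times_{\mathcal{I}_H(Y_i)}Y_i$, which is precisely what is needed to identify the $s=0$ slab with $(X_i)_H^{[k+1]}$. Equivalently, one can decompose each $W_j$ into its $G$-ergodic components, apply Lemma \ref{lem: fiber product partition} in its stated form to each, and reassemble to get the same conclusion.
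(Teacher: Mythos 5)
Your proof is correct and follows essentially the same route as the paper: induct on $k$, split $X_G^{[k+1]}$ over the $G$-invariant pieces $W_j$, apply Lemma~\ref{lem: fiber product partition} to get an $l$-fold cyclic partition of each $W_j\times_{\mathcal{I}(W_j)}W_j$, and stitch the index sets together into $(\Z/l\Z)^{k+1}$; the isomorphisms $\tau_{(j,s)}=(\tau_j\times\tau_j)\circ(\mathrm{Id}\times T_{sg_0}^{[k]})$ you write down match the paper's ``via an isomorphism whose projections are powers of $T_{g_0}^{[k]}$'' exactly.

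One genuine point in your favor: you correctly flag that $W_0$ (hence $W_j$) is generally not $G$-ergodic for $k\ge 1$, so Lemma~\ref{lem: fiber product partition} is being invoked outside its stated hypothesis; the paper elides this. Your fix is right either way. A small correction to your framing, though: you write that under ergodicity the lemma yields the Cartesian form $Y_{i,i}=Y_i\times Y_i$ and that dropping ergodicity forces a ``mild generalization'' to get $Y_{i,i}=Y_i\times_{\mathcal{I}_H(Y_i)}Y_i$. In fact the lemma as stated already gives the fiber-product form $Y_i\times_{\mathcal{I}_H(Y_i)}Y_i$, and its proof never actually uses $G$-ergodicity of $Y$ — the ``ergodic decomposition'' invoked there is with respect to $\mathcal{I}_G(Y)$ (equivalently $\mathcal{I}_H(Y_0)$ via the isomorphism $A\mapsto A\cap Y_0$), not an assumption that $\mathcal{I}_G(Y)$ is trivial. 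The closing line of that proof, ``$=Y_i\times Y_i$'', is really $Y_i\times_{\mathcal{I}_H(Y_i)}Y_i$ (a slip that collapses to Cartesian only when $\mathcal{I}_G(Y)$ is trivial, since then $\mathcal{I}_H(Y_0)\cong\mathcal{I}_G(Y)$ is also trivial). So no generalization is needed; the stated conclusion is already the one you use. Your alternative — decompose each $W_j$ into $G$-ergodic components, apply the lemma to each, and reassemble — is also a valid repair.
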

\begin{proof}
We induct on $k$. The case $k=0$ is trivial.

Assume that the claim holds for some $k\geq 0$. Then $$X_G^{[k+1]} = X_G^{[k]}\times_{\mathcal{I}(X_G^{[k]})}X_G^{[k]} =\bigcup_{j\in(\Z/l\Z)^k} (W_j\times_{\mathcal{I}(W_j)} W_j).$$
Fix $j \in (\Z/l\Z)^k$.
Since the isomorphism $\tau_j:W_0\rightarrow W_j$ commutes with $(T_g^{[k]})_{g\in G}$, it induces an isomorphism $\tau_j\times \tau_j:W_0\times_{\mathcal{I}(W_0)} W_0\rightarrow W_j\times_{\mathcal{I}(W_j)} W_j$. By assumption $W_0 = \bigcup_{i\in\Z/l\Z}(X_i)_H^{[k]}$ and by Lemma \ref{lem: fiber product partition}, $W_0\times_{\mathcal{I}(W_0)}W_0$ can be partitioned into $(T_g^{[k+1]})_{g\in G}$-invariant sets $\{V_i\}_{i\in\Z/l\Z}$ such that $$V_0 = \bigcup_{i\in\Z/l\Z}\left( (X_i)_H^{[k]}\times_{\mathcal{I}\left((X_i)_H^{[k]}\right)} (X_i)_H^{[k]}\right) =\bigcup_{i\in\Z/l\Z} (X_i)_H^{[k+1]}.$$
Moreover, $V_0$ is isomorphic to $V_j$ via an isomorphism whose projections are powers of $T_{g_0}^{[k]}$. Since $W_0$ is isomorphic to $W_j$, this completes the proof.
\end{proof}
We recall that it suffices to establish the proof of Lemma \ref{IZk} in the case where the $G$-action is ergodic and $G/H$ is a cyclic group of order $l$ for some $l>0$. As before, we find a partition $X=\bigcup_{i\in\Z/l\Z} X_i$ of $X$ into $H$-invariant sets and some $g_0\in G$ such that $T_{g_0} (X_i) = X_{i+1}$ for $i \in \Z/l\Z$.
\begin{proof}[Proof of Lemma \ref{IZk}]
Let $k\geq 0$, and let $\{W_i\}_{i\in(\Z/l\Z)^k}$ be as in Lemma \ref{lem: parallelopiped partition}.
Since $X_0,...,X_{l-1}$ are disjoint $(T_h)_{h\in H}$-invariant subsets of $X$, we have $\mathcal{I}(X_H^{[k]}) = \prod_{i\in\Z/l\Z} \mathcal{I}\left((X_i)_H^{[k]}\right)$ and $Z^k_H(X)=\prod_{i\in\Z/l\Z} Z^k_H(X_i)$.
Let $B$ be a $(T_h^{[k]})_{h\in H}$-invariant subset of $(X_i)_H^{[k]}$. 
For every $j\in\Z/l\Z$, let $A_j=(T_{(j-i)g_0}^{[k]})(B)$ and $A=\bigcup_{j\in\Z/l\Z} A_j$.
By definition $A\subseteq W_0$ is a $(T_g^{[k]})_{g\in G}$-invariant set. 
Therefore, by Theorem \ref{invariantalgebra}, $A\in \left(\mZ^k_G(X)\right)^{[k]}$.
Since $X_i$ is $(T_h^{[k]})$-invariant, by Lemma \ref{IZ}, $X_i\in\mZ^1_G(X)$.
Therefore, $B=A_i = A\cap \left(X_i\right)_H^{[k]}$ is an element of $\left(\mZ^k_G(X)\right)^{[k]}$.
Since $B$ is arbitrary, and this holds for all $i\in\Z/l\Z$, we deduce that $\mathcal{I}(X_H^{[k]})\preceq \mathcal{Z}^k_G(X)$.
By Theorem \ref{invariantalgebra}, we have $\mathcal{Z}^k_H(X)\preceq \mathcal{Z}_G^k(X)$.
Lemma \ref{easy} provides the other inclusion, and this completes the proof.
\end{proof}

\end{document}